\documentclass{article}

\usepackage[utf8]{inputenc}
\usepackage[stable]{footmisc}
\usepackage{amsmath}  
\usepackage{graphicx} 
\usepackage{amssymb,amsthm}
\usepackage{bm,bbm}
\usepackage{booktabs}
\usepackage{dsfont}
\usepackage{graphicx}
\usepackage{subcaption}
\usepackage{blindtext}
\usepackage{url}
\usepackage{algorithm,algorithmic}
\usepackage{setspace}
\usepackage{enumitem}
\usepackage{thmtools}
\usepackage{thm-restate}
\usepackage{authblk}
\usepackage{comment}
\usepackage{geometry}
\geometry{a4paper,
total={160mm,247mm},
}

\usepackage{array}

\usepackage[normalem]{ulem}

\newcommand{\dummy}{\mathord{\color{black!33}\bullet}}

\providecommand{\bbN}{\mathbb{N}}

\providecommand{\bbR}{\mathbb{R}}
\newcommand{\R}{\mathbb{R}}

\providecommand{\bbE}{\mathbb{E}}
\newcommand{\E}{\mathbb{E}}

\newcommand{\minL}{\underbar{L}}

\providecommand{\CE}{G}
\providecommand{\CN}{\mathcal{N}}
\providecommand{\CD}{\mathcal{D}}

\providecommand{\CC}{\mathcal{C}}

\providecommand{\CP}{\mathcal{P}}

\providecommand{\CV}{\mathcal{V}}

\DeclareMathOperator*{\Law}{Law}

\DeclareMathOperator*{\diag}{diag}

\providecommand{\Id}{\mathrm{Id}}
\providecommand{\argmin}{\operatorname*{\arg\min}}

\newcommand{\supp}[1]{\operatorname{supp}(#1)} 
\providecommand{\dist}{\operatorname{dist}}

\newcommand{\thetaG}{\theta_{\mathrm{good}}^*}
\newcommand{\thetaGtilde}{\tilde\theta_{\mathrm{good}}}
\newcommand{\mAlphaBeta}[1]{m^{G,L}_{\alpha, \beta}(#1)}
\newcommand{\mAlphaBetaRED}[1]{m(#1)}
\newcommand{\mAlphaBetanoarg}{m^{G,L}_{\alpha, \beta}}

\newcommand{\mAlpha}[1]{m^G_{\alpha}(#1)}
\newcommand{\mAlphanoarg}{m^{G}_{\alpha}}

\newcommand{\Ibeta}[1]{I^L_{\beta}[#1]}
\newcommand{\qbeta}[1]{q^L_{\beta}[#1]}
\newcommand{\qbetanoarg}{q^L_{\beta}}
\newcommand{\qbetahalf}[1]{q^L_{\beta/2}[#1]}
\newcommand{\qa}[1]{q^L_{a}[#1]}

\newcommand{\Qbeta}[1]{Q^L_{\beta}[#1]}
\newcommand{\m}[1]{m(#1)}
\newcommand{\NN}{\mathbb{N}}

\newcommand{\Lloc}{L_{\text{loc}}^2}

\newcommand{\nmini}{n_{\text{batch}}}
\newcommand{\alphaInit}{\alpha_{\text{init}}}
\newcommand{\sigmaEpoch}{\sigma_{\text{epoch}}}
\newcommand{\LM}{\chi}
\newcommand{\betaEpoch}{\beta_{\text{epoch}}}
\newcommand{\DF}{\kappa}
\newcommand{\betaMin}{\beta_{\text{min}}}

\providecommand*{\abs}[1]{\left|{#1}\right|} 
\providecommand*{\absbig}[1]{\big|{#1}\big|} 
\providecommand*{\N}[1]{\left\|{#1}\right\|} 
\providecommand*{\Nnormal}[1]{\|{#1}\|} 
\providecommand*{\Nbig}[1]{\big\|{#1}\big\|} 
\providecommand*{\NBig}[1]{\Big\|{#1}\Big\|} 

\newtheorem{theorem}{Theorem}[section]
\newtheorem{proposition}[theorem]{Proposition}
\newtheorem{lemma}[theorem]{Lemma}

\newtheorem{assumption}[theorem]{Assumption}

\newtheorem{definition}[theorem]{Definition}
\theoremstyle{remark}

\newtheorem{remark}[theorem]{Remark}

\numberwithin{equation}{section}


\newcommand{\overbar}[1]{\makebox[0pt]{$\phantom{#1}\mkern 1.5mu\overline{\mkern-1.5mu\phantom{#1}\mkern-1.5mu}\mkern 1.5mu$}#1}
\renewcommand{\underbar}[1]{\makebox[0pt]{$\phantom{#1}\mkern 1.5mu\underline{\mkern-1.5mu\phantom{#1}\mkern-1.5mu}\mkern 1.5mu$}#1}

\newcommand{\divergence}{\textrm{div}}

\newcommand{\minobj}{\underbar G}

\newcommand{\omegaa}[0]{\omega_{\alpha}^G}

\newcommand{\indivmeasure}[0]{\varrho} 




\newif\ifrevisedOne

\revisedOnetrue
\newif\ifrevisedTwo

\revisedTwotrue

\usepackage{cite} 
\usepackage{hyperref}
\hypersetup{
	final,
    colorlinks=true,
    linkcolor=blue,
    filecolor=magenta,
    urlcolor=cyan,
}

\usepackage{blindtext}
\usepackage{url}
\usepackage{xcolor}

\definecolor{darkgreen}{rgb}{0,0.4,0}

\title{{\usefont{OT1}{bch}{b}{n}
	\huge CB\texorpdfstring{\textsuperscript{2}}{2}O: Consensus-Based Bi-Level Optimization}}

\date{}

\author[1]{Nicol\'as Garc\'ia Trillos\thanks{Email: \texttt{garciatrillo@wisc.edu}}}
\author[1]{Sixu Li\thanks{Email: \texttt{sli739@wisc.edu}}}
\author[2]{Konstantin Riedl\thanks{Email: \texttt{Konstantin.Riedl@maths.ox.ac.uk}}}
\author[3]{Yuhua Zhu\thanks{Email: \texttt{yuhuazhu@ucla.edu}}}

\affil[1]{University of Wisconsin-Madison, Department of Statistics}
\affil[2]{University of Oxford, Mathematical Institute}
\affil[3]{University of California, Los Angeles, Department of Statistics and Data Science}


\begin{document}

\maketitle

\begin{abstract}
    \noindent
    Bi-level optimization problems, where one wishes to find the global minimizer of an upper-level objective function over the globally optimal solution set of a lower-level objective, arise in a variety of scenarios throughout science and engineering, machine learning, and artificial intelligence. In this paper, we propose and investigate, analytically and experimentally, consensus-based bi-level optimization (CB\textsuperscript{2}O), a multi-particle metaheuristic derivative-free optimization method designed to solve bi-level optimization problems when both objectives may be nonconvex. Our method leverages within the computation of the consensus point a carefully designed particle selection principle implemented through a suitable choice of a quantile on the level of the lower-level objective, together with a Laplace principle-type approximation w.r.t.\@ the upper-level objective function, to ensure that the bi-level optimization problem is solved in an intrinsic manner. We give an existence proof of solutions to a corresponding mean-field dynamics, for which we first establish the stability of our consensus point w.r.t.\@ a combination of Wasserstein and $L^2$ perturbations, and consecutively resort to PDE considerations extending the classical Picard iteration to construct a solution. For such solution, we provide a global convergence analysis in mean-field law showing that the solution of the associated nonlinear nonlocal Fokker-Planck equation converges exponentially fast to the unique solution of the bi-level optimization problem provided suitable choices of the hyperparameters. The practicability and efficiency of our CB\textsuperscript{2}O algorithm is demonstrated through extensive numerical experiments in the settings of constrained global optimization, sparse representation learning, and robust (clustered) federated learning.
\end{abstract}

{\noindent\small{\textbf{Keywords:} global optimization, derivative-free optimization, bi-level optimization, nonsmoothness, nonconvexity, metaheuristics, consensus-based optimization, mean-field limit, Fokker-Planck equations}}\\

{\noindent\small{\textbf{AMS subject classifications:} 65K10, 90C26, 90C56, 35Q90, 35Q84}}

\tableofcontents

\section{Introduction}
\label{sec:intro}
A variety of problems arising in science and engineering, machine learning and artificial intelligence as well as economics and operations research are naturally posed as nonconvex bi-level optimization problems of the form
\begin{equation}
    \label{eq:bilevel_opt}
    \thetaG := \argmin_{\theta^*\in \Theta} G(\theta^*)
    \quad \text{s.t.\@}\quad
    \theta^* \in \Theta := \argmin_{\theta \in \bbR^d} L(\theta),
\end{equation}
where the aim is to minimize an upper-level objective function~$G$ over the solution set~$\Theta$ of a lower-level problem represented by the objective $L$~\cite{dempe2010optimality, beck2014first, gong2021Biojective,jiang2023conditional}.
Due to the widespread relevance of this type of problem, it has attracted great attention across several disciplines. In machine learning and artificial intelligence, for example,
this type of problem appears in hyperparameter optimization~\cite{franceschi2018bilevel},
continual learning \cite{chaudhry2018efficient},
lexicographic optimization \cite{gong2021Biojective},
machine learning fairness \cite{zafar2017fairness},
adversarial robustness~\cite{trillos2024attack},
neural architecture search~\cite{MR3948095},
as well as
ill-posed or over-parameterized machine learning~\cite{jiang2023conditional}, just to give a few examples.
In science and engineering,
this problem formulation appears in
compressed sensing~\cite{donoho2006compressed},
optimal control,
resource allocation~\cite{wogrin2020applications}, among many other settings. For a more exhaustive survey of examples and applications of such bi-level optimization problems, we refer the reader to~\cite{dempe2010optimality,liu2022investigating} as well as references therein.
Problems of the type~\eqref{eq:bilevel_opt} are commonly referred to as simple bi-level optimization problems~\cite{dempe2010optimality,beck2014first,sabach2017first,dutta2020algorithms,shehu2021inertial,jiang2023conditional,samadi2024achieving,wang2024near,cao2024accelerated}, and they comprise a relevant subclass of general bi-level optimization problems,
where in general the lower-level objective may be parametrized by some arguments of the upper-level objective function.

In several of the aforementioned applications, solving such problem may be viewed as enforcing a model selection principle in a setting where several models perform equally well w.r.t. the objective function~$L$,
while only one amongst them grants desirable properties beyond the lower-level objective. Such properties may be related to robustness, fairness, sparsity, efficacy, and are encoded through the upper-level objective function~$G$. Mathematically speaking, the set of feasible solutions to the upper-level problem~$G$ is given by the set of global minimizers of the lower-level objective function~$L$.
Throughout this paper, both the upper- and lower-level objectives $G$ and $L$, respectively, are assumed to be real-valued functions on $\bbR^d$. 
In accordance with this description, the lower-level objective function $L$ is typically a nonconvex function that admits multiple global minimizers~$\theta^*$. We denote $L$'s associated set of minimizers by $\Theta := \argmin_{\theta \in \bbR^d} L(\theta)$ and the value of $L$ at those points by $\minL$. We further assume that, as imposed by the problem formulation in \eqref{eq:bilevel_opt}, only one of the elements in $\Theta$ is optimal w.r.t.\@ the upper-level objective function $G$. 
That is, the function $G$ is assumed to have a unique minimizer within the set $\Theta$, which we henceforth denote by $\thetaG$. For an illustration of the problem setting, we refer to Figure~\ref{fig:CB2OIllustration} below.
\begin{figure}[!htb]
    \centering
    \includegraphics[trim=96 78 72 76,clip, width=0.8\linewidth]{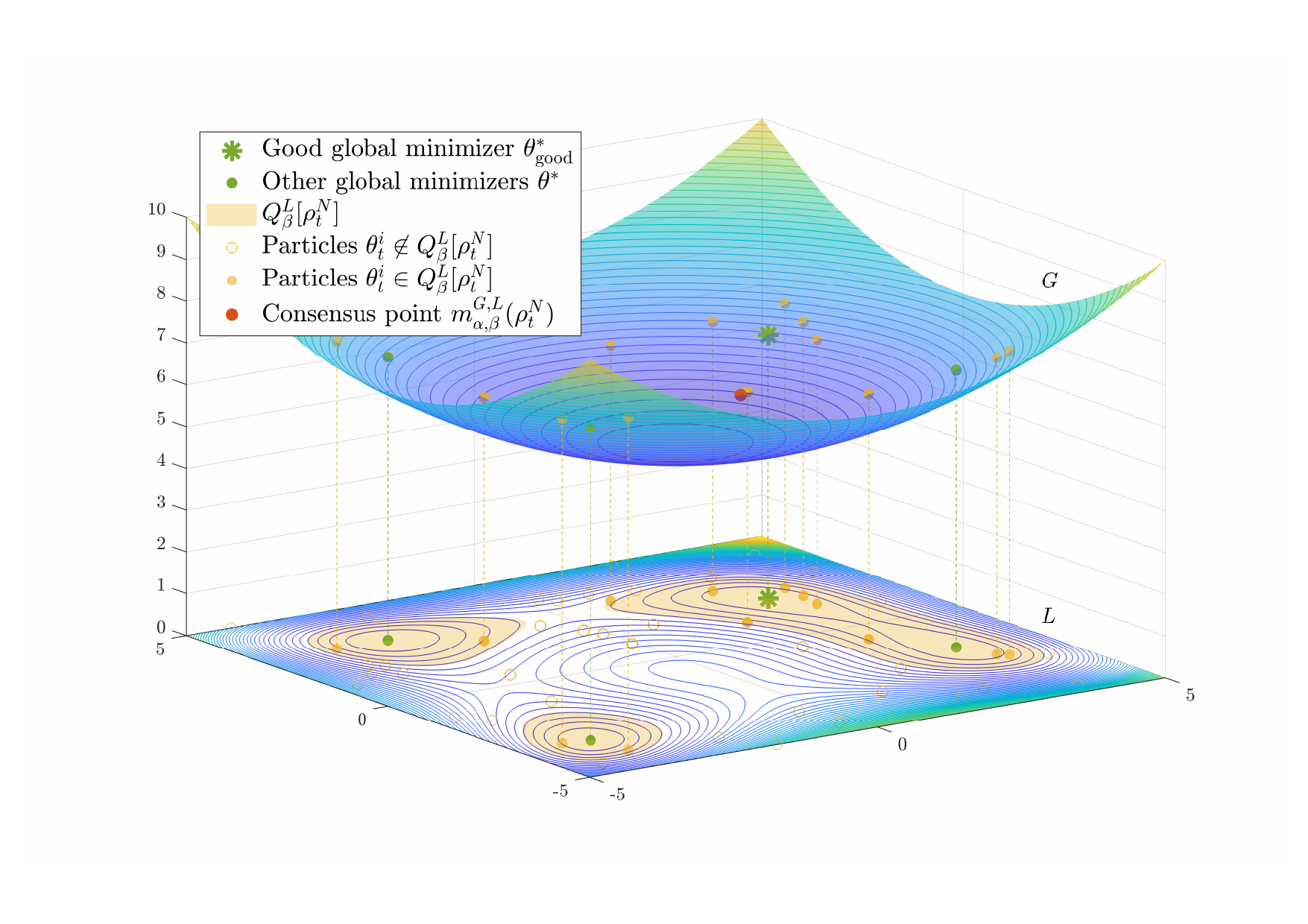}
    \caption{An illustration of the CB\textsuperscript{2}O algorithm~\eqref{eq:dyn_micro} and its working principles for solving bi-level optimization problems of the form~\eqref{eq:bilevel_opt}. 
    We depict a typical setting of~\eqref{eq:bilevel_opt} in two dimensions with the lower-level objective function~$L$ being a Himmelblau function (plotted as contours in the $xy$-plane) and with the upper-level objective function~$G$ being a parabola (plotted as a surface)\footnotemark. The set of global minimizers of $L$, i.e., the set $\Theta$, consists of the three green dots and the one green star that are plotted on the $xy$ plane. The green star identifies \textit{the} global minimizer~$\thetaG$ of $L$ which is optimal w.r.t.\@ the upper-level objective function~$G$ among the points in $\Theta$, and thereby is the solution to the bi-level optimization problem~\eqref{eq:bilevel_opt}. 
    The setting in this example satisfies the assumptions made later in the paper for our theoretical analysis.\\ At every point in time, CB\textsuperscript{2}O employs $N=50$ particles (depicted as yellow points, including both circles and filled dots on the $xy$-plane) and computes the consensus point~$\mAlphaBeta{\rho^N_t}$, which all particles are attracted to as they explore the space through noise (not depicted here). In order to compute the aforementioned consensus point~$\mAlphaBeta{\rho^N_t}$, each particle first evaluates the lower-level objective function $L$. For a chosen quantile parameter~$\beta=0.25$, the $\beta N$ w.r.t.\@ $L$ best positioned particles (depicted as the yellow filled dots) are selected. They belong to the set $\Qbeta{\rho^N_t}$ defined in \eqref{eq:Qbeta_FiniteParticles}, which serves as an approximation to the set~$\Theta$ of global minimizers of $L$. Based on those points, the consensus point $\mAlphaBeta{\rho^N_t}$ is computed, while the remaining particles (depicted as the yellow circles) are discarded from the computation of the consensus point. Figuratively speaking, those particles that belong to the quantile set $\Qbeta{\rho^N_t}$ are lifted to the upper-level objective function~$G$, which they evaluate in a second step. For a chosen weight parameter $\alpha=10$, the consensus point $\mAlphaBeta{\rho^N_t}$ is then computed as in \eqref{eq:ConsensusPoint_FiniteParticles}. It approximates the global minimizer~$\thetaG$ of the bi-level optimization problem~\eqref{eq:bilevel_opt}.}
    \label{fig:CB2OIllustration}
\end{figure}
\footnotetext{We ask the reader to appreciate the placement of the objective functions~$L$ and $G$ in Figure~\ref{fig:CB2OIllustration} in analogy with their intuitive interpretation, i.e., $L$ as the lower- and $G$ as the upper-level objective.}

Bi-level optimization problems of the form~\eqref{eq:bilevel_opt} come with two fundamental challenges.
First, there is typically no simple characterization or explicit formula for the set~$\Theta$ of global minimizers of the lower-level objective function~$L$.
Because of this, it is impractical to directly employ projection-based or projection-free optimization methods, since projecting onto or solving a minimization problem over such an implicitly defined feasible set is not straightforward.
Second, both the lower- and upper-level objective functions~$L$ and $G$ may potentially be nonconvex, as is typical in classical engineering or modern machine learning problems.
As a result, many existing methods developed for convex bi-level optimization problems might become less effective in a more challenging, non-convex setting (see also the discussions on related works in Section~\ref{sec:related_work_SBO}). The potential lack of convexity in one of the objectives also renders approaches such as unconstrained optimization of a weighted sum of the two objectives difficult to implement since it is in general challenging to tune weights to prevent the scheme from tilting towards arbitrary global minimizers of either $G$ or $L$. Related to this, we note that problem formulation~\eqref{eq:bilevel_opt} is invariant to outer composition of the lower objective with an increasing function. Under such transformations of the lower-objective, the geometry of the set $\Theta$ is unchanged, suggesting that a desirable algorithm should be stable to such perturbations of the problem. 



To address the aforementioned difficulties,
we design an interacting particle-based method, \textbf{C}onsensus-\textbf{B}ased \textbf{B}i-level \textbf{O}ptimization (CB\textsuperscript{2}O),
which is tailored to solving the optimization problem \eqref{eq:bilevel_opt}, i.e., finding the target good global minimizer $\thetaG$ of $L$ w.r.t.\@ $G$.
Our approach is inspired by consensus-based optimization (CBO) for standard optimization problems as introduced in \cite{pinnau2017consensus}, but enhanced by a suitable selection principle.
The method's working principles are described in what follows and illustrated in Figure~\ref{fig:CB2OIllustration}.
For this purpose, let us start by establishing some notation.
CB\textsuperscript{2}O methods employ a finite number of agents $\theta^1, \dots, \theta^N$,
which are formally stochastic processes, i.e., $\theta^i=(\theta^i_t)_{t\geq0}$ for $i=1,\dots,N$,
to explore the domain and to form a global consensus about the location of $\thetaG$ as time passes.
For user-specified parameters $\alpha,\beta, \lambda, \sigma > 0$,
the time-evolution of the $i$-th agent is defined by the stochastic differential equation (SDE)
\begin{equation}
\label{eq:dyn_micro}
\begin{split}
    d\theta_t^i
    &=
    -\lambda \left(\theta_t^i - \mAlphaBeta{\rho_t^N} \right)dt + \sigma D\left(\theta_t^i - \mAlphaBeta{\rho_t^N}\right) dB_t^i, \\
    \theta_0^i
    &\sim \rho_0,
\end{split}
\end{equation}
where $((B_t^i)_{t\geq0})_{i=1,\dots,N}$ are independent standard Brownian motions in $\bbR^d$.
The philosophy behind the design of the dynamics is in the spirit of many metaheuristics~\cite{blum2003metaheuristics,kennedy1995particle,aarts1989simulated},
which orchestrate an interaction between local improvement procedures and global strategies by combining deterministic and random decisions to create a process capable of escaping from local optima and performing a robust search in the solution space (see \cite[Section~1]{fornasier2021consensus} for more background on this quite popular class of optimization methods).
The dynamics of the particles $\theta^1, \dots, \theta^N$ in \eqref{eq:dyn_micro} are governed by two competing terms at each point in time.
A drift term drags each agent towards an instantaneous consensus point,
denoted by $\mAlphaBetanoarg$,
which is computed as a weighted average of the particles’ positions and acts as a momentaneous proxy for $\thetaG$. We will elaborate in a moment on how this consensus point is computed, given that it encapsulates the foundational innovations of the method and is therefore central to it. 
The second term is stochastic and with it we diffuse agents according to a scaled Brownian motion in $\bbR^d$, featuring the exploration of the energy landscape of the objective functions~$L$ and $G$.
Two commonly studied diffusion types are isotropic~\cite{pinnau2017consensus,carrillo2018analytical,fornasier2021consensus} and anisotropic~\cite{carrillo2019consensus,fornasier2021convergence} diffusion, which correspond to
\begin{equation} \label{eq:diffustion_types}
	D\!\left(\dummy\right) =
	\begin{cases}
		\N{\dummy}_2 \Id ,& \text{for isotropic diffusion,}\\
		\diag\left(\dummy\right)\!,& \text{for anisotropic diffusion},
			\end{cases}
\end{equation} 
where $\Id\in\bbR^{d\times d}$ is the identity matrix and $\diag:\bbR^{d} \rightarrow \bbR^{d\times d}$ the operator mapping a vector onto a diagonal matrix with the vector as its diagonal.
Ideally, as result of the described drift-diffusion mechanism, the agents should eventually achieve a near optimal global consensus, in the sense that the associated empirical measure $\rho^N_t:=\frac{1}{N}\sum_{i=1}^N\delta_{\theta^i_t}$ should converge in time to a Dirac delta at some point close to $\thetaG$.

In order to compute the consensus point~$\mAlphaBeta{\rho^N_t}$ at time $t$,
we first and foremost evaluate the lower-level objective function~$L$ for each of the $N$ particles, i.e., $\{L(\theta^i_t)\}_{i=1}^N$,
and arrange them in a non-decreasing order
\begin{equation}
    L (\theta_t^{\#1})
    \leq L (\theta_t^{\#2})
    \leq \cdots \leq L (\theta_t^{\#N}),
\end{equation}
where $\theta_t^{\#j}$ denotes the position of the particle with the $j$-th smallest lower-level objective function value at time $t$ within the set $\{L(\theta_t^i)\}_{i=1}^N$.
For $\beta \in (0,1]$, let us denote by $\lceil \beta N \rceil$ the smallest integer greater than or equal to $\beta N$.
We then define the $\beta$-quantile of the set $\{L(\theta_t^i)\}_{i=1}^N$ as
\begin{equation}\label{eq:qbeta_FiniteParticles}
    \qbeta{\rho^N_t}
    := L\big(\theta_t^{\#\lceil \beta N \rceil}\big).
\end{equation}
This means that $\qbeta{\rho^N_t}$ is the value for which $\lceil\beta N\rceil$ particles of $\{\theta^i\}_{i=1}^N$ attain a lower-level function value less than or equal to $\qbeta{\rho^N_t}$.
Intuitively speaking, the $\beta$-quantile $\qbeta{\rho_t^N}$ can be regarded as a proxy for $\minL$,
the minimum value of objective function $L$, given the currently available information amongst the particles.
This approximation becomes more and more accurate (still constrained by the currently available information) as $\beta$ becomes smaller and smaller.
The consensus point~$\mAlphaBeta{\rho_t^N}$ is now defined as a weighted average of precisely those particle positions, or put into formulas as 
\begin{equation}
\label{eq:ConsensusPoint_FiniteParticles}
\begin{split}
    \mAlphaBeta{\rho_t^N} := \sum_{\theta_t^i: L(\theta_t^i)\leq \qbeta{\rho_t^N}} \theta_t^i \frac{\omegaa(\theta_t^i)}{\sum_{\theta_t^j: L(\theta_t^j)\leq \qbeta{\rho_t^N}} \omegaa(\theta_t^j)},
    \qquad \text{with} \quad
    \omegaa(\theta) := \exp \left( -\alpha G(\theta) \right),
\end{split}
\end{equation}
where the $\beta$-quantile $\qbeta{\rho_t^N}$ of the full particle set $\{\theta_t^i\}_{i=1}^N$ w.r.t.\@ $L$ is as defined in \eqref{eq:qbeta_FiniteParticles}.
In order to facilitate the upcoming discussion and analysis,
let us define the sub-level set of the lower-level objective function $L$ w.r.t.\@ the $\beta$-quantile $\qbeta{\rho_t^N}$ defined in \eqref{eq:qbeta_FiniteParticles} as
\begin{equation}
\label{eq:Qbeta_FiniteParticles}
    \Qbeta{\rho_t^N}
    := \left\{ \theta \in \bbR^d : L(\theta) \leq \qbeta{\rho_t^N}  \right\}.
\end{equation}
In words, $\Qbeta{\rho_t^N}$ encompasses those regions of $\bbR^d$ where the lower-level objective function $L$ attains values less than or equal to the $\beta$-quantile $\qbeta{\rho_t^N}$.
Recalling that $\qbeta{\rho_t^N}$ can be thought of as a proxy for $\minL$, the set $\Qbeta{\rho_t^N}$ can therefore be regarded as an approximation of the neighborhood of the set $\Theta$ of global minimizers of $L$.
As the number $N$ of particles becomes large,
the particles are expected to effectively cover this neighborhood in full,
with the approximation becoming more and more refined as $\beta$ becomes smaller and smaller; see Remark~\ref{remark:beta0}. With this notation, the consensus point~$\mAlphaBeta{\rho_t^N}$ in \eqref{eq:ConsensusPoint_FiniteParticles} can be rewritten as
\begin{equation}\label{eq:ConsensusPoint_FiniteParticles_II}
    \mAlphaBeta{\rho_t^N} = \sum_{\theta_t^i: \theta_t^i \in \Qbeta{\rho_t^N}} \theta_t^i \frac{\omegaa(\theta_t^i)}{\sum_{\theta_t^j: \theta_t^j \in \Qbeta{\rho_t^N}} \omegaa(\theta_t^j)}.
\end{equation}
Note that the set $\Qbeta{\rho_t^N}$, and therefore also the consensus point $ \mAlphaBeta{\rho_t^N} $, is unchanged if $L$ is composed with an arbitrary increasing function and it therefore shares the same type of invariance with the set $\Theta$.

Unlike in standard CBO~\cite{pinnau2017consensus} for classical optimization,
where the consensus point is defined as a weighted average of \textit{all} particle positions,
our consensus point $\mAlphaBeta{\rho_t^N}$, as defined in \eqref{eq:ConsensusPoint_FiniteParticles_II}, is obtaiend as an average over a subset of particles from $\{\theta_t^i\}_{i=1}^N$,
namely those belonging to the sub-level set $\Qbeta{\rho_t^N}$.
While by construction the consensus point is supported on the set $\Qbeta{\rho_t^N}$ and computed from positions with good lower-level objective values,
this does not yet mean that the consensus point itself will lie within the set $\Qbeta{\rho_t^N}$ or have a good lower-level objective value~$L$ for some $\alpha$.
However, since the Laplace principle~\cite{dembo2009large,miller2006applied}
ensures that for any absolutely continuous probability distribution $\indivmeasure$ on $\bbR^d$ we have
\begin{align}
\label{eq:laplace_principle}
    \lim\limits_{\alpha\rightarrow \infty}\left(-\frac{1}{\alpha}\log\left(\int\omegaa(v)\,d\indivmeasure(v)\right)\right) = \inf\limits_{v \in \supp\indivmeasure}G(v),
\end{align}
the consensus point $\mAlphaBeta{\rho_t^N}$ can be regarded as a proxy for the global minimizer of the upper-level objective function~$G$ restricted to the set $\Qbeta{\rho_t^N}$.
This approximation improves as $\alpha$ becomes larger; see Remark~\ref{remark:alpha0}.
The combination and interplay between these two mechanisms, each of them related to the hyperparameters~$\beta$ and $\alpha$, respectively, ensure that the consensus point $\mAlphaBeta{\rho_t^N}$ is a proxy for the good global minimizer~$\thetaG$, i.e., the global minimizer of $G$ within the set $\Theta$ of global minimizers of $L$.

Apart from the crucial difference in the definition of the consensus point introduced above and the one appearing in the standard CBO setting (difference motivated by the specific goal of solving problem~\eqref{eq:bilevel_opt}),
the CB\textsuperscript{2}O system~\eqref{eq:dyn_micro} closely resembles the dynamics of the CBO method for classical optimization introduced in \cite{pinnau2017consensus}. For notational convenience, when defining the CB\textsuperscript{2}O system in a more generic way later on, and to better align with the conventional notation in the CBO literature \cite{pinnau2017consensus, carrillo2018analytical,fornasier2021consensus},
we introduce the measure
\begin{equation}
    \label{eq:Ibeta_FiniteParticles}
    \Ibeta{\rho_t^N} :=  \mathds{1}_{\Qbeta{\rho_t^N}} \rho_t^N,
\end{equation}
which is the (unnormalized) empirical measure of those particles of $\{\theta_t^i\}_{i=1}^N$ that belong to the quantile set $\Qbeta{\rho_t^N}$, i.e., the particles that are used to compute the consensus point.
With the definition of $\Ibeta{\rho_t^N}$,
we can write the consensus point $\mAlphaBeta{\rho_t^N}$ in integral form as
\begin{equation}
    \label{eq:ConsensusPoint_FiniteParticles_III}
    \mAlphaBeta{\rho_t^N}
    := \int \theta \frac{\omegaa(\theta)}{\|\omegaa\|_{L^1(\Ibeta{\rho_t^N})}} d\Ibeta{\rho_t^N}(\theta)
\end{equation}
with normalization factor $\Nnormal{\omegaa}_{L^1(\Ibeta{\rho_t^N})} := \int \omegaa(\theta) \,d\Ibeta{\rho_t^N}(\theta)$.
Let us remark in particular that $ \mAlphaBeta{\rho_t^N} = \mAlpha{\Ibeta{\rho_t^N}}$, where $\mAlphanoarg$ is just the definition of the consensus point in classical CBO for optimization, see, e.g., \cite[Equation~(1.5)]{fornasier2021consensus}.

To wrap up, by following the dynamics \eqref{eq:dyn_micro},
the particles are attracted towards a position, namely the consensus point~$\mAlphaBeta{\rho_t^N}$, which has a small value of the upper-level objective function~$G$ while simultaneously lying \textit{within} the regions where the lower-level objective function~$L$ is small. This intuition aligns with our goal of finding $\thetaG$, which is \textit{the} global minimizer of $L$ that has the smallest objective function value w.r.t.\@ $G$.
The mechanisms behind computing the consensus point~$\mAlphaBeta{\rho_t^N}$ are illustrated in Figure~\ref{fig:CB2OIllustration}.

A theoretical analysis of the CB\textsuperscript{2}O dynamics can either be done directly at the level of the microscopic interacting particle system~\eqref{eq:dyn_micro},
or, following the recent and popular vein of works \cite{carrillo2018analytical,carrillo2019consensus,fornasier2021consensus,fornasier2021convergence,grassi2021mean,riedl2024perspective,riedl2022leveraging}, by analyzing the macroscopic behavior of agents density~$\rho=(\rho_t)_{t\geq0}$ through a mean-field limit associated with the particle-based dynamics~\eqref{eq:dyn_micro}, which we will specify more rigorously shortly. Typically, this second perspective allows to leverage more agile deterministic calculus tools and leads generally to more expressive theoretical results that characterize the average agent behavior through the evolution of the measure~$\rho$.
This analytical perspective is justified by the mean-field limit or mean-field approximation,
which qualitatively or quantitatively captures the convergence of the microscopic system~\eqref{eq:dyn_micro} to the mean-field limit~\eqref{eq:dyn_macro} or associated Fokker-Planck Equation~\eqref{eq:fokker_planck} as the number~$N$ of agents grows; see Remark~\ref{rem:MFA} for more details.
For the mean-field limit of the interacting CB\textsuperscript{2}O method~\eqref{eq:dyn_micro}, i.e., as $N\rightarrow\infty$,
it is natural to postulate the stochastic process $\overbar{\theta} = (\overbar{\theta}_t)_{t\geq0}$ satisfying the self-consistent nonlinear nonlocal SDE 
\begin{equation}
\label{eq:dyn_macro}
\begin{split}
    d\overbar{\theta}_t
    &=
    -\lambda \left(\overbar{\theta}_t - \mAlphaBeta{\rho_t} \right) dt + \sigma D\left(\overbar{\theta}_t - \mAlphaBeta{\rho_t}\right) dB_t, \\
    \overbar{\theta}_0
    &\sim \rho_0,
\end{split}
\end{equation}
where $\rho_t := \mathrm{Law} (\overbar{\theta}_t)$.
Here, the definition of the consensus point $\mAlphaBeta{\rho_t}$ is analogous to the definition in \eqref{eq:ConsensusPoint_FiniteParticles_III} with $\rho_t$ replacing the empirical measure $\rho_t^N$; see also Section \ref{sec:CB2O}. 
The macroscopic dynamics~\eqref{eq:dyn_macro} can be regarded as the evolution of a typical particle in an interacting particle system of the form \eqref{eq:dyn_micro} when the number $N$ of particles is large.
The path $\rho \in \CC([0,T],\CP(\bbR^d))$ with $\rho_t = \rho(t) = \Law(\overbar{\theta}_t)$ satisfies the nonlinear nonlocal Fokker-Planck equation
\begin{equation}
	\label{eq:fokker_planck}
	\partial_t\rho_t
	= \lambda\divergence \left(\left(\theta -\mAlphaBeta{\rho_t}\right)\rho_t\right)
	+ \frac{\sigma^2}{2}\sum_{k=1}^d \partial_{kk} \left(D\left(\theta-\mAlphaBeta{\rho_t}\right)_{kk}^2\rho_t\right)
\end{equation}
in a weak sense (see Definition~\ref{def:weak_solution}).

\begin{remark}[Mean-field approximation for CB\textsuperscript{2}O]
    \label{rem:MFA}
    Analyzing the mean-field dynamics~\eqref{eq:dyn_macro} or the associated Fokker-Planck Equation~\eqref{eq:fokker_planck} in lieu of the interacting particle system~\eqref{eq:dyn_micro}
    is justified by establishing the convergence
    \begin{equation}
        \label{eq:MFconvergence}
        \rho^N \rightharpoonup \rho
        \in \CC([0,T],\CP(\bbR^d)), \quad N \rightarrow \infty,
    \end{equation}
    or, more quantitatively, by obtaining a mean-field approximation result of the form
    \begin{equation}
        \label{eq:MFA}
        \max_{i=1,\dots,N} \sup_{t\in[0,T]} \bbE\left[\N{\theta_t^i-\overbar{\theta}_t^i}_2^2\right]
        \leq CN^{-1},
    \end{equation}
    where $\overbar{\theta}_t^i$ denote $N$ i.i.d.\@ copies of the mean-field dynamics~\eqref{eq:dyn_macro}
    which are coupled to the processes $\theta_t^i$ by choosing the same initial conditions as well as same Brownian motion paths,
    see, e.g., the recent review papers \cite{chaintron2021propagation,chaintron2022propagation}.
    While we leave establishing the convergence~\eqref{eq:MFconvergence} or an estimate of the form~\eqref{eq:MFA} for the CB\textsuperscript{2}O dynamics as an interesting and challenging (due to the more intricate definition of the consensus point~$\mAlphaBetanoarg$) open problem for future research,
    analogous results for CBO in the setting of classical optimization have been provided~\cite{huang2021MFLCBO,fornasier2021consensus,gerber2023mean,huang2024uniform}.
    An exhaustive survey of this type of results in the setting of CBO can be found in \cite[Remark~1.2]{fornasier2021consensus} or, in more detail, in \cite[Section~3.1.2]{riedl2024perspective}.
\end{remark}

\begin{remark}[Choice of the hyperparameter~$\alpha$ in CBO-type methods]
    \label{rem:choicealphafinite}
    Although it might seem tempting to set the hyperparameter $\alpha$ to infinity in view of typical mean-field convergence statement for CBO-type methods, see, e.g., \cite[Theorem~3.7]{fornasier2021consensus} or Theorem~\ref{thm:main},
    this is not an advisable strategy in the finite particle regime.
    As elaborated on in \cite[Section~3.3]{fornasier2021consensus} and \cite[Section~3.1]{riedl2024perspective},
    the analytical framework leveraging a mean-field perspective decomposes the approximation error of the finite interacting particle dynamics~\eqref{eq:dyn_micro} into two central error contributions.
    First, the mean-field approximation error (see Remark~\ref{rem:MFA}), which captures how well the microscopic interacting particle system~\eqref{eq:dyn_micro} approximates the macroscopic mean-field limit~\eqref{eq:dyn_macro} and \eqref{eq:fokker_planck} as the number~$N$ of employed particles grows.
    Second, the approximation error of the global target minimizer~$\thetaG$ by the mean-field dynamics~\eqref{eq:dyn_macro}.
    While the latter error improves as $\alpha\rightarrow\infty$, the first error would degenerate, see \cite[Proposition 3.11]{fornasier2021consensus}, where the constant~$C$ in \eqref{eq:MFA} is shown to exponentially depend on $\alpha$.
    For this reason, in practical applications, a trade-off is required when it comes to the choice of the hyperparameter~$\alpha$, see \cite[Theorem~3.8]{fornasier2021consensus}.
\end{remark}

Before wrapping up this introduction by giving an overview of the contributions and organization of this paper, as well as provide pointers to relevant related literature, we first discuss in the subsequent remark an extension of the CB\textsuperscript{2}O dynamics where each particle  experiences an additional local gradient drift w.r.t.\@ the lower-level objective function~$L$.

\begin{remark}[CB\textsuperscript{2}O algorithm with additional gradient drift w.r.t.\@~$L$]
    \label{rem:gradient_drift}
    In several applications of real-world interest,
    it can be of numerical benefit to enhance the particle selection mechanisms present in the  CB\textsuperscript{2}O method by adding a local gradient drift into the dynamics. This extra term facilitates the detection of minimizers of the lower-level objective function~$L$, since individual particles can get closer to, at the very least, locally optimal points of the objective landscape of $L$. In concrete terms, the time-evolution of the $i$-th agent of this version of CB\textsuperscript{2}O is given by
    \begin{equation} \label{eq:dyn_micro_add_grad}
    \begin{split}
        d\theta_t^i
        &=
        -\lambda \left(\theta_t^i - \mAlphaBeta{\rho_t^N} \right)dt + \sigma D\left(\theta_t^i - \mAlphaBeta{\rho_t^N}\right) dB_t^i\\
        &\quad\, - \lambda_\nabla \nabla L(\theta_t^i) \,dt
        + \sigma_\nabla D\left(\nabla L(\theta_t^i)\right) dB_t^{\nabla,i},
        \\
        \theta_0^i
        &\sim \rho_0.
    \end{split}
    \end{equation}
    Note that a gradient drift w.r.t.\@ the upper-level objective function $G$ is typically not desirable, as $\thetaG$ is generally not a critical point of $G$ on the whole domain.
\end{remark}

\subsection{Contributions}

The contributions of this work are fourfold.
First of all, we propose a novel algorithm for solving bi-level optimization problems of the form~\eqref{eq:bilevel_opt} with a potentially nonconvex non-smooth lower-level objective function $L$ and a potentially nonconvex non-smooth upper-level objective function $G$.
Such problems are of widespread relevance in science and engineering, machine learning and artificial intelligence as well as economics and operations research.
We tackle this problem by proposing
consensus-based bi-level optimization, abbreviated as CB\textsuperscript{2}O,
which takes inspiration from the family of particle-based optimization algorithms, in particular from consensus-based optimization~\cite{pinnau2017consensus}, which in contrast has been designed for classical unconstrained optimization problems. Our method leverages a particle selection principle implemented through the suitable choice of a quantile of the lower-level objective function~$L$ to ensure that the constraint in \eqref{eq:bilevel_opt} is satisfied, while the remaining particles are weighted w.r.t.\@ a Gibbs-type measure in order to  ensure optimization in terms of the upper-level objective function~$G$. In stark contrast to a variety of other algorithms, particle-based or not, CB\textsuperscript{2}O is intrinsically designed to solve the problem~\eqref{eq:bilevel_opt}, rather than a relaxation thereof. In particular, the CB\textsuperscript{2}O dynamics are unchanged if the objective $L$ is composed with an increasing function. 

Second, and marking the first main theoretical contribution of the paper,
we rigorously prove in Theorem~\ref{thm:well_posedness} existence of solutions to the mean-field dynamics~\eqref{eq:dyn_macro} and the associated Fokker-Planck Equation~\eqref{eq:fokker_planck} of CB\textsuperscript{2}O under reasonable regularity assumptions on the initial distribution~$\rho_0$.
The design of the consensus point in \eqref{eq:ConsensusPoint_FiniteParticles_III} creates technical challenges of fundamental nature when establishing such result due to a lack of stability under Wasserstein perturbations of the measure, see Remark \ref{rem:counter_example}.
This renders the standard classical coupling method, which is typically used in the CBO literature \cite{carrillo2018analytical}, ineffective and in particular insufficient to prove existence and uniqueness of solutions to~\eqref{eq:dyn_macro} and \eqref{eq:fokker_planck}.
To resolve this, we prove with Proposition~\ref{thm:stabilityEst} the stability of the consensus point~\eqref{eq:ConsensusPoint_FiniteParticles_III} w.r.t.\@ a combination of Wasserstein and $L^2$ perturbations.
This requires the development of several new ideas that enhance the standard toolbox~\cite{carrillo2018analytical} used in papers discussing CBO-like dynamics and may be of independent interest beyond CB\textsuperscript{2}O and this paper.
More specifically, we resort to PDE considerations and work in appropriate spaces of functions to extend the classical Picard iteration method used in \cite{albi2017meanfield,fornasier2020consensus_hypersurface_wellposedness} to analyze standard CBO dynamics in bounded domains. 
With this, our paper contributes in novel ways to the CBO literature, and, more broadly, to the corpus of global optimization methods. 

Third, and constituting the other central theoretical contribution of our paper,
is a global convergence analysis of the CB\textsuperscript{2}O method in mean-field law~\cite{fornasier2021consensus}.
Following the analytical framework put forward by the authors of \cite{fornasier2021consensus,fornasier2021convergence,riedl2024perspective},
Theorem~\ref{thm:main} states that under mild assumptions about the initial distribution~$\rho_0$ and minimal assumptions on the objective functions~$L$ and $G$,
CB\textsuperscript{2}O converges in mean-field law to the good global minimizer~$\thetaG$ of the bi-level optimization problem~\eqref{eq:bilevel_opt}.
More rigorously, we prove that the law~$\rho_t$ solving the nonlinear nonlocal Fokker-Planck Equation~\eqref{eq:fokker_planck} converges w.r.t.\@ the Wasserstein distance exponentially fast to a Dirac delta located at $\thetaG$ provided suitable choices of the hyperparameters~$\beta$ and $\alpha$.
The influence of those choices is extensively discussed and illustrated in Remarks~\ref{remark:beta0} and \ref{remark:alpha0}, respectively.
Once again,
the design of the consensus point in \eqref{eq:ConsensusPoint_FiniteParticles_III} requires a novel treatment due to the additional particle selection principle. For this reason,
we first establish in Proposition~\ref{lem:laplace_LG} a quantitative ``quantiled" Laplace principle~(Q\textsuperscript{2}LP),
which combines a suitable choice of the hyperparameter~$\beta$, and through it a choice of the quantile~$\qbetanoarg$, with the quantitative Laplace principle~\cite[Proposition~4.5]{fornasier2021consensus}.
This technique may be of independent interest.

Fourth and lastly, we provide extensive numerical experiments that demonstrate the practicability and efficacy of the CB\textsuperscript{2}O method~\eqref{eq:dyn_micro}.
In Section~\ref{sec:numerics_COPT}, we treat the special instance of constrained global optimization, where we firstly compare CB\textsuperscript{2}O to particle-based methods designed for this task and then perform a thorough study of the influence of the various hyperparameters of the scheme. In Section~\ref{sec:numerics_SRL}, we then tackle a sparse representation learning task~\cite{gong2021Biojective} thereby demonstrating that the CB\textsuperscript{2}O method is also applicable in high-dimensional real-world scenarios. 
For an additional machine learning example, in the context of federated learning,
we refer to the paper \cite{trillos2024attack}.

\subsection{Related Works}
\label{sec:related_work}

\paragraph{Related works in bi-level optimization problems of the form~\eqref{eq:bilevel_opt}.}
\label{sec:related_work_SBO}

The convex case of bi-level optimization problems of the form~\eqref{eq:bilevel_opt} (also known as simple bi-level optimization problems), i.e., where both the upper- and lower-level objective functions~$G$ and $L$ in \eqref{eq:bilevel_opt} are assumed to be convex,
has been extensively studied in recent years. 
In what follows,
we discuss those works that are related to our method,
and refer the reader to \cite{giang2023projection,jiang2023conditional,chen2024penalty} for a more comprehensive review of the literature on this problem.

One standard approach for solving convex bi-level optimization problems of the form~\eqref{eq:bilevel_opt} is to recast the problem as a single-level unconstrained optimization problem by introducing a suitable Lagrangian; this approach is commonly referred to as regularization approach. Specifically, in \cite{tikhonov1977solutions} the authors propose to combine the upper- and lower-level objective functions in \eqref{eq:bilevel_opt} to obtain the unconstrained optimization problem $\argmin_{\theta \in \bbR^d}\, \{\LM L(\theta) +  G(\theta)\}$, with regularization parameter $\LM > 0$.
To address the issue of tuning $\LM$, works such as \cite{cabot2005proximal,solodov2007explicit, dutta2020algorithms} propose to iteratively update the regularization parameter in a suitable way, producing in this way a sequence $\{\LM_k\}_{k\geq 0}$. This approach has been further explored under various assumptions and different algorithmic choices in \cite{helou2017subgradient,malitsky2017primal,amini2019iterative, kaushik2021method,shen2023online,chen2024penalty}.
However, these approaches are dual methods inherently and thus are effective only when strong duality holds, thereby limiting their applicability to the convex case.
In contrast, our interacting particle system-based approach is a primal method,
which directly and inherently tackles the primal problem~\eqref{eq:bilevel_opt},
making it consequently more suitable for the nonconvex setting.

Another strategy is to approximate the set~$\Theta$ of global minimizers of $L$ with a surrogate set of simpler and more explicit form, before applying conventional projection-based or projection-free algorithms using this surrogate set \cite{beck2014first,jiang2023conditional,cao2024projection,cao2024accelerated}.
This approach is referred to as the sub-level set approach.
However, these surrogate sets are built based on the convexity of the lower-level objective function $L$, which limits their ability to accurately approximate the set $\Theta$ in nonconvex settings, thereby reducing the effectiveness of those methods.
Our CB\textsuperscript{2}O method~\eqref{eq:dyn_micro} shares some conceptual similarities with the sub-level set approach.
Specifically, the set~$\Qbeta{\rho_t^N}$, as defined in \eqref{eq:Qbeta_FiniteParticles}, can be regarded as an approximation of the neighborhoods of the set~$\Theta$ of global minimizers of the lower-level objective~$L$, in particular, in the mean-field regime with small $\beta$.
Unlike existing methods, however, the construction of $\Qbeta{\rho_t^N}$ does not depend on the convexity of the objective function $L$, making it, in consequence, applicable to the relevant class of nonconvex problems that are of interest to us.

To the best of our knowledge, \cite{gong2021Biojective} is the only paper that directly addresses nonconvex bi-level optimization problems of the form~\eqref{eq:bilevel_opt}.
That paper introduces a dynamic barrier constraint on the search direction at each iteration, leading to a simple gradient-based algorithm that balances the objective functions $G$ and $L$ through an adaptive combination of coefficients, thereby tracing a trajectory towards the optimal solution.
Notably, similarly to our approach, \cite{gong2021Biojective} is also a primal method, which is generally preferred for nonconvex problems as explained earlier. However, their theoretical results only guarantee convergence to stationary points of the nonconvex problem.
In contrast, we rigorously demonstrate that our method converges to the desired target global minimizer~$\thetaG$ with arbitrary precision, at least in the mean-field regime.

\paragraph{Related works in general consensus-based optimization.}
Consensus-based optimization methods for classical optimization have been introduced by the authors of \cite{pinnau2017consensus} motivated by the urge to develop and design a class of metaheuristic particle-based algorithms that are amenable to a rigorous mathematical convergence analysis.
The first serious and so far most promising global convergence frameworks,
leveraging a mean-field perspective,
were proposed in \cite{carrillo2018analytical,carrillo2019consensus} and \cite{fornasier2021consensus,fornasier2021convergence,riedl2022leveraging,riedl2024perspective}, respectively.
While the authors of the former analyze the variance of the mean-field dynamics, thereby establishing consensus formation, and consecutively bounding the distance between the found consensus and the global minimizer of the problem, the authors of the latter group of papers directly investigate the Wasserstein distance between the law of the mean-field dynamics and the global minimizer. To further understand the global convergence property in the finite particle regime, \cite{huang2021MFLCBO, fornasier2021convergence,fornasier2021consensus,gerber2023mean,huang2024uniform} study the mean-field approximation of the finite particle system, which, together with the global convergence of the mean-field system, yields convergence guarantees for the finite particle algorithm implemented in practice.
It is worth mentioning that a convergence analysis directly at the finite particle level has so far been addressed only in \cite{ko2022convergence,byeon2024discrete,bellavia2024discrete}, where \cite{bellavia2024discrete} adopts the Wasserstein-based analysis framework which we also follow in this work.

The applications and analyses of CBO-type algorithms have been extended to a variety of other optimization problems, including, e.g., stochastic optimization \cite{bonandin2024consensus},
multi-objective optimization~\cite{borghi2022consensus,borghi2022adaptive,borghi2023repulsion},
min-max problems~\cite{huang2022consensus, borghi2024particle},
the simultaneous search of multiple global minimizers~\cite{bungert2022polarized, fornasier2024polarized},
sampling~\cite{carrillo2022consensus} and the optimization over the space of probability distributions~\cite{borghi2024dynamics}.
Further applications of CBO have also been applied to high-dimensional
machine learning problems \cite{carrillo2019consensus,fornasier2020consensus_sphere_convergence} such as decentralized clustered federated learning \cite{carrillo2024fedcbo}.
CBO has also been shown to be related to the well-known particle swarm optimization method~\cite{cipriani2021zero,huang2022global} and stochastic gradient descent with specific noise~\cite{riedl2023all,riedl2023all2}.

The closely related work~\cite{herty2024multiscale} proposes a CBO-type method to solve general bi- and multi-level optimization problems. 
In their approach, multiple interacting populations of particles are employed,
each designated to optimize one level of the problem, with the individual populations evolving independently according to a CBO-type dynamics with different time-scaling. 
While the bi-level optimization problem that we study here can be seen as a specific instance of the type of problem considered in \cite{herty2024multiscale}, we highlight that their proposed method does not fully capture and apply to our setting.
Specifically, our typical scenario is that the lower-level objective function $L$ possesses multiple global minimizers, a scenario that the standard CBO dynamics (as applied to each level of optimization problem in \cite{herty2024multiscale}) cannot effectively handle as it is not designed for such setting. Furthermore, here we provide rigorous justification of the global convergence of our method in the mean-field regime, whereas \cite{herty2024multiscale} puts more emphasis on the algorithmic development.

\paragraph{Related works in constrained consensus-based optimization.}
For standard constrained optimization problems, which can be formulated as a specific instance of the bi-level optimization setting~\eqref{eq:bilevel_opt}, several variants of CBO-type methods have been proposed. 
These approaches can be broadly categorized into three main classes.

The first approach recasts the constrained optimization problem as an unconstrained problem by incorporating the constraint into the upper-level objective function via a penalization term.
The standard CBO \cite{pinnau2017consensus} is then applied to this modified problem \cite{borghi2021constrained,carrillo2021consensus,herty2024micromacro}.
However, penalization-based methods are effective primarily when the penalty is exact \cite{burke1991exact, borghi2021constrained}, a condition that does not generally hold for nonconvex constraint sets.

The second approach involves projection onto the constrained set, referred to projection-based methods.
For example,
\cite{bae2022constrained} proposes a predictor-corrector CBO method where particles are projected onto the feasible convex set at each time step.
Similarly,
\cite{fornasier2020consensus_hypersurface_wellposedness,fornasier2020consensus_sphere_convergence,fornasier2021anisotropic} design CBO dynamics intrinsic to manifolds such as spheres and tori, leveraging simple projection mechanisms.
Despite their effectiveness, these methods require the computation of the distance function $\text{dist}(\dummy, \Theta)$ to the constraint set $\Theta$, which becomes infeasible for complex constraint sets and may not be possible in nonconvex settings.

The third approach \cite{carrillo2022consensus,carrillo2024interacting} introduces a large gradient forcing term, derived from the constraint set, into the standard CBO dynamics \cite{pinnau2017consensus}.
This ensures rapid particle concentration on the constraint set, achieving high accuracy and stability when the constraint set is convex or otherwise well-behaved.
However, this method lacks convergence guarantees for general constraint sets and requires the evaluation of the inverse of the Hessian at each step \cite{carrillo2024interacting}, which is computationally infeasible for high-dimensional problems.

In addition to these main approaches, \cite{beddrich2024constrained} implements and explores a reflective boundary condition for CBO within compact convex domains.

In contrast to these existing methods, which typically rely on favorable geometric properties (e.g., convexity) of the constraint set, our CB\textsuperscript{2}O method, by leveraging a lower-level objective function $L$ whose set of minimizers coincide with the constraint set of interest, can be applied to general (and in particular nonconvex) constraint sets, achieving the accuracy and stable behavior at the same time. 

\paragraph{Related works in evolutionary algorithms for bi-level optimization problems.}
Due to the potential nonconvexity in both the upper- and lower-level objective functions,
evolutionary and metaheuristic algorithms have been proposed to address bi-level optimization problems.
Some of the pioneering methods date back to the late 1980s; see \cite{anandalingam1989artificial,mathieu1994genetic}.
Since then, various evolutionary algorithms tailored to bi-level optimization problems have been explored, see, e.g., \cite{yin2000genetic,oduguwa2002bi, li2006hierarchical, zhu2006hybrid, angelo2015study,jiang2021research}, amongst many others. 
For a comprehensive review, we refer the reader to \cite{sinha2017review}.
Despite their popularity and success, those metaheuristic approaches typically lack a rigorous mathematical foundation.
In contrast, for our CB\textsuperscript{2}O method, which falls within the class of metaheuristic algorithms, we can provide a rigorous mathematical justification for its ability to solve problem \eqref{eq:bilevel_opt} by  
building upon the works~\cite{carrillo2018analytical,carrillo2021consensus,fornasier2021consensus,fornasier2021convergence} and by introducing new analysis and mathematical constructions. We thus believe that this paper strengthens and advances the field of metaheuristic methods for bi-level optimization.

\subsection{Organization}
\label{subsec:orga}

In Section~\ref{sec:main} we present the main theoretical contributions of this paper.
To this end, Section~\ref{sec:CB2O} first reintroduces the CB\textsuperscript{2}O dynamics which is theoretically investigated in the remainder of the theoretical parts of this paper.
It should be regarded as a merely (for analytical purposes) regularized version of the CB\textsuperscript{2}O algorithm that has been described in the introduction. While in practice this regularized version does work well, the modifications are not necessary to obtain successful experimental results and for this reason we implement the simpler variant discussed in the introduction.
Section~\ref{sec:well_posedness} then provides an existence result as well as regularity estimates for solutions to the mean-field CB\textsuperscript{2}O dynamics.
The accompanying proofs of this section are deferred to Section~\ref{sec:well_posedness_proofs}.
In Section~\ref{sec:convergence} we state a global mean-field law convergence result for the CB\textsuperscript{2}O dynamics to the good global minimizer~$\thetaG$ of the bi-level optimization problem~\eqref{eq:bilevel_opt}.
We furthermore provide an elaborate discussion of the choices of the hyperparameters~$\beta$ and $\alpha$, obtaining in this way deeper insights into the design of our CB\textsuperscript{2}O algorithm, while additionally hinting at the mechanisms at play in the convergence proof.
The proof details of this section can be found in Section~\ref{sec:convergence_proofs}. Thereafter we turn towards the experimental validation of the proposed CB\textsuperscript{2}O method.
For this purpose, Section~\ref{sec:experiments} showcases a series of numerical experiments which demonstrate the practicability as well as efficiency of the CB\textsuperscript{2}O algorithm, whose implementation is formalized in Section~\ref{sec:numerics_alg}.
In Section~\ref{sec:numerics_COPT}, we consider the special instance of classical constrained global optimization in lower-dimensional settings, where we conduct an extensive experimental study of the influence of the various hyperparameters of our method on its performance.
Section~\ref{sec:numerics_SRL} is concerned with a sparse representation learning task. We use this experiment to demonstrate the applicability of the CB\textsuperscript{2}O method in high-dimensional modern applications with real data. For a further machine learning example in the setting of (clustered) federated learning,
we refer to the recent paper \cite{trillos2024attack},
where CB\textsuperscript{2}O is used to implement a defense mechanism against backdoor adversarial attacks. We wrap up the paper in Section~\ref{sec:conclusions}.

For the sake of reproducible research, we provide the code implementing the CB\textsuperscript{2}O algorithm introduced in this work and used to run the numerical experiments in Section~\ref{sec:experiments} in the GitHub repository \url{https://github.com/SixuLi/CB2O}.

\subsection{Notation}
\label{subsec:notation}
We use $\bbN_0^d$ to denote the set of $d$-dimension vectors with each element being a non-negative integer. 
For a function $f: \R^d \mapsto \R$, and $a = (a_1, \dots, a_d) \in \bbN_0^d$ with $|a| := \sum_{i=1}^d a_i$, we denote $f^{(a)} := \frac{\partial^{|\nu|}}{\partial_{\theta_1}^{a_1} \cdots \partial_{\theta_d}^{a_d}} f$.
For notation simplicity, we will use $f^{(l)}$ or $D^{l} f$ to denote $f^{(a)}$ for some $a \in \bbN_0^d$ with $|a| = l$.
Unless otherwise specified, we use $C$ or $\widetilde{C}$ to denote some generic constants.
Euclidean balls are denoted as \mbox{$B_{r}(\theta) := \{\tilde\theta \in \bbR^d\!:\! \Nnormal{\tilde\theta-\theta}_2 \leq r\}$}.
We define the distance between a point $\theta$ and set $S$ in $\R^d$ as $\dist(\theta, S) := \inf_{\tilde\theta \in S}\, \Nnormal{\tilde\theta-\theta}_2$.
Furthermore, we denote the neighborhood of set $S$ with radius $r$ as $\CN_r(S) := \{\tilde\theta \in \R^d : \dist(\tilde\theta, S) \leq r \}$.
For the space of continuous functions~$f:X\rightarrow Y$ we write $\CC(X,Y)$, with $X\subset\bbR^n$ and a suitable topological space $Y$.
For an open set $X\subset\bbR^n$ and for $Y=\bbR^m$ the spaces~$\CC^k_{c}(X,Y)$ and~$\CC^k_{b}(X,Y)$ consist of functions~$f\in\CC(X,Y)$ that are $k$-times continuously differentiable and have compact support or are bounded, respectively. 
We avoid making $Y$ explicit in the real-valued case.
Furthermore, following the conventional notation in the literature, we abbreviate $\CC^{1} ([0,T]; \CC^2(\R^d) )$ by $\CC^{2, 1} (\R^d \times [0,T])$.
The operators $\nabla$ and $\Delta$ denote the gradient and Laplace operators of a function on~$\bbR^d$.
The main objects of study in this paper are laws of stochastic processes, $\rho\in\CC([0,T],\CP(\bbR^d))$, where the set $\CP(\bbR^d)$ contains all Borel probability measures over $\bbR^d$.
With $\rho_t\in\CP(\bbR^d)$ we refer to the snapshot of such law at time~$t$.
In case we refer to some fixed distribution, we write~$\indivmeasure$.
Measures~$\indivmeasure \in \CP(\bbR^d)$ with finite $p$-th moment $\int \Nnormal{\theta}_2^p\,d\indivmeasure(\theta)$ are collected in $\CP_p(\bbR^d)$.
For any $1\leq p<\infty$, $W_p$ denotes the \mbox{Wasserstein-$p$} distance between two Borel probability measures~$\indivmeasure_1,\indivmeasure_2\in\CP_p(\bbR^d)$ (see, e.g., \cite{savare2008gradientflows}), which is defined by
\begin{align} \label{def:wassersteindistance}
	W_p(\indivmeasure_1,\indivmeasure_2) = \left(\inf_{\pi\in\Pi(\indivmeasure_1,\indivmeasure_2)}\int\Nnormal{\theta-\tilde\theta}_2^p\,d\pi(\theta,\tilde\theta)\right)^{1/p},
\end{align}
where $\Pi(\indivmeasure_1,\indivmeasure_2)$ denotes the set of all couplings of $\indivmeasure_1$ and $\indivmeasure_2$, i.e., the collection of all Borel probability measures over $\mathbb{R}^d\times\mathbb{R}^d$ with marginals $\indivmeasure_1$ and $\indivmeasure_2$. 
$\bbE(\indivmeasure)$ denotes the expectation of a random variable distributed according to a probability measure $\indivmeasure$.

\section{Consensus-Based Bi-Level Optimization (CB\texorpdfstring{\textsuperscript{2}}{2}O) and Main Theoretical Results}
\label{sec:main}

This section is dedicated to the presentation of the main theoretical contributions of this paper.
To this end, we first introduce in Section~\ref{sec:CB2O} the CB\textsuperscript{2}O dynamics that we theoretically investigate in the subsequent two sections and which shall be regarded as a merely (for analytical purposes) regularized version of the variant that has been described intuitively and instructively in the introduction; we note that, for simplicity, it is the version of the dynamics presented in the Introduction the one that is implemented towards the end of the paper in Section~\ref{sec:experiments}. In Section~\ref{sec:well_posedness}, we then provide an existence result for solutions to the mean-field CB\textsuperscript{2}O dynamics and establish regularity estimates thereof. The proofs of this section are deferred to Section~\ref{sec:well_posedness_proofs}.
After studying existence and regularity of solutions to the mean-field PDE, we discuss in Section~\ref{sec:convergence} the behavior of the CB\textsuperscript{2}O dynamics by stating a global convergence result of the CB\textsuperscript{2}O dynamics in mean-field law to the global minimizer of the bi-level optimization problem~\eqref{eq:bilevel_opt}.
Its proof details can be found in Section~\ref{sec:convergence_proofs}.

\subsection{Consensus-Based Bi-Level Optimization (CB\texorpdfstring{\textsuperscript{2}}{2}O)}
\label{sec:CB2O}

To solve a bi-level optimization problem of the form~\eqref{eq:bilevel_opt},
we propose an interacting particle system inspired by the class of consensus-based optimization methods~\cite{pinnau2017consensus,carrillo2018analytical, carrillo2021consensus,fornasier2021consensus,fornasier2021convergence,riedl2024perspective},
namely
\textbf{C}onsensus-\textbf{B}ased \textbf{B}i-level \textbf{O}ptimization (CB\textsuperscript{2}O).
It is described, as we restate in this concise overview here, by the system of stochastic differential equations~\eqref{eq:dyn_micro} given by
\begin{align*}
    d\theta_t^i
    &=
    -\lambda \left(\theta_t^i - \mAlphaBeta{\rho_t^N} \right)dt + \sigma D\left(\theta_t^i - \mAlphaBeta{\rho_t^N}\right) dB_t^i, \\
    \theta_0^i
    &\sim \rho_0 \quad\text{for all } i=1,\dots,N,
\end{align*}
where $((B_t^i)_{t\geq0})_{i=1,\dots,N}$ are independent standard Brownian motions in $\bbR^d$.
Moreover, the consensus point~$\mAlphaBetanoarg$ is defined for an arbitrary measure $\varrho\in\CP(\bbR^d)$ according to
\begin{equation}
    \label{eq:consensus_point}
    \mAlphaBeta{\varrho}
    :=
    \mAlpha{\Ibeta{\varrho}} := \int \theta \frac{\omegaa(\theta)}{\N{\omegaa}_{L^1(\Ibeta{\varrho})}} d\Ibeta{\varrho}(\theta),
    \quad \text{with}\quad
    \omegaa(\theta):= \exp \left(-\alpha G(\theta) \right).
\end{equation}
Here, the unnormalized measure $\Ibeta{\varrho}$ is given as 
\begin{align}
    \Ibeta{\varrho}\label{eq:I_beta}
    &:=
    \mathbbm{1}_{\Qbeta{\varrho}} \varrho,
\end{align}
where, for some fixed parameters $\delta_q>0$ and sufficiently large $R > 0$, the sub-level set $\Qbeta{\varrho}$ is defined as
\begin{align}
    \Qbeta{\varrho} \label{eq:Q_beta}
    &:=
    \left\{\theta\in B_R(0) :  L(\theta)\leq \frac{2}{\beta}\int_{\beta/2}^{\beta} \qa{\varrho}\,da +\delta_q \right\},
\end{align}
with the $a$-quantile function~$\qa{\varrho}$ of $\varrho$ under $L$ given by
\begin{align}
    \label{eq:q_beta}
    \qa{\varrho}
    &:=
    \inf \big\{ q \text{ s.t. } a \leq \varrho(L(\theta)\leq q) \big\}.
\end{align}
This latter definition generalizes the one of the quantile function~\eqref{eq:qbeta_FiniteParticles} from the setting of a finite number of particles
to a general distribution $\varrho \in \CP(\bbR^d)$.
As before,
the quantile function~$\qa{\varrho}$ from \eqref{eq:q_beta} returns the particular value such that the probability of a random variable with distribution $\varrho$ having an objective value w.r.t.\@ $L$ smaller or equal to $\qa{\varrho}$ is at least $a$.
The set $\Qbeta{\varrho}$, as defined in \eqref{eq:Q_beta},
denotes, analogously to the definition in \eqref{eq:Qbeta_FiniteParticles}, the set of all $\theta\in\bbR^d$ whose objective function value w.r.t.\@ the lower-level objective function~$L$ is, roughly speaking (see \ref{eq:Qbetaregularizationii} and \ref{eq:Qbetaregularizationiii} below), smaller than the value of the quantile function $\qbeta{\varrho}$ for a parameter $\beta$. For our later analytical studies, however, and compared to the analogous definition \eqref{eq:Qbeta_FiniteParticles} in the finite particle setting, the above definitions come with three modifications that can be interpreted as different ways of regularization.
\begin{enumerate}[label=(\arabic*),labelsep=10pt,leftmargin=35pt]
    \item \label{eq:Qbetaregularizationi}The set $\Qbeta{\varrho}$ is limited to a ball $B_R(0)$.
    Since our goal is to search $\thetaG$, we naturally assume that the radius $R>0$ is large enough so that $\thetaG \in B_R(0)$, i.e., in particular such that $R\geq\Nnormal{\thetaG}_2+r_R$ for some $r_R>0$.
    \item \label{eq:Qbetaregularizationii}Instead of choosing $\qbeta{\varrho}$ as the threshold parameter on the level of the lower-level objective function~$L$ in the definition of $\Qbeta{\varrho}$ in \eqref{eq:Q_beta},
    we use the smoothened threshold $\frac{2}{\beta}\int_{\beta/2}^{\beta} \qa{\varrho}\,da$.
    The theoretical analysis later on can be straightforwardly generalized to any regularized threshold of the form $\frac{1}{\vartheta\beta}\int_{(1-\vartheta)\beta}^{\beta} \qa{\varrho}\,da$ with $\vartheta\in(0,1)$, which can be made arbitrarily close to $\qbeta{\varrho}$.
    \item \label{eq:Qbetaregularizationiii}We introduce a small slack parameter~$\delta_q$ in \eqref{eq:Q_beta}.
\end{enumerate}
Despite these modifications to the original definitions in the introduction, the sub-level set $\Qbeta{\varrho}$ can still be interpreted as an approximation to the neighborhood of the set~$\Theta$ of global minimizers of the lower-level objective function~$L$. This is expected when the support of $\varrho$ covers $\Theta$ and $\beta$ and $\delta_q$ are sufficiently small. We emphasize again that while the above modifications are required for our theoretical analysis, in our practical implementations
we stick to the definitions in \eqref{eq:Qbeta_FiniteParticles} given that these definitions induce a scheme that is simpler to implement and has a comparable numerical performance as the modified scheme. At the theoretical level, the regularizations~\ref{eq:Qbetaregularizationi} and \ref{eq:Qbetaregularizationii} are introduced to establish the well-posedness of the mean-field CB\texorpdfstring{\textsuperscript{2}}{2}O dynamics that we present in Section~\ref{sec:well_posedness}. They are, on the other hand, not required for the convergence analysis presented in Section~\ref{sec:convergence}. The regularization~\ref{eq:Qbetaregularizationiii} is needed for the global convergence in mean-field law analysis.

\begin{remark}
With the definition of $\Qbeta{\varrho}$ as in \eqref{eq:Q_beta},
the consensus point~$\mAlphaBetanoarg$ as defined in \eqref{eq:consensus_point} is, due to \ref{eq:Qbetaregularizationi}, uniformly bounded,
i.e., satisfies $\Nnormal{\mAlphaBeta{\varrho}}_2 \leq R$ for any $\varrho \in \CP(\R^d)$.
\end{remark}

As discussed in the introduction,
our mathematical analysis takes a mean-field perspective.
Motivated by several mean-field approximation results in the setting of classical optimization, see Remark~\ref{rem:MFA},
it is natural to postulate that in the mean-field limit,
i.e., as the number of particles~$N$ tends to infinity,
the CB\textsuperscript{2}O dynamics \eqref{eq:dyn_micro} can be described by the self-consistent stochastic process~$\overbar{\theta} = (\overbar{\theta}_t)_{t\geq0}$ which solves the self-consistent nonlinear nonlocal SDE  \eqref{eq:dyn_macro} given by
\begin{equation*}
\begin{aligned}
    d\overbar{\theta}_t
    &=
    -\lambda \left(\overbar{\theta}_t - \mAlphaBeta{\rho_t} \right) dt + \sigma D\left(\overbar{\theta}_t - \mAlphaBeta{\rho_t}\right) dB_t,
\end{aligned}
\end{equation*}
where $\rho_t := \mathrm{Law} (\overbar{\theta}_t)$.
The associated law~$\rho$ with $\rho_t = \rho(t)$ must satisfy the Fokker-Planck Equation~\eqref{eq:fokker_planck} given by
\begin{equation*}
    \partial_t\rho_t
	= \lambda\divergence \left(\left(\theta -\mAlphaBeta{\rho_t}\right)\rho_t\right)
	+ \frac{\sigma^2}{2}\sum_{k=1}^d \partial_{kk} \left(D\left(\theta-\mAlphaBeta{\rho_t}\right)_{kk}^2\rho_t\right)
\end{equation*}
in the weak sense, defined as follows.

\begin{definition}[Weak solution to Fokker-Planck Equation~\eqref{eq:fokker_planck}]
\label{def:weak_solution}
Let $\rho_0 \in \CP(\bbR^d)$, $T > 0$.
We say $\rho\in\CC([0,T],\CP(\bbR^d))$ satisfies the Fokker-Planck Equation~\eqref{eq:fokker_planck} with initial condition $\rho_0$ in the weak sense in the time interval $[0,T]$,
if for all $\phi \in \CC_c^{\infty}(\bbR^d)$ and all $t \in (0,T)$ 
	\begin{equation} \label{eq:weak_solution_identity}
	\begin{aligned}
		\frac{d}{dt}\int \phi(\theta) \,d\rho_t(\theta)
		&= \lambda\int \sum_{k=1}^d (\theta - \mAlphaBeta{\rho_t})_k \partial_k \phi(\theta) \, d\rho_t(\theta)\\
		&\quad\, + \frac{\sigma^2}{2} \int \sum_{k=1}^d D\!\left(\theta-\mAlphaBeta{\rho_t}\right)_{kk}^2  \partial^2_{kk} \phi(\theta) \,d\rho_t(\theta),
	\end{aligned}
	\end{equation}
	and $\lim_{t\rightarrow 0}\rho_t = \rho_0$ in the weak sense of probability distributions. 
\end{definition}

Our main theoretical contributions, which we will present in the remainder of this section,
are split into two key theorems.
First, we discuss the existence of regular solutions to the mean-field system \eqref{eq:dyn_macro} in Theorem \ref{thm:well_posedness}.
Second, we discuss in Theorem \ref{thm:main} the long-time behavior properties of the mean-field PDE \eqref{eq:fokker_planck} and show that, under some mild assumptions on initialization and the correct tuning of hyperparameters, the measure $\rho_t$ concentrates around the target global minimizer $\thetaG$.

For the remainder of the theoretical parts of this paper we limit our attention to the case of CB\textsuperscript{2}O with isotropic diffusion, i.e., $D\!\left(\dummy\right)=\N{\dummy}_2 \Id$ in \eqref{eq:diffustion_types}.

\subsection{Well-Posedness of the Mean-Field CB\texorpdfstring{\textsuperscript{2}}{2}O Dynamics}
\label{sec:well_posedness}

Let us start the presentation of our main theoretical contributions with an existence result for solutions to the mean-field CB\textsuperscript{2}O dynamics~\eqref{eq:dyn_macro} and the associated Fokker-Plank Equation~\eqref{eq:fokker_planck}.
We moreover establish their regularity for objective functions~$L$ and $G$ that satisfy the following assumptions.

\begin{assumption}\label{asm:well-posedness}

Throughout we are interested in objective functions $L \in \CC(\bbR^d)$ and $G \in \CC(\bbR^d)$
    for which
\begin{enumerate}[label=W\arabic*,labelsep=10pt,leftmargin=35pt]
    \item\label{asm:minimizers}
   $\minL := \inf_{\theta \in \R^d} L(\theta) > -\infty$,
    \item\label{asm:LipObjL}
    there exists $C_1 > 0$ such that
    \begin{equation}
        \abs{L(\theta) - L(\theta')} \leq C_1 \N{\theta - \theta'}_2 \qquad \text{for all} \; \theta, \theta' \in \R^d,
    \end{equation}
    \item \label{asm:LipschitzDistrib} $L$'s distribution function $q \in [0,\infty) \mapsto \mathrm{Vol}(\{ \theta\in B_R(0):  L(\theta) \leq q \})$ is $C_L$-Lipschitz continuous, where $R$ is the radius of the ball in \eqref{eq:Q_beta}, see also \ref{eq:Qbetaregularizationi},
    \item\label{asm:lowerBound_G}  $\underbar{G} := \inf_{\theta \in \bbR^d} G(\theta) > -\infty$,

    \item\label{asm:upperBound_G} either $\overbar\CE:=\sup_{\theta \in \bbR^d}\CE(\theta) < \infty$, or there exist constants $C_2,C_3 > 0$ such that
		\begin{equation} \label{asm:quadratic_growth_2}
			\CE(\theta) - \minobj
			\geq C_2\N{\theta}_2^2
			\quad \text{for all } \N{\theta}_2 \geq C_3,
		\end{equation}
    \item\label{asm:growthBound_G}
    there exists $C_4 > 0$ such that
    \begin{equation}
        G(\theta) - \underbar{G} \leq C_4 (1 + \N{\theta}_2^2) \quad \text{for all } \theta \in \R^d.
    \end{equation}
\end{enumerate}
\end{assumption}

\textit{Lower-level objective function $L$:} 
Assumption~\ref{asm:minimizers} imposes that the lower-level objective function~$L$ is bounded from below by $\minL$.
Assumption~\ref{asm:LipObjL} requires that $L$ is Lipschitz continuous with constant $C_1$.  Assumption~\ref{asm:LipschitzDistrib} is an assumption on regularity of level sets of $L$ that we impose to rule out lower-level objective functions $L$ with level sets $\{ \theta:L(\theta)=q \}$ that are full-dimensional.

\textit{Upper-level objective function $G$:} 
Assumption~\ref{asm:lowerBound_G} requires that also the upper-level objective function~$G$ is lower-bounded.
Assumption~\ref{asm:upperBound_G} requires that $G$ is either bounded from above or has at least quadratic growth in the farfield. Assumption~\ref{asm:growthBound_G}, on the other hand, requires that $G$ also has bounded growth-rate globally.




In our first main theoretical result,
we establish the existence of solutions to the mean-field CB\textsuperscript{2}O dynamics~\eqref{eq:dyn_macro} and the associated nonlinear nonlocal Fokker-Planck Equation~\eqref{eq:fokker_planck}.
The proof of uniqueness of such solution is challenging, and is left for future investigation (see discussions in Section \ref{sec:conclusions}).

\begin{theorem}[Existence of regular solutions to the mean-field CB\textsuperscript{2}O dynamics~\protect{\eqref{eq:dyn_macro}} and~\protect{\eqref{eq:fokker_planck}}]
\label{thm:well_posedness}
    Let $L \in \CC(\bbR^d)$ and $G \in \CC(\bbR^d)$ satisfy \ref{asm:minimizers}--\ref{asm:growthBound_G}.
    Moreover, for $l \geq 0$, let $\rho_0 \in H^{l+2}(\R^d) \cap L^{\infty} (\R^d) \cap \CP_4(\bbR^d) $ be such that $\thetaG \in \supp{\rho_0}$.
    Then, for $T > 0$, there exists a nonlinear process $\overbar{\theta} \in \CC \left( [0,T], \R^d\right)$ satisfying the mean-field dynamics \eqref{eq:dyn_macro} in the strong sense whose associated law $\rho=\Law(\overbar\theta)$ is a weak solution (Definition \ref{def:weak_solution}) to the Fokker-Planck Equation~\eqref{eq:fokker_planck} and has regularity    \begin{equation}\label{eq:Sol_regularity}
        \rho \in W^{1,\infty} \left( [0,T], H^l(\R^d) \right) \cap L^{\infty} ([0,T], L^{\infty}(\R^d))  \cap \CC \left([0,T], \CP_2(\R^d) \right) \, .
    \end{equation}
    Moreover, $\rho_t \in \CP_4(\R^d)$ and the mapping $t \mapsto \mAlphaBeta{\rho_t}$ is $\frac{1}{2}$-Hölder continuous for $t \in [0,T]$.
\end{theorem}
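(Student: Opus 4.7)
The plan is to use a Picard-type iteration in a function space that simultaneously tracks the Wasserstein and $L^2$ structure of a measure curve, with Proposition~\ref{thm:stabilityEst} as the linchpin. The classical coupling approach used for standard CBO in \cite{carrillo2018analytical} is insufficient here because the consensus point $\mAlphaBeta{\varrho}$ depends on the quantile of $L$ under $\varrho$ and on the indicator of the sub-level set $\Qbeta{\varrho}$, and these operations are not continuous in Wasserstein distance alone. Proposition~\ref{thm:stabilityEst} circumvents this by providing stability in a combined Wasserstein-$L^2$ distance, which forces us to set up our fixed-point scheme inside a class of densities with controlled $L^\infty$ and fourth-moment bounds.

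For a fixed input curve $\tilde\rho \in \CC([0,T],\CP(\R^d))$, the map $t \mapsto \mAlphaBeta{\tilde\rho_t}$ is a measurable function of time bounded by $R$ thanks to regularization~\ref{eq:Qbetaregularizationi}. The decoupled, non-autonomous linear SDE
\begin{equation*}
d\theta_t = -\lambda\bigl(\theta_t - \mAlphaBeta{\tilde\rho_t}\bigr)\,dt + \sigma \bigl\|\theta_t - \mAlphaBeta{\tilde\rho_t}\bigr\|_2 \,dB_t,\qquad \theta_0 \sim \rho_0,
\end{equation*}
has globally Lipschitz coefficients with linear growth, so it admits a unique strong solution; I set $\CT(\tilde\rho)_t := \Law(\theta_t)$. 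The associated Fokker-Planck equation for $\CT(\tilde\rho)$ is linear with smooth coefficients in $\theta$, so standard parabolic estimates propagate $L^\infty$ and $L^2$ bounds of the density, while the linear growth of the coefficients propagates the fourth moment. I would then select constants $M_1,M_2,M_3>0$ depending on $\|\rho_0\|_{L^\infty}$, $\|\rho_0\|_{L^2}$, $\int\N{\theta}_2^4\,d\rho_0$, and the hyperparameters, and work in the invariant class
\begin{equation*}
\mathcal{K} := \Bigl\{\rho \in \CC([0,T],\CP_4(\R^d)) : \sup_{t\in[0,T]}\!\|\rho_t\|_{L^\infty}\!\leq M_1,\ \sup_{t\in[0,T]}\!\|\rho_t\|_{L^2}\!\leq M_2,\ \sup_{t\in[0,T]}\!\int \N{\theta}_2^4\,d\rho_t\leq M_3 \Bigr\}.
\end{equation*}

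The next step is to show that $\CT$ is a contraction on $\mathcal{K}$. For $\rho,\tilde\rho \in \mathcal{K}$, a standard linear-SDE coupling between $\CT(\rho)$ and $\CT(\tilde\rho)$ (synchronously driven by the same Brownian motion from the same initial datum) controls $W_2^2(\CT(\rho)_t,\CT(\tilde\rho)_t)$ by a Grönwall iterate of $\int_0^t \Nnormal{\mAlphaBeta{\rho_s} - \mAlphaBeta{\tilde\rho_s}}_2^2\,ds$, while the linear parabolic structure yields a parallel estimate for $\|\CT(\rho)_t - \CT(\tilde\rho)_t\|_{L^2}^2$ via an energy identity. Applying Proposition~\ref{thm:stabilityEst} to the integrand converts these into a self-consistent inequality in the combined quantity $W_2^2(\cdot,\cdot)+\|\cdot - \cdot\|_{L^2}^2$, which closes by Grönwall for $T$ small; iteration in time yields a fixed point $\rho$ on $[0,T]$. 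The nonlinear process arising from the unique strong solution of the decoupled SDE with drift $\mAlphaBeta{\rho_t}$ then satisfies \eqref{eq:dyn_macro} in the strong sense, and $\rho$ is a weak solution of \eqref{eq:fokker_planck} in the sense of Definition~\ref{def:weak_solution}. The regularity~\eqref{eq:Sol_regularity} follows from the $\mathcal{K}$-bounds together with the linear parabolic energy estimate for $\nabla\rho$ and by reading $\partial_t\rho$ directly off the weak form~\eqref{eq:weak_solution_identity}.

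The main obstacle is precisely the lack of Wasserstein-only stability of the fixed-point map, which forces the joint Wasserstein-$L^2$ framework and the use of Proposition~\ref{thm:stabilityEst}; this is also the central novelty compared to the Picard constructions for standard CBO in \cite{albi2017meanfield,fornasier2020consensus_hypersurface_wellposedness}. Subordinate technical points include (i) verifying the continuity of $t \mapsto \mAlphaBeta{\rho_t}$ for the fixed point itself, which relies on assumption~\ref{asm:LipschitzDistrib} to exclude level-set plateaus of $L$ that would otherwise make $\qa{\rho_t}$ discontinuous in $a$, and (ii) ensuring the propagation constants in the $L^\infty$ and $L^2$ estimates stay uniformly bounded along the iteration, which is where the uniform bound $\Nnormal{\mAlphaBeta{\tilde\rho_t}}_2 \leq R$ provided by regularization~\ref{eq:Qbetaregularizationi} is essential.
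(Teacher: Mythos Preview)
Your diagnosis of the key obstruction is right: $\mAlphaBetanoarg$ is not Wasserstein-stable, so any fixed-point scheme must simultaneously track an $L^2$-type distance and invoke Proposition~\ref{thm:stabilityEst}. But your plan then diverges from the paper in a substantive way. You propose a \emph{contraction} (Banach fixed-point) argument in the combined metric $W_2^2+\|\cdot\|_{L^2}^2$, whereas the paper runs the Picard iteration $\rho^{n+1}$ solving the linear Fokker--Planck equation driven by $m(\rho^n_t)$ and extracts a limit by \emph{compactness}: uniform bounds in $L^2([0,T],W^{1,2,1}(B_r(0)))$ and on $\partial_t\rho^{n+1}$ in the dual space feed the Aubin--Lions lemma, then the stability estimate is used only to pass to the limit in $m(\rho^{n_k}_t)$, not to contract.

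The gap in your route is the ``parallel $L^2$ energy identity'' for the difference $w=\CT(\rho)-\CT(\tilde\rho)$. With $m_1=m(\rho_t)$, $m_2=m(\tilde\rho_t)$, $u_2=\CT(\tilde\rho)_t$, subtracting the two equations and pairing with $w$ produces, besides the good term $-\frac{\sigma^2}{2}\int\|\theta-m_1\|_2^2\,\|\nabla w\|_2^2$, a cross-term coming from the diffusion-coefficient mismatch,
\[
-\frac{\sigma^2}{2}\int \bigl(\|\theta-m_1\|_2^2-\|\theta-m_2\|_2^2\bigr)\,\nabla w\cdot\nabla u_2\,d\theta,
\]
whose integrand has size $\lesssim |m_1-m_2|\,(1+\|\theta\|_2)\,\|\nabla w\|_2\,\|\nabla u_2\|_2$. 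The only a priori gradient information available is the \emph{weighted} bound $\int\|\theta-m_i\|_2^2\,\|\nabla u_i\|_2^2<\infty$ (this is exactly what the paper's energy computation~\eqref{eq:envol_L2_norm}--\eqref{eq:aux_Grad_bound} gives, and why the regularity statement is $W^{1,2,1}_{\mathrm{loc}}$, not $H^1$). Because the weight for $\nabla u_2$ degenerates at $m_2$ while the weight for $\nabla w$ degenerates at $m_1$, the Cauchy--Schwarz pairing of $\nabla w$ against $\nabla u_2$ does not close without additional (unavailable) regularity of $u_2$ near $m_1$ and $m_2$. A further sign that this step is genuinely obstructed: a working contraction would yield uniqueness, which the paper explicitly leaves open. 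If you want to salvage a direct argument, you would need either higher interior regularity for the linearized equation (to control unweighted $\|\nabla u_2\|_{L^2}$) or a different difference functional that avoids the gradient cross-term; absent that, the compactness route is the one that goes through.
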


A few comments about Theorem~\ref{thm:well_posedness} are in order.
First and foremost, it is important to highlight that, compared to the classical well-posedness proof for CBO~\cite{pinnau2017consensus}, which can be found for instance in \cite{carrillo2018analytical,fornasier2021consensus},
the proof of Theorem~\ref{thm:well_posedness} is substantially more involved and intricate.
This added complexity arises from the lack of Wasserstein stability for the consensus point~\eqref{eq:consensus_point}.
For standard CBO, one has by \cite[Lemma~3.2]{carrillo2018analytical} that $\Nbig{\mAlpha{\varrho}-\mAlpha{\widetilde{\varrho}}}_2\leq CW_2(\varrho,\widetilde{\varrho})$.
Unfortunately, as we demonstrate in Remark~\ref{rem:counter_example} by giving an instructive counterexample, this is \textit{not} the case for our consensus point $\mAlphaBeta{\varrho}$. In contrast, we establish in Proposition~\ref{thm:stabilityEst} the following stability estimate 
\[\Nbig{\mAlphaBeta{\varrho} - \mAlphaBeta{\widetilde{\varrho}}}_2\leq C \left( \|\varrho - \widetilde{\varrho}\|_{L^2(B_R(0))} + W_2 \left( \varrho, \widetilde{\varrho} \right) \right),\]
which holds for probability measures that are absolutely continuous with respect to the Lebesgue measure. As a result, we must pursue a different approach to the standard fixed-point argument used in \cite{carrillo2018analytical}. Specifically, we extend the Picard's iteration argument from \cite{albi2017meanfield, fornasier2020consensus_hypersurface_wellposedness}, originally applied to the dynamics restricted to a bounded domain, to one applicable over the entire space $\R^d$. 
This requires integrating analyses at both the PDE and SDE levels and working within suitable spaces of functions; we believe that our analysis is thus of independent interest.

Second, the assumption $\thetaG\in\supp{\rho_0}$ about the initial configuration $\rho_0$ is not really a restriction,
as it would anyhow hold immediately for $\rho_t$ for any $t > 0$ in view of the diffusive character of
the dynamics~\eqref{eq:dyn_macro}, see also \cite[Remark~4.8]{fornasier2021consensus}.
It is an assumption of technical nature that ensures that the consensus point is always well-defined for $\rho$ satisfying the Fokker-Planck Equation~\eqref{eq:fokker_planck}.
More details are to be found in Section~\ref{sec:well_posedness_proof} when proving Theorem~\ref{thm:well_posedness}.

Third, it is important to highlight that the Picard iteration approach that we follow to prove existence of solutions to the mean-field PDE does not allow us to deduce uniqueness. It is reasonable to expect that under additional regularity assumptions one may be able to prove uniqueness within a class of regular solutions of the mean field PDE. We will explore this question in future work.

\begin{remark} \label{rem:test_functions_redefine}
    Analogously to \cite{fornasier2021consensus} for standard CBO,
    the proof of Theorem~\ref{thm:well_posedness} implies that, 
    for a weak solution $\rho \in \CC \left([0,T], \CP_4(\R^d) \right)$ to the Fokker-Planck Equation~\eqref{eq:fokker_planck},
    we can extend the standard test function space $\CC_c^{\infty}(\bbR^d)$ to the space $\CC^2_*(\bbR^d)$, where
    \begin{equation}
        \label{eq:CCstar}
        \CC_*^2 (\R^d)
        := \left\{ \phi \in \CC^2(\R^d): \|\nabla \phi(\theta)\| \leq C \left(1 + \|\theta\|_2 \right) \text{ and } \sup_{\theta \in \R^d} |\Delta \phi (\theta)| < \infty \right\}.
    \end{equation}
\end{remark}


\subsection{Global Convergence of the CB\texorpdfstring{\textsuperscript{2}}{2}O Dynamics in Mean-Field Law}
\label{sec:convergence}

We now present the main result about the global convergence of the CB\textsuperscript{2}O dynamics~\eqref{eq:dyn_macro} in mean-field law for a lower-level objective function~$L$ and an upper-level objective function~$G$ that satisfy the following assumptions. We suggest to the reader to take another look at Figure~\ref{fig:CB2OIllustration} to recall a typical setting for a bi-level optimization problem of the form \eqref{eq:bilevel_opt} in order to convince themselves that the assumptions on the objective functions~$L$ and $G$ that we make below are reasonable.

\begin{assumption}\label{def:assumptions}
	Throughout we are interested in objective functions $L \in \CC(\bbR^d)$ and $G \in \CC(\bbR^d)$,
    for which
	\begin{enumerate}[label=A\arabic*,labelsep=10pt,leftmargin=35pt]
		\item\label{asm:thetaG} there exists a unique $\thetaG\in\Theta:=\argmin_{\theta\in\bbR^d} L(\theta)$ with $\underbar{L}:=L(\thetaG)=\inf_{\theta\in\bbR^d} L(\theta)$ such that \begin{align}
            G(\thetaG) = \inf_{\theta^*\in\Theta} G(\theta^*),
		\end{align}
        \item\label{asm:LipL} there exist $h_L,R^H_{L}>0$, and $H_L<\infty$ such that
        \begin{align}
            L(\theta)-\underbar{L}
            \leq H_L\N{\theta-\thetaG}_2^{h_L}
            \quad \text{ for all } \theta\in B_{R^H_{L}}(\thetaG),
        \end{align}
        \item\label{asm:icpL} there exist $L_\infty,R_L,\eta_L > 0$, and $\nu_L \in (0,\infty)$ such that
        \begin{subequations}
		\begin{align}
			\label{eq:icpL_1}
			\dist(\theta,\Theta)
            &\leq \frac{1}{\eta_L}\left(L(\theta)-\underbar{L}\right)^{\nu_L} \quad \text{ for all } \theta \in \CN_{R_L}(\Theta),\\
            \label{eq:icpL_2}
            L(\theta)-\underbar{L}
            &> L_\infty \quad \text{ for all } \theta \in \big(\CN_{R_L}(\Theta)\big)^c,
		\end{align}
        \end{subequations}
        \item\label{asm:LipG} there exist $h_G,R^H_{G}>0$, and $H_G<\infty$ such that
        \begin{align}
            G(\theta)-G(\thetaG)
            \leq H_G\N{\theta-\thetaG}_2^{h_G}
            \quad \text{ for all } \theta\in B_{R^H_{G}}(\thetaG),
        \end{align}
        \item\label{asm:icpG} there exist $G_{\infty},R_G,\eta_G > 0$, and $\nu_G \in (0,\infty)$ such that for all $r_G\leq R_G$ there exists $\thetaGtilde\in B_{r_G}(\thetaG)$ such that
        \begin{subequations}
		\begin{align}
			\label{eq:icpG_1}
			\Nbig{\theta-\thetaGtilde}_2
            &\leq \frac{1}{\eta_G}\left(G(\theta)-G(\thetaGtilde)\right)^{\nu_G} \quad \text{ for all } \theta \in B_{r_G}(\thetaG),\\
			\label{eq:icpG_2}
			G(\theta)-G(\thetaGtilde)
            &> G_\infty \quad \text{ for all } \theta \in \CN_{r_G}(\Theta)\backslash B_{r_G}(\thetaG).
		\end{align}
        \end{subequations}
    \end{enumerate}
\end{assumption}

In Assumption~\ref{asm:thetaG}, we clearly state that we assume there is a unique minimizer to the bilevel optimization problem \eqref{eq:bilevel_opt}. Typically, $L$ has more than one global minimizer, and it may even take the minimal value at an entire set of any shape in $\bbR^d$ subject to the additional assumptions~\ref{asm:LipL}--\ref{asm:icpL}.
All those global minimizers have objective value $\underbar{L}$.
Amongst them, $\thetaG$ is {\it the} global minimizer performing best w.r.t.\@ the robustness-promoting upper-level objective function~$G$, i.e., the good global minimizer. 
Let us recall at this point that we additionally assumed in the introduction that $\thetaG \in B_R(0)$, where $R>0$ is supposed to be sufficiently large such that $R\geq\Nbig{\thetaG}_2+r_R$ for $r_R>0$ as defined earlier.

\textit{Lower-level objective function $L$:} Assumption~\ref{asm:LipL} imposes that the objective function $L$ is $h_L$-H\"older continuous with constant $H_L$ locally around $\thetaG$.
The lower-level objective function $L$ moreover satisfies a tractability condition on its landscape in the form of Assumption~\ref{asm:icpL}.
In the neighborhood of the set of global minimizers~$\Theta$, i.e., on the set~$\CN_{R_L}(\Theta)$, the first part of \ref{asm:icpL}, Equation~\eqref{eq:icpL_1}, requires the local coercivity of $L$ w.r.t.\@ $\Theta$, thereby ensuring that the objective function values of $L$ on the level sets give instructive information about the distance of the associated points in the level sets to the set of global minimizers.
Such condition is known as an inverse continuity property~\cite{fornasier2021consensus,fornasier2021convergence,fornasier2020consensus_sphere_convergence,riedl2022leveraging}.
Compared to the assumptions made in those references, however, \ref{asm:icpL} is more local in nature by being an assumption on the distance to the whole set~$\Theta$, instead of to a single point.
This in particular warrants the existence of multiple global minimizers of the lower-level objective function~$L$.
For similar assumptions in the literature, we refer to quadratic growth or error bound conditions, as, e.g., discussed in \cite{anitescu2000degenerate, necoara2019linear,bolte2017error}.
Away from the neighborhood~$\CN_{R_L}(\Theta)$ of the set of global minimizers~$\Theta$, the second part of \ref{asm:icpL}, Equation~\eqref{eq:icpL_2}, merely ensures that there is a sufficient gap in the objective function values to the minimal value $\underbar{L}$,
thereby preventing in particular that $L(\theta) \approx\underbar{L}$ for some $\theta\in\bbR^d$ considerably far from $\Theta$.
For an illustration of this assumption, both Figures~\ref{fig:CB2OIllustration} and \ref{fig:A5_Illustration} may be consulted.
\begin{figure}[!htb]
    \centering
    \includegraphics[trim=37 258 12 250,clip,width=0.6\textwidth]{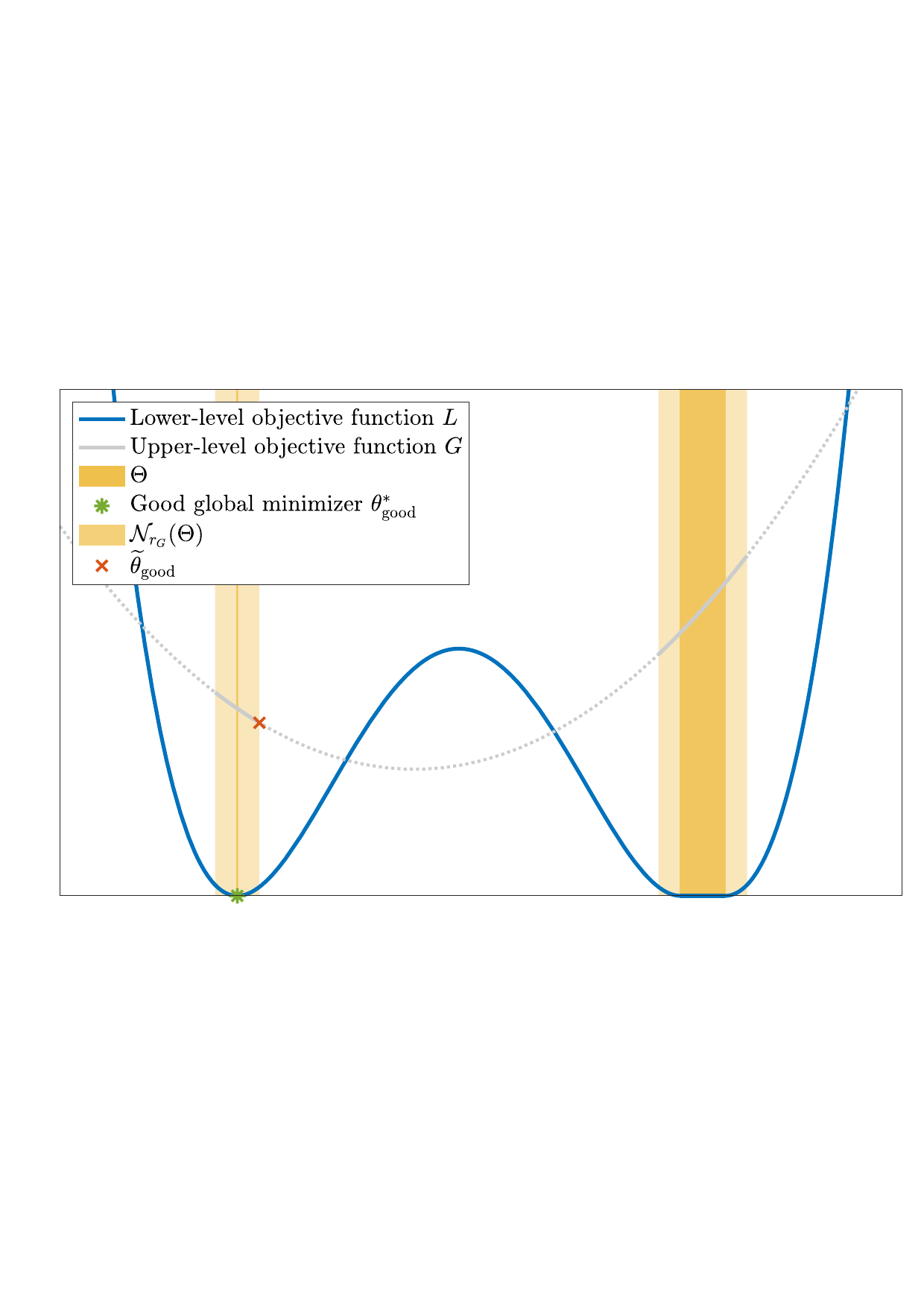}
    \caption{Illustration of Assumptions~\ref{asm:icpL} and \ref{asm:icpG} on the lower-level objective function~$L$ and the upper level objective function~$G$.}
    \label{fig:A5_Illustration}
\end{figure}

\textit{Upper-level objective function $G$:} 
Assumption~\ref{asm:LipG} imposes that the objective function $G$ is $h_G$-H\"older continuous with constant~$H_G$ locally around $\thetaG$.
In addition, Assumption~\ref{asm:icpG} requires the upper-level objective function $G$ to admit an inverse continuity condition~\cite{fornasier2021consensus,fornasier2021convergence,fornasier2020consensus_sphere_convergence,riedl2022leveraging}, which has to hold exclusively on a neighborhood of the set of global minimizers~$\Theta$ of $L$.
More precisely, \ref{asm:icpG} imposes an inverse continuity condition for all neighborhoods~$\CN_{r_G}(\Theta)$ with $r_G\leq R_G$ for some (potentially sufficiently small) $R_G>0$.
Outside the set~$\CN_{R_G}(\Theta)$, the behavior of the function~$G$ is of no relevance.
On any such neighborhood $\CN_{r_G}(\Theta)$, 
the minimizer of the upper-level objective function $G$, which we denote by $\thetaGtilde$ (the reader may want to convince themselves that $\thetaGtilde$ is typically not $\thetaG$; see Figure \ref{fig:A5_Illustration} for an illustration), is required to lie close to $\thetaG$; to be precise, within the ball~$B_{r_G}(\thetaG)\subset\CN_{r_G}(\Theta)$ of radius~$r_G$ around $\thetaG$.
On this ball, the objective function $G$ is locally coercive around $\thetaGtilde$ as required by the first condition of \ref{asm:icpG}, Equation~\eqref{eq:icpG_1}.
Outside of $B_{r_G}(\thetaG)$, but for $\theta\in\CN_{r_G}(\Theta)$, the second condition of \ref{asm:icpG}, Equation~\eqref{eq:icpG_2}, ensures that $G(\theta)\approx G(\thetaGtilde)=\min_{\tilde\theta\in\CN_{r_G}(\Theta)} G(\tilde\theta)$ does not occur.
That is, far away from $\thetaGtilde$ but within the neighborhood $\CN_{r_G}(\Theta)$, the value $G(\thetaGtilde)$ is not attained.
This resembles the standard inverse continuity condition~\cite{fornasier2021consensus,fornasier2021convergence,fornasier2020consensus_sphere_convergence,riedl2022leveraging}, however, restricted to the neighborhood $\CN_{r_G}(\Theta)$ and w.r.t.\@ the point $\thetaGtilde$.
More intuitively speaking, recall that $\thetaG$, by definition of the bi-level optimization problem~\eqref{eq:bilevel_opt}, is the global minimizer of the upper-level objective function~$G$ on the set $\Theta$.
Hence, \ref{asm:icpG} can be regarded as a stability condition of the global minimizer of $G$ for neighborhoods of $\Theta$.
It extends the aforementioned optimality property to those neighborhoods~$\CN_{r_G}(\Theta)$ of $\Theta$ with $r_G\leq R_G$, while allowing at the same time an error of order $r_G$ when wishing to approximate $\thetaG$.
This error is necessary and natural as $G$ typically admits (in realistic scenarios) on the set $\CN_{r_G}(\Theta)$ a global minimizer $\thetaGtilde\not=\thetaG$.
\ref{asm:icpG} ensures that $\thetaGtilde$ is $r_G$-close to $\thetaG$, hence excluding cases where $\thetaGtilde$ might have jumped to, e.g., a separate component of the set $\CN_{r_G}(\Theta)$.
Here, we again refer for an illustration to both Figures~\ref{fig:CB2OIllustration} and \ref{fig:A5_Illustration}.

We are now ready to state the main result about the global convergence of the mean-field CB\textsuperscript{2}O dynamics~\eqref{eq:fokker_planck}.
The proof is deferred to Section~\ref{sec:convergence_proofs}.
\begin{theorem}[Convergence of the mean-field CB\textsuperscript{2}O dynamics~\protect{\eqref{eq:dyn_macro}} and~\protect{\eqref{eq:fokker_planck}}]
    \label{thm:main}
    Let $L \in \CC(\bbR^d)$ and $G \in \CC(\bbR^d)$ satisfy \ref{asm:thetaG}--\ref{asm:icpG}.
	Moreover, let $\rho_0 \in \CP_4(\bbR^d)$ be such that $\thetaG\in\supp{\rho_0}$.
	Fix any $\varepsilon \in (0,W_2^2(\rho_0,\delta_{\thetaG})/2)$ and $\vartheta \in (0,1)$,
    choose parameters $\lambda,\sigma > 0$ with $2\lambda > d\sigma^2$, and
    define the time horizon
	\begin{align} \label{eq:end_time_star_statement}
		T^* := \frac{1}{(1-\vartheta)\big(2\lambda-d\sigma^2\big)}\log\left(\frac{W_2^2(\rho_0,\delta_{\thetaG})/2}{\varepsilon}\right).
	\end{align}
	Then,
	for $\delta_q>0$ in \eqref{eq:Q_beta} sufficiently small (see Remark~\ref{remark:deltaq}),
	there exist $\beta_0 > 0$ and $\alpha_0 > 0$ (see Remarks~\ref{remark:beta0} and~\ref{remark:alpha0}, as well as Figures~\ref{fig:CB2OIllustration_beta} and~\ref{fig:CB2OIllustration_alpha}),
    depending (among problem-dependent quantities) on $R$, $\delta_q$, $\varepsilon$ and $\vartheta$,
    such that for all $\beta < \beta_0$ and $\alpha > \alpha_0$,
    if $\rho \in \CC([0,T^*], \CP_4(\bbR^d))$ is a weak solution to the Fokker-Planck Equation~\eqref{eq:fokker_planck} on the time interval $[0,T^*]$ with initial condition $\rho_0$ and if the mapping $t \mapsto \mAlphaBeta{\rho_t}$ is continuous for $t\in[0,T^*]$,
    it holds 
    \begin{align}
        \label{eq:end_time_statement}
        W_2^2(\rho_T,\delta_{\thetaG})/2 = \varepsilon
        \quad\text{ with }\quad
        T\in\left[\frac{1-\vartheta}{(1+\vartheta/2)}\;\!T^*,T^*\right].
    \end{align}
	Furthermore, on the time interval $[0,T]$, $W_2^2(\rho_t,\delta_{\thetaG})$ decays at least exponentially fast.
    More precisely,
	\begin{align} \label{eq:thm:global_convergence_main:V}
		W_2^2(\rho_t,\delta_{\thetaG})
        \leq W_2^2(\rho_0,\delta_{\thetaG}) \exp\left(-(1-\vartheta)\left(2\lambda- d\sigma^2\right) t\right)
	\end{align}
    for all $t\in[0,T]$.
\end{theorem}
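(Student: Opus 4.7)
The plan is to follow the Wasserstein-distance-based analytical framework of \cite{fornasier2021consensus,fornasier2021convergence,riedl2024perspective}, tracking the evolution of the Lyapunov functional $\CV(\rho_t) := W_2^2(\rho_t, \delta_{\thetaG})/2 = \tfrac{1}{2}\int \N{\theta - \thetaG}_2^2 \,d\rho_t(\theta)$. Since $\phi(\theta) = \tfrac{1}{2}\N{\theta - \thetaG}_2^2$ lies in $\CC_*^2(\bbR^d)$, the extended weak formulation (Remark~\ref{rem:test_functions_redefine}) of the Fokker-Planck equation~\eqref{eq:fokker_planck} can be applied to this test function. With isotropic diffusion, a direct computation together with an expansion of $\theta - \mAlphaBeta{\rho_t}$ around $\theta - \thetaG$ and Cauchy--Schwarz yields a differential inequality of the form
\begin{equation*}
    \frac{d}{dt}\CV(\rho_t) \leq -(2\lambda - d\sigma^2)\,\CV(\rho_t) + C_1 \Nbig{\mAlphaBeta{\rho_t} - \thetaG}_2 \sqrt{\CV(\rho_t)} + C_2\Nbig{\mAlphaBeta{\rho_t} - \thetaG}_2^2,
\end{equation*}
where $C_1, C_2$ depend only on $\lambda, \sigma, d$. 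Under the condition $2\lambda > d\sigma^2$ built into the statement, the first term drives exponential contraction, so the task reduces to bounding the consensus-point error $\Nbig{\mAlphaBeta{\rho_t} - \thetaG}_2$ by a quantity controllable through the hyperparameters $\alpha$, $\beta$, and $\delta_q$.

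Next I would invoke the quantitative \emph{quantiled} Laplace principle (Q\textsuperscript{2}LP) stated as Proposition~\ref{lem:laplace_LG}. The Q\textsuperscript{2}LP combines two mechanisms: first, choosing $\beta$ sufficiently small together with the inverse continuity \ref{asm:icpL} forces $\Qbeta{\rho_t} \subset \CN_r(\Theta)$ for some small $r = r(\beta, \delta_q)$, so that the computation of the consensus point happens over a tight neighborhood of the set of lower-level minimizers; second, \ref{asm:icpG} together with a Laplace-type bound for large $\alpha$ forces the Gibbs-weighted average $\mAlphaBeta{\rho_t} = \mAlpha{\Ibeta{\rho_t}}$ to lie near $\thetaGtilde$, which by \ref{asm:icpG} is itself within $r_G$ of $\thetaG$. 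Making these quantitative, and combining with \ref{asm:LipL} and \ref{asm:LipG} to ensure that a sufficient mass of $\rho_t$ populates a ball around $\thetaG$ (which remains true for all $t\in[0,T^*]$ thanks to the assumption $\thetaG \in \supp\rho_0$ and standard arguments for how diffusion preserves the support structure, cf.\ \cite[Lemma~4.7]{fornasier2021consensus}), will yield a bound of the form $\Nbig{\mAlphaBeta{\rho_t} - \thetaG}_2 \leq \eta(\alpha,\beta,\delta_q) + C\sqrt{\CV(\rho_t)}$ with $\eta(\alpha,\beta,\delta_q)\to 0$ as $\alpha\to\infty$, $\beta\to 0$, $\delta_q\to 0$.

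With this bound in hand, I would set up the standard bootstrap argument: define the stopping time $T := \inf\{t \geq 0 : \CV(\rho_t) \leq \varepsilon\}\wedge T^*$ and use the differential inequality together with the Q\textsuperscript{2}LP bound on $[0,T]$. Choosing $\beta < \beta_0$, $\alpha > \alpha_0$, and $\delta_q$ sufficiently small depending on $\varepsilon$ and $\vartheta$ so that $\eta(\alpha,\beta,\delta_q)$ is negligible compared to $\sqrt{\varepsilon}$, the mixed term and quadratic term can be absorbed into a $\vartheta$-fraction of the leading contractive term, yielding
\begin{equation*}
    \frac{d}{dt}\CV(\rho_t) \leq -(1-\vartheta)(2\lambda - d\sigma^2)\,\CV(\rho_t) \qquad \text{on } [0,T].
\end{equation*}
Integrating this Grönwall inequality gives the claimed exponential decay~\eqref{eq:thm:global_convergence_main:V}, and the definition of $T^*$ in~\eqref{eq:end_time_star_statement} ensures that $\CV(\rho_t)$ reaches the level $\varepsilon$ within $T^*$, proving that $T < T^*$. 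The lower bound on $T$ in~\eqref{eq:end_time_statement} follows from a matching upper rate using continuity of $t\mapsto \mAlphaBeta{\rho_t}$ (assumed in the theorem) to rule out instantaneous drops of $\CV$.

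The main obstacle will be the Q\textsuperscript{2}LP step: unlike in standard CBO, the two-step selection mechanism (first sub-level truncation via $\beta$, then Gibbs reweighting via $\alpha$) requires delicate and coupled choices of $\beta$, $\alpha$, and $\delta_q$, together with a careful verification that $\rho_t$ always places enough mass inside the relevant sub-level set for the quantile $\qbetanoarg$ and the Laplace approximation to be meaningful. Formalising this uniformly on $[0,T^*]$, and tracking explicit dependences so that the thresholds $\beta_0$ and $\alpha_0$ are well-defined (as recorded in the pointers to Remarks~\ref{remark:beta0} and~\ref{remark:alpha0}), is where the bulk of the technical work resides.
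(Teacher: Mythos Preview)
Your overall strategy matches the paper's exactly: differential inequality for $\CV$ (Lemma~\ref{lem:evolution_V}), the Q\textsuperscript{2}LP (Proposition~\ref{lem:laplace_LG}) combined with the mass lower bound (Proposition~\ref{lem:lower_bound_probability}), then Gr\"onwall. However, the form you claim for the Q\textsuperscript{2}LP output, $\Nbig{\mAlphaBeta{\rho_t}-\thetaG}_2\leq \eta(\alpha,\beta,\delta_q)+C\sqrt{\CV(\rho_t)}$ with a fixed $C$ and only $\eta\to 0$, does not suffice: substituting it into the differential inequality leaves terms of order $C\,\CV$ and $C^2\CV$ that cannot be absorbed by shrinking $\eta$ alone. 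What \eqref{eq:lem:laplace_LG} actually yields is a first piece made small via $\beta,\delta_q$ and a second piece $\exp(-\alpha u)\,\rho_t(B_r(\thetaG))^{-1}\sqrt{2\CV(\rho_t)}$ whose \emph{coefficient} of $\sqrt{\CV}$ is driven to zero by taking $\alpha$ large. The paper packages the whole bound as $\Nbig{\mAlphaBeta{\rho_t}-\thetaG}_2< c(\vartheta,\lambda,\sigma)\sqrt{\CV(\rho_t)}$ with the specific small constant~\eqref{eq:c}, and \emph{builds this inequality into the stopping time}~\eqref{eq:endtime_T}, then shows by contradiction (Case~3 in Section~\ref{sec:proof_main}) that this condition cannot be the first to fail. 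Your simpler stopping time $T=\inf\{t:\CV(\rho_t)\leq\varepsilon\}\wedge T^*$ can be salvaged via the crude a~priori bound $\Nbig{\mAlphaBeta{\rho_t}}_2\leq R$ from regularization~\ref{eq:Qbetaregularizationi}, but you must make this explicit to break the circularity between the mass bound in Proposition~\ref{lem:lower_bound_probability} (which requires $\sup_t\Nbig{\mAlphaBeta{\rho_t}-\thetaG}_2\leq B$) and the consensus-point bound itself.

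Separately, the lower bound $T\geq \frac{1-\vartheta}{1+\vartheta/2}T^*$ is not a continuity argument: it comes from the companion \emph{lower} differential inequality~\eqref{eq:evolution_of_objective_lower} in Lemma~\ref{lem:evolution_V}, which caps the decay rate of $\CV$ by $(1+\vartheta/2)(2\lambda-d\sigma^2)$ and hence prevents $\CV$ from reaching $\varepsilon$ too early.
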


A few comments about Theorem~\ref{thm:main} are in order.
For a discussion regarding the choices of the central hyperparameters $\beta$ and $\alpha$ we refer to the in-depth discussions below in Remarks~\ref{remark:beta0} and~\ref{remark:alpha0}, respectively.
First we note, as already noted for Theorem~\ref{thm:well_posedness}, that the assumption $\thetaG\in\supp{\rho_0}$ about the initial configuration $\rho_0$ is not really a restriction,
as it would anyhow hold immediately for $\rho_t$ for any $t > 0$ in view of the diffusive character of
the dynamics~\eqref{eq:dyn_macro}; see also \cite[Remark~4.8]{fornasier2021consensus}.
In fact, this has been rigorously proven recently in \cite{fornasier2025regularity}.
Second, the dimensionality dependence of the condition $2\lambda > d\sigma^2$ on the drift and noise parameter may be removed by using anisotropic noise~\cite{carrillo2019consensus,fornasier2021convergence}, which would require that $2\lambda > \sigma^2$.
Such conditions may be further relaxed by truncating the noise; see \cite{fornasier2023consensus}.
Third, the rate of convergence in \eqref{eq:thm:global_convergence_main:V} is at least $(1-\vartheta)(2\lambda-d\sigma^2)$ and, as can be seen from the proof, at most $(1+\vartheta/2)(2\lambda-d\sigma^2)$.
Thus, at the cost of taking $\alpha\rightarrow\infty$ to allow for $\vartheta\rightarrow0$, it can be made arbitrarily close to the rate $(2\lambda-d\sigma^2)$.
This at the same time renders more precise the time horizon $T$ in \eqref{eq:end_time_statement} at which time the desired $\varepsilon$ accuracy is reached for the Wasserstein distance $W_2^2(\rho_T,\delta_{\thetaG})/2$.
Fourth, we require in the statement of the theorem that $\rho \in \CC([0,T^*], \CP_4(\bbR^d))$ has to be a weak solution to the Fokker-Planck Equation~\eqref{eq:fokker_planck} with initial condition $\rho_0$ \textit{and} the mapping $t \mapsto \mAlphaBeta{\rho_t}$ must be continuous.
Theorem~\ref{thm:well_posedness} in the preceding section proves that solutions to the mean-field Fokker Planck equation with the desired regularity and for which the map $t \mapsto \mAlphaBeta{\rho_t}$ is continuous indeed exist under the assumptions stated in this result. Those assumptions are therefore sufficient, but they may not be necessary and
in particular, the statement of the above theorem may be valid under a different set of assumptions as long as the aforementioned requirements about the regularity of $\rho$ as well as the continuity of the consensus point mapping are satisfied.

Let us now highlight how the hyperparameters~$\beta$ and $\alpha$ influence the behavior of the CB\textsuperscript{2}O dynamics~\eqref{eq:dyn_macro} and thereby the CB\textsuperscript{2}O algorithm~\eqref{eq:dyn_micro}.
The illustrations and insights that we provide here hint at how we later rigorously prove in Section~\ref{sec:convergence_proofs} our main convergence statement, Theorem~\ref{thm:main}.

We start with the parameter~$\delta_q$, which is of mere proof-technical purpose and is convenient due to its regularizing effect.

\begin{remark}[Choice of $\delta_q$]
	\label{remark:deltaq}
	The requirement $\delta_q>0$ sufficiently small in Theorem~\ref{thm:main} means that
    \begin{align}
    	\delta_{q}
    	\leq \frac{1}{2} \min\left\{L_\infty,(\eta_Lr_{G,\varepsilon})^{1/\nu_L}\right\}
    \end{align}
    where $r_{G,\varepsilon}$ is as defined explicitly in \eqref{eq:rGeps}.
\end{remark}

Let us now address the choice and influence of the hyperparameter~$\beta$.

\begin{remark}[Choice of hyperparameter $\beta$]
	\label{remark:beta0}
    The hyperparameter $\beta$ is related to the lower-level objective function~$L$ and steers (in the finite particle terminology) the particle selection. 
    In the mean-field interpretation, it analogously restricts the particle density~$\varrho$ to its most favorable regions w.r.t.\@ $L$.
    Thereby, $\beta$ warrants that CB\textsuperscript{2}O is successful in identifying the set~$\Theta$ of global minimizers of $L$, which is a prerequisite to eventually succeeding in determining a point in the proximity of $\Theta$ that is best w.r.t. the upper-level objective function~$G$.
    Note that at this point and with the parameter $\beta$ only we can not drive the dynamics towards$\thetaG$.
    Rather, our aim at this stage is to more and more accurately identify $\Theta$ as $\beta$ becomes smaller and smaller.
    
    This selection principle is implemented through the $\beta$-quantile $\qbeta{\varrho}$ and the associated level set $\Qbeta{\varrho}$, which identifies those regions of the domain of $L$ that have the best (lowest) lower-level objective function value.
    Only the parts of the density supported on this set are then taken into account when computing the consensus point~$\mAlphaBeta{\varrho} = \mAlpha{\Ibeta{\varrho}}$ in \eqref{eq:consensus_point},
    where $\Ibeta{\varrho} = \mathds{1}_{\Qbeta{\varrho}}\varrho$ denotes the associated restricted measure which exerts precisely this selection principle.
    Phrased differently, the consensus point is exclusively computed on the quantile set $\Qbeta{\varrho}$\footnote{A word of caution: Note that this does not yet ensure that the consensus point~$\mAlphaBeta{\varrho}$ itself lies, even if computed on it, within the set $\Qbeta{\varrho}$, let alone that it approximates $\thetaG\in\Theta$.
    That we are indeed able to achieve the latter two properties is the topic of Remark~\ref{remark:alpha0} and thanks to the inverse continuity property~\ref{asm:icpG} on $G$ and the hyperparameter~$\alpha$ as we discuss there.}.

    With the set $\Qbeta{\varrho}$, and thus through the hyperparameter~$\beta$, we can identify a set which is expected to approximate $\Theta$ up to an accuracy level that is controllable through $\beta$.
    The quality of the approximation of the set $\Theta$ with the set $\Qbeta{\varrho}$ improves as $\beta\rightarrow0$ (given some regularity of $L$).
    As we let $\beta$ become smaller and smaller,
    by construction, $\Qbeta{\varrho}$ is induced to contain only points with objective function value closer and closer to $\underbar{L}$, narrowing down the set $\Qbeta{\varrho}$ towards $\Theta$ in the process.
    As soon as $\beta=0$, $\Qbeta{\varrho}=\Theta$.
    This behavior is illustrated in Figure~\ref{fig:CB2OIllustration_beta}, where we plot the set $\Qbeta{\varrho}$, i.e., the support of the measure~$\Ibeta{\varrho} = \mathds{1}_{\Qbeta{\varrho}}\varrho$ (provided that $\varrho$ were supported everywhere),
    for decreasing (from left to right) values of $\beta$.
    \begin{figure}[!htb]
        \centering
        \includegraphics[trim=118 228 88 222,clip,width=1\linewidth]{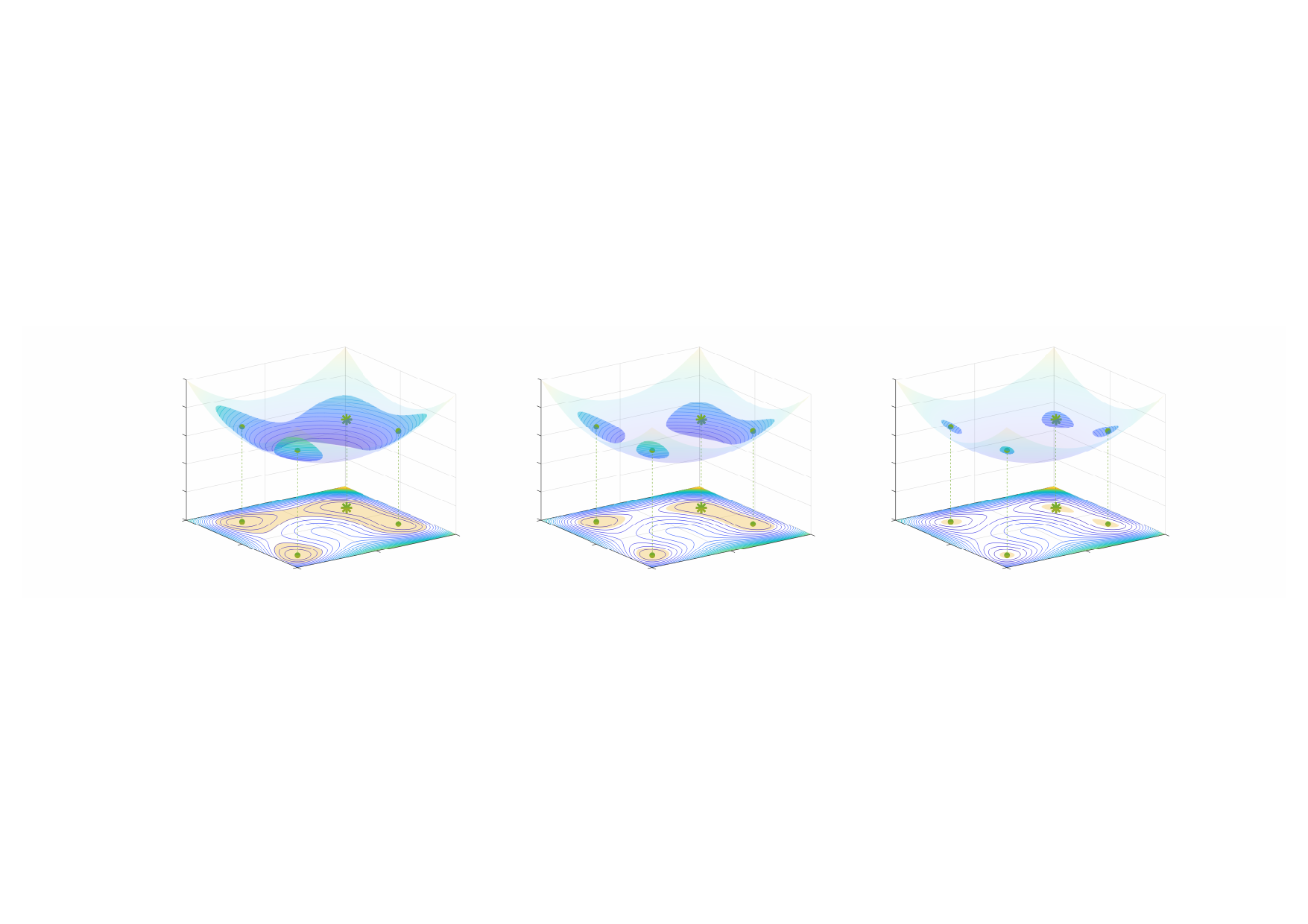}
        \caption{The influence of the hyperparameter~$\beta$. By decreasing the value of $\beta$ from left to right, we shrink the quantile set $\Qbeta{\varrho}$ (illustrated as yellow shadows on the $xy$ plane and projected onto the upper-level objective), thereby ensuring an increasingly finer approximation of the set $\Theta$ of global minimizers of $L$.}
        \label{fig:CB2OIllustration_beta}
    \end{figure}
    
    In order to technically approximate the set $\Theta$ through $\Qbeta{\varrho}$ sufficiently well,
    we need to choose in Theorem~\ref{thm:main} the hyperparameter~$\beta$ sufficiently small in the sense that
    $\beta<\beta_0$ with
    \begin{equation} \label{eq:beta0}
        \beta_0
        := \frac{1}{2}\rho_0\big(B_{r_{H,\varepsilon}/2}(\thetaG)\big) \exp(-p_{H,\varepsilon}T^*),
    \end{equation}
    where $r_{H,\varepsilon}$ is as in \eqref{eq:rHeps} and where $p_{H,\varepsilon}$ is defined as discussed after \eqref{eq:beta}.
\end{remark}

Being now able to approximate the set $\Theta$ with $\Qbeta{\varrho}$ through the choice of $\beta$, our next task is to locate $\thetaG$ within this set.
For this we need to discuss the choice and influence of the hyperparameter~$\alpha$.

\begin{remark}[Choice of hyperparameter $\alpha$]
    \label{remark:alpha0}
    The hyperparameter $\alpha$ is related to the upper-level objective function~$G$ and is concerned with the actual approximation of the good global minimizer~$\thetaG$ within the set $\Qbeta{\varrho}$.
    To this end,
    we actually don't aim at approximating $\thetaG$ directly but instead intend to find the global minimizer~$\thetaGtilde$ of $G$ restricted to the set $\Qbeta{\varrho}$.
    However, granted the stability of the global optimizer of $G$ for neighborhoods of $\Theta$, which is ensured by the inverse continuity property~\ref{asm:icpG}, $\thetaGtilde$ is close to $\thetaG$.
    This closeness has been controlled and quantified through the choice of $\beta$ in Remark~\ref{remark:beta0}.
    Thus, it is sufficient to approximate $\thetaGtilde$ when computing the consensus point~\eqref{eq:consensus_point}.

    In order to do so, we leverage a quantitative nonasymptotic variant of the classical Laplace principle~\cite{dembo2009large,miller2006applied}, namely \cite[Proposition~4.5]{fornasier2021consensus}, which ensures that $\mAlphaBeta{\varrho}=\mAlpha{\Ibeta{\varrho}}$ can approximate $\thetaGtilde=\argmin_{\theta\in\Qbeta{\varrho}} G(\theta)$ arbitrarily well provided a large enough choice of the hyperparameter~$\alpha$.
    In the mean-field limit, the quality of this approximation improves as $\alpha\rightarrow\infty$.
    As we let $\alpha$ become larger and larger, $\mAlphaBeta{\varrho}$ better and better approximates $\thetaGtilde$.
    As $\alpha\rightarrow\infty$, $\mAlphaBeta{\varrho}\rightarrow\thetaGtilde$.
    We illustrate this behavior in Figure~\ref{fig:CB2OIllustration_alpha}, where we depict the consensus point~$\mAlphaBeta{\varrho}$ for increasing (from left to right) values of $\alpha$.
    \begin{figure}[!htb]
        \centering
        \includegraphics[trim=118 228 88 222,clip,width=1\linewidth]{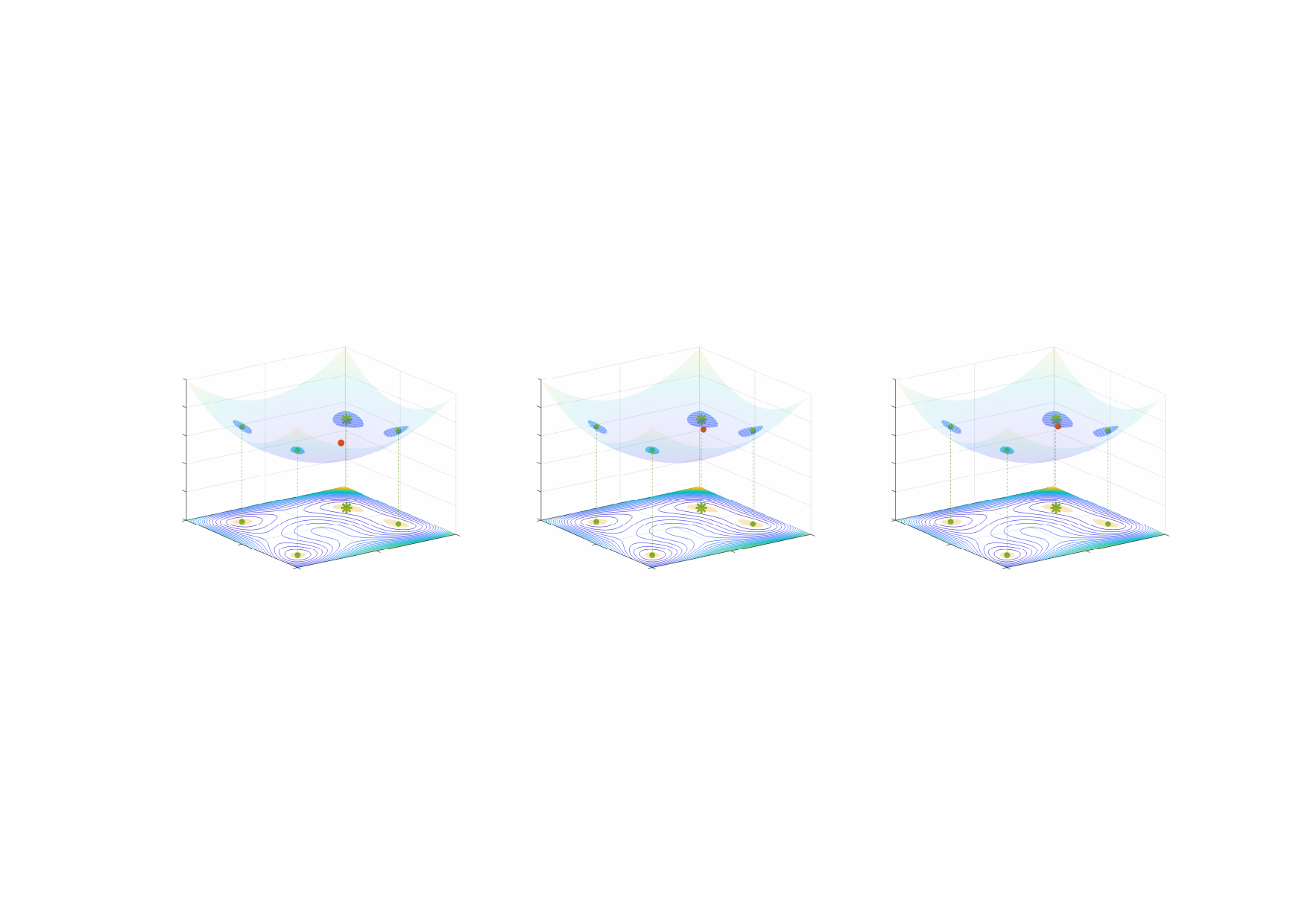}
        \caption{The influence of the hyperparameter~$\alpha$. By increasing the value of $\alpha$ from left to right, we improve on the approximation of $\thetaGtilde$, i.e., how well $\mAlphaBeta{\varrho}$, depicted as the orange dot in all three plots, eventually approximates $\thetaGtilde$.}
        \label{fig:CB2OIllustration_alpha}
    \end{figure}

    In order to technically approximate $\thetaGtilde$ (and thus $\thetaG$ by Remark~\ref{remark:beta0} and \ref{asm:icpG}) through $\mAlphaBeta{\varrho}$ sufficiently well,
    we need to choose in Theorem~\ref{thm:main} the hyperparameter~$\alpha$ sufficiently large in the sense that
    $\alpha>\alpha_0$ with
    \begin{equation} \label{eq:alpha0}
    \begin{split}
        \alpha_0
		&:= \frac{1}{u_\varepsilon}\Bigg(\log\left(\frac{4\sqrt{2}}{c\left(\vartheta,\lambda,\sigma\right)}\right)-\log\left(\rho_{0}\big(B_{r_\varepsilon/2}(\thetaG)\big)\right) + \max\left\{\frac{1}{2},\frac{p_\varepsilon}{(1-\vartheta)\big(2\lambda-d\sigma^2\big)}\right\}\log\left(\frac{\CV(\rho_0)}{\varepsilon}\right)\!\Bigg),
    \end{split}
    \end{equation}
    where $c(\vartheta,\lambda,\sigma)$ is as in \eqref{eq:c}, where $u_\varepsilon$ and $r_\varepsilon$ are as in \eqref{eq:ueps} and \eqref{eq:reps}, respectively, and where $p_\varepsilon$ is defined as discussed after \eqref{eq:alpha}.
\end{remark}

To wrap up the discussions about the choices of the hyperparameters~$\beta$ and $\alpha$, let us point out their interplay in the convergence proof, which is presented in Section~\ref{sec:convergence_proofs}. When deriving time-evolution inequalities for $W_2^2(\rho_t,\delta_{\thetaG})$, as we do in Lemma~\ref{lem:evolution_V}, the central quantity to be controlled by suitable choices of the hyperparameters is $\Nbig{\mAlphaBeta{\rho_t}-\thetaG}_2$.
To obtain this control, we employ the quantitative quantiled Laplace principle in Proposition~\ref{lem:laplace_LG}, where we identify two error contributions. The first of which comes from the approximation accuracy of the set $\Theta$ of global minimizers of the lower-level objective function $L$ by the set $\Qbeta{\rho_t}$.
The second contribution arises from approximating, within this set, the global minimizer~$\thetaGtilde$ of the upper-level objective function $G$ by $\mAlphaBeta{\rho_t}$.
In summary, we have a decomposition of the error $\Nbig{\mAlphaBeta{\rho_t}-\thetaG}_2$ of the form
\begin{equation}
    \label{eq:conspoint_errordecomp}
    \Nbig{\mAlphaBeta{\rho_t}-\thetaG}_2
    \leq \underbrace{\Nbig{\mAlphaBeta{\rho_t}-\thetaGtilde}_2}_{\substack{\text{small by choice of $\alpha$ since} \\\text{$\thetaGtilde=\argmin_{\theta\in\Qbeta{\rho_t}} G(\theta)$}}} + \underbrace{\Nbig{\thetaGtilde-\thetaG}_2}_{\substack{\text{small by choice of $\beta$ since}\\\text{$\thetaGtilde\in\Qbeta{\rho_t}$}}}.
\end{equation}
Let us point out, though, that the decomposition in the proof of Proposition~\ref{lem:laplace_LG} in Section~\ref{sec:laplace_p} is slightly more implicit.
However, \eqref{eq:conspoint_errordecomp} does paint the correct picture for an understanding of the interplay between the hyperparameters~$\beta$ and $\alpha$.
For more technical details we refer to Remark~\ref{rem:error_decomp_laplace}.

Before closing this section about the main results of the paper and providing the proof details as well as numerical experiments thereafter,
let us continue the discussion from Remark~\ref{rem:gradient_drift} on integrating a gradient drift w.r.t.\@ the lower-level objective function~$L$ into the CB\textsuperscript{2}O algorithm~\eqref{eq:dyn_micro} as well as the associated CB\textsuperscript{2}O mean-field dynamics~\eqref{eq:dyn_macro}.
For standard CBO~\cite{pinnau2017consensus,carrillo2018analytical,fornasier2021consensus,fornasier2021convergence}, this has been done and analyzed in \cite{riedl2022leveraging}.

\begin{remark}[Mean-field law convergence of the CB\textsuperscript{2}O dynamics~\eqref{eq:dyn_micro_add_grad} with additional gradient drift w.r.t.\@~$L$]
    As discussed in Remark~\ref{rem:gradient_drift}, it is reasonable and in some applications advisable to add an gradient drift w.r.t.\@~$L$ in the CB\textsuperscript{2}O algorithm~\eqref{eq:dyn_micro_add_grad}, i.e., $\lambda_\nabla\not=0$.
    In such case, the lower-level objective function $L$ naturally needs to be continuously differentiable and we additionally require that $L \in\CC^1(\bbR^d)$ is such that
	\begin{enumerate}[label=A\arabic*,labelsep=10pt,leftmargin=35pt]
		\setcounter{enumi}{5}
		\item\label{asm:smoothnessL}
        there exists $C_{\nabla L} > 0$ such that
		\begin{align}
            \N{\nabla L(\theta)-\nabla L(\theta')}_2 \leq C_{\nabla L}\N{\theta-\theta'}_2 \quad \text{for all } \theta,\theta' \in\bbR^d.
		\end{align}
	\end{enumerate}
    Assumption~\ref{asm:smoothnessL} is a standard smoothness assumption on $L$ which requires $\nabla L$ to be Lipschitz continuous.

    Under this additional assumption, the convergence statement of Theorem~\ref{thm:main} can be extended to the mean-field dynamics of \eqref{eq:dyn_micro_add_grad} by following \cite{riedl2022leveraging}.
\end{remark}

\section{Proof Details for Section~\ref{sec:well_posedness}}
\label{sec:well_posedness_proofs}

In this section, we provide the details of the proof of Theorem \ref{thm:well_posedness}, the existence of regular solutions to the mean-field dynamics \eqref{eq:dyn_macro}. In Section \ref{sec:existence_proof_sketch}, we present a proof sketch to outline the main steps.
In Section \ref{sec:conspoint_stability}, we provide a concrete example showing that, different from the consensus points defined in the standard CBO setting \cite{carrillo2018analytical}, our consensus point $\mAlphaBetanoarg$ is not stable under Wasserstein perturbations.
This lack of stability forces us to develop a stability estimate for $\mAlphaBetanoarg$ with respect to more restrictive perturbations as presented in Proposition \ref{thm:stabilityEst}.
In Section \ref{sec:well_posedness_proof}, we complete the proof of Theorem \ref{thm:well_posedness}.

\subsection{Proof Sketch}\label{sec:existence_proof_sketch}
Suppose that $\rho = \mathrm{Law} (\overbar{\theta})$ is a solution to the Fokker-Planck Equation~\eqref{eq:fokker_planck} satisfying $\rho \in \CC \left([0,T], \R^d \right)$. Then we may apply standard theory of SDEs (see, e.g., \cite[Chapter~6.2]{arnold1974stochasticdifferentialequations}) to conclude that there exists a unique strong solution to the mean-field dynamics \eqref{eq:dyn_macro}.
Therefore, in what follows we mainly prove that there exists a solution to the PDE \eqref{eq:fokker_planck} with the desired regularity.

The proof is based on the Picard's iteration method. In what follows, we denote by $\rho^n_t\in\CP(\bbR^d)$ the measure at time $t$ of the $n$-th iterate in the Picard-type recursion. Precisely, we set $\rho^0_t := \rho_0$ for all $t>0$, i.e.,  $\rho^{n=0}_{t}(\theta) = \rho_0(\theta)$ for all $\theta\in\bbR^d$ and for all $t>0$.
Then, having defined $\rho^n$ for a given $n\in \mathbb{N}$, we let $\rho^{n+1}$ be the solution to the \textit{linear} Fokker-Planck equation
\begin{equation}
	\partial_t\rho^{n+1}_t
	= \lambda\nabla \cdot \left(\left(\theta -\mAlphaBetaRED{\rho^n_t}\right)\rho^{n+1}_t\right)
	+ \frac{\sigma^2}{2}\Delta \left(\N{\theta-\mAlphaBetaRED{\rho^n_t}}_2^2\rho^{n+1}_t\right)
 \label{eqn:PicardIteration}
\end{equation}
with initial data $\rho^{n+1}_0 = \rho_0 \in H^{l+2}(\R^d) \cap L^{\infty}(\R^d) \cap \CP_4(\R^d)$. 
We will show by induction that 
\begin{equation}\label{eq:regularity}
\begin{aligned}
&\{\rho^{n+1}\}_{n\in \mathbb{N}} \subseteq W^{1, \infty} ([0,T], H^l(\R^d)) \cap L^{\infty} ([0,T] \times \R^d) \cap \CC ([0,T], \CP_2(\R^d)) \, .
\end{aligned}
\end{equation}
Moreover, we prove that the sequence $\{\rho^{n+1}\}_n$ has uniform bounded norms in each of those spaces.

Secondly, by the Aubin-Lions lemma and a diagonal argument, we are able to conclude that there exists a subsequence $\{ \rho^{n_k} \}_{k\in \NN}$ of $\{ \rho^n \}_{n \in \NN}$ and a limiting function $\rho$ such that $\rho^{n_k}$ strongly converges to $\rho$ in $\CC ([0,T], \Lloc(\R^d))$. 
Since $\{\rho^{n_k}\}_k$ has uniformly bounded norms, we can further conclude that the limiting function $\rho$ satisfies the similar regularity of $\{\rho^{n_k}\}_k$, specified in \eqref{eq:Sol_regularity}.

Lastly, we verify that $\rho$ is a solution to the Equation~\eqref{eq:fokker_planck} in the weak sense (Definition \ref{def:weak_solution}). 
In Equation~\eqref{eqn:PicardIteration}, by $m(\varrho)$ we mean $\mAlphaBeta{\varrho}$ as defined in Section \ref{sec:CB2O}. We have simplified the notation for the reader's convenience and also to highlight that our approach in the proof of Theorem \ref{thm:well_posedness}, which is presented in Section \ref{sec:well_posedness_proof}, continues to work for general functions $\mAlphaBetaRED{\dummy}$ satisfying a suitable stability estimate. 

\subsection{Stability Estimate for the Consensus Point~\texorpdfstring{$\mAlphaBetanoarg$}{}}
\label{sec:conspoint_stability}

A stability estimate for the consensus point~$\mAlphaBeta{\varrho}$ w.r.t.\@ perturbations of the measure~$\varrho$ is crucial for proving the well-posedness of the mean-field dynamics~\eqref{eq:dyn_macro} and \eqref{eq:fokker_planck},
as well as for establishing results about a mean-field approximation~\cite[Section~3.3 and 5]{fornasier2021consensus}.
For the standard CBO system~\cite{pinnau2017consensus, carrillo2018analytical} used for simple unconstrained optimization,
the consensus point is stable w.r.t.\@ Wasserstein perturbations~\cite[Lemma~3.2]{carrillo2018analytical}, allowing for an estimate of the form~\eqref{eq:WassStability}.
This stability enables an effective application of the coupling method as worked out in \cite{carrillo2018analytical} to prove existence (and uniqueness) of solutions of the corresponding mean-field PDE.

However, due to the particle selection principle implemented through the choice of quantile of the lower-level objective function~$L$, our consensus point $\mAlphaBeta{\varrho}$, as defined in \eqref{eq:consensus_point},
is no longer stable w.r.t.\@ Wasserstein perturbations of the measure~$\varrho$. We demonstrate this through a counterexample in Remark~\ref{rem:counter_example}. Indeed, we show that $\mAlphaBetanoarg$ cannot be Lipschitz continuous w.r.t.\@ the Wasserstein distance (alone). In order to resolve this issue, we provide in Proposition~\ref{thm:stabilityEst} a new stability estimate for our consensus point $\mAlphaBetanoarg$ by showing that $\mAlphaBetanoarg$ is stable w.r.t.\@ a combination of Wasserstein and $L^2$ perturbations.

\begin{remark}[Lack of stability of the consensus point~$\mAlphaBetanoarg$ w.r.t.\@ Wasserstein perturbations]
\label{rem:counter_example}
    We show that there \textit{cannot} exist a constant $c > 0$ such that
    \begin{equation}
        \label{eq:WassStability}
        \N{\mAlphaBeta{\varrho} - \mAlphaBeta{\widetilde{\varrho}}}_2
        \leq c W_2 \left(\varrho, \widetilde{\varrho} \right),
    \end{equation}
    for all $\varrho, \widetilde{\varrho} \in \CP_2(\R^2)$. For this two dimensional counterexample, consider the setting where the objective functions are $L(\theta) = \N{\theta}_2$ and $G(\theta) = \N{\theta}_2$. Moreover, let us set the quantile hyperparameter $\beta$ to be $\beta = 0.3$. Consider measures~$\varrho, \widetilde{\varrho} \in \CP(\R^2)$ 
    as depicted in Figure~\ref{fig:counter_example}.
    The mass of the measure $\varrho$ is distributed equally over two concentric circles (depicted in gray in Figure~\ref{fig:counter_example_left}) centered around the origin. The perturbed measure $\widetilde{\varrho}$ is obtained by shifting a quarter of the total mass of the measure $\varrho$ from the right side of the outer circle to the right by a distance of $s$ (depicted in blue in Figure~\ref{fig:counter_example_right}).
    The $2$-Wasserstein distance between $\varrho$ and $\widetilde{\varrho}$ is clearly $W_2\left(\varrho, \widetilde{\varrho}\right) = \frac{1}{2} s$. On the other hand, according to definition \eqref{eq:consensus_point}, the consensus point $\mAlphaBeta{\widetilde{\varrho}}$ (depicted as a purple dot in Figure~\ref{fig:counter_example_right}) moves from $\mAlphaBeta{\varrho}$  (depicted as an orange dot in Figure~\ref{fig:counter_example_left} and \ref{fig:counter_example_right}) to the left.
    The distance between these two points is $\Nbig{\mAlphaBeta{\varrho} - \mAlphaBeta{\widetilde{\varrho}}}_2 =c_{\alpha}$, where $c_{\alpha}$ is a constant that depends on the hyperparameter $\alpha$ but not on $s$. More details on this example are provided in the caption of Figure \ref{fig:counter_example}. 
    
    Therefore, in this case, there cannot exist a constant $c > 0$ such that \eqref{eq:WassStability} holds true.
    \definecolor{blue_intuition}{RGB}{0,114,189}
    \definecolor{orange_intuition}{RGB}{201,92,47}
    \definecolor{purple_intuition}{RGB}{148,37,51}
    \begin{figure}[!htb]
    	\centering
    	\subcaptionbox{\label{fig:counter_example_left}Measure~{\color{gray}$\varrho$} and consensus point~{\color{orange_intuition}$\mAlphaBeta{{\color{gray}\varrho}}$}}{\includegraphics[trim=0 0 0 0 ,clip,width=0.4\textwidth]{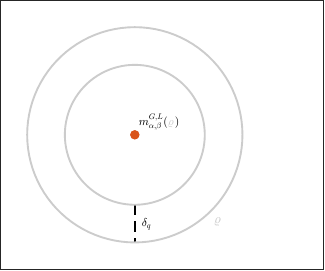}
    	}
    \hspace{2em}
    	\subcaptionbox{\label{fig:counter_example_right}Measure~{\color{blue_intuition}$\widetilde\varrho$} and consensus point~{\color{purple_intuition}$\mAlphaBeta{{\color{blue_intuition}\widetilde\varrho}}$}}{\includegraphics[trim=0 0 0 0 ,clip,width=0.4\textwidth]{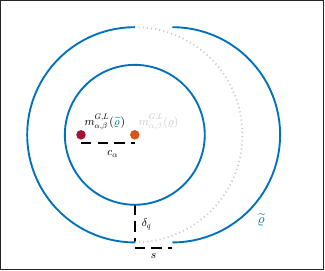}
    	}
    \caption{An illustration that the consensus point~$\mAlphaBeta{\varrho}$ as defined in \eqref{eq:consensus_point} cannot be stable w.r.t.\@ Wasserstein perturbations of the measure~$\varrho$.
    For this counterexample, consider the setting where the objective functions are $L(\theta) = \N{\theta}_2$ and $G(\theta) = \N{\theta}_2$.
    Moreover, let us set the quantile hyperparameter $\beta = 0.3$.
    On the left, in Figure~\ref{fig:counter_example_left} we depict the measure $\varrho$ together with its corresponding consensus point $\mAlphaBeta{\varrho}$. 
    The two concentric circles are level sets of the function $L$ and 
    we assume that the mass of measure $\varrho$ is equally split between the two circles and uniformly distributed over the circles.
    By construction, these two level sets correspond to the function values $\frac{2}{\beta} \int_{\beta/2}^{\beta} q_a^{L} da$ and $\frac{2}{\beta} \int_{\beta/2}^{\beta} q_a^{L} da + \delta_q$, respectively, where $q_a^L$ is the same quantile value for $\varrho$ or $\tilde \varrho$, since $a \leq \beta=0.3$. Consequently, if $R$ is set to be large enough, the measure $\Ibeta{\varrho} $ coincides with $\varrho$ and thus, based on \eqref{eq:consensus_point}, the consensus point~$\mAlphaBeta{\varrho}$ lies at the center of the circles. On the right, in Figure~\ref{fig:counter_example_right}, we construct the measure $\widetilde\varrho$ and plot it together with its corresponding consensus point $\mAlphaBeta{\widetilde\varrho}$. 
    The perturbed measure $\widetilde{\varrho}$ is obtained by shifting a quarter of the total mass of the measure $\varrho$ from the right side of the outer circle to the right by a distance of $s$ (depicted in blue in Figure~\ref{fig:counter_example_right}).
    By definition, the measure $\Ibeta{\widetilde{\varrho}}$ now includes the mass on the inner circle as well as the mass on the left side of the outer circle, but not the mass that has been shifted to the right.
    Consequently, the consensus point $\mAlphaBeta{\widetilde{\varrho}}$ shifts to the left, as shown in Figure~\ref{fig:counter_example_right}.
    The size~$c_\alpha$ of this shift, however, is independent of $s$.} 
    \label{fig:counter_example}
    \end{figure}
\end{remark}

\begin{proposition}[Stability estimate for the consensus point~$\mAlphaBetanoarg$]\label{thm:stabilityEst}
Let $L \in \CC(\bbR^d)$ and $G \in \CC(\bbR^d)$ satisfy \ref{asm:minimizers}--\ref{asm:growthBound_G}.
Moreover, let $\varrho, \widetilde{\varrho} \in \CP_2(\R^d) \cap L^{\infty}(B_R(0))$ with $\int_{\R^d} \N{\theta}_2^2 \varrho d\theta, \int_{\R^d} \N{\theta}_2^2 \widetilde{\varrho} d\theta \leq K$, and such that $\varrho (\Qbeta{\varrho} ), \widetilde{\varrho} ( \Qbeta{\widetilde{\varrho}} ) \geq c$ for constants $K < \infty$ and $c > 0$.
Then we have the stability estimate
\begin{equation}
    \N{\mAlphaBeta{\varrho} - \mAlphaBeta{\widetilde{\varrho}}}_2
    \leq C \left( \|\varrho - \widetilde{\varrho}\|_{L^2(B_R(0))} + W_2 \left( \varrho, \widetilde{\varrho} \right) \right),
\end{equation}
where the constant $C$ only depends on $R$, $K$, $c_K$  as defined in Lemma~\ref{lem:upperBound} (which depends on $c$), and $\widetilde{C}$ as derived in Lemma~\ref{lem:auxLemStability}. 
\end{proposition}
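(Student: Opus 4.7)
The overall strategy is to decompose the difference of consensus points into a numerator-difference and a denominator-difference, reduce both to estimates of signed integrals against $\Ibeta{\varrho} - \Ibeta{\widetilde\varrho}$, and then split the variation of the restricted measure into a ``density part'' (controlled by $L^2$) and a ``set part'' (controlled by symmetric difference of sub-level sets, which will be controlled by Wasserstein through a quantile-stability auxiliary lemma).

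First, using the identity $\mAlphaBeta{\varrho} = \Nnormal{\omegaa}_{L^1(\Ibeta{\varrho})}^{-1} \int \theta \,\omegaa(\theta)\, d\Ibeta{\varrho}(\theta)$, I would write
\begin{align*}
	\mAlphaBeta{\varrho} - \mAlphaBeta{\widetilde\varrho}
	= \frac{1}{\Nnormal{\omegaa}_{L^1(\Ibeta{\varrho})}} \int \theta\,\omegaa \,d\bigl(\Ibeta{\varrho} - \Ibeta{\widetilde\varrho}\bigr)
	+ \mAlphaBeta{\widetilde\varrho}\, \frac{\Nnormal{\omegaa}_{L^1(\Ibeta{\widetilde\varrho})} - \Nnormal{\omegaa}_{L^1(\Ibeta{\varrho})}}{\Nnormal{\omegaa}_{L^1(\Ibeta{\varrho})}}.
\end{align*}
Since both $\Ibeta{\varrho}$ and $\Ibeta{\widetilde\varrho}$ are supported in $B_R(0)$, and since $G$ is continuous and therefore bounded on $B_R(0)$, the integrands $\theta\,\omegaa$ and $\omegaa$ are uniformly bounded on the relevant sets by constants depending only on $R$ and $\alpha$. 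The lower bound $\Nnormal{\omegaa}_{L^1(\Ibeta{\varrho})} \geq c_K > 0$ from Lemma~\ref{lem:upperBound} (which exploits $\varrho(\Qbeta{\varrho}) \geq c$ together with the lower bound of $\omegaa$ on $B_R(0)$) turns both prefactors into uniformly bounded constants. It therefore remains to estimate quantities of the form $\int f\, d(\Ibeta{\varrho} - \Ibeta{\widetilde\varrho})$ for $f$ bounded and supported in $B_R(0)$.

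Next, I would perform the key splitting
\begin{align*}
    \Ibeta{\varrho} - \Ibeta{\widetilde\varrho}
    = \mathbbm{1}_{\Qbeta{\varrho}}(\varrho - \widetilde\varrho) + \bigl(\mathbbm{1}_{\Qbeta{\varrho}} - \mathbbm{1}_{\Qbeta{\widetilde\varrho}}\bigr)\,\widetilde\varrho.
\end{align*}
For the first summand, since $f\,\mathbbm{1}_{\Qbeta{\varrho}}$ is a bounded function supported on the bounded set $B_R(0)$, the Cauchy–Schwarz inequality gives a bound of the form $C\,\Nnormal{\varrho - \widetilde\varrho}_{L^2(B_R(0))}$, with $C$ depending on $R$ and $\alpha$. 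For the second summand, boundedness of $f$ yields an estimate by $C \cdot \widetilde\varrho\bigl(\Qbeta{\varrho} \,\triangle\, \Qbeta{\widetilde\varrho}\bigr)$, which by the $L^\infty$ assumption on $\widetilde\varrho$ is further bounded by $C\,\Nnormal{\widetilde\varrho}_{L^\infty}\cdot\mathrm{Vol}\bigl(\Qbeta{\varrho}\,\triangle\,\Qbeta{\widetilde\varrho}\bigr)$. Observing that the symmetric difference of the two sub-level sets of $L$ is contained in the ``slab'' $\{\theta \in B_R(0): T_{\min} \leq L(\theta) \leq T_{\max}\}$, where $T_\varrho := \tfrac{2}{\beta}\int_{\beta/2}^\beta \qa{\varrho}\,da + \delta_q$ and $T_{\widetilde\varrho}$ is defined analogously, Assumption~\ref{asm:LipschitzDistrib} yields
\begin{align*}
    \mathrm{Vol}\bigl(\Qbeta{\varrho}\,\triangle\,\Qbeta{\widetilde\varrho}\bigr)
    \leq C_L\,\bigl|T_\varrho - T_{\widetilde\varrho}\bigr|.
\end{align*}

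Finally, I would invoke Lemma~\ref{lem:auxLemStability}, which I expect to provide the quantile-stability bound $|T_\varrho - T_{\widetilde\varrho}| \leq \widetilde C\bigl(\Nnormal{\varrho - \widetilde\varrho}_{L^2(B_R(0))} + W_2(\varrho,\widetilde\varrho)\bigr)$, with constants depending on $K$ and $c$. Chaining all of the above inequalities, and collecting constants into a single $C$ depending only on $R$, $K$, $c_K$, and $\widetilde C$, delivers the claimed stability estimate. \textbf{Main obstacle.} The genuinely nontrivial step is establishing this quantile/threshold stability encapsulated in Lemma~\ref{lem:auxLemStability}: a pointwise quantile $\qbeta{\varrho}$ of $L_\#\varrho$ is \emph{not} Lipschitz under Wasserstein perturbations in general (indeed, Remark~\ref{rem:counter_example} showcases precisely this failure mode for $\mAlphaBetanoarg$). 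The regularized averaged threshold $\tfrac{2}{\beta}\int_{\beta/2}^\beta \qa{\varrho}\,da$, combined with the Lipschitzness of the distribution function (Assumption~\ref{asm:LipschitzDistrib}) and the bi-regularity hypothesis $\varrho,\widetilde\varrho \in L^\infty$, will be what restores stability. The Wasserstein term handles shifts of mass between level sets of $L$, while the $L^2$ component handles local redistributions of mass within level sets, mirroring the two mechanisms that Remark~\ref{rem:counter_example} highlights as the sources of potential discontinuity.
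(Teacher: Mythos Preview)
Your proposal is correct and follows essentially the same approach as the paper: a numerator/denominator split reducing to integrals against $\Ibeta{\varrho}-\Ibeta{\widetilde\varrho}$, then the splitting $\mathbbm{1}_{\Qbeta{\varrho}}(\varrho-\widetilde\varrho) + (\mathbbm{1}_{\Qbeta{\varrho}}-\mathbbm{1}_{\Qbeta{\widetilde\varrho}})\widetilde\varrho$, Cauchy--Schwarz on the first piece, and $\Nnormal{\widetilde\varrho}_{L^\infty}\cdot C_L|T_\varrho-T_{\widetilde\varrho}|$ via Assumption~\ref{asm:LipschitzDistrib} on the second. One labeling correction: in the paper, Lemma~\ref{lem:auxLemStability} is precisely the full estimate $\Nbig{\int_{\Qbeta{\varrho}}f\varrho-\int_{\Qbeta{\widetilde\varrho}}f\widetilde\varrho}_2\leq\widetilde C(\Nnormal{\varrho-\widetilde\varrho}_{L^2(B_R)}+W_2)$ that you have reproved inline, whereas the threshold stability you attribute to it is a separate auxiliary result (Lemma~\ref{lem:1dWass}) which bounds $|T_\varrho-T_{\widetilde\varrho}|$ by $W_2(\varrho,\widetilde\varrho)$ \emph{alone}, using only the Lipschitzness of $L$ (Assumption~\ref{asm:LipObjL}) together with the one-dimensional identity $W_2(L_\#\varrho,L_\#\widetilde\varrho)=\Nnormal{q^L_\bullet[\varrho]-q^L_\bullet[\widetilde\varrho]}_{L^2(0,1)}$; no $L^2$ term or $L^\infty$ assumption is needed at that step.
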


\begin{remark}
By assuming that $\varrho, \widetilde{\varrho}$ satisfy $\varrho ( \Qbeta{\varrho} ), \widetilde{\varrho} (\Qbeta{\widetilde{\varrho}} ) \geq c$, we in particular ensure that the consensus points $\mAlphaBeta{\varrho}$, $ \mAlphaBeta{\widetilde{\varrho}}$ are well-defined.
\end{remark}

Before providing the proof of Proposition~\ref{thm:stabilityEst}, we first state and prove an auxiliary lemma that we use in what follows. 

\begin{lemma}\label{lem:auxLemStability}
Let $L \in \CC(\bbR^d)$ satisfy \ref{asm:minimizers}--\ref{asm:LipschitzDistrib}.
Then we have, for any bounded function $f: \R^d \rightarrow \R^m$ such that $\sup_{\theta \in \R^d} \N{f(\theta)}_2 \leq M$ with constant $M > 0$, and $\varrho, \widetilde{\varrho} \in \CP(\R^d) \cap L^{\infty}(B_R(0))$, the inequality
\begin{equation}
\N{\int_{\Qbeta{\varrho}} f \varrho d\theta - \int_{\Qbeta{\widetilde{\varrho}}} f \widetilde{\varrho}  d\theta}_2 \leq \widetilde{C} \left(\|\varrho - \widetilde{\varrho}\|_{L^2(B_R(0))} + W_2\left(\varrho, \widetilde{\varrho} \right) \right),
\end{equation}
where the constant $\widetilde{C}$ only depends on $R$, $M$, $\|\widetilde{\varrho}\|_{L^{\infty}(B_R(0))}$ and $C_L$ (i.e., the level set constant that we assumed for $L$).
\end{lemma}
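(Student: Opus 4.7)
The plan is to decompose the difference into a \emph{mass-transport piece}, which measures how $\varrho$ and $\widetilde\varrho$ differ on a common domain, and a \emph{set-perturbation piece}, which captures how the sub-level sets $\Qbeta{\varrho}$ and $\Qbeta{\widetilde\varrho}$ differ. Concretely, I would write
\begin{equation*}
\int_{\Qbeta{\varrho}} f \varrho\, d\theta - \int_{\Qbeta{\widetilde\varrho}} f \widetilde\varrho\, d\theta
= \int_{\Qbeta{\varrho}} f (\varrho - \widetilde\varrho)\, d\theta + \int \big(\mathbbm{1}_{\Qbeta{\varrho}} - \mathbbm{1}_{\Qbeta{\widetilde\varrho}}\big) f \widetilde\varrho\, d\theta.
\end{equation*}
Since $\Qbeta{\varrho} \subseteq B_R(0)$ by \ref{eq:Qbetaregularizationi} and $\sup_{B_R(0)} \N{f}_2 \leq M$, the first term is bounded by $M |B_R(0)|^{1/2} \|\varrho - \widetilde\varrho\|_{L^2(B_R(0))}$ via Cauchy--Schwarz, which supplies the $L^2$ contribution in the claimed estimate.

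For the second term, set $q_\varrho := \tfrac{2}{\beta}\int_{\beta/2}^\beta \qa{\varrho}\,da + \delta_q$ and define $q_{\widetilde\varrho}$ analogously; without loss of generality $q_\varrho \leq q_{\widetilde\varrho}$, so $\Qbeta{\varrho}\subseteq\Qbeta{\widetilde\varrho}$ and the symmetric difference is contained in $\{\theta \in B_R(0): q_\varrho < L(\theta) \leq q_{\widetilde\varrho}\}$. By \ref{asm:LipschitzDistrib}, the Lebesgue measure of this set is at most $C_L |q_\varrho - q_{\widetilde\varrho}|$, so the second term is controlled by $M\|\widetilde\varrho\|_{L^\infty(\R^d)} C_L |q_\varrho - q_{\widetilde\varrho}|$. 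By the triangle inequality, $|q_\varrho - q_{\widetilde\varrho}| \leq \tfrac{2}{\beta}\int_{\beta/2}^\beta |\qa{\varrho} - \qa{\widetilde\varrho}|\,da$, which reduces the problem to bounding the quantile differences.

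The main step is to control those quantile differences by $W_2(\varrho, \widetilde\varrho)$. For this I would use the classical one-dimensional identity $W_1(L_\#\varrho,\, L_\#\widetilde\varrho) = \int_0^1 |\qa{\varrho} - \qa{\widetilde\varrho}|\,da$ for pushforward measures on the real line, which immediately gives $\int_{\beta/2}^\beta |\qa{\varrho} - \qa{\widetilde\varrho}|\,da \leq W_1(L_\#\varrho,\, L_\#\widetilde\varrho)$. Combined with Kantorovich--Rubinstein duality and the $C_1$-Lipschitz continuity of $L$ from \ref{asm:LipObjL} (composing a $1$-Lipschitz test function with $L$ yields a $C_1$-Lipschitz function), one gets $W_1(L_\#\varrho,\, L_\#\widetilde\varrho) \leq C_1 W_1(\varrho, \widetilde\varrho) \leq C_1 W_2(\varrho, \widetilde\varrho)$, the last inequality by H\"older. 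Assembling everything with
\begin{equation*}
\widetilde C := \max\!\left\{M|B_R(0)|^{1/2},\; \tfrac{2 M C_L C_1}{\beta}\|\widetilde\varrho\|_{L^\infty(\R^d)}\right\}
\end{equation*}
yields the claim. The main subtlety is to keep track of the two distinct Lipschitz constants in play: $C_L$ from \ref{asm:LipschitzDistrib} is used to bound the volume of the thin level-set strip produced by perturbing the threshold, whereas $C_1$ from \ref{asm:LipObjL} is what transfers Wasserstein distance from $\varrho$ to its pushforward $L_\#\varrho$; both are indispensable and enter the argument in qualitatively different ways.
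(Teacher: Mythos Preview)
Your proof is correct and follows essentially the same approach as the paper: the same splitting into a mass-transport term (bounded via Cauchy--Schwarz on $B_R(0)$) and a set-perturbation term (bounded via \ref{asm:LipschitzDistrib} applied to the thin level-set strip, then controlling the threshold difference by the Wasserstein distance of the pushforwards $L_\#\varrho,\,L_\#\widetilde\varrho$). The only minor variation is that you route the quantile estimate through $W_1(L_\#\varrho,L_\#\widetilde\varrho)=\int_0^1|\qa{\varrho}-\qa{\widetilde\varrho}|\,da$ and Kantorovich--Rubinstein, whereas the paper (in its auxiliary Lemma~\ref{lem:1dWass}) uses Cauchy--Schwarz to pass to $W_2(L_\#\varrho,L_\#\widetilde\varrho)=\big(\int_0^1|\qa{\varrho}-\qa{\widetilde\varrho}|^2\,da\big)^{1/2}$; both are equivalent up to harmless constants and both finish with the Lipschitz property \ref{asm:LipObjL} of $L$ to descend to $W_2(\varrho,\widetilde\varrho)$.
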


\begin{proof}
We can compute
\begin{equation}
\begin{aligned}
\left\| \int_{\Qbeta{\varrho}} f \varrho d\theta - \int_{\Qbeta{\widetilde{\varrho}}} f \widetilde{\varrho}  d\theta \right\|_2
& \leq \left\| \int_{\Qbeta{\varrho}} f \left(\varrho - \widetilde{\varrho} \right)d\theta \right\|_2 + \left\| \int_{\Qbeta{\varrho}} f \widetilde{\varrho}d\theta - \int_{\Qbeta{\widetilde{\varrho}}} f \widetilde{\varrho} d\theta \right\|_2\\
&=: T_1 + T_2,
\end{aligned}
\end{equation}
where $T_1$ and $T_2$ are defined implicitly.
For the term $T_1$, we have
\begin{equation}
\begin{aligned}
T_1 := \left\| \int_{\Qbeta{\varrho}} f \left(\varrho - \widetilde{\varrho} \right)d\theta \right\|_2 &\leq \left(\int_{\Qbeta{\varrho}} \left\| f(\theta) \right\|_2^2  d\theta \right)^{\frac{1}{2}} \left( \int_{\Qbeta{\varrho}} | \varrho(\theta) - \widetilde{\varrho}(\theta) |^2 d\theta\right)^{\frac{1}{2}}\\
&\leq \left(\int_{B_R(0)} \left\| f(\theta) \right\|_2^2  d\theta \right)^{\frac{1}{2}} \left( \int_{B_R(0)} | \varrho(\theta) - \widetilde{\varrho}(\theta) |^2 d\theta\right)^{\frac{1}{2}}\\
&\leq M \mathrm{Vol}(B_R(0))^{\frac{1}{2}} \|\varrho - \widetilde{\varrho}\|_{L^2(B_R(0))}, 
\end{aligned}
\end{equation}
where the first inequality follows from Cauchy-Schwarz inequality, the second inequality uses the fact that $\Qbeta{\varrho} \subseteq B_R(0)$ by definition \eqref{eq:Q_beta}, and the last inequality comes from $\sup_{\theta \in \R^d} \|f(\theta)\|_2 \leq M$.
For the second term $T_2$, without loss of generality, let us assume $\int_{\beta/2}^{\beta} \qa{\widetilde{\varrho}} da \leq \int_{\beta/2}^{\beta} \qa{\varrho} da$, and further denote
\begin{equation}
    \mathcal{Q}_{\beta}[\varrho, \widetilde{\varrho}] := \left\{\theta \in B_R(0) : \frac{2}{\beta}\int_{\beta/2}^{\beta} \qa{\widetilde{\varrho}} da + \delta_q \leq L(\theta) \leq \frac{2}{\beta} \int_{\beta/2}^{\beta} \qa{\varrho} da + \delta_q \right\} .
\end{equation}
Then, again by the definition of $\Qbeta{\dummy}$ in \eqref{eq:Q_beta}, we can compute
\begin{equation}
\begin{aligned}
T_2 &:= \left\| \int_{\Qbeta{\varrho}} f \widetilde{\varrho}d\theta - \int_{\Qbeta{\widetilde{\varrho}}} f \widetilde{\varrho} d\theta \right\|_2 \\
& = \left\| \int_{\{\theta \in B_R(0):\; L(\theta) \leq \frac{2}{\beta} \int_{\beta/2}^{\beta} \qa{\varrho} + \delta_q\}} f\widetilde{\varrho} d\theta - \int_{\{\theta \in B_R(0):\; L(\theta) \leq \frac{2}{\beta} \int_{\beta/2}^{\beta} \qa{\widetilde{\varrho}} + \delta_q\}} f\widetilde{\varrho} d\theta\right\|_2\\
&\leq M \left\| \widetilde{\varrho} \right\|_{L^{\infty}(\R^d)}  \int_{\mathcal{Q}_{\beta} [\varrho, \widetilde{\varrho}]} 1 d\theta \\
&\leq M \left\| \widetilde{\varrho} \right\|_{L^{\infty}(\R^d)} C_L \frac{2}{\beta} \left| \int_{\beta/2}^{\beta}\qa{\varrho} - \int_{\beta/2}^{\beta} \qa{\widetilde{\varrho}} \right|\\
&\leq M \left\| \widetilde{\varrho} \right\|_{L^{\infty}(\R^d)} C_{\beta, L} C_L W_2 \left(\varrho, \widetilde{\varrho} \right).
\end{aligned}
\end{equation}
Here the second-to-last inequality uses Assumption \ref{asm:LipschitzDistrib}, and the last inequality employs Lemma~\ref{lem:1dWass}. We conclude the proof by combining the bounds for both terms $T_1$ and $T_2$.
\end{proof}

With this auxiliary result, we can now prove Proposition~\ref{thm:stabilityEst}.

\begin{proof}[Proof of Proposition~\ref{thm:stabilityEst}]
Let $g(\theta; \varrho) := {\theta \omegaa(\theta)}/{\left\| \omegaa\right\|_{L^1(\Ibeta{\varrho})}}$, where $\omegaa(\dummy)$ and $\Ibeta{\dummy}$ are defined as in \eqref{eq:consensus_point} and \eqref{eq:I_beta}, respectively. From the definition of the consensus point in \eqref{eq:consensus_point}, we obtain
\begin{equation}
\begin{aligned}
\left\| \mAlphaBeta{\varrho} - \mAlphaBeta{\widetilde{\varrho}} \right\|_2 &= \left\| \int_{\Qbeta{\varrho}} g(\theta; \varrho) \varrho d\theta - \int_{\Qbeta{\widetilde{\varrho}}} g(\theta; \widetilde{\varrho}) \widetilde{\varrho} d\theta \right\|_2\\
&\leq \left\|\int_{\Qbeta{\varrho}} \left(g(\theta; \varrho) - g(\theta; \widetilde{\varrho}) \right) \varrho d\theta \right\|_2 +  \left\| \int_{\Qbeta{\varrho}} g(\theta; \widetilde{\varrho}) \varrho d\theta - \int_{\Qbeta{\widetilde{\varrho}}} g(\theta; \widetilde{\varrho}) \widetilde{\varrho} d\theta\right\|_2\\
&=: T_1 + T_2.
\end{aligned}
\end{equation}
For the term $T_1$, we have
\begin{equation}
\begin{aligned}
T_1 &= \left\|\int_{\Qbeta{\varrho}} \left(g(\theta; \varrho) - g(\theta; \widetilde{\varrho}) \right) \varrho d\theta \right\|_2 \\
&= \left\| \int_{\Qbeta{\varrho}} \theta \omegaa(\theta)\left( \frac{1}{\|\omegaa\|_{L^1(\Ibeta{\varrho})}} - \frac{1}{\|\omegaa\|_{L^1(\Ibeta{\widetilde{\varrho}})}} \right) \varrho d\theta \right\|_2\\
&\leq \int_{\Qbeta{\varrho}} \|\theta\|_2 \frac{\omegaa(\theta)}{\|\omegaa\|_{L^1(\Ibeta{\varrho})} \|\omegaa\|_{L^1(\Ibeta{\widetilde{\varrho}})}} \varrho d\theta \left|\int_{\Qbeta{\varrho}} \omegaa(\theta') \varrho d\theta' - \int_{\Qbeta{\widetilde{\varrho}}} \omegaa(\theta') \widetilde{\varrho} d\theta' \right| \\
&\leq \int_{\Qbeta{\varrho}} \|\theta\|_2 \frac{ \exp\left(-2\alpha \minobj \right)}{\|\omegaa\|_{L^1(\Ibeta{\varrho})} \|\omegaa\|_{L^1(\Ibeta{\widetilde{\varrho}})}} \varrho d\theta \widetilde{C} \left(\|\varrho - \widetilde{\varrho}\|_{L^2(B_R(0))} + W_2\left(\varrho, \widetilde{\varrho} \right) \right) \\
&\leq c_K^2 \widetilde{C} \int_{\Qbeta{\varrho}} \|\theta\|_2 \varrho d\theta \left(\|\varrho - \widetilde{\varrho}\|_{L^2(B_R(0))} + W_2\left(\varrho, \widetilde{\varrho} \right) \right)\\
&\leq \sqrt{K} c_K^2 \widetilde{C} \left(\|\varrho - \widetilde{\varrho}\|_{L^2(B_R(0))} + W_2\left(\varrho, \widetilde{\varrho} \right) \right).
\end{aligned}
\end{equation}
Here, the second inequality comes from $\sup_{\theta \in \R^d} \left| \omegaa(\theta) \right| \leq \exp(-\alpha \minobj)$ and Lemma \ref{lem:auxLemStability}.
The third inequality uses Lemma \ref{lem:upperBound} in combination with the assumption $\varrho (\Qbeta{\varrho}), \widetilde{\varrho} (\Qbeta{\widetilde{\varrho}}) \geq c > 0$ and the last inequality holds thanks to Cauchy-Schwarz inequality and the assumption $\int_{\R^d} \|\theta\|_2^2 \varrho d\theta \leq K$.

Regarding term $T_2$, by Lemma \ref{lem:upperBound} we have
\begin{equation}
    \sup_{\theta \in B_R(0)} \left\|g(\theta; \widetilde{\varrho})\right\|_2 = \sup_{\theta \in B_R(0)} \left\|\frac{\theta \omegaa(\theta)}{\left\| \omegaa\right\|_{L^1(\Ibeta{\widetilde{\varrho}})}}\right\|_2 \leq R c_K.
\end{equation}
Applying Lemma \ref{lem:auxLemStability} again yields
\begin{equation}
\begin{split}
    T_2 &:= \left\| \int_{\Qbeta{\varrho}} g(\theta; \widetilde{\varrho}) \varrho d\theta - \int_{\Qbeta{\widetilde{\varrho}}} g(\theta; \widetilde{\varrho}) \widetilde{\varrho} d\theta\right\|_2\\
&\leq R c_K \widetilde{C} \left(\|\varrho - \widetilde{\varrho}\|_{L^2(B_R(0))} + W_2\left(\varrho, \widetilde{\varrho} \right) \right).
\end{split}
\end{equation}
We conclude the proof by combining the estimates for both terms $T_1$ and $T_2$. 
\end{proof}

\subsection{Proof of Theorem~\ref{thm:well_posedness}}
\label{sec:well_posedness_proof}
Before we present the main proof, let us first recall some preliminary results from \cite{fornasier2025regularity}, which studies the regularity of weak solutions to the following type of linear PDEs with diffusion coefficients that may not satisfy a uniform ellipticity condition:
\begin{equation}\label{eq:gen_pde}
\begin{aligned}
\partial_t \mu_t &= \nabla \cdot \left(F_t \nabla \mu_t \right) - \nabla \cdot \left( J_t \mu_t \right) + A \mu_t,\\
\mu_0 &= \varrho \in H^{l+2} (\R^d), \quad \forall l \geq 0 \, .
\end{aligned}
\end{equation}
Here, $F: [0,T] \times \R^d \mapsto \R_{\geq 0}$ is a scalar function, $J: [0,T] \times \R^d \mapsto \R^d$ is a vector valued function, and $A$ is a constant.
Next, we define a class of test functions determined by $F$,
\begin{equation}\label{eq:function_class}
\begin{aligned}
\Gamma_F := \bigg\{ \phi: \; &\forall T > 0, \phi \in \CC^{2,1} ([0,T] \times \R^d), \phi \in W^{1, \infty} ([0,T]; H^l(\R^d)),\\
&\int_{\R^d} F_t^2 \|\phi^{(l)}\|^2 d\theta < \infty, \forall t \in [0,T], l \geq 0,\, \text{and} \int_0^T \int_{\R^d} F_t^3 \|\phi^{(l)}\|^2 d\theta dt < \infty, \forall l \geq 0  \bigg\},
\end{aligned}
\end{equation}
which, in turn, we use to introduce a notion of weak solution to \eqref{eq:gen_pde}.
\begin{definition}[Definition 2.4 in \cite{fornasier2025regularity}]\label{def:weak_sol_def_2}
We say $\mu$ is a weak solution to equation \eqref{eq:gen_pde} if for every $t\in [0,\infty)$ $\mu_t$ is a Radon measure and for any test function $\phi \in \Gamma_F$ we have
\begin{equation*}
\begin{aligned}
&\int_0^t \int_{\R^d} \left( \partial_{\tau} \phi_{\tau}(\theta) + \nabla \cdot \left(F_{\tau}(\theta) \nabla \phi_{\tau}(\theta) \right) -   J_{\tau}(\theta) \cdot \nabla \phi_{\tau}(\theta)  + A \phi_{\tau}(\theta) \right) \mu_{\tau} (\theta) d\theta d\tau \\
&\qquad = \int_{\R^d} \mu_t (\theta) \phi_t(\theta) d\theta - \int_{\R^d} \varrho(\theta) \phi_0(\theta) d\theta,
\end{aligned}
\end{equation*}
for any $t \in [0, \infty)$.
\end{definition}
Lastly, we state one of the main theorems in \cite{fornasier2025regularity}, which is an important preliminary result used in our proof of Theorem \ref{thm:well_posedness}.
\begin{proposition}[Theorem 2.5 in \cite{fornasier2025regularity}]\label{prop:well_pose_gen_pde}
If Assumptions 2.6, 2.7, and 2.17 in \cite{fornasier2025regularity} are satisfied, then equation \eqref{eq:gen_pde} has a unique weak solution in the sense of Definition \ref{def:weak_sol_def_2}, and this weak solution belongs to the function class $\Gamma_F$ defined in \eqref{eq:function_class}.
\end{proposition}

We now have all the necessary tools at hand to present a detailed proof of the existence of solutions to the mean-field CB\textsuperscript{2}O dynamics~\eqref{eq:fokker_planck}.
For notational convenience, we abbreviate the consensus point~\eqref{eq:consensus_point} by $\mAlphaBetaRED{\dummy} = \mAlphaBeta{\dummy}$.

\begin{proof}[Proof of Theorem~\ref{thm:well_posedness}]
First note that if $\rho = \mathrm{Law} (\overbar{\theta})$ is a solution to the Fokker-Planck Equation~\eqref{eq:fokker_planck} satisfying $\rho \in \CC \left([0,T], \R^d \right)$, then we may apply standard theory of SDEs (see, e.g., Chapter 6.2 of \cite{arnold1974stochasticdifferentialequations}) to conclude that there exists a unique strong solution to the mean-field dynamics \eqref{eq:dyn_macro}.
Therefore, in what follows we focus on proving that there exists a solution to the PDE \eqref{eq:fokker_planck} with the desired regularity.


\textbf{Step 1:}
Let $\rho^0_t = \rho_0$ for all $t>0$, i.e.,  $\rho^{n=0}_{t}(\theta) = \rho_0(\theta)$ for all $\theta\in\bbR^d$ and for all $t>0$.
For $n\geq 0$,
let $\rho^{n+1}$ be the solution to the following linear Fokker-Planck equation
\begin{equation}\label{eq:linearFK}
\partial_t\rho^{n+1}_t
= \lambda\nabla \cdot \left(\left(\theta -\mAlphaBetaRED{\rho^n_t}\right)\rho^{n+1}_t\right)
+ \frac{\sigma^2}{2}\Delta \left(\N{\theta-\mAlphaBetaRED{\rho^n_t}}_2^2\rho^{n+1}_t\right)
\end{equation}
with initial data $\rho^{n+1}_0 = \rho_0 \in H^{l+2}(\R^d) \cap L^{\infty} (\R^d) \cap \CP_4(\R^d)$.
Before we make the induction assumptions on $\rho^n$, we first rewrite \eqref{eq:linearFK} in the form of \eqref{eq:gen_pde}, i.e.,
\begin{equation}\label{eq:rho_nplus1_gen_form}
\partial_t \rho_t^{n+1} = \nabla \cdot \left( F_t^n \nabla \rho_t^{n+1} \right) - \nabla \cdot \left( J_t^n \rho_t^{n+1} \right) + A \rho_t^{n+1},
\end{equation}
where, for $t \in [0,T]$,
\begin{equation}\label{eq:F_J_A}
F_t^n (\theta) := \frac{\sigma^2}{2} \|\theta - \mAlphaBetaRED{\rho_t^n}\|_2^2, \quad \quad J_t^n(\theta) := (\lambda + \sigma^2) (\theta - \mAlphaBetaRED{\rho_t^n}), \quad  A := d(\lambda + \sigma^2) \,.
\end{equation}
Now we make the following induction assumptions on $\rho^n$: (for simplicity, we use $C,c > 0$ to denote constants that are \textit{independent} of $n$)
\begin{enumerate}[label=(\roman*),labelsep=10pt,leftmargin=35pt]
    \item\label{asm:induction_asm_I}
    $\rho_t^{n} (\Qbeta{\rho_t^n} ) \geq c$ with a constant $c > 0$ that is independent of $n$ or $t \in [0,T]$.
    This ensures that $\{\mAlphaBetaRED{\rho_t^n}\}_{t \in [0,T]}$ is well-defined;
    
    \item\label{asm:induction_asm_II} $\rho^n \in \Gamma_{F^{n-1}} \cap L^{\infty} ([0,T]\times \R^d) $ with $\Gamma_{F^{n-1}}$ defined in \eqref{eq:function_class} based on function $F^{n-1}$ introduced in \eqref{eq:F_J_A}.
    Moreover, $\rho^n$ satisfies
    \begin{equation*}
        \|\rho^n \|_{W^{1, \infty} ([0,T]; H^{l} (\R^d))} \leq C \|\rho_0\|_{H^{l+2}(\R^d)}, \quad \text{and} \quad \|\rho^n\|_{L^{\infty} ([0,T] \times \R^d)} \leq C \|\rho_0\|_{L^{\infty}(\R^d)};
    \end{equation*}

    \item\label{asm:induction_asm_III}
    $\rho^n \in \CC ([0,T]; \CP_2(\R^d))$  with $\sup_{t \in [0,T]} \int_{\R^d} \|\theta\|^4  d\rho_t^n(\theta) \leq C$;
    
    \item\label{asm:induction_asm_IV} $\rho^n$ satisfies $\| \mAlphaBetaRED{\rho_t^n} - \mAlphaBetaRED{\rho_s^n} \|_2 \leq C |t-s|^{1/2}$ for any $0 \leq s, t \leq T$.
\end{enumerate}

We verify that $\rho^{n+1}$ satisfies the induction assumptions \ref{asm:induction_asm_I}-\ref{asm:induction_asm_IV}.

For the induction assumption \ref{asm:induction_asm_I}, we first notice that, by the definition of $\Qbeta{\dummy}$ in \eqref{eq:Q_beta} and Assumption \ref{asm:LipL}, 
\begin{equation}\label{eq:aux_lower_bound_1}
\begin{aligned}
\rho_t^{n+1} \left( \Qbeta{\rho_t^{n+1}} \right) &= \rho_t^{n+1} \left(\left\{\theta\in B_R(0) :  L(\theta)\leq \frac{2}{\beta}\int_{\beta/2}^{\beta} \qa{\rho_t^{n+1}}\,da +\delta_q \right\} \right)\\
&\geq  \rho_t^{n+1} \left(\left\{\theta \in B_R(0): L(\theta) \leq \minL + \delta_q \right\} \right)\\
&\geq \rho_t^{n+1} \left( B_{r_q} (\thetaG) \right), \qquad \qquad \text{with} \; \; r_q = \left( \frac{\delta_q}{H_L} \right)^{1/h_L} .
\end{aligned}
\end{equation}
Here the first inequality holds because $\frac{2}{\beta}\int_{\beta/2}^{\beta} \qa{\varrho}\,da \geq \minL$ for all $\beta \in [0, 1]$, and the second inequality uses the fact that $L(\theta) \leq \minL + \delta_q$ for all $\theta \in B_{r_q} (\thetaG)$.
Moreover, following the argument described in the proof of Proposition \ref{lem:lower_bound_probability}, we can infer that the amount of mass in the ball $B_{r_q}(\thetaG)$ w.r.t. $\rho^{n+1}$ will decrease at most exponentially fast, i.e.,
\begin{equation}\label{eq:aux_lower_bound_2}
    \rho_t^{n+1} \left( B_{r_q} (\thetaG) \right) \geq \left(\int \phi_{r_q} (\theta) d\rho_0^{n+1}(\theta) \right) \exp \left( -p t \right) = \left(\int \phi_{r_q} (\theta) d\rho_0(\theta) \right) \exp \left( -p t \right), \qquad \text{for}\;\; t \in [0,T],
\end{equation}
where $\phi_{r_q}$ and $p$ are as in \eqref{eq:mollifier} and \eqref{eq:lower_bound_probability_rate}, respectively; we note that this $p$ can be chosen to depend only on hyperparameters of our model and not on $t \in [0,T]$ or $n$. Because $\rho_0$ satisfies that $\thetaG \in \supp{\rho_0}$, in other words, $\int \phi_{r_q} (\theta) d\rho_0(\theta) > 0$, we can thus deduce that 
\begin{equation}
\label{eq:LowerBoundDenominator}
\rho_t^{n+1} \big( \Qbeta{\rho_t^{n+1}} \big) \geq c
\end{equation}
for a constant $c>0$ that is \textit{independent} of $n$ or $t \in [0,T]$ by combining the inequalities \eqref{eq:aux_lower_bound_1} and \eqref{eq:aux_lower_bound_2}. 
Furthermore, recall that $\mAlphaBetaRED{\rho_t^{n+1}}$ was introduced as
\begin{equation}
    \mAlphaBetaRED{\rho_t^{n+1}} = \int \theta \frac{\omegaa(\theta)}{\left\| \omegaa \right\|_{L^1(\Ibeta{\rho_t^{n+1}})}} d\Ibeta{\rho_t^{n+1}} (\theta),
    \label{eqn:ConsensuPointAux}
\end{equation}
with $\Ibeta{\rho_t^{n+1}} := \mathbbm{1}_{\Qbeta{\rho_t^{n+1}}} \rho_t^{n+1}$ as defined in \eqref{eq:I_beta}. Therefore, $\rho_t^{n+1} ( \Qbeta{\rho_t^{n+1}}) > 0$ ensures that the above formula is well-defined.
This shows that $\rho^{n+1}$ satisfies the induction assumption \ref{asm:induction_asm_I}.

Next, we check that the induction assumption \ref{asm:induction_asm_II} holds for $\rho_t^{n+1}$.
To apply Proposition \ref{prop:well_pose_gen_pde}, we need to check that $F_t^n$ and $J_t^n$, as defined in \eqref{eq:F_J_A}, satisfy the required assumptions.
Among them, we verify that $F_t^n(\theta)$ and $J_t^n(\theta)$ are locally Hölder continuous in time $t$ with Hölder coefficient $\gamma \in [0,1]$ for any fixed $\theta \in \R^d$.
The remaining required assumptions hold for $F_t^n(\theta)$ and $J_t^n(\theta)$ following similar arguments as in \cite[ Remark 2.18]{fornasier2025regularity}. The Hölder continuity follows from the direct computation:
\begin{equation*}
\begin{aligned}
|F_t^n(\theta) - F_s^n(\theta)| &= \frac{\sigma^2}{2} \left| \| \theta - \mAlphaBetaRED{\rho_t^n} \|_2 - \| \theta - \mAlphaBetaRED{\rho_s^n} \|_2 \right| \\
&\leq \frac{\sigma^2}{2} \left\| 2\theta + \mAlphaBetaRED{\rho_t^n} + \mAlphaBetaRED{\rho_s^n} \right\|_2 \|\mAlphaBetaRED{\rho_t^n} - \mAlphaBetaRED{\rho_s^n}\|_2 \\
&\leq \sqrt{\frac{3}{2}}\sigma^2 (\sqrt{2}\|\theta\|_2 + R) C |t-s|^{1/2} \, ;
\end{aligned}
\end{equation*}
in the last inequality, we applied Young's inequality, the fact that $\mAlphaBetaRED{\rho_t^n}, \mAlphaBetaRED{\rho_s^n} \in B_R(0)$, and the induction assumption \ref{asm:induction_asm_IV} for $\rho^n$.
With similar computations, we can also show that $J_t^n(\theta)$ is locally $\frac{1}{2}$-Hölder continuous for any fixed $\theta \in \R^d$.
Thus, by Proposition \ref{prop:well_pose_gen_pde}, we know that $\rho_t^{n+1}$ is well defined as the unique solution to the PDE \eqref{eq:rho_nplus1_gen_form} and satisfies the regularity conditions in the definition of $\Gamma_{F^n}$.
With similar computations as in the proof of  \cite[Proposition \ref{prop:well_pose_gen_pde}]{fornasier2025regularity}, we can further show that 
\begin{equation}\label{eq:uniform_norm_bound}
\|\rho^{n+1}\|_{W^{1, \infty} ([0,T]; H^l(\R^d))} \leq C(T, l, \lambda, \sigma) \|\rho_0\|_{H^{l+2}(\R^d)} \, .
\end{equation}

Next, we show that $\rho_t^{n+1} \in L^{\infty} ([0,T] \times \R^d)$.
Since we know that $\rho_t^{n+1} \in \Gamma_{F^n}$, it follows that for every $2 \leq p < \infty$ we can use $(p_t^{n+1})^{p-1}$ as a test function for the linear Fokker-Planck \eqref{eq:rho_nplus1_gen_form} and get 
\begin{equation*}
\begin{aligned}
\frac{1}{p} \frac{d}{dt} \|\rho_t^{n+1}\|_{L^p(\R^d)}^p &= \int_{\R^d} \partial_t \rho_t^{n+1} (\rho_t^{n+1})^{p-1} d\theta\\
&= \int_{\R^d} \left( \nabla \cdot \left( F_t^n(\theta) \nabla (\rho_t^{n+1})^{p-1} \right) - \nabla \cdot (J_t^n(\theta) (\rho_t^{n+1})^{p-1}) + A (\rho_t^{n+1})^{p-1} \right) \rho_t^{n+1} d\theta\\
&= -\int_{\R^d} F_t^n(\theta) \left( \nabla (\rho_t^{n+1})^{p-1} \right)^T \nabla \rho_t^{n+1} d\theta\\
&\quad + \int_{\R^d} J_t^n(\theta)^T \nabla \rho_t^{n+1} (\rho_t^{n+1})^{p-1} d\theta + A \|\rho_t^{n+1}\|_{L^p(\R^d)}^p\\
&= - (p-1) \int_{\R^d} F_t^n(\theta) \|\nabla \rho_t^{n+1}\|^2_2 (\rho_t^{n+1})^{p-2} d\theta \\
& \quad + \frac{1}{p}\int_{\R^d} J_t^n(\theta)^T  \nabla (\rho_t^{n+1})^p d\theta + A \|\rho_t^{n+1}\|_{L^p(\R^d)}^p\\
&\leq -\frac{1}{p} \int_{\R^d} \nabla \cdot J_t^n(\theta) (\rho_t^{n+1})^p d\theta + A \|\rho_t^{n+1}\|_{L^p(\R^d)}^p\\
&= \frac{p-1}{p} d (\lambda + \sigma^2) \|\rho_t^{n+1}\|_{L^p(\R^d)}^p \, .
\end{aligned}
\end{equation*}
Here, the third equality comes from integration by parts, the first inequality uses the fact that $F_t^n(\theta) = \frac{\sigma^2}{2} \|\theta - \mAlphaBetaRED{\rho_t^n}\|^2_2 \geq 0$ and integration by parts, and in the last equality we simply plug in the definitions of $J_t^n$ and $A$ as introduced in \eqref{eq:rho_nplus1_gen_form}. Using Grönwall's inequality, we obtain
\begin{equation*}
    \left\| \rho_t^{n+1} \right\|_{L^p(\R^d)} \leq \exp \left( \frac{p-1}{p} d (\lambda + \sigma^2) t \right) \left\| \rho_0 \right\|_{L^p(\R^d)} .
\end{equation*}
Thus, for any $t \in [0,T]$ we have
\begin{equation*}
\begin{aligned}
\left\| \rho_t^{n+1} \right\|_{L^{\infty}(\R^d)} = \lim_{p \rightarrow \infty} \left\| \rho_t^{n+1} \right\|_{L^p(\R^d)} &\leq \lim_{p \rightarrow \infty} \exp \left( \frac{p-1}{p} d (\lambda + \sigma^2) t \right) \left\| \rho_0 \right\|_{L^p(\R^d)}\\
&= \exp \left( d (\lambda + \sigma^2) t \right) \left\| \rho_0 \right\|_{L^{\infty}(\R^d)}\\
&\leq \exp \left( d (\lambda + \sigma^2) T \right) \left\| \rho_0 \right\|_{L^{\infty}(\R^d)} .
\end{aligned}
\end{equation*}
Form the above we conclude $\rho^{n+1} \in L^{\infty}([0,T] \times \R^d)$, showing that $\rho^{n+1}$ satisfies the induction assumption \ref{asm:induction_asm_II}.

To verify the induction assumption \ref{asm:induction_asm_III} for $\rho_t^{n+1}$, we denote the SDE corresponding to the linear Fokker-Planck Equation~\eqref{eq:linearFK} as
\begin{equation}\label{eq:sdePicard}
d\theta_t^{n+1} = - \lambda \left(\theta_t - \m{\rho_t^n} \right)dt + \sigma \N{\theta_t^{n+1} - \m{\rho_t^n}}_2 dB_t .
\end{equation}
Note that by the definition of $\mAlphaBetaRED{\dummy}$ in \eqref{eq:consensus_point}, $\N{\mAlphaBetaRED{\rho_t^n}}_2 \leq R$ for all $t \in [0,T]$.
Therefore, by standard theory of SDEs (see, e.g., Chapter 7 in \cite{arnold1974stochasticdifferentialequations}), a fourth-order moment estimate for solutions to \eqref{eq:sdePicard} takes the form
\begin{equation}
\E \N{\theta_t^{n+1}}_2^4 \leq \left(1 + \E \N{\theta_0^{n+1}}_2^4 \right) e^{at},
\end{equation}
for some constant $a > 0$ (independent of $n$ or $t$). In particular, 
\begin{equation}\label{eq:moment_bound}
    \sup_{t \in [0,T]} \int_{\R^d} \N{\theta_t^{n+1}}_2^4 d\rho_t^{n+1} \leq K
\end{equation}
for a constant $K < \infty$ that is \textit{independent} of $n$. This shows $\rho^{n+1}_t \in \CP_4(\R^d)$ for all $ t \in [0,T]$.
Furthermore, from the SDE \eqref{eq:sdePicard}, the Itô isometry yields
\begin{equation}
\label{eqn:ConvergenceWasserstein}
\begin{aligned}
\E \left\|\theta_t^{n+1} - \theta_s^{n+1} \right\|_2^2 & \leq 2 \lambda^2 |t - s| \int_s^t \left\|\theta_{\tau}^{n+1} - \mAlphaBetaRED{\rho_{\tau}^n} \right\|_2^2 d\tau + 2\sigma^2 \int_s^t \left\| \theta_{\tau}^{n+1} - \mAlphaBetaRED{\rho_{\tau}^n} \right\|_2^2 d\tau\\
&\leq 4 \left(\lambda^2 T + \sigma^2 \right) \left(K + R^2 \right) |t-s|\\
&=: b|t-s|,
\end{aligned}
\end{equation}
and, therefore, $W_2\left(\rho_t^{n+1}, \rho_s^{n+1} \right) \leq b |t-s|^{\frac{1}{2}}$, for some constant $b > 0$ that is \textit{independent} of $n \in \bbN$.
This implies that $\rho_t^{n+1} \in \CC \left([0,T], \CP_2(\R^d) \right)$, and shows that $\rho_t^{n+1}$ fulfills the induction assumption \ref{asm:induction_asm_III}.

Lastly, we verify that $\rho^{n+1}$ also meets the induction assumption \ref{asm:induction_asm_IV}.
For any $0 \leq s < t \leq T$, from the above analysis, we know that $\rho_s^{n+1}, \rho_t^{n+1}$ satisfy the assumption requirements for the stability estimate for the consensus point $\mAlphaBetaRED{\dummy}$ stated in Proposition \ref{thm:stabilityEst}.
In particular,
\begin{equation}\label{eq:Holder_cont}
\begin{aligned}
\left\| \mAlphaBetaRED{\rho_t^{n+1}} - \mAlphaBetaRED{\rho_s^{n+1}}  \right\|_2 &\leq \widetilde{C} \left( W_2 \left( \rho_t^{n+1}, \rho_s^{n+1} \right) + \|\rho_t^{n+1} - \rho_s^{n+1}\|_{L^2(B_R(0))} \right),
\end{aligned}
\end{equation}
where the constant $\widetilde{C} > 0$ is independent of $n$. 

Moreover, we can compute
\begin{equation}
\begin{aligned}
\|\rho_t^{n+1} - \rho_s^{n+1}\|_{L^2(B_R(0))} &= \left(\int_{B_R(0)} |\rho_t^{n+1}(\theta) - \rho_s^{n+1}(\theta) |^2 d\theta  \right)^{1/2} \\
&= \left( \int_{B_R(0)} \left| \int_s^t \partial_{\tau} \rho_{\tau}^{n+1} (\theta) d\tau \right|^2 d\theta \right)^{1/2} \\
&\leq \|\partial_{\tau} \rho_{\tau}^{n+1}\|_{L^2 ([s,t] \times B_R(0))} |t-s|^{1/2}\\
&\leq \|\rho^{n+1}\|_{W^{1,\infty} ([0,T]; H^l(\R^d))} |t-s|^{1/2}\\
&\leq C(T, l, \lambda, \sigma) \|\rho_0\|_{H^{l+2}(\R^d)}  |t-s|^{1/2} \, .
\end{aligned}
\end{equation}
Here, in the first inequality we apply the Cauchy-Schwarz inequality, and the last inequality makes use of inequality \eqref{eq:uniform_norm_bound}.
Integrating the above estimate and $W_2\left(\rho_t^{n+1}, \rho_s^{n+1} \right) \leq b |t-s|^{\frac{1}{2}}$, derived in \eqref{eqn:ConvergenceWasserstein}, into \eqref{eq:Holder_cont}, we obtain that
\begin{equation*}
\left\| \mAlphaBetaRED{\rho_t^{n+1}} - \mAlphaBetaRED{\rho_s^{n+1}}  \right\|_2 \leq C |t - s|^{1/2}
\end{equation*}
for a constant $C > 0$ that is independent of $n$.
With this we conclude that $\rho^{n+1}$ indeed satisfies the induction assumption \ref{asm:induction_asm_IV}.

\textbf{Step 2:} 
From Step 1, we have shown that the sequence $\{\rho^n\}_n \subset W^{1, \infty} ([0,T]; H^l(\R^d))$ has a uniformly bounded (i.e., independent of $n$) norm in this space. 
Consequently, for any fixed $r > 0$, we know that $\{\rho^n\}_n$ is bounded in $L^{\infty} ([0,T]; H^1 (B_r(0)))$ and $\{\partial_t \rho^n\}_n$ is bounded in $ L^{\infty} ([0,T]; H^{-1} (B_r(0)))$ (since $H^1({B_r(0)}) \subset H^{-1} (B_r(0))$).
With the further facts $H^1 (B_r(0)) \subset \subset L^2 (B_r(0))$ (compactly embedded) and $L^2 (B_r(0)) \hookrightarrow H^{-1} (B_r(0))$ (continuously embedded), we can use the Aubin-Lions lemma \cite{aubin1963theoreme, lions1969quelques} to conclude that there exists a subsequence $\rho^{n_k(1)}$ and a function $g_r \in \CC \left([0,T], L^2(B_r(0)) \right)$ such that
\begin{equation}
    \rho^{n_k(1)} \rightarrow g_r \; \text{in} \; \CC \left([0,T], L^2(B_r(0)) \right) \quad \text{as} \;\; k \rightarrow \infty .
\end{equation}
Repeating the above argument, from the subsequence $\{\rho^{n_k(1)}\}$ we can extract a further subsequence $\{\rho^{n_k(2)}\}$ converging to  an element $g_{2r}$ in $  \CC \left([0,T], L^2(B_{2r}(0))\right)$. We note that it must be the case that $g_{2r} (t, \theta) = g_{r}(t, \theta)$ for all $(t,\theta) \in [0,T] \times B_r(0)$. 
Proceeding in an inductive fashion, we can create a subsequence $\rho^{n_k(j)}$ of the sequence $\rho^{n_{k}(j-1)}$ such that $\rho^{n_k(j)} \rightarrow g_{jr}$ in $\CC \left([0,T], L^2(B_{jr}(0))\right)$; this construction ensures that $\rho^{n_k(j)}$ converges to $g_{hr}$ in $\CC \left([0,T], L^2(B_{hr}(0)) \right)$ for all $h \leq j$.
Moreover, $g_{jr} (t, \theta) = g_{hr}(t, \theta)$ for $(t, \theta) \in [0,T] \times B_{hr}(0)$ for every $h \leq j$.
Taking $n_k := n_k(k)$ and defining $\rho \in \CC \left([0,T]; L^2_{loc}(\R^d) \right)$ as $\rho(t,\theta) = g_{j r}(t,\theta)$ for all $(t,\theta) \in [0,T] \times B_{jr}(0)$ and $j \in \mathbb{N}_+$, we then have
\begin{equation}\label{eq:convInL2}
\rho^{n_k} \rightarrow \rho|_{B_{jr}(0)} \; \text{in} \; \CC \left([0,T], L^2(B_{jr}(0))\right) \quad \text{as} \;\; k\rightarrow \infty,
\end{equation}
for all $j \in \mathbb{N}_+$. 
This implies that
\begin{equation}\label{eq:localConvAETime}
\rho^{n_k} \rightarrow \rho \,  \text{ strongly in } \, \CC ([0,T], \Lloc(\R^d)) \, .  
\end{equation}

Next, we show that the limiting function $\rho$ satisfies the following properties (similar to the induction assumptions \ref{asm:induction_asm_I}-\ref{asm:induction_asm_IV}):
\begin{enumerate}[label=(\Roman*),labelsep=10pt,leftmargin=35pt]
    \item\label{pp:lowerBoundmass} 
    $\rho (\Qbeta{\rho_t}) \geq c$ for all $t \in [0,T]$ with constant $c > 0$ defined in \eqref{eq:LowerBoundDenominator}.
    This ensures that $\{\mAlphaBetaRED{\rho_t}\}_{t \in [0,T]}$ is well-defined;

    \item\label{pp:regularity} 
    $\rho$ satisfies:
    \begin{equation*}
        \rho \in \CC ([0,T]; L^2(\R^d)) \cap W^{1,\infty}([0,T]; H^l(\R^d)) \cap L^{\infty} ([0,T]; L^{\infty}(\R^d));
    \end{equation*}

    \item\label{pp:boundFourthMom}
    $\rho \in \CC ([0,T]; \CP_2(\R^d))$ and $\sup_{t \in [0,T] }\int_{\R^d} \|\theta\|^4 d\rho(\theta_t) \leq K$ with a constant $K > 0$ defined in \eqref{eq:moment_bound};

    \item\label{pp:CSP_continuity}
    $\rho$ satisfies $\|\mAlphaBetaRED{\rho_t} - \mAlphaBetaRED{\rho_s}\|_2 \leq C |t - s|^{1/2}$ for any $0 \leq s, t \leq T$.
\end{enumerate}

For property \ref{pp:lowerBoundmass}, with similar calculations as in \eqref{eq:aux_lower_bound_1}, we know that $\rho_t (\Qbeta{\rho_t} ) \geq \rho_t ( B_{r_q} (\thetaG) )$, where $r_q$ is defined in \eqref{eq:aux_lower_bound_1}.
Furthermore, we can compute that
\begin{equation}\label{eq:lower_bound_mass_rho_t}
\begin{aligned}
\rho_t \left( B_{r_q} (\thetaG) \right) &\geq \rho_t^{n_k} \left( B_{r_q} (\thetaG) \right) - \left| \rho_t^{n_k} \left( B_{r_q} (\thetaG) \right) - \rho_t \left( B_{r_q} (\thetaG) \right) \right| \\
&\geq \rho_t^{n_k} \left(B_{r_q}(\thetaG) \right) - \left( \mathrm{Vol} \left(B_{r_q}(\thetaG) \right) \right)^{\frac{1}{2}} \left\| \rho_t^{n_k} - \rho_t \right\|_{L^2 (B_{r_q} (\thetaG))}\\
&\geq c - \left( \mathrm{Vol} \left(B_{r_q}(\thetaG) \right) \right)^{\frac{1}{2}} \left\| \rho_t^{n_k} - \rho_t \right\|_{L^2 (B_{r_q} (\thetaG))} \, .
\end{aligned}
\end{equation}
Here the first inequality uses the inverse triangle inequality, the second inequality is an application of Cauchy-Schwarz, and the last inequality utilizes the fact that $\rho_t^{n_k} (B_{r_q} (\thetaG) ) \geq c$ for all $ k \geq 0$ and $t \in [0,T]$ as shown in \eqref{eq:LowerBoundDenominator}.
By letting $k \rightarrow \infty$ in \eqref{eq:lower_bound_mass_rho_t} and using $L^2_{\mathrm{Loc}} (\R^d)$ convergence property \eqref{eq:localConvAETime} with $\Omega = B_{r_q} (\thetaG)$, we verify that $\rho_t ( B_{r_q} (\thetaG) ) \geq c$ with a constant $c > 0$ for all $t \in [0,T]$.
This implies that $\rho_t (\Qbeta{\rho_t} ) \geq c$, and further indicates that $\mAlphaBetaRED{\rho_t}$ is well-defined for all $t \in [0,T]$.

To verify that $\rho$ satisfies property \ref{pp:regularity}, we make use of the fact that $\rho^{n_k} \rightarrow \rho$ strongly in $\CC ([0,T]; \Lloc(\R^d))$, and $\{\rho^{n_k}\}_{n_k} \subset W^{1, \infty} ([0,T]; H^l(\R^d)) \cap L^{\infty} ([0,T] \times \R^d)$ has uniform bounded norms in these two spaces, respectively.
The proof follows from standard functional analysis arguments, and we defer the details to Lemma \ref{lem:limiting_func_Sob_reg} and Lemma \ref{lem:limiting_func_L_infty_reg} in the Appendix.

For the property \ref{pp:boundFourthMom} of $\rho$, since for all $t \in [0,T]$ we have that $\rho_t^{n_k}$ strongly converges to $\rho_t$ in $\Lloc(\R^d)$ as $n \rightarrow \infty$ and $\sup_n \int \|\theta\|_2^4 d\rho^n_t(\theta) \leq K$ with a constant $K$ defined in \eqref{eq:moment_bound}, we can show that $\rho^{n_k}_t$ converges to $\rho_t$ in $W_2$-distance and moreover $\int \|\theta\|_2^4 d\rho_t(\theta) \leq \liminf_{k \rightarrow \infty} \int \|\theta\|^4_2d \rho_t^{n_k} \leq K$.
We defer the details of these facts to Lemma \ref{lem:WassConv} and Lemma \ref{lem:semiContinuity} in the Appendix. Then, by $\rho^{n_k} \in \CC ([0,T]; \CP_2 (\R^d))$ and triangle inequality, we have, for any $0 \leq s, t \leq T$,
\begin{equation*}
\lim_{t \rightarrow s} W_2 (\rho_t, \rho_s) \leq \lim_{t \rightarrow s} W_2 (\rho_t, \rho^{n_k}_t) + W_2 (\rho^{n_k}_t, \rho^{n_k}_s) + W_2 (\rho^{n_k}_s, \rho_s) \rightarrow 0 \qquad \text{as} \; k \rightarrow +\infty \, .
\end{equation*}
Thus, $\rho \in \CC ([0,T]; \CP_2(\R^d))$, and we finish verifying property \ref{pp:boundFourthMom} of $\rho$.

To justify that $\rho$ has property \ref{pp:CSP_continuity}, note that for any $0 \leq s, t \leq T$, $\rho_s$ we can compute that
\begin{equation}\label{eq:aux_triangle_ineq}
\begin{aligned}
\| \mAlphaBetaRED{\rho_t} - \mAlphaBetaRED{\rho_s} \|_2 &\leq \sqrt{3} \left( \|\mAlphaBetaRED{\rho_t} - \mAlphaBetaRED{\rho_t^{n_k}}\|_2 + \|\mAlphaBetaRED{\rho_t^{n_k}} - \mAlphaBetaRED{\rho_s^{n_k}}\|_2 + \|\mAlphaBetaRED{\rho_t^{n_s}} - \mAlphaBetaRED{\rho_s}\|_2 \right)\\
&\leq C|t-s|^{1/2} + \sqrt{3} \left( \|\mAlphaBetaRED{\rho_t^{n_k}} - \mAlphaBetaRED{\rho_t}\|_2 + \|\mAlphaBetaRED{\rho_t^{n_s}} - \mAlphaBetaRED{\rho_s}\|_2\right)\\
&\leq C|t-s|^{1/2}\\
& \quad + \widetilde{C} \left(\|\rho_t^{n_k} - \rho_t\|_{L^2(B_R(0))} + W_2 (\rho_t, \rho_t^{n_k}) + \|\rho_s^{n_k} - \rho_s\|_{L^2(B_R(0))} + W_2 (\rho_s, \rho_s^{n_k}) \right) \, .
\end{aligned}
\end{equation}
Here the first inequality applies Young's inequality, the second inequality applies the induction property \ref{asm:induction_asm_IV} of $\rho^{n_k}$, and the last inequality makes use of the stability estimate of the consensus point in Proposition \ref{thm:stabilityEst}.
We further note that the constants $C, \widetilde{C} > 0$ are \textit{independent} from $n_k$ as shown in the induction properties \ref{asm:induction_asm_I}-\ref{asm:induction_asm_IV}.
Then by letting $k \rightarrow \infty$ and noticing that $\rho^{n_k}$ strongly converges to $\rho$ in $\CC ([0,T]; \Lloc(\R^d))$, and $\rho_{\tau}^{n_k}$ converges to $\rho_{\tau}$ in $W_2$ for all $\tau \in [0,T]$, we conclude that $\rho$ satisfies the property \ref{pp:CSP_continuity}.

\textbf{Step 3:}
Next we show that the limiting function $\rho$ solves the Equation~\eqref{eq:weak_solution_identity} in the sense of Definition \ref{def:weak_solution}.

First of all, we verify that for any fixed $r > 0$,
\begin{subequations}
\begin{equation}\label{eq:convTerm1}
\int_0^T \int_{B_r(0)} \left\|  \mAlphaBetaRED{\rho_t^{n_k}} \rho_t^{n_{k+1}} -  \mAlphaBetaRED{\rho_t} \rho_t \right\|_2^2 d\theta dt \rightarrow 0 \quad \text{as} \;\; k \rightarrow \infty,
\end{equation}
\begin{equation}\label{eq:convTerm2}
\int_0^T \int_{B_r(0)} \left| \left\| \theta - \mAlphaBetaRED{\rho_t^{n_k}} \right\|_2^2 \rho_t^{n_{k+1}} - \left\| \theta - \mAlphaBetaRED{\rho_t} \right\|_2^2 \rho_t\right|^2 d\theta dt \rightarrow 0  \quad \text{as} \;\; k \rightarrow \infty \; .
\end{equation}
\end{subequations}
For term \eqref{eq:convTerm1}, one can compute
\begin{equation}\label{eq:convTerm1Varify}
\begin{aligned}
& \lim_{k \rightarrow \infty} \int_0^T \int_{B_r(0)} \left\|  \mAlphaBetaRED{\rho_t^{n_k}} \rho_t^{n_{k+1}} - \mAlphaBetaRED{\rho_t} \rho_t \right\|_2^2 d\theta dt \\
&\qquad\leq 2 \lim_{k \rightarrow \infty}\int_0^T \int_{B_r(0)} \left\| \mAlphaBetaRED{\rho_t^{n_k}} - \mAlphaBetaRED{\rho_t} \right\|_2^2  \rho_t^2 d\theta dt + 2 \lim_{k \rightarrow \infty}\int_0^T \int_{B_r(0)} \left\| \mAlphaBetaRED{\rho_t^{n_k}}\right\|_2^2  \left| \rho_t^{n_{k+1}} - \rho_t \right|^2 d\theta dt\\
&\qquad\leq 2 \int_0^T \lim_{k \rightarrow \infty}  \left\| \mAlphaBetaRED{\rho_t^{n_k}} - \mAlphaBetaRED{\rho_t} \right\|_2^2 \left(\int_{B_r(0)} \rho_t^2 d\theta \right) dt + 2 R^2 \lim_{k \rightarrow \infty}  \left\| \rho^{n_{k+1}} - \rho \right\|^2_{L^2([0,T] \times B_r(0))}\\
&\qquad\lesssim \int_0^T \lim_{k \rightarrow \infty}  \left( \left\|\rho_t^{n_k} - \rho_t \right\|_{L^2(B_R(0))} + W_2\left( \rho_t^{n_k}, \rho_t \right)\right) \left\| \rho_t \right\|^2_{L^2(B_r(0))} dt\\
&\qquad\quad + 2 R^2 \lim_{k \rightarrow \infty}  \left\| \rho^{n_{k+1}} - \rho \right\|^2_{L^2([0,T] \times B_r(0))}\\
&\qquad= 0 .
\end{aligned}
\end{equation}
The second inequality comes from the fact that $\left\| \mAlphaBetaRED{\rho_t^{n_k}} - \mAlphaBetaRED{\rho_t}\right\|_2^2 \leq 4 R^2$ and the dominated convergence theorem. The third inequality uses the stability estimate of consensus point in Proposition \ref{thm:stabilityEst} with the fact that $\varrho_t^{n_k}, \rho_t \in L_{\text{loc}}^{\infty} (\R^d)$ and $\rho_t^{n_k} (\Qbeta{\rho_t^{n_k}} ), \rho_t (\Qbeta{\rho_t} ) \geq c > 0$ for all $t \in [0,T]$ and all $k$.
The last equality uses \eqref{eq:localConvAETime} and Lemma \ref{lem:WassConv}, and \eqref{eq:convInL2}, i.e.
\begin{equation}
\lim_{k \rightarrow \infty}  \left( \left\|\rho_t^{n_k} - \rho_t \right\|_{L^2(B_R(0))} + W_2\left( \rho_t^{n_k}, \rho_t \right)\right) = 0 \quad \text{and} \quad \lim_{k \rightarrow \infty} \left\| \rho^{n_{k+1}} - \rho \right\|^2_{L^2([0,T] \times B_r(0))} = 0.
\end{equation}

For term \eqref{eq:convTerm2}, we have
\begin{equation}
\begin{aligned}
&\int_0^T \int_{B_r(0)} \left| \left\| \theta - \mAlphaBetaRED{\rho_t^{n_k}} \right\|_2^2 \rho_t^{n_{k+1}} - \left\| \theta - \mAlphaBetaRED{\rho_t} \right\|_2^2 \rho_t\right|^2 d\theta dt\\
&= \int_0^T \int_{B_r(0)} \left| \|\theta\|_2^2 \left(\rho_t^{n_{k+1}} \!-\! \rho_t \right) \!-\! 2 \theta^T \left( \mAlphaBetaRED{\rho_t^{n_k}} \rho_t^{n_{k+1}} \!-\! \mAlphaBetaRED{\rho_t} \rho_t \right) \!+\! \left( \left\|\mAlphaBetaRED{\rho_t^{n_k}} \right\|_2^2 \rho_t^{n_{k+1}} \!-\! \left\|\mAlphaBetaRED{\rho_t} \right\|_2^2 \rho_t \right) \right|^2 d\theta dt\\
&\lesssim r^4 \left\| \rho^{n_{k+1}} - \rho \right\|^2_{L^2([0,T] \times B_r(0))} + r^2 \int_0^T \int_{B_r(0)} \left\|  \mAlphaBetaRED{\rho_t^{n_k}} \rho_t^{n_{k+1}} - \mAlphaBetaRED{\rho_t} \rho_t \right\|_2^2 d\theta dt\\
&\quad\, +  \int_0^T \int_{B_r(0)} \left|  \left\|\mAlphaBetaRED{\rho_t^{n_k}} \right\|_2^2 \rho_t^{n_{k+1}} - \left\|\mAlphaBetaRED{\rho_t}\right\|_2^2 \rho_t \right|^2 d\theta dt.
\end{aligned}
\end{equation}
In the above inequality, the first two terms converge to $0$ as $k \rightarrow \infty$ by \eqref{eq:convInL2} and \eqref{eq:convTerm1}.
For the last term, we can compute that
\begin{equation}
\begin{aligned}
&\int_0^T \int_{B_r(0)} \left|  \left\|\mAlphaBetaRED{\rho_t^{n_k}} \right\|_2^2 \rho_t^{n_{k+1}} - \left\|\mAlphaBetaRED{\rho_t}\right\|_2^2 \rho_t \right|^2 d\theta dt\\
&\quad\,\leq 2 \int_0^T \int_{B_r(0)} \left| \left\|\mAlphaBetaRED{\rho_t^{n_k}} \right\|_2^2 - \left\|\mAlphaBetaRED{\rho_t} \right\|_2^2 \right\|_2^2 \rho_t^2 d\theta dt + 2 \int_0^T \int_{B_r(0)} \left\| \mAlphaBetaRED{\rho_t^{n_k}} \right\|_2^4 \left\| \rho_t^{n_{k+1}} - \rho_t \right\|_2^2 d\theta dt\\
&\quad\,\leq 2  \int_0^T \int_{B_r(0)} \left| \left( \mAlphaBetaRED{\rho_t^{n_k}} + \mAlphaBetaRED{\rho_t} \right)^T \left( \mAlphaBetaRED{\rho_t^{n_k}} - \mAlphaBetaRED{\rho_t} \right) \right|^2 \rho_t^2 d\theta dt + 2R^4 \left\| \rho^{n_{k+1}} - \rho \right\|^2_{L^2([0,T] \times B_r(0))}\\
&\quad\,\leq 8R^2 \int_0^T \int_{B_r(0)} \left\| \mAlphaBetaRED{\rho_t^{n_k}} - \mAlphaBetaRED{\rho_t} \right\|_2^2  \rho_t^2 d\theta dt + 2R^4 \left\| \rho^{n_{k+1}} - \rho \right\|^2_{L^2([0,T] \times B_r(0))}\\
&\quad\,\rightarrow 0\text{ as }k \rightarrow \infty. \quad (\text{by \eqref{eq:convInL2} and \eqref{eq:convTerm1Varify}})
\end{aligned}
\end{equation}
Therefore, we have verified that \eqref{eq:convTerm2} holds.
Furthermore, we can easily show that for any test function $\phi \in \CC_c^2 (\R^d)$,
\begin{equation}
\lim_{t \rightarrow 0} \lim_{k \rightarrow \infty} \int_{B_r(0)} \rho^{n_{k+1}}(\theta,t) \phi(\theta) d\theta = \int_{B_r(0)}\rho(\theta, 0) \phi(\theta) d\theta,
\end{equation}
\begin{equation}
\lim_{t \rightarrow T} \lim_{k \rightarrow \infty} \int_{B_r(0)} \rho^{n_{k+1}}(\theta,t) \phi(\theta) d\theta = \int_{B_r(0)}\rho(\theta, T) \phi(\theta) d\theta
\end{equation}
and
\begin{equation}
\int_0^T \int_{B_r(0)} \left\| \theta \rho_t^{n_{k+1}} - \theta\rho_t  \right\|_2^2 d\theta dt \rightarrow 0 \;\; \text{as} \;\; k \rightarrow \infty,
\end{equation}
due to $\rho^{n_k} \in \CC\left([0,T]; L^2(\R^d)\right)$ and \eqref{eq:convInL2}. 
Hence we have shown that the limiting function $\rho$ is a solution to the Equation~\eqref{eq:fokker_planck} in the weak sense (Definition \ref{def:weak_solution}). 
\end{proof}



    

    

\section{Proof Details for Section~\ref{sec:convergence}}
\label{sec:convergence_proofs}

In this section, we provide the proof details for the global convergence result of CB\textsuperscript{2}O in mean-field law, Theorem~\ref{thm:main}.
Therefore, we present in Section~\ref{sec:sketch} a proof sketch outlining the main ideas and giving an overview of the crucial steps.
Sections~\ref{sec:time_evolution_V}--\ref{sec:lower_bound_prob_mass} then provide with Lemma~\ref{lem:evolution_V}, Proposition~\ref{lem:laplace_LG}, and Proposition~\ref{lem:lower_bound_probability} the individual statements that are required and central in the overall proof of Theorem~\ref{thm:main}, which we then conclude in Section~\ref{sec:proof_main}.

For the sake of convenience, we introduce the notation $\CV(\rho_t) = \frac{1}{2}W_2^2(\rho_t,\delta_{\thetaG})$, i.e., 
\begin{equation}
	\label{eq:V}
    \CV(\rho_t) = \frac{1}{2}\int\N{\theta-\thetaG}_2^2d \rho_t(\theta).
\end{equation}

The proof framework follows the one introduced in \cite{fornasier2021consensus,fornasier2021convergence,riedl2022leveraging,riedl2024perspective}.

\subsection{Proof Sketch}
\label{sec:sketch}

In order to show the convergence of the mean-field CB\textsuperscript{2}O dynamics~\eqref{eq:fokker_planck} to the desired solution $\thetaG$ of the bi-level optimization problem~\eqref{eq:bilevel_opt},
our eventual goal is to establish that a solution $\rho$ of \eqref{eq:fokker_planck} approximates arbitrarily well a Dirac measure located at $\thetaG$ w.r.t.\@ the Wasserstein-$2$ distance~$W_2$.
More rigorously, we wish to show that for any $\varepsilon>0$, when choosing suitably the hyperparameters~$\alpha$, $\beta$, and $\delta_q$ of the CB\textsuperscript{2}O dynamics~\eqref{eq:fokker_planck},
it holds
\begin{equation}
    W_2^2(\rho_T,\delta_{\thetaG}) = 2\CV(\rho_T)\leq\varepsilon
    \quad\text{for}\quad
    T = T(\varepsilon)
\end{equation}
sufficiently large.
Let us recall here that $R$ has to be sufficiently large from the beginning fulfilling $R\geq\Nbig{\thetaG}_2+r_R$ for some $r_R>0$.

The main idea to achieve the former is to show that the functional $\CV(\rho_t)$ as defined in \eqref{eq:V} satisfies for any $\vartheta\in(0,1)$ differential inequalities of the form
\begin{align} \label{eq:proof_sketch_ODE_ineq}
	-(1+\vartheta/2)\left(2\lambda-d\sigma^2\right)\CV(\rho_t)
    \leq\frac{d}{dt}\CV(\rho_t)
	\leq -(1-\vartheta)\left(2\lambda-d\sigma^2\right)\CV(\rho_t)
\end{align}
until the time $T = T^*$ or until $\CV(\rho_T) \leq \varepsilon$.
In the case $T=T^*$, it is easy to check that the definition of $T^*$ in \eqref{eq:end_time_star_statement} in Theorem~\ref{thm:main} implies $\CV(\rho_{T^*}) \leq \varepsilon$, thus giving $\CV(\rho_T)\leq \varepsilon$ in either case.

As a first step towards \eqref{eq:proof_sketch_ODE_ineq} we derive a differential inequality for the evolution of $\CV(\rho_t)$ by using the dynamics~\eqref{eq:fokker_planck} of $\rho$.
More concisely, by using the weak form~\eqref{eq:weak_solution_identity} of \eqref{eq:fokker_planck} with test function $\phi(\theta) = 1/2\Nbig{\theta-\thetaG}_2^2$,
we derive in Lemma~\ref{lem:evolution_V} the upper bound
\begin{equation} \label{eq:proof_sketch_ODE_ineq_2}
\begin{aligned}
	\frac{d}{dt}\CV(\rho_t)
		\leq
		&-\left(2\lambda - d\sigma^2\right) \CV(\rho_t)
	    + \sqrt{2}\left(\lambda + d\sigma^2\right) \sqrt{\CV(\rho_t)} \N{\mAlphaBeta{\rho_t}-\thetaG}_2\phantom{.} \\
	    &+ \frac{d\sigma^2}{2} \N{\mAlphaBeta{\rho_t}-\thetaG}_2^2
\end{aligned}
\end{equation}
and analogously a lower bound on $\frac{d}{dt}\CV(\rho_t)$. 

To find suitable bounds on the second and third term in \eqref{eq:proof_sketch_ODE_ineq_2}, we need to control the quantity $\Nbig{\mAlphaBeta{\rho_t}-\thetaG}_2$ through appropriate choices of the hyperparameters $\alpha$, $\beta$, and $\delta_q$.
Before describing how to do this, let us recall that as of \eqref{eq:consensus_point} we have
\begin{equation}
    \mAlphaBeta{\rho_t} = \mAlpha{\Ibeta{\rho_t}}.
\end{equation}
Control on the discrepancy $\Nbig{\mAlphaBeta{\rho_t}-\thetaG}_2$ is now obtained by the quantitative quantiled Laplace principle (Q\textsuperscript{2}LP) developed in Proposition~\ref{lem:laplace_LG}.
It combines a quantile choice with the quantitative Laplace principle~\cite[Proposition~4.5]{fornasier2021consensus}.
The former, i.e., the quantitative quantile choice, is associated with a suitably small choice of $\beta>0$, to be precise, such that
\begin{equation}
    q_{\beta}[\varrho]+\delta_q\leq\underbar{L}+\min\{L_\infty,(\eta_Lr_G)^{1/\nu_L}\}
\end{equation}
holds for some small $r_G\in(0,R_G]$.
This warrants that, under the inverse continuity property~\ref{asm:icpL} on $L$ w.r.t.\@ the set $\Theta$ of all global minimizers of $L$, the support of the measure~$\Ibeta{\rho_t}$ approximates well (by being contained in) a small local neighborhood around the set~$\Theta$, denoted by $\CN_{r_G}(\Theta)$.
With the inverse continuity property~\ref{asm:icpG} on $G$ being valid on $\CN_{r_G}(\Theta)$ and thus on the support of $\Ibeta{\rho_t}$, we can imagine a decomposition (in the proof below, this decomposition will be more implicit, see Remark~\ref{rem:error_decomp_laplace})  of the error as in \eqref{eq:conspoint_errordecomp}, i.e., 
\begin{align} \label{eq:decomposition_sketch}
\begin{aligned}
    \N{\mAlphaBeta{\rho_t}-\thetaG}_2
    &= \NBig{\mAlpha{\Ibeta{\rho_t}}-\thetaG}_2\\
    &\leq \N{\mAlpha{\Ibeta{\rho_t}}-\thetaGtilde}_2 + \N{\thetaGtilde-\thetaG}_2\\
    &\leq \N{\mAlpha{\Ibeta{ \rho_t}}-\thetaGtilde}_2 + r_G,
\end{aligned}
\end{align}
where $\thetaGtilde\in B_{r_G}(\thetaG)$ exists as of \ref{asm:icpG}, thus giving the bound on the second term in \eqref{eq:decomposition_sketch}.
For an illustration, we ask the reader to consult once more Figure~\ref{fig:A5_Illustration} above and imagine the set $\CN_{r_G}(\Theta)$ to coincide with the support of $\Ibeta{\rho_t}$.
Since by assumption \ref{asm:icpG}, $\thetaGtilde$ globally minimizes the objective function $G$ restricted to $\CN_{r_G}(\Theta)$ and thus the support of $\Ibeta{\rho_t}$,
the quantitative Laplace principle can be employed to get control over the first term in \eqref{eq:decomposition_sketch}.
This is since the Laplace principle, for a sufficiently large choice of $\alpha>0$, ensures that $\mAlphaBeta{\rho_t} = \mAlpha{\Ibeta{\rho_t}}$ does approximate $\thetaGtilde$.
More precisely, under the inverse continuity property~\ref{asm:icpG} on $G$, we can show
\begin{align}
    \N{\mAlphaBeta{\rho_t} - \thetaGtilde}_2
    &\lesssim
    \ell(r_G,r)
    + \frac{\exp\left(-\alpha r\right)}{\rho_t\big(B_r(\thetaG)\big)}
\end{align}
for sufficiently small $r>0$.
Here, $\ell$ is a strictly positive but monotonically decreasing functions with $\ell(r_G,r)\rightarrow 0$ as $r_G,r\rightarrow 0$.
As long as we can guarantee that $\rho_t(B_{r}(\thetaG)) > 0$,
we achieve $\Nbig{\mAlphaBeta{\rho_t}-\thetaG}_2\lesssim \ell(r_G,r)$, which can be made arbitrarily small by suitable choices of~$r\ll 1$ as well as $r_G \ll 1$,
which in turn require sufficiently small choices of $\beta$ and $\delta_q$, as well as a sufficiently large choice of $\alpha$.

It thus remains to ensure that $\rho_t(B_{r}(\thetaG)) > 0$ for all $r > 0$.
To do so, we employ \cite[Proposition~4.6]{fornasier2021consensus}, see also Proposition~\ref{lem:lower_bound_probability} below for a more detailed explanation,
showing that the initial mass $\rho_0(B_{r}(\thetaG)) > 0$ can, thanks to an active Brownian motion term, i.e., $\sigma > 0$, decay at most at an exponential rate for any $r > 0$, therefore remaining strictly positive during finite time $[0,T]$.
\color{black}

\subsection{Time-Evolution of \texorpdfstring{$\CV(\rho_t)$}{Squared Wasserstein Distance}}
\label{sec:time_evolution_V}

In this section,
we derive evolution inequalities from above and below for the functional $\CV(\rho_t)$ defined in \eqref{eq:V}.

\begin{lemma}[Time-evolution of $\CV(\rho_t)$, cf.\@{\cite[Lemmas~4.1 and 4.3]{fornasier2021consensus}}]
    \label{lem:evolution_V}
    Let $L:\bbR^d\rightarrow\bbR$ and $G:\bbR^d\rightarrow\bbR$.
    Fix $\alpha,\beta,R,\delta_q,\lambda,\sigma > 0$.
	Moreover, let $T>0$ and let $\rho \in \CC([0,T], \CP_4(\bbR^d))$ be a weak solution to the Fokker-Planck Equation~\eqref{eq:fokker_planck} in the sense of Definition~\ref{def:weak_solution}.
	Then the functional $\CV(\rho_t)$ satisfies
	\begin{equation}
	    \label{eq:evolution_of_objective}
	\begin{split}
	    \frac{d}{dt}\CV(\rho_t)
		\leq
		&-\left(2\lambda - d\sigma^2\right) \CV(\rho_t)
	    + \sqrt{2}\left(\lambda + d\sigma^2\right) \sqrt{\CV(\rho_t)} \N{\mAlphaBeta{\rho_t}-\thetaG}_2\phantom{.} \\
	    &+ \frac{d\sigma^2}{2} \N{\mAlphaBeta{\rho_t}-\thetaG}_2^2
	\end{split}
	\end{equation}
    as well as
    \begin{equation}
        \label{eq:evolution_of_objective_lower}
        \frac{d}{dt}\CV(\rho_t)
        \geq
        -\left(2\lambda - d\sigma^2\right) \CV(\rho_t)
        - \sqrt{2}\left(\lambda + d\sigma^2\right) \sqrt{\CV(\rho_t)} \N{\mAlphaBeta{\rho_t}-\thetaG}_2.
    \end{equation}
\end{lemma}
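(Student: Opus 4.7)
The plan is to apply the weak form \eqref{eq:weak_solution_identity} of the Fokker-Planck equation with the quadratic test function $\phi(\theta)=\tfrac{1}{2}\|\theta-\thetaG\|_2^2$, whose expectation against $\rho_t$ is precisely $\CV(\rho_t)$. First I would check that $\phi$ is an admissible test function: since $\nabla\phi(\theta)=\theta-\thetaG$ grows at most linearly in $\theta$ and $\partial_{kk}^2\phi\equiv 1$ is uniformly bounded, $\phi$ belongs to the space $\CC_*^2(\R^d)$ defined in \eqref{eq:CCstar}, and Remark~\ref{rem:test_functions_redefine} allows its use even though it is not compactly supported.

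Next, exploiting that for isotropic diffusion $D(\theta-\mAlphaBeta{\rho_t})_{kk}^2=\|\theta-\mAlphaBeta{\rho_t}\|_2^2$ for every $k$, substitution into \eqref{eq:weak_solution_identity} should yield
\begin{equation*}
\frac{d}{dt}\CV(\rho_t)
=-\lambda\!\int(\theta-\mAlphaBeta{\rho_t})\cdot(\theta-\thetaG)\,d\rho_t(\theta)
+\frac{d\sigma^2}{2}\!\int\|\theta-\mAlphaBeta{\rho_t}\|_2^2\,d\rho_t(\theta).
\end{equation*}
I would then rewrite both integrands via the decomposition $\theta-\mAlphaBeta{\rho_t}=(\theta-\thetaG)-(\mAlphaBeta{\rho_t}-\thetaG)$ and expand. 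The diagonal pieces produce $-2\lambda\CV(\rho_t)$ from the first integral and $+d\sigma^2\CV(\rho_t)$ from the second, assembling the leading dissipation $-(2\lambda-d\sigma^2)\CV(\rho_t)$. The cross terms from the two integrals combine into a single expression
\begin{equation*}
(\lambda-d\sigma^2)\int(\mAlphaBeta{\rho_t}-\thetaG)\cdot(\theta-\thetaG)\,d\rho_t(\theta),
\end{equation*}
while the remaining contribution is exactly $\tfrac{d\sigma^2}{2}\|\mAlphaBeta{\rho_t}-\thetaG\|_2^2$.

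Finally, to extract the announced upper and lower bounds, I would apply Cauchy--Schwarz to the cross term, yielding
\begin{equation*}
\left|\int(\mAlphaBeta{\rho_t}-\thetaG)\cdot(\theta-\thetaG)\,d\rho_t(\theta)\right|
\leq \|\mAlphaBeta{\rho_t}-\thetaG\|_2\,\sqrt{2\CV(\rho_t)},
\end{equation*}
and dominate the prefactor via $|\lambda-d\sigma^2|\leq \lambda+d\sigma^2$ in order to match the constant $\sqrt{2}(\lambda+d\sigma^2)$ in both \eqref{eq:evolution_of_objective} and \eqref{eq:evolution_of_objective_lower}. Keeping the nonnegative term $\tfrac{d\sigma^2}{2}\|\mAlphaBeta{\rho_t}-\thetaG\|_2^2$ on the right-hand side delivers the upper estimate, while discarding it (it is always $\geq 0$) yields the lower estimate. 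No serious obstacle is anticipated: the two points requiring care are the justification of the non-compactly-supported test function $\phi$, which is covered by Remark~\ref{rem:test_functions_redefine}, and the need to consolidate the cross terms from the drift and the noise contributions \emph{before} applying the absolute-value bound on $\lambda-d\sigma^2$, so that the final constant remains $\sqrt{2}(\lambda+d\sigma^2)$ and not something larger.
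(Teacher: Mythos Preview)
Your proposal is correct and follows essentially the same route as the paper: test with $\phi(\theta)=\tfrac12\|\theta-\thetaG\|_2^2\in\CC_*^2(\R^d)$, expand via $\theta-\mAlphaBeta{\rho_t}=(\theta-\thetaG)-(\mAlphaBeta{\rho_t}-\thetaG)$, and bound the cross term with Cauchy--Schwarz and Jensen. The only cosmetic difference is that the paper applies Cauchy--Schwarz to the drift and diffusion cross terms \emph{separately} (yielding the constants $\lambda$ and $d\sigma^2$ directly), whereas you first combine them into $(\lambda-d\sigma^2)\langle\mAlphaBeta{\rho_t}-\thetaG,\bbE(\rho_t)-\thetaG\rangle$ and then inflate $|\lambda-d\sigma^2|\le\lambda+d\sigma^2$; both lead to the stated bounds.
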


\begin{proof}
    The proof follows the lines of \cite[Lemmas~4.1 and 4.2]{fornasier2021consensus}.

	To start with, we recall that the function $\theta\mapsto \phi(\theta) := 1/2\Nnormal{\theta-\thetaG}_2^2$ is in the function space $\CC_*^2(\bbR^d)$ and that $\rho$ satisfies the weak solution identity~\eqref{eq:weak_solution_identity} in Definition~\ref{def:weak_solution} for all test functions in $\CC_*^2(\bbR^d)$, see Remark~\ref{rem:test_functions_redefine}.
	Hence,
	by applying \eqref{eq:weak_solution_identity} with $\phi$ as defined above,
    we obtain for the time evolution of $\CV(\rho_t)$ that
	\begin{align}
		\frac{d}{dt} \CV(\rho_t)
		&= \underbrace{-\lambda \int \langle  \theta-\thetaG, \theta-\mAlphaBeta{\rho_t}\rangle \,d\rho_t(\theta)}_{=:T_1} + \underbrace{\frac{d\sigma^2}{2} \int \N{\theta-\mAlphaBeta{\rho_t}}_2^2 d\rho_t(\theta)}_{=:T_2},
	\end{align}
	where we used $\partial_k \phi(\theta) = (\theta-\thetaG)_k$ and $\partial^2_{kk} \phi(\theta) = 1$ for all $k\in\{1,\dots,d\}$.
	Expanding the right-hand side of the scalar product in the integrand of $T_1$ by subtracting and adding $\thetaG$ yields
	\begin{equation}
	\begin{split}
	    T_1
		&=-\lambda \int \langle \theta-\thetaG, \theta-\thetaG\rangle \,d\rho_t(\theta) + \lambda  \left\langle \int (\theta-\thetaG) \,d\rho_t(\theta), \mAlphaBeta{\rho_t} - \thetaG\right\rangle\\
		&\leq -2\lambda \CV(\rho_t) + \lambda \N{\bbE(\rho_t) - \thetaG}_2 \N{\mAlphaBeta{\rho_t}-\thetaG}_2,
	\end{split}
	\end{equation}
	with the last step involving Cauchy-Schwarz inequality.
	Analogously,
    again by subtracting and adding $\thetaG$ in the integrand of the term $T_2$,
    we have with Cauchy-Schwarz inequality the upper bound
	\begin{equation}
	\begin{split}
		T_2
		&= \frac{d\sigma^2}{2} \left(\int \N{\theta-\thetaG}_2^2 d\rho_t(\theta) - 2 \left\langle \int (\theta-\thetaG) \,d\rho_t(\theta), \mAlphaBeta{\rho_t} - \thetaG\right\rangle+  \N{\mAlphaBeta{\rho_t}-\thetaG}_2^2\right)\\
		& \leq d\sigma^2 \left(\CV(\rho_t) + \int \N{\theta-\thetaG}_2 d\rho_t(\theta)\N{\mAlphaBeta{\rho_t}-\thetaG}_2 + \frac{1}{2}\N{\mAlphaBeta{\rho_t}-\thetaG}_2^2\right).
	\end{split}
	\end{equation}
	The first part of the statement, namely the upper bound in~\eqref{eq:evolution_of_objective}, now follows after noting that by Jensen's inequality it holds $\Nnormal{\bbE(\rho_t) - \thetaG}_2 \leq \int \Nnormal{\theta-\thetaG}_2 \,d\rho_t(\theta) \leq \sqrt{2\CV(\rho_t)}$.

    What concerns the second part, i.e., the lower bound in \eqref{eq:evolution_of_objective_lower}, we have by Cauchy-Schwarz inequality also
    \begin{equation}
            \pm \left\langle \int (\theta-\thetaG) \,d\rho_t(\theta), \mAlphaBeta{\rho_t} - \thetaG\right\rangle
            \geq -\N{\bbE(\rho_t) - \thetaG}_2 \N{\mAlphaBeta{\rho_t}-\thetaG}_2.
    \end{equation}
    Thus, 
    \begin{equation}
    \begin{split}
        T_1
		&=-\lambda \int \langle \theta-\thetaG, \theta-\thetaG\rangle \,d\rho_t(\theta) + \lambda  \left\langle \int (\theta-\thetaG) \,d\rho_t(\theta), \mAlphaBeta{\rho_t} - \thetaG\right\rangle\\
		&\geq -2\lambda \CV(\rho_t) - \lambda \N{\bbE(\rho_t) - \thetaG}_2 \N{\mAlphaBeta{\rho_t}-\thetaG}_2
    \end{split}
    \end{equation}
    as well as
    \begin{equation}
	\begin{split}
		T_2
		&= \frac{d\sigma^2}{2} \left(\int \N{\theta-\thetaG}_2^2 d\rho_t(\theta) - 2 \left\langle \int (\theta-\thetaG) \,d\rho_t(\theta), \mAlphaBeta{\rho_t} - \thetaG\right\rangle+  \N{\mAlphaBeta{\rho_t}-\thetaG}_2^2\right)\\
		& \geq d\sigma^2 \left(\CV(\rho_t) -\N{\bbE(\rho_t) - \thetaG}_2 \N{\mAlphaBeta{\rho_t}-\thetaG}_2\right),
	\end{split}
	\end{equation}
    where we used additionally that $\Nnormal{\mAlphaBeta{\rho_t}-\thetaG}_2^2\geq0$.
    The lower bound \eqref{eq:evolution_of_objective_lower} now follows after recalling that $\Nnormal{\bbE(\rho_t) - \thetaG}_2 \leq \sqrt{2\CV(\rho_t)}$ by Jensen's inequality.
\end{proof}

\subsection{Quantitative Quantiled Laplace Principle~(Q\texorpdfstring{\textsuperscript{2}}{2}LP)}
\label{sec:laplace_p}

In order to leverage the differential inequalities from Lemma~\ref{lem:evolution_V} to establish the exponential convergence of the functional~$\CV(\rho_t)$,
we need to control the term $\Nbig{\mAlphaBeta{\varrho} - \thetaG}_2$ through the choices of the hyperparameters $\delta_q$, $\beta$ and $\alpha$, which is the content of the following statement, which we refer to as the quantitative quantiled Laplace principle~(Q\textsuperscript{2}LP),
as it combines a quantitative quantile choice with the quantitative Laplace principle~\cite[Proposition~4.5]{fornasier2021consensus}.

\begin{proposition}[Quantitative quantiled Laplace principle]
	\label{lem:laplace_LG}
    Let $\varrho \in \CP(\bbR^d)$,
    fix $\alpha>0$,
    let $r_G\in(0,\min\{R_G,R^H_{G},(G_{\infty}/(2H_G))^{1/h_G}\}]$ and $\delta_q\in(0,\min\{L_\infty,(\eta_Lr_G)^{1/\nu_L}\}/2]$.
    For any $r>0$ define $G_r:=\sup_{\theta\in B_{r}(\thetaG)}G(\theta)-G(\thetaG)$.
    Then, under
    the inverse continuity property~\ref{asm:icpL} on $L$,
    the H\"older continuity assumption~\ref{asm:LipL} on $L$,
    the inverse continuity property~\ref{asm:icpG} on $G$ and the H\"older continuity assumption~\ref{asm:LipG} on $G$,
    and provided that there exists
    \begin{equation}
        \label{eq:beta_QQLP}
    	\beta\in(0,1)
    	\quad\text{satisfying}\quad
    	q^L_{\beta}[\varrho]+\delta_q\leq\underbar{L}+\min\{L_\infty,(\eta_Lr_G)^{1/\nu_L}\},
    \end{equation}
    for any $r\in(0,\min\{r_R,r_G,R^H_{L},(\delta_q/H_L)^{1/h_L}\}]$ and $u > 0$ such that $u + G_{r} + H_Gr_G^{h_G} \leq G_{\infty}$,
    we have
    \begin{equation}
        \label{eq:lem:laplace_LG}
    \begin{split}
        \N{\mAlphaBeta{\varrho} - \thetaG}_2
        &\leq
        \frac{(u+G_r + H_Gr_G^{h_G})^{\nu_G}}{\eta_G} +\frac{\exp\left(-\alpha u\right)}{\varrho\big(B_r(\thetaG)\big)}\!\left(\int\N{\theta-\thetaG}_2 d\Ibeta{\varrho}(\theta)\right).
    \end{split}
    \end{equation}
    We furthermore have $B_r(\thetaG)\subset\Qbeta{\varrho}\subset\CN_{r_G}(\Theta)$.
\end{proposition}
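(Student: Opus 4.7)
The plan is to combine a quantile-based restriction of the support of $\Ibeta{\varrho}$ with a quantitative Laplace-principle decomposition on the Gibbs weights $\omega_\alpha$. First, I would establish the two containments $B_r(\thetaG)\subset\Qbeta{\varrho}\subset\CN_{r_G}(\Theta)$. The upper containment uses monotonicity of $a\mapsto q^L_a[\varrho]$, so that $\frac{2}{\beta}\int_{\beta/2}^{\beta}q^L_a[\varrho]\,da\leq q^L_\beta[\varrho]$, which together with \eqref{eq:beta_QQLP} yields $L(\theta)-\underbar{L}\leq\min\{L_\infty,(\eta_Lr_G)^{1/\nu_L}\}$ on $\Qbeta{\varrho}$. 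The bound by $L_\infty$ and the contrapositive of \eqref{eq:icpL_2} force $\theta\in\CN_{R_L}(\Theta)$, after which \eqref{eq:icpL_1} delivers $\dist(\theta,\Theta)\leq r_G$. For the lower containment, the H\"older control \ref{asm:LipL} together with the choices $r\leq(\delta_q/H_L)^{1/h_L}$ and $r\leq r_R$ place $B_r(\thetaG)$ inside both the sub-level set $\{L\leq\underbar{L}+\delta_q\}$ and the ball $B_R(0)$, as required in the definition of $\Qbeta{\varrho}$.

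Next, I would invoke \ref{asm:icpG} to introduce the auxiliary point $\thetaGtilde\in B_{r_G}(\thetaG)$, noting that \ref{asm:LipG} (for $r_G\leq R^H_G$) yields $|G(\thetaGtilde)-G(\thetaG)|\leq H_Gr_G^{h_G}$. I would then split the integrand in $\mAlphaBeta{\varrho}=\int\theta\,d\mu(\theta)$, with $d\mu=\omega_\alpha/\|\omega_\alpha\|_{L^1(\Ibeta{\varrho})}\,d\Ibeta{\varrho}$, over the good set $A:=\{\theta\in\Qbeta{\varrho}:G(\theta)-G(\thetaGtilde)\leq u+G_r+H_Gr_G^{h_G}\}$ and its complement. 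On $A$, the hypothesis $u+G_r+H_Gr_G^{h_G}\leq G_\infty$ combined with the upper containment and the contrapositive of \eqref{eq:icpG_2} places $\theta\in B_{r_G}(\thetaG)$, so that \eqref{eq:icpG_1} produces the pointwise bound $\|\theta-\thetaGtilde\|_2\leq(u+G_r+H_Gr_G^{h_G})^{\nu_G}/\eta_G$, which governs the first term of \eqref{eq:lem:laplace_LG}. On $A^c$ the weight obeys $\omega_\alpha(\theta)\leq\exp(-\alpha(G(\thetaGtilde)+u+G_r+H_Gr_G^{h_G}))$.

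Finally, I would lower bound the normalizer by restricting to the ball furnished by the lower containment: since $G(\theta)\leq G(\thetaG)+G_r$ on $B_r(\thetaG)\subset\Qbeta{\varrho}$,
\begin{equation*}
\|\omega_\alpha\|_{L^1(\Ibeta{\varrho})}\geq\exp(-\alpha(G(\thetaG)+G_r))\,\varrho\big(B_r(\thetaG)\big).
\end{equation*}
Using $G(\thetaG)-G(\thetaGtilde)\leq H_Gr_G^{h_G}$, the density ratio on $A^c$ is then at most $\exp(-\alpha u)/\varrho(B_r(\thetaG))$, producing the second term in \eqref{eq:lem:laplace_LG} once the factor $\|\theta-\thetaG\|_2$ is extracted from the integrand. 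The main obstacle I anticipate is the careful book-keeping around $\thetaGtilde$ versus $\thetaG$: a naive triangle step from $\|\theta-\thetaGtilde\|_2$ to $\|\theta-\thetaG\|_2$ would introduce an additive $r_G$ beyond the ICP bound, and the role of the $H_Gr_G^{h_G}$ correction inside the first summand of \eqref{eq:lem:laplace_LG} is precisely to absorb this displacement via the combined H\"older--ICP interplay (essentially the compatibility $\eta_G\leq H_G^{\nu_G}$ enforced by comparing \ref{asm:LipG} with \eqref{eq:icpG_1}), so that no extraneous additive $r_G$ appears in the final estimate.
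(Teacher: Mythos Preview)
Your set-containment arguments for $B_r(\thetaG)\subset\Qbeta{\varrho}\subset\CN_{r_G}(\Theta)$ and your lower bound on the normalizer $\|\omegaa\|_{L^1(\Ibeta{\varrho})}$ via restriction to $B_r(\thetaG)$ are correct and match the paper exactly. The exponential bound on $A^c$ is also fine: using $G(\thetaG)-G(\thetaGtilde)\leq H_Gr_G^{h_G}$ you indeed get the ratio $\leq\exp(-\alpha u)/\varrho(B_r(\thetaG))$.

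The difference from the paper is the choice of splitting set, and this is precisely where your argument has a gap. The paper splits over a \emph{ball} $B_{\widetilde r}(\thetaG)$ with the specific radius $\widetilde r=(u+\widetilde G_r)^{\nu_G}/\eta_G$, where $\widetilde G_r:=\sup_{\theta\in B_r(\thetaG)}G(\theta)-G(\thetaGtilde)$. On this ball the bound $\|\theta-\thetaG\|_2\leq\widetilde r$ is \emph{trivial}, and the work goes into showing that on its complement (intersected with $\Qbeta{\varrho}$) one has $G(\theta)-G(\thetaGtilde)\geq u+\widetilde G_r$ via \ref{asm:icpG}; then $\widetilde r\leq(u+G_r+H_Gr_G^{h_G})^{\nu_G}/\eta_G$ follows from $\widetilde G_r\leq G_r+H_Gr_G^{h_G}$. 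You do the dual: split over the \emph{sub-level set} $A=\{G(\theta)-G(\thetaGtilde)\leq u+G_r+H_Gr_G^{h_G}\}$, which makes the far side trivial but forces you to bound $\|\theta-\thetaG\|_2$ on $A$.

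Here your argument breaks: on $A$, \eqref{eq:icpG_1} only yields $\|\theta-\thetaGtilde\|_2\leq(u+G_r+H_Gr_G^{h_G})^{\nu_G}/\eta_G$, and the triangle inequality produces an extra $\|\thetaGtilde-\thetaG\|_2$. Your proposed fix---an ``enforced compatibility $\eta_G\leq H_G^{\nu_G}$'' from comparing \ref{asm:LipG} and \eqref{eq:icpG_1}---is not valid: those assumptions are centered at different points ($\thetaG$ vs.\ $\thetaGtilde$) with unrelated exponents $h_G$ and $1/\nu_G$, and no such inequality is assumed or implied. Even granting it, you would still only obtain the first summand of \eqref{eq:lem:laplace_LG} up to a multiplicative constant, not the stated bound. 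The clean way out is the paper's: decompose over a ball around $\thetaG$ so that the near contribution is the radius itself, and choose that radius to make the far-side inverse continuity estimate produce exactly the exponent $u$.
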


%
%

Before providing the proof of Proposition~\ref{lem:laplace_LG},
let us discuss the bound \eqref{eq:lem:laplace_LG} in view of the error decomposition $\Nbig{\mAlphaBeta{\varrho}-\thetaG}_2
    \leq \Nbig{\mAlphaBeta{\varrho}-\thetaGtilde}_2 + \Nbig{\thetaGtilde-\thetaG}_2$.

\begin{remark}
    \label{rem:error_decomp_laplace}
    The bound
    \begin{equation*}
    \begin{split}
        \N{\mAlphaBeta{\varrho} - \thetaG}_2
        &\leq
        \frac{(u+G_r + H_Gr_G^{h_G})^{\nu_G}}{\eta_G} +\frac{\exp\left(-\alpha u\right)}{\varrho\big(B_r(\thetaG)\big)}\!\left(\int\N{\theta-\thetaG}_2 d\Ibeta{\varrho}(\theta)\right)
    \end{split}
    \end{equation*}
    from~\eqref{eq:lem:laplace_LG} in Proposition~\ref{lem:laplace_LG} comprises two levels of approximation that correspond to the two error terms in the decomposition $\Nbig{\mAlphaBeta{\varrho}-\thetaG}_2
    \leq \Nbig{\mAlphaBeta{\varrho}-\thetaGtilde}_2 + \Nbig{\thetaGtilde-\thetaG}_2$ from~\eqref{eq:lem:laplace_LG}, and are thus related to the two hyperparameters $\beta$ and $\alpha$, for which we refer to the discussion in Section~\ref{sec:convergence} after Theorem~\ref{thm:main}.
    
    Let us discuss now which part of the bound~\eqref{eq:lem:laplace_LG} corresponds to which term and can be controlled through which hyperparameter.
    For this purpose, imagine taking, for $\beta$ fixed,
    $\alpha\rightarrow\infty$.
    Recall that by the Laplace principle $\mAlphaBeta{\varrho}=\mAlpha{\Ibeta{\varrho}}$ is expected to converge to $\thetaGtilde=\argmin_{\theta\in\Qbeta{\varrho}}G(\theta)$ as $\alpha\rightarrow\infty$.
    Indeed, taking $\alpha\rightarrow\infty$ in the bound \eqref{eq:lem:laplace_LG}, allows for $u\rightarrow0$ and $r\rightarrow0$, thus yielding the error bound $\Nbig{\thetaGtilde-\thetaG}_2\leq(H_Gr_G^{h_G})^{\nu_G}/\eta_G$.
    This term can be controlled through the choice of the hyperparameter~$\beta$ since, according to \eqref{eq:beta_QQLP}, we need to find $\beta\in(0,1)$ sufficiently small such that $q^L_{\beta}[\varrho]+\delta_q\leq\underbar{L}+\min\{L_\infty,(\eta_Lr_G)^{1/\nu_L}\}$ holds.
    Feasible choices of $\beta$ are described during the proof of Theorem~\ref{thm:main} in Section~\ref{sec:convergence_proofs}.
    
    The remaining error contributions in \eqref{eq:lem:laplace_LG} can be, as already alluded to, controlled through the choice of the hyperparameter~$\alpha$.
    Imagining setting $\beta=0$, \eqref{eq:lem:laplace_LG} reduces to the classical quantitative Laplace principle~\cite[Proposition~4.5]{fornasier2021consensus}.
\end{remark}

\begin{proof}[Proof of Proposition~\ref{lem:laplace_LG}]
    \textbf{Preliminaries.}
    To begin with, notice that it holds
    \begin{equation} \label{eq:qbeta_monotonicity}
        \qbetahalf{\varrho}
        \leq \frac{2}{\beta}\int_{\beta/2}^{\beta} \qa{\varrho}\,da
        \leq \qbeta{\varrho},
    \end{equation}
    since the $a$-quantile~$\qa{\varrho}$ as defined in \eqref{eq:q_beta} is monotonously increasing in $a$.

    \textbf{Set inclusion $B_r(\thetaG)\subset\Qbeta{\varrho}$.}
    We now start by establishing that with the choice of $r$ it holds $B_r(\thetaG)\subset\Qbeta{\varrho}$, which will be useful throughout the proof.
    First, since $r\leq r_R$, we have $B_r(\thetaG)\subset B_R(0)$, since for all $\theta\in B_r(\thetaG)$ it holds $\N{\theta}_2 \leq \Nbig{\theta-\thetaG}_2 + \Nbig{\thetaG}_2 \leq r + \Nbig{\thetaG}_2 \leq r_R + \Nbig{\thetaG}_2 \leq R$ by assumption on $R$ from Section~\ref{sec:CB2O}.
    Moreover, since $r\leq R^H_{L}$, employing the H\"older continuity~\ref{asm:LipL} of $L$ yields for all $\theta\in B_r(\thetaG)$ that
	\begin{equation}
		L(\theta)-\underbar{L}
        \leq H_L\N{\theta-\thetaG}_2^{h_L}
        \leq H_Lr^{h_L}
        \leq \delta_q
        \leq \frac{2}{\beta}\int_{\beta/2}^{\beta} \qa{\varrho}\,da-\underbar{L}+\delta_q,
	\end{equation}
	where we further used that $r\leq(\delta_q/H_L)^{1/h_L}$ in the next-to-last step and that obviously $\qa{\varrho}\geq\underbar{L}$ for any $a\in(0,1]$ in the last.
    Then $L(\theta)\leq\frac{2}{\beta}\int_{\beta/2}^{\beta} \qa{\varrho}\,da+\delta_q$ together with the first observation made implies $\theta\in\Qbeta{\varrho}$, ergo $B_r(\thetaG)\subset\Qbeta{\varrho}$.

	\textbf{Set inclusion $\Qbeta{\varrho}\subset\CN_{r_G}(\Theta)$.}
	We notice
    that with \eqref{eq:icpL_1} of the inverse continuity property~\ref{asm:icpL} on the function $L$ w.r.t.\@ the set $\Theta$
    it holds
    \begin{equation}
        \label{eq:proof:lem:laplace_LG:0}
        \dist(\theta,\Theta)
        \leq \frac{1}{\eta_L}\left(L(\theta)-\underbar{L}\right)^{\nu_L}
        \leq \frac{1}{\eta_L}\left(\left(\frac{2}{\beta}\int_{\beta/2}^{\beta} \qa{\varrho}\,da+\delta_q\right) -\underbar{L}\right)^{\nu_L}
        \quad \text{for all }
        \theta\in\CN_{R_L}(\Theta)\cap\Qbeta{\varrho},
    \end{equation}
    where for the latter inequality we recall that by definition of $\Qbeta{\varrho}$ it holds $L(\theta)\leq\frac{2}{\beta}\int_{\beta/2}^{\beta} \qa{\varrho}\,da+\delta_q$ for all $\theta\in\Qbeta{\varrho}$.
    Together with the observation~\eqref{eq:qbeta_monotonicity} we may further bound
    \begin{equation}
        \label{eq:proof:lem:laplace_LG:1}
        \dist(\theta,\Theta)
        \leq \frac{1}{\eta_L}\left(\left(\qbeta{\varrho}+\delta_q\right)-\underbar{L}\right)^{\nu_L}
        \quad \text{for all }
        \theta\in\CN_{R_L}(\Theta)\cap\Qbeta{\varrho}.
    \end{equation}
    For $\theta\in\Qbeta{\varrho}$ it furthermore holds, again by \eqref{eq:qbeta_monotonicity}, and by assumption on $\beta$ that
    \begin{equation}
        L(\theta)\leq\frac{2}{\beta}\int_{\beta/2}^{\beta} \qa{\varrho}\,da+\delta_q\leq\qbeta{\varrho}+\delta_q\leq\underbar{L}+L_\infty.
    \end{equation}
    Since, on the other hand, for all $ \theta\in\big(\CN_{R_L}(\Theta)\big)^c$ it holds $L(\theta)>\underbar{L}+L_\infty$ by \eqref{eq:icpL_2} of the inverse continuity property~\ref{asm:icpL},
    we have $\Qbeta{\varrho}\subset\CN_{R_L}(\Theta)$.
    Thus \eqref{eq:proof:lem:laplace_LG:1} holds on $\Qbeta{\varrho}$.
    Moreover, with the assumption on $\beta$ we conclude from \eqref{eq:proof:lem:laplace_LG:1} and the latter observation that
    \begin{equation}
        \label{eq:proof:lem:laplace_LG:2}
        \dist(\theta,\Theta)
        \leq r_G
        \quad \text{for all }
        \theta\in\Qbeta{\varrho},
    \end{equation}
    i.e., $\Qbeta{\varrho}\subset\CN_{r_G}(\Theta)$.
    
    With $r_G\leq R_G$ the inverse continuity property~\ref{asm:icpG} on $G$ holds.
    Since $\Qbeta{\varrho}\subset\CN_{r_G}(\Theta)$ as established in \eqref{eq:proof:lem:laplace_LG:2}, it holds in particular on the set $\Qbeta{\varrho}$, on which $\mAlphaBetanoarg$ is computed.

    \textbf{Main proof.}
    In order to control the term $\Nbig{\mAlphaBeta{\varrho} - \thetaG}_2$,
    we can follow the proof of the quantitative Laplace principle \cite[Propostion~4.5]{fornasier2021consensus} and \cite[Proposition~1]{fornasier2021convergence} with the necessary modification which are due to the inverse continuity property~\ref{asm:icpG} on $G$ being slightly different as a consequence of the necessity of distinguishing between $\thetaG$ and $\thetaGtilde$.
    
    Let $\widetilde r \geq r > 0$ and recall that $r\in(0,\min\{r_R,r_G,R^H_{L},(\delta_q/H_L)^{1/h_L}\}]$ by assumption.
	Using the definition of the consensus point $\mAlphaBeta{\varrho} = \int \theta \omegaa(\theta)/\Nnormal{\omegaa}_{L^1(\Ibeta{\varrho})}\,d\Ibeta{\varrho}(\theta)$ we can decompose
    \begin{equation} \label{proof:Laplace_principle:eq:decomposition}
	\begin{split}
	    \Nbig{\mAlphaBeta{\varrho} - \thetaG}_2
		&\leq \int_{B_{\widetilde{r}}(\thetaG)} \Nbig{\theta-\thetaG}_2 \frac{\omegaa(\theta)}{\N{\omegaa}_{L^1(\Ibeta{\varrho})}} d\Ibeta{\varrho}(\theta) \\
        &\qquad+ \int_{\left(B_{\widetilde{r}}(\thetaG)\right)^c} \Nbig{\theta-\thetaG}_2 \frac{\omegaa(\theta)}{\N{\omegaa}_{L^1(\Ibeta{\varrho})}} d\Ibeta{\varrho}(\theta).
	\end{split}
	\end{equation}
    \textit{First term in \eqref{proof:Laplace_principle:eq:decomposition}}:
    Since for all $\theta \in B_{\widetilde{r}}(\thetaG)$ it holds $\Nbig{\theta-\thetaG}_2\leq \widetilde{r}$, the first term in \eqref{proof:Laplace_principle:eq:decomposition} is bounded by $\widetilde{r}$.
    Recalling the definition of $G_r$ and introducing the notation $\widetilde{G}_r:=\sup_{\theta\in B_{r}(\thetaG)}G(\theta)-G(\thetaGtilde)$, 
    we choose $\widetilde r = (u+\widetilde{G}_r)^{\nu_G}/{\eta_G}$, which is a valid choice since
	\begin{equation}
	\begin{split}
	    \widetilde r
		=
        \frac{(u+\widetilde{G}_r)^{\nu_G}}{\eta_G}
		\geq \frac{\widetilde{G}_r^{\nu_G}}{\eta_G}
		&=
        \frac{\left(\sup_{\theta\in B_r(\thetaG)}G(\theta)-G(\thetaGtilde)\right)^{\nu_G}}{\eta_G} \\
        &\geq
        \sup_{\theta\in B_r(\thetaG)}\Nbig{\theta-\thetaGtilde}_2 \\
       &\geq r.
	\end{split}
	\end{equation}
    Here, 
    the inequality in the second line holds since the first part~\eqref{eq:icpG_1} of the inverse continuity property~\ref{asm:icpG} on $G$ holds on the set $B_r(\thetaG)\subset B_{r_G}(\thetaG)$ since $r\leq r_G$.
    To see the inequality in the last line recall that $r\leq r_G$ and $\thetaGtilde\in B_{r_G}(\thetaG)$ (note that the expression is minimal if $\thetaGtilde=\thetaG$, in which case the supremum is equal to $r$).

    \noindent
    \textit{Second term in \eqref{proof:Laplace_principle:eq:decomposition}}:
    Let us first note that because of Markov's inequality it holds for any $a>0$ that 
    \begin{equation} \label{eq:int_omegaa}
        \Nnormal{\omegaa}_{L^1(\Ibeta{\varrho})}
        \geq a \varrho\left(\left\{\theta\in B_R(0) : \exp(-\alpha G(\theta)) \geq a \text{ and } L(\theta)\leq \frac{2}{\beta}\int_{\beta/2}^{\beta} \qa{\varrho}\,da+\delta_q\right\}\right)
    \end{equation}
	Choosing $a = \exp\big(-\alpha (\widetilde{G}_r+G(\thetaGtilde))\big)$ and noticing that
	\begin{equation}
	\begin{split}
	    &\varrho\left(\left\{\theta\in B_R(0) : \exp(-\alpha G(\theta)) \geq \exp\big(-\alpha (\widetilde{G}_r+G(\thetaGtilde))\big) \text{ and } L(\theta)\leq \frac{2}{\beta}\int_{\beta/2}^{\beta} \qa{\varrho}\,da+\delta_q\right\}\right)\\
		&\qquad\qquad
        = \varrho\left(\left\{\theta\in B_R(0) : G(\theta) \leq \widetilde{G}_r+G(\thetaGtilde) \text{ and } L(\theta)\leq \frac{2}{\beta}\int_{\beta/2}^{\beta} \qa{\varrho}\,da+\delta_q\right\}\right)\\
        &\qquad\qquad
        = \varrho\left(\left\{\theta\in B_R(0) : G(\theta) \leq \sup_{\tilde\theta\in B_{r}(\thetaG)}G(\tilde\theta) \text{ and } L(\theta)\leq \frac{2}{\beta}\int_{\beta/2}^{\beta} \qa{\varrho}\,da+\delta_q\right\}\right)\\
        &\qquad\qquad
        \geq \varrho\left(B_R(0) \cap B_r(\thetaG) \cap \Qbeta{\varrho} \right)\\
        &\qquad\qquad
        = \varrho\left(B_r(\thetaG) \right),
	\end{split}
	\end{equation}
	where we reused the in the preliminaries shown set inclusions $B_r(\thetaG)\subset B_R(0)$ and $B_r(\thetaG)\subset\Qbeta{\varrho}$ in the last step,
    allows to conclude \eqref{eq:int_omegaa} to obtain  $\Nnormal{\omegaa}_{L^1(\Ibeta{\varrho})} \geq \exp\big(-\alpha (\widetilde{G}_r+G(\thetaGtilde))\big)\varrho\big(B_r(\thetaG)\big)$.
	With this we have
	\begin{equation}
	\begin{split}
		&\int_{\left(B_{\widetilde{r}}(\thetaG)\right)^c} \Nbig{\theta-\thetaG}_2 \frac{\omegaa(\theta)}{\N{\omegaa}_{L^1(\Ibeta{\varrho})}} d\Ibeta{\varrho}(\theta)\\
		&\qquad\qquad\leq\int_{\left(B_{\widetilde{r}}(\thetaG)\right)^c\cap\Qbeta{\varrho}} \Nbig{\theta-\thetaG}_2 \frac{\exp\big(-\alpha (G(\theta)-(\widetilde{G}_r+G(\thetaGtilde)))\big)}{\varrho\big(B_r(\thetaG)\big)} d\Ibeta{\varrho}(\theta)\\
		&\qquad\qquad\leq\! \frac{\exp\left(-\alpha \left(\inf_{\theta \in \left(B_{\widetilde{r}}(\thetaG)\right)^c\cap\Qbeta{\varrho}} G(\theta) - G(\thetaGtilde) - \widetilde{G}_r\right)\right)}{\varrho\big(B_r(\thetaG)\big)}\!\int\Nbig{\theta-\thetaG}_2 d\Ibeta{\varrho}(\theta)
	\end{split}
	\end{equation}
    for the second term in \eqref{proof:Laplace_principle:eq:decomposition}.
    Thus, for any $\widetilde r \geq r > 0$ we obtain for \eqref{proof:Laplace_principle:eq:decomposition} that
	\begin{align} \label{eq:aux_laplace_1}
	\begin{aligned}
	     \Nbig{\mAlphaBeta{\varrho} - \thetaG}_2
		&\leq \widetilde r + \frac{\exp\left(-\alpha \left(\inf_{\theta \in \left(B_{\widetilde{r}}(\thetaG)\right)^c\cap\Qbeta{\varrho}} G(\theta) - G(\thetaGtilde) - \widetilde{G}_r\right)\right)}{\varrho\big(B_r(\thetaG)\big)}\!\\
        &\qquad\qquad\qquad\qquad\cdot\int\N{\theta-\thetaG}_2 d\Ibeta{\varrho}(\theta).
	\end{aligned}
	\end{align}
    With the choice $\widetilde r = (u+\widetilde{G}_r)^{\nu_G}/{\eta_G}$
    it holds under the assumption $u + G_{r} + H_G{r_G}^{h_G} \leq G_{\infty}$ that
    \begin{align}
        \label{eq:proof:laplace:widetilde_r}
    \begin{split}
        \widetilde r
        = \frac{(u+\widetilde{G}_r)^{\nu_G}}{\eta_G}
        &= \frac{(u+G_r + (G(\thetaG)-G(\thetaGtilde)))^{\nu_G}}{\eta_G} \\
        &\leq \frac{(u+G_r + H_G{r_G}^{h_G})^{\nu_G}}{\eta_G}
        \leq \frac{G_{\infty}^{\nu_G}}{\eta_G}.
    \end{split}
    \end{align}
    In order to obtain the inequality in the next-to-last step be reminded that $\thetaGtilde\in B_{r_G}(\thetaG)$ and that $r_G\leq R^H_{G}$, allowing us to employ the H\"older continuity~\ref{asm:LipG} of $G$ to derive the upper bound $\absbig{G(\thetaG)-G(\thetaGtilde)}\leq H_G\Nbig{\thetaG-\thetaGtilde}_2^{h_G}\leq H_Gr_G^{h_G}$.
    Reordering \eqref{eq:proof:laplace:widetilde_r} shows $(\eta_G\widetilde r)^{1/\nu_G} \leq G_{\infty}$.
    Thus, recalling that $\Qbeta{\varrho}\subset\CN_{r_G}(\Theta)$ as established in \eqref{eq:proof:lem:laplace_LG:2} and using again both \eqref{eq:icpG_1} and \eqref{eq:icpG_2} of \ref{asm:icpG}, we have the first inequality in
    \begin{equation}
        \inf_{\theta \in \left(B_{\widetilde{r}}(\thetaG)\right)^c\cap\Qbeta{\varrho}} G(\theta) - G(\thetaGtilde) - \widetilde{G}_r 
        \geq \min\big\{G_{\infty}, (\eta_G \widetilde r)^{1/{\nu_G}}\big\} - \widetilde{G}_r = (\eta_G \widetilde r)^{1/{\nu_G}} - \widetilde{G}_r = u.
    \end{equation}
    The other two steps are a result of the definition of $\widetilde r$.
	Inserting this and the upper bound $\widetilde r \leq (u+G_r + H_G{r_G}^{h_G})^{\nu_G}/\eta_G$ from \eqref{eq:proof:laplace:widetilde_r} into \eqref{eq:aux_laplace_1},
    we obtain the result.
\end{proof}

\subsection{A Lower Bound for the Probability Mass around \texorpdfstring{$\thetaG$}{Good Global Minimizer} } \label{sec:lower_bound_prob_mass}

In this section, we establish a lower bound on the probability mass $\min_{t\in[0,T]}\rho_t(B_{r}(\thetaG))$ for any arbitrarily small radius $r > 0$ and a finite time horizon $T<\infty$,
which is crucial to ensure the feasibility of the choice \eqref{eq:beta_QQLP} and employ \eqref{eq:lem:laplace_LG} from Lemma~\ref{lem:laplace_LG}.

To do so, we define a smooth mollifier $\phi_r : \bbR^d \rightarrow [0,1]$ satisfying $\supp{\phi_r} = B_{r}(\thetaG)$.
One such possibility is
\begin{equation} \label{eq:mollifier}
	\phi_{r}(\theta) :=
	\begin{cases}
		\exp\left(1-\frac{r^2}{r^2-\N{\theta-\thetaG}_2^2}\right),& \textrm{ if } \Nbig{\theta-\thetaG}_2 < r,\\
		0,& \textrm{ else.}
	\end{cases}
\end{equation}
It then holds $\rho_t(B_{r}(\thetaG)) \geq \int \phi_r(\theta)\,d\rho_t(\theta)$,
where the right-hand side can be studied by using the weak solution property~\eqref{eq:weak_solution_identity} of $\rho$ as in Definition~\ref{def:weak_solution}.

\begin{proposition}[Lower bound for $\rho_t\big(B_{r}(\thetaG)\big)$, cf.\@{\cite[Proposition~4.6]{fornasier2021consensus}}]
    \label{lem:lower_bound_probability}
	Let $T > 0,\ r > 0$, and fix parameters $\alpha,\beta,R,\delta_q,\lambda,\sigma > 0$.
	Assume $\rho\in\CC([0,T],\CP(\bbR^d))$ weakly solves the Fokker-Planck Equation~\eqref{eq:fokker_planck} in the sense of Definition~\ref{def:weak_solution} with initial condition $\rho_0 \in \CP(\bbR^d)$ and for $t \in [0,T]$.
	Then, for all $t\in[0,T]$ we have
	\begin{align} 
		\rho_t\left(B_{r}(\thetaG)\right)
		&\geq \left(\int\phi_{r}(\theta)\,d\rho_0(\theta)\right)\exp\left(-pt\right), \text{ where} \label{eq:lower_bound_probability_rate} \\
        p &:= 2\max\left\{\frac{\lambda(cr+B\sqrt{c})}{(1-c)^2r}+\frac{\sigma^2(cr^2+B^2)(2c+d)}{(1-c)^4r^2},\frac{2\lambda^2}{(2c-1)\sigma^2}\right\}
        \label{eq:def_p}
	\end{align}
	for any $B<\infty$ with $\sup_{t \in [0,T]} \Nbig{\mAlphaBeta{\rho_t} - \thetaG}_2 \leq B$
	and for any $c \in (1/2,1)$ satisfying $d(1-c)^2\leq(2c-1)c$.
\end{proposition}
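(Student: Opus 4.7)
\textbf{Proof proposal for Proposition~\ref{lem:lower_bound_probability}.} The plan is to test the weak formulation of the Fokker--Planck equation~\eqref{eq:fokker_planck} against the explicit compactly supported bump $\phi_r$ defined in \eqref{eq:mollifier}. Since $\phi_r \leq \mathds{1}_{B_r(\thetaG)}$ pointwise, we have $\rho_t(B_r(\thetaG)) \geq \int \phi_r\,d\rho_t$, so it suffices to prove the differential inequality
\begin{equation*}
    \frac{d}{dt}\int \phi_r(\theta)\,d\rho_t(\theta) \geq -p \int \phi_r(\theta)\,d\rho_t(\theta)
\end{equation*}
for the $p$ given in \eqref{eq:def_p}, and then to apply Grönwall's inequality. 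First I would verify that $\phi_r \in \CC^2_*(\bbR^d)$ (in particular $\phi_r$ has bounded Laplacian and controlled gradient growth) so that, by Remark~\ref{rem:test_functions_redefine}, we may use it as a test function; this yields, under the isotropic diffusion $D(\,\cdot\,)=\N{\,\cdot\,}_2\Id$, the identity
\begin{equation*}
    \frac{d}{dt}\int \phi_r\,d\rho_t = -\lambda \int \langle \theta - \mAlphaBeta{\rho_t},\nabla\phi_r(\theta)\rangle\,d\rho_t(\theta) + \frac{\sigma^2}{2}\int \N{\theta-\mAlphaBeta{\rho_t}}_2^2\,\Delta\phi_r(\theta)\,d\rho_t(\theta).
\end{equation*}

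The technical heart of the argument is the pointwise inequality $-\lambda\langle \theta-\mAlphaBeta{\rho_t},\nabla\phi_r(\theta)\rangle + \tfrac{\sigma^2}{2}\N{\theta-\mAlphaBeta{\rho_t}}_2^2\,\Delta\phi_r(\theta) \geq -p\,\phi_r(\theta)$ for all $\theta \in B_r(\thetaG)$. To establish it, I would compute explicitly
\begin{equation*}
    \nabla \phi_r(\theta) = -\frac{2r^2(\theta-\thetaG)}{(r^2-\N{\theta-\thetaG}_2^2)^2}\,\phi_r(\theta), \qquad
    \Delta \phi_r(\theta) = \phi_r(\theta)\!\left(\frac{4r^4\N{\theta-\thetaG}_2^2}{(r^2-\N{\theta-\thetaG}_2^2)^4} - \frac{2r^2 d}{(r^2-\N{\theta-\thetaG}_2^2)^2} - \frac{8r^2\N{\theta-\thetaG}_2^2}{(r^2-\N{\theta-\thetaG}_2^2)^3}\right)\!,
\end{equation*}
which reveals that both quantities are bounded by $\phi_r$ times a rational function of $\N{\theta-\thetaG}_2^2$ that degenerates at the boundary of $B_r(\thetaG)$. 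Following the strategy used in the standard CBO setting~\cite[Proposition 4.6]{fornasier2021consensus}, I would split $B_r(\thetaG)$ into the inner ball $B_{\sqrt{c}\,r}(\thetaG)$ and the annulus $B_r(\thetaG)\setminus B_{\sqrt{c}\,r}(\thetaG)$ with $c\in(1/2,1)$ as in the statement. On the inner ball the denominators $(r^2-\N{\theta-\thetaG}_2^2)^k$ are bounded below by $(1-c)^k r^{2k}$, so both the drift and diffusion contributions are bounded by an explicit multiple of $\phi_r$, producing the first branch in the maximum defining $p$ (after plugging in $\N{\theta-\mAlphaBeta{\rho_t}}_2\leq \N{\theta-\thetaG}_2+B \leq \sqrt{c}\,r+B$ and $\N{\theta-\mAlphaBeta{\rho_t}}_2^2\leq 2(cr^2+B^2)$).

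The main obstacle, and the delicate part of the argument, is the annular region, where $\nabla\phi_r$ and the positive part of $\Delta\phi_r$ are unbounded. Here I would exploit that the sole truly harmful term is the positive contribution $\tfrac{4r^4\N{\theta-\thetaG}_2^2}{(r^2-\N{\theta-\thetaG}_2^2)^4}\,\phi_r$ in $\Delta\phi_r$, and use Young's inequality to bound the drift term by $\tfrac{\sigma^2}{2}\N{\theta-\mAlphaBeta{\rho_t}}_2^2\cdot (\text{positive part of }\Delta\phi_r) + C \phi_r$ for a suitable constant $C$. The condition $d(1-c)^2 \leq (2c-1)c$ is precisely what makes the negative contribution $-\tfrac{2r^2 d}{(r^2-\N{\theta-\thetaG}_2^2)^2}\phi_r$ in $\Delta\phi_r$ (after integration against $\N{\theta-\mAlphaBeta{\rho_t}}_2^2$) dominate the positive contribution on the annulus, so that the net diffusion contribution there is non-negative and no additional degenerate terms survive. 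This produces the second branch $2\lambda^2/((2c-1)\sigma^2)$ of the maximum in $p$. Combining the two regional estimates yields the desired pointwise inequality, and Grönwall finishes the proof.
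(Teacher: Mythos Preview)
Your approach is exactly the one the paper invokes: the paper's proof is literally a one-line observation that \cite[Proposition~4.6]{fornasier2021consensus} carries over verbatim, since that argument only uses the bound $\sup_{t\in[0,T]}\Nbig{\mAlphaBeta{\rho_t}-\thetaG}_2\leq B$ and nothing about the internal structure of the consensus point. Your outline reproduces precisely that cited proof (mollifier test function, explicit derivative computation, inner-ball/annulus split governed by the parameter~$c$, Gr\"onwall), so strategically you are on target.

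One caution on your annulus description: the signs are reversed. The most singular term $\tfrac{4r^4\N{\theta-\thetaG}_2^2}{(r^2-\N{\theta-\thetaG}_2^2)^4}\phi_r$ in $\Delta\phi_r$ is \emph{helpful}, not harmful---it contributes positively to the diffusion term and is exactly what absorbs the (potentially negative) drift via Young's inequality. The role of the condition $d(1-c)^2\leq(2c-1)c$ is to ensure that, after a fraction of this positive term is spent on absorbing the drift, the remainder still dominates the two genuinely \emph{negative} contributions $-\tfrac{2r^2d}{(\cdot)^2}\phi_r$ and $-\tfrac{8r^2\N{\theta-\thetaG}_2^2}{(\cdot)^3}\phi_r$ on the annulus, so that the net diffusion there stays nonnegative. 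Your sentence ``the negative contribution \ldots\ dominate[s] the positive contribution \ldots\ so that the net diffusion contribution there is non-negative'' is self-contradictory as written; fix the direction of domination and the argument goes through.
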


\begin{proof}
    The result follows immediately from \cite[Proposition~4.6]{fornasier2021consensus} after noticing that its proof does not rely on the specific form and relation of $\mAlphaBeta{\rho_t}$ and $\thetaG$ as long as $\sup_{t \in [0,T]} \Nbig{\mAlphaBeta{\rho_t} - \thetaG}_2 \leq B<\infty$, which is per assumption.
    This has been pointed out in Remark~4.7(i) in the extended arXiv version of \cite{fornasier2021consensus} (corresponding to \cite[Remark~4.7]{fornasier2021consensus}\footnote{This remark was removed in the published version~\cite{fornasier2021consensus} due to the journal's page limit policy.} in the journal version).
\end{proof}

\subsection{Proof of Theorem~\ref{thm:main}} \label{sec:proof_main}

We are now ready to combine the formerly established technical tools and present the proof of Theorem~\ref{thm:main}.

\begin{proof}[Proof of Theorem~\ref{thm:main}]
    Let us start by recalling that $G_r:=\sup_{\theta\in B_{r}(\thetaG)}G(\theta)-G(\thetaG)$ for $r>0$
    and by introducing the abbreviation
	\begin{align}\label{eq:c}
		c\left(\vartheta,\lambda,\sigma\right)
		:=
        \min\left\{
			\frac{\vartheta}{2}\frac{\left(2\lambda-d\sigma^2\right)}{\sqrt{2}\left(\lambda+d\sigma^2\right)}, 
			\sqrt{\vartheta\frac{\left(2\lambda-d\sigma^2\right)}{d\sigma^2}}
			\right\}.
	\end{align}
    Moreover, we define
    \begin{align} \label{eq:rGeps}
        r_{G,\varepsilon}
        :=
        \min\left\{\left(\frac{1}{2H_G}\left(\eta_G\frac{c\left(\vartheta,\lambda,\sigma\right)\sqrt{\varepsilon}}{2}\right)^{1/{\nu_G}}\right)^{1/{h_G}},R_G,R^H_{G},\left(\frac{G_{\infty}}{2H_G}\right)^{1/h_G}\right\},
    \end{align}
    which satisfies $r_{G,\varepsilon}\in(0,\min\{R_G,R^H_{G},(G_{\infty}/(2H_G))^{1/h_G}\}]$ by design.
    We further emphasize, that, by assumption, $\delta_q>0$ is sufficiently small (see Remark~\ref{remark:deltaq}) in the sense that it satisfies
    \begin{align}
    	\delta_{q}
    	\leq \frac{1}{2} \min\left\{L_\infty,(\eta_Lr_{G,\varepsilon})^{1/\nu_L}\right\}.
    \end{align}

    \textbf{Choice of $\beta$.}
    Let us further abbreviate by $\xi_{L,\varepsilon} := \min\left\{L_\infty,(\eta_Lr_{G,\varepsilon})^{1/\nu_L}\right\}-\delta_{q}$ and point out that $\xi_{L,\varepsilon}>0$ granted by the afore-discussed properties of $\delta_q$. 
    Further, define
    \begin{align} \label{eq:rHeps}
        r_{H,\varepsilon}
        := \min\left\{R^H_{L}, \left(\frac{\xi_{L,\varepsilon}}{H_L}\right)^{1/h_L}\right\},
    \end{align}
    which satisfies $r_{H,\varepsilon}\in(0,R^H_{L}]$ by design.

    We now choose $\beta\in(0,1)$ such that
    \begin{equation} \label{eq:beta}
        \beta <
        \beta_0
        := \frac{1}{2}\rho_0\big(B_{r_{H,\varepsilon}/2}(\thetaG)\big) \exp(-p_{H,\varepsilon}T^*),
    \end{equation}
    where $p_{H,\varepsilon}$ is as defined in \eqref{eq:def_p} in Proposition~\ref{lem:lower_bound_probability} with $B=c(\vartheta,\lambda,\sigma)\sqrt{\CV(\rho_0)}$ and with $r=r_{H,\varepsilon}$.
    Such choice of $\beta$ is possible since $\beta_0\in(0,1)$, which in turn is due to $p_{H,\varepsilon},T^*<\infty$, $\thetaG\in\supp{\rho_0}$ and $r_{H,\varepsilon}>0$ on the one hand, and $p_{H,\varepsilon}T^*>0$ and $\rho_0$ being a probability measure on the other.
    For such $\beta$ we have $\qbeta{\rho_t} \leq \underbar{L}+\xi_{L,\varepsilon}$ for all $t\in[0,T^*]$ for the following reason:
    Due to \ref{asm:LipL}, for all $\theta\in B_{r_{H,\varepsilon}}(\thetaG)$ it holds
    $L(\theta)-\underbar{L} \leq H_L\Nnormal{\theta-\thetaG}_2^{h_L}\leq H_Lr_{H,\varepsilon}^{h_L}\leq\xi_{L,\varepsilon}$.
    Thus, $B_{r_{H,\varepsilon}}(\thetaG)\subset \{\theta:L(\theta)-\underbar{L}\leq\xi_{L,\varepsilon}\}$.
    We further note that by Proposition~\ref{lem:lower_bound_probability} with $r_{H,\varepsilon}$, $p_{H,\varepsilon}$ and $B$ (for $r$, $p$ and $B$) as defined before, it holds for all $t\in[0,T^*]$ that 
    \begin{align}
	\begin{aligned}
	    \rho_{t}\big(B_{r_{H,\varepsilon}}(\thetaG)\big)
		&\geq
        \left(\int \phi_{r_{H,\varepsilon}}(\theta) \,d\rho_0(\theta)\right)\exp(-p_{H,\varepsilon}t) \\
		&\geq
        \frac{1}{2}\,\rho_0\big(B_{r_{H,\varepsilon}/2}(\thetaG)\big) \exp(-p_{H,\varepsilon}T^*) 
		> \beta,
	\end{aligned}
	\end{align}
	where we used in the second line the fact that $\phi_{r}$ (as defined in Equation~\eqref{eq:mollifier}) is bounded from below on $B_{r/2}(\thetaG)$ by $1/2$.
    The last inequality is by choice of $\beta$.
    With this and the aforementioned set inclusion, we have for all $t\in[0,T^*]$ that
    \begin{align}
        \beta
        <
        \rho_{t}\big(B_{r_{H,\varepsilon}}(\thetaG)\big)
        \leq
        \rho_{t}\big(\{\theta:L(\theta)-\underbar{L}\leq\xi_{L,\varepsilon}\}\big)
    \end{align}
    and thus by definition of $\qbeta{\dummy}$ as the infimum, that $\qbeta{\rho_t}\leq\underbar{L}+\xi_{L,\varepsilon}$ for all $t\in[0,T^*]$.

    \textbf{Choice of $\alpha$.}
    Let us further define 
    \begin{align}\label{eq:ueps}
		u_\varepsilon := \frac{1}{4}\min\bigg\{\!\left(\eta_G\frac{c\left(\vartheta,\lambda,\sigma\right)\sqrt{\varepsilon}}{2}\right)^{1/{\nu_G}}\!,G_{\infty}\bigg\}
        \quad \text{and} \quad
        \widetilde{r}_\varepsilon := \max_{s \in [0,r_{G,\varepsilon}]}\left\{\max_{\theta \in B_s(\thetaG)}G(\theta) -G(\thetaG) \leq u_\varepsilon\right\},
	\end{align}
    which satisfy $u_\varepsilon>0$ and $\widetilde{r}_\varepsilon\in(0,r_{G,\varepsilon}]$ by design.
    Notice that the continuity of $G$ ensures that there exists $s_{u_\varepsilon}>0$ such that $G(\theta)-G(\thetaG)\leq u_\varepsilon$ for all $\theta\in B_{s_{u_\varepsilon}}(\thetaG)$, thus yielding also $\widetilde{r}_\varepsilon>0$.
    Moreover, since $G_r$ is monotonously increasing in $r$, any $r_\varepsilon\in(0,\widetilde{r}_\varepsilon]\subset(0,r_{G,\varepsilon}]$,
    in particular the choice
    \begin{align}\label{eq:reps}
        r_\varepsilon
        :=
        \min\left\{r_R,\widetilde{r}_\varepsilon,R^H_{L},\left(\frac{\delta_q}{H_L}\right)^{1/h_L}\right\},
    \end{align}
    satisfies
    \begin{align}
        u_\varepsilon + G_{r_\varepsilon} + H_Gr_{G,\varepsilon}^{h_G} 
        \leq u_\varepsilon + G_{\widetilde{r}_\varepsilon} + H_Gr_{G,\varepsilon}^{h_G}
        \leq u_\varepsilon + G_{\widetilde{r}_\varepsilon} + \frac{G_{\infty}}{2}
        \leq 2u_\varepsilon + \frac{G_{\infty}}{2}
        \leq G_\infty,
    \end{align}
    where we utilized that $r_{G,\varepsilon}\leq(G_{\infty}/(2H_G))^{1/h_G}$ in the second step, the definitions of $\widetilde{r}_\varepsilon$ as well as $G_{r}$ in the third, and the definition of $u_\varepsilon$ in the last.

    All parameters $r_{G,\varepsilon}$, $\delta_q$, $\beta$, $r_\varepsilon$, and $u_\varepsilon$ are now in line with the requirements of Proposition~\ref{lem:laplace_LG} and it remains to choose $\alpha$
    such that 
	\begin{equation}
	   \label{eq:alpha}
	\begin{split}
		\alpha > 
		\alpha_0
		&:= \frac{1}{u_\varepsilon}\Bigg(\log\left(\frac{4\sqrt{2}}{c\left(\vartheta,\lambda,\sigma\right)}\right)-\log\left(\rho_{0}\big(B_{r_\varepsilon/2}(\thetaG)\big)\right)+\max\left\{\frac{1}{2}\log\left(\frac{\CV(\rho_0)}{\varepsilon}\right),p_\varepsilon T^*\right\}\!\Bigg)
	\end{split}
	\end{equation}
    where $p_{\varepsilon}$ is as $p$ defined in \eqref{eq:def_p} in Proposition~\ref{lem:lower_bound_probability} with $B=c(\vartheta,\lambda,\sigma)\sqrt{\CV(\rho_0)}$ and with $r=r_\varepsilon$.
    
    \textbf{Main proof.}
    Let us now define the time horizon $T_{\alpha,\beta} \geq 0$, which may depend on $\alpha$ and $\beta$,
    by
	\begin{align} \label{eq:endtime_T}
		T_{\alpha,\beta} := \sup\big\{t\geq0 : \CV(\rho_{t'}) > \varepsilon \text{ and } \N{\mAlphaBeta{\rho_t}-\thetaG}_2 < C(t') \text{ for all } t' \in [0,t]\big\}
	\end{align}
	with $C(t):=c(\vartheta,\lambda,\sigma)\sqrt{\CV(\rho_t)}$.
	Notice for later use that $C(0)=B$.

    Our aim is to show $\CV(\rho_{T_{\alpha,\beta}}) = \varepsilon$ with $T_{\alpha,\beta}\in\big[\frac{1-\vartheta}{(1+\vartheta/2)}\;\!T^*,T^*\big]$ and that we have at least exponential decay of $\CV(\rho_t)$ until time $T_{\alpha,\beta}$, i.e., until accuracy~$\varepsilon$ is reached.
	
	First, however, we ensure that $T_{\alpha,\beta}>0$.
    With the mapping~$t\mapsto\CV(\rho_{t})$ being continuous as a consequence of the regularity $\rho\in\CC([0,T^*], \CP_4(\bbR^d))$ (as established in Theorem~\ref{thm:well_posedness}) and $t\mapsto\!\Nbig{\mAlphaBeta{\rho_{t}} \!-\! \thetaG}_2$ being continuous as a consequence of the continuity of the mapping $t \mapsto \mAlphaBeta{\rho_t}$ (as established in Theorem~\ref{thm:well_posedness}), 
    $T_{\alpha,\beta}>0$ follows from the definition, since $\CV(\rho_{0}) > \varepsilon$ and $\Nbig{\mAlphaBeta{\rho_{0}} - \thetaG}_2 < C(0)$.
	While the former is immediate by assumption, for the latter an application of Proposition~\ref{lem:laplace_LG} with $r_{G,\varepsilon}$, $r_\varepsilon$, $u_\varepsilon$ and $\rho_0$ (for $r_G$, $r$, $u$ and $\varrho$) as defined at the beginning of the proof yields
    \begin{align} \label{eq:proof:lapl1}
	\begin{split}
        \N{\mAlphaBeta{\rho_{0}} - \thetaG}_2
        &\leq
        \frac{(u_\varepsilon+G_{r_\varepsilon} + H_Gr_{G,\varepsilon}^{h_G})^{\nu_G}}{\eta_G} + \frac{\exp\left(-\alpha u_\varepsilon\right)}{\rho_{0}\big(B_{r_\varepsilon}(\thetaG) \big)}\int\N{\theta-\thetaG}_2d\Ibeta{\rho_{0}}(\theta)\\
        &\leq
        \frac{(2u_\varepsilon+ H_Gr_{G,\varepsilon}^{h_G})^{\nu_G}}{\eta_G} + \frac{\exp\left(-\alpha u_\varepsilon\right)}{\rho_{0}\big(B_{r_\varepsilon}(\thetaG) \big)} \int\N{\theta-\thetaG}_2d\Ibeta{\rho_{0}}(\theta)\\
        &\leq
        \frac{c\left(\vartheta,\lambda,\sigma\right)\sqrt{\varepsilon}}{2} + \frac{\exp\left(-\alpha u_\varepsilon\right)}{\rho_{0}\big(B_{r_\varepsilon}(\thetaG) \big)}\sqrt{2\CV(\rho_0)}\\
		&\leq \frac{c\left(\vartheta,\lambda,\sigma\right)\sqrt{\varepsilon}}{2} + \frac{\exp\left(-\alpha u_\varepsilon\right)}{\rho_{0}\big(B_{r_\varepsilon}(\thetaG) \big)}\sqrt{2\CV(\rho_0)}\\
        &\leq c\left(\vartheta,\lambda,\sigma\right)\sqrt{\varepsilon} < c\left(\vartheta,\lambda,\sigma\right)\sqrt{\CV(\rho_0)} = C(0).
	\end{split}
	\end{align}
    Proposition~\ref{lem:laplace_LG} is applicable, since all assumptions hold as formerly verified.
    Thus, the first inequality in \eqref{eq:proof:lapl1} holds.
    The second step is due to the definition of $r_\varepsilon$,
    the third and fourth step follow after plugging in the definitions and properties of $r_{G,\varepsilon}$ and $u_\varepsilon$, and simplifying.
    Finally, the first inequality in the last line of \eqref{eq:proof:lapl1} holds by choice of $\alpha$ in \eqref{eq:alpha}.
    The remainder is by assumption and definition.    

	Next, we show that the functional $\CV(\rho_t)$ decays essentially exponentially fast in time.
    More precisely, we prove that, up to time $T_{\alpha,\beta}$, $\CV(\rho_t)$ decays
    \begin{enumerate}[label=(\roman*),labelsep=10pt,leftmargin=35pt]
        \item at least exponentially fast (with rate $(1-\vartheta)(2\lambda-d\sigma^2)$), and \label{enumerate:proof:atleastexpdecay}
        \item at most exponentially fast (with rate $(1+\vartheta/2)(2\lambda-d\sigma^2)$).\label{enumerate:proof:atmostexpdecay}
    \end{enumerate}
	To obtain \ref{enumerate:proof:atleastexpdecay}, recall that Equation~\eqref{eq:evolution_of_objective} in Lemma~\ref{lem:evolution_V} provides an upper bound on $\frac{d}{dt}\CV(\rho_t)$ given by
    \begin{equation}
	\begin{split}
	    \frac{d}{dt}\CV(\rho_t)
		\leq
		&-\left(2\lambda - d\sigma^2\right) \CV(\rho_t)
	    + \sqrt{2}\left(\lambda + d\sigma^2\right) \sqrt{\CV(\rho_t)} \N{\mAlphaBeta{\rho_t}-\thetaG}_2\phantom{.} \\
	    &+ \frac{d\sigma^2}{2} \N{\mAlphaBeta{\rho_t}-\thetaG}_2^2.
	\end{split}
	\end{equation}
	Combining this with the definition of $T_{\alpha,\beta}$ in \eqref{eq:endtime_T} we have by construction 
	\begin{align}
		\frac{d}{dt}\CV(\rho_t)
		\leq -(1-\vartheta)\left(2\lambda-d\sigma^2\right)\CV(\rho_t)
		\quad \text{ for all } t \in (0,T_{\alpha,\beta}).
	\end{align}
    Analogously, for \ref{enumerate:proof:atmostexpdecay}, by the second part of Lemma~\ref{lem:evolution_V}, Equation~\eqref{eq:evolution_of_objective_lower},
    we obtain a lower bound on $\frac{d}{dt}\CV(\rho_t)$ of the form
    \begin{equation}
    \begin{split}
        \frac{d}{dt}\CV(\rho_t)
        &\geq
        -\left(2\lambda - d\sigma^2\right) \CV(\rho_t)
        - \sqrt{2}\left(\lambda + d\sigma^2\right) \sqrt{\CV(\rho_t)} \N{\mAlphaBeta{\rho_t}-\thetaG}_2 \\
        &\geq -(1+\vartheta/2)\left(2\lambda - d\sigma^2\right) \CV(\rho_t)
        \quad \text{ for all } t \in (0,T_{\alpha,\beta}),
    \end{split}
    \end{equation}
    where the second inequality again exploits the definition of $T_{\alpha,\beta}$.
	Gr\"onwall's inequality now implies for all $t \in [0,T_{\alpha,\beta}]$ the upper and lower bound
	\begin{align}
		\CV(\rho_t)
		&\leq \CV(\rho_0) \exp\left(- (1-\vartheta)\left(2\lambda-d\sigma^2\right) t\right), \label{eq:evolution_J_no_H}\\
		\CV(\rho_t)
		&\geq \CV(\rho_0) \exp\left(- (1+\vartheta/2)\left(2\lambda-d\sigma^2\right) t\right), \label{eq:evolution_J_no_H_lower}
	\end{align}
    thereby proving \ref{enumerate:proof:atleastexpdecay} and \ref{enumerate:proof:atmostexpdecay}.
	We further note that the definition of $T_{\alpha,\beta}$ in \eqref{eq:endtime_T} together with the definition of $C(t)$ and \eqref{eq:evolution_J_no_H} permits to control
	\begin{align}
		&\max_{t \in [0,T_{\alpha,\beta}]} \N{\mAlphaBeta{\rho_{t}} - \thetaG}_2
		\leq \max_{t \in [0,T_{\alpha,\beta}]} C(t)\leq C(0).
		\label{eq:max_bound_distance_no_H}
	\end{align}
    
    To conclude, it remains to prove that $\CV(\rho_{T_{\alpha,\beta}}) = \varepsilon$ with $T_{\alpha,\beta}\in\big[\frac{1-\vartheta}{(1+\vartheta/2)}\;\!T^*,T^*\big]$.
    For this we distinguish the following three cases.

	\noindent
	\textbf{Case $T_{\alpha,\beta} \geq T^*$:}
	We can use the definition of $T^*$ in \eqref{eq:end_time_star_statement} and the time-evolution bound of $\CV(\rho_t)$ in \eqref{eq:evolution_J_no_H} to conclude that $\CV(\rho_{T^*}) \leq \varepsilon$.
	Hence, by definition of $T_{\alpha,\beta}$ in \eqref{eq:endtime_T} together with the continuity of $t\mapsto\CV(\rho_t)$, we find $\CV(\rho_{T_{\alpha,\beta}}) =\varepsilon$ with $T_{\alpha,\beta} = T^*$.
	
	\noindent
	\textbf{Case $T_{\alpha,\beta} < T^*$ and $\CV(\rho_{T_{\alpha,\beta}}) \leq \varepsilon$:}
	By continuity of $t\mapsto\CV(\rho_t)$, it holds for $T_{\alpha,\beta}$ as defined in \eqref{eq:endtime_T}, $\CV(\rho_{T_{\alpha,\beta}}) = \varepsilon$.
    Thus, $\varepsilon
        = \CV(\rho_{T_{\alpha,\beta}})
        \geq \CV(\rho_0) \exp\left(- (1+\vartheta/2)\left(2\lambda-d\sigma^2\right) T_{\alpha,\beta}\right)$ by \eqref{eq:evolution_J_no_H_lower}, which can be reordered as
    \begin{align}
        \frac{1-\vartheta}{(1+\vartheta/2)} \, T^*
        =\frac{1}{(1+\vartheta/2)\left(2\lambda-d\sigma^2\right)}\log\left(\frac{\CV(\rho_0)}{\varepsilon}\right)
        \leq T_{\alpha,\beta}
        < T^*.
    \end{align}
	
	\noindent
	\textbf{Case $T_{\alpha,\beta} < T^*$ and $\CV(\rho_{T_{\alpha,\beta}}) > \varepsilon$:}
	We shall show that this case can never occur by verifying that $\Nbig{\mAlphaBeta{\rho_{T_{\alpha,\beta}}} - \thetaG}_2 < C(T_{\alpha,\beta})$ due to the choices of $\alpha$ in~\eqref{eq:alpha} and $\beta$ in \eqref{eq:beta}.
	In fact, fulfilling simultaneously both $\CV(\rho_{T_{\alpha,\beta}})>\varepsilon$ and $\Nbig{\mAlphaBeta{\rho_{T_{\alpha,\beta}}} - \thetaG}_2 < C(T_{\alpha,\beta})$ would contradict the definition of $T_{\alpha,\beta}$ in \eqref{eq:endtime_T} itself.
	To this end, we apply again Proposition~\ref{lem:laplace_LG} with $r_{G,\varepsilon}$, $r_\varepsilon$ and $u_\varepsilon$ as defined at the beginning of the proof.
    Proposition~\ref{lem:laplace_LG} is again applicable, since all assumptions hold as formerly verified and since $T_{\alpha,\beta}\leq T^*$.
    Thus, the first inequality in \eqref{eq:proof:lapl2} below holds and the following steps are obtained when plugging in the definitions and properties of $r_\varepsilon$, $r_{G,\varepsilon}$ and $u_\varepsilon$.
    For the sharp inequality in the fourth step we recall that in this case we assumed $\varepsilon<\CV(\rho_{T_{\alpha,\beta}})$.
    The fifth step merely results from simplifying.
    We have
	\begin{align}  \label{eq:proof:lapl2}
	\begin{split}
        \N{\mAlphaBeta{\rho_{T_{\alpha,\beta}}} - \thetaG}_2
        &\leq
        \frac{(u_\varepsilon+G_{r_\varepsilon} + H_Gr_{G,\varepsilon}^{h_G})^{\nu_G}}{\eta_G} + \frac{\exp\left(-\alpha u_\varepsilon\right)}{\rho_{T_{\alpha,\beta}}\big(B_{r_\varepsilon}(\thetaG)\big)}\int\N{\theta-\thetaG}_2d\Ibeta{\rho_{T_{\alpha,\beta}}}(\theta)\\
        &\leq
        \frac{(2u_\varepsilon + H_Gr_{G,\varepsilon}^{h_G})^{\nu_G}}{\eta_G} + \frac{\exp\left(-\alpha u_\varepsilon\right)}{\rho_{T_{\alpha,\beta}}\big(B_{r_\varepsilon}(\thetaG)\big)}\int\N{\theta-\thetaG}_2d\Ibeta{\rho_{T_{\alpha,\beta}}}(\theta)\\
        &\leq\frac{c\left(\vartheta,\lambda,\sigma\right)\sqrt{\varepsilon}}{2} + \frac{\exp\left(-\alpha u_\varepsilon\right)}{\rho_{T_{\alpha,\beta}}\big(B_{r_\varepsilon}(\thetaG) \big)}\int\N{\theta-\thetaG}_2d\Ibeta{\rho_{T_{\alpha,\beta}}}(\theta)\\
        &<
        \frac{c\left(\vartheta,\lambda,\sigma\right)\sqrt{\CV(\rho_{T_{\alpha,\beta}})}}{2} + \frac{\exp\left(-\alpha u_\varepsilon\right)}{\rho_{T_{\alpha,\beta}}\big(B_{r_\varepsilon}(\thetaG) \big)}\sqrt{2\CV(\rho_{T_{\alpha,\beta}})}.
	\end{split}
	\end{align}
	Since, thanks to \eqref{eq:max_bound_distance_no_H}, it holds the bound $\max_{t \in [0,T_{\alpha,\beta}]}\Nbig{\mAlphaBeta{\rho_{t}} - \thetaG}_2 \leq B$ for $B=C(0)$, which is in particular independent of $\alpha$,  Proposition~\ref{lem:lower_bound_probability} guarantees that there exists a $p_\varepsilon>0$ not depending on $\alpha$ (but depending on $B$ and $r_\varepsilon$) with
	\begin{align}
	\begin{aligned}
	    \rho_{T_{\alpha,\beta}}(B_{r_\varepsilon}(\thetaG))
		&\geq \left(\int \phi_{r_\varepsilon}(\theta) \,d\rho_0(\theta)\right)\exp(-p_\varepsilon T_{\alpha,\beta}) \\
		&\geq \frac{1}{2}\,\rho_0\big(B_{r_\varepsilon/2}(\thetaG)\big) \exp(-p_\varepsilon T^* ) 
		> 0,
	\end{aligned}
	\end{align}
	where we used $\thetaG\in\supp{\rho_0}$ for bounding the initial mass $\rho_0$, the fact that $\phi_{r}$ (as defined in Equation~\eqref{eq:mollifier}) is bounded from below on $B_{r/2}(\thetaG)$ by $1/2$ and that $T_{\alpha,\beta}\leq T^*$.
	With this we can continue the chain of inequalities in~\eqref{eq:proof:lapl2} to obtain
	\begin{align} \label{eq:proof:lapl22}
	\begin{split}
		\N{\mAlphaBeta{\rho_{T_{\alpha,\beta}}} - \thetaG}_2
		&< \frac{c\left(\vartheta,\lambda,\sigma\right)\sqrt{\CV(\rho_{T_{\alpha,\beta}})}}{2}  + \frac{2\exp\left(-\alpha u_\varepsilon\right)}{\rho_{0}\big(B_{r_\varepsilon/2}(\thetaG)\big)\exp(-p_\varepsilon T^*)}\sqrt{2\CV(\rho_{T_{\alpha,\beta}})}\\
		&\leq c\left(\vartheta,\lambda,\sigma\right)\sqrt{\CV(\rho_{T_{\alpha,\beta}})}
		= C(T_{\alpha,\beta}) ,
	\end{split}
	\end{align}
    where the last inequality in \eqref{eq:proof:lapl22} holds by choice of $\alpha$ in \eqref{eq:alpha}.
	This establishes the desired contradiction, again as consequence of the continuity of the mappings~$t\mapsto\CV(\rho_{t})$ and~$t\mapsto\Nbig{\mAlphaBeta{\rho_{t}} - \thetaG}_2$.
\end{proof}

\section{Numerical Experiments}\label{sec:experiments}

In this section, we conduct a series of numerical experiments to demonstrate the practicability as well as efficiency of the CB\textsuperscript{2}O method proposed in \eqref{eq:dyn_micro}. 
For this purpose, we first present in Section \ref{sec:numerics_alg} a practical implementation of CB\textsuperscript{2}O.
Thereafter, we showcase in Sections \ref{sec:numerics_COPT} and \ref{sec:numerics_SRL}, respectively, two examples for solving nonconvex bi-level optimization problems of the form~\eqref{eq:bilevel_opt}.
In Section \ref{sec:numerics_COPT}, we consider the special instance of classical constrained global optimization in lower-dimensional settings.
We compare CB\textsuperscript{2}O with the methods~\cite{borghi2021constrained, carrillo2021consensus, carrillo2024interacting,fornasier2020consensus_sphere_convergence,fornasier2020consensus_hypersurface_wellposedness} designed specifically for this task.
In Section~\ref{sec:numerics_SRL}, we then tackle a sparse representation learning task~\cite{gong2021Biojective}, which aims at learning a sparse feature representation on a supervised dataset by using neural networks.
With this we demonstrate that the CB\textsuperscript{2}O method is also applicable in high-dimensional real-data applications.
A further machine learning example in the setting of (clustered) federated learning~\cite{carrillo2024fedcbo}
is tackled in the recent paper \cite{trillos2024attack}, where CB\textsuperscript{2}O is demonstrated to be robust against backdoor attacks by leveraging a bi-level optimization problem of the form~\eqref{eq:bilevel_opt}.

\subsection{\texorpdfstring{Implementation of the CB\textsuperscript{2}O Algorithm}{The CB2O Algorithm}}\label{sec:numerics_alg}

To implement the CB\textsuperscript{2}O algorithm,
we consider an Euler-Maruyama discretization of the interacting $N$ particle system introduced in \eqref{eq:dyn_micro} with time discretization step size $\Delta t > 0$.
At each time step $k = 0,1,2, \dots$, the particles $\{\theta^i\}_{i=1}^N$ are updated according to the iterative update rule
\begin{equation}
\label{eq:dyn_micro_discrete}
\theta_{k+1}^i = \theta_k^i -  \Delta t 
\lambda \left( \theta_k^i - \mAlphaBeta{\rho_k^N} \right) + \sqrt{\Delta t} \sigma D \left(\theta_k^i - \mAlphaBeta{\rho_k^N} \right)B_k^i, \qquad \text{for } i \in \{1,\dots,N\},
\end{equation}
where $B_k^i \sim \CN (0,I_{d\times d})$.
The consensus point $\mAlphaBeta{\rho_k^N}$ is defined analogously to \eqref{eq:ConsensusPoint_FiniteParticles} for the particle positions $\{\theta_k^i\}_{i=1}^N$.
For readers' convenience, let us recall
\begin{equation}
\label{eq:csp_finite_particles_discrete}
\begin{split}
    \mAlphaBeta{\rho_k^N} := \sum_{\theta_k^i: L(\theta_k^i)\leq \qbeta{\rho_k^N}} \theta_k^i \frac{\omegaa(\theta_k^i)}{\sum_{\theta_k^j: L(\theta_k^j)\leq \qbeta{\rho_k^N}} \omegaa(\theta_k^j)},
    \qquad \text{with} \quad
    \omegaa(\theta) := \exp \left( -\alpha G(\theta) \right).
\end{split}
\end{equation}
Here the sub-level set $\Qbeta{\rho_k^N}$ and the $\beta$-quantile function $\qbeta{\rho_k^N}$ are defined analogously to \eqref{eq:Qbeta_FiniteParticles} and \eqref{eq:qbeta_FiniteParticles}, respectively, as
\begin{equation}
\label{eq:Qbeta_qbeta_alg}
\Qbeta{\rho_k^N} = \left\{ \theta \in \R^d : L(\theta) \leq \qbeta{\rho_k^N} \right\} \qquad \text{and} \qquad \qbeta{\rho_k^N} = L \left(\theta_k^{\#\lceil \beta N \rceil} \right),
\end{equation}
with the empirical measure given by $\rho_k^N := \frac{1}{N} \sum_{i=1}^N \delta_{\theta_k^i}$.

Algorithm \ref{alg:cb2o_alg} below now summarizes the CB\textsuperscript{2}O algorithm.

\def\th{\theta}
\def\l{\left}
\def\r{\right}
\def\ll{\lVert}
\def\rl{\rVert}
\def\qd{\quad}
\def\atantwo{\text{atan2}}
\def\estop{\varepsilon_{\text{stop}}}
\def\cstop{c_{\text{stop}}}
\def\tcb{\textcolor{blue}}
\def\tcr{\textcolor{red}}

\begin{algorithm}[htb]
\setstretch{1.25}
\caption{CB\textsuperscript{2}O Algorithm}
\label{alg:cb2o_alg}
\begin{algorithmic}[1]
\REQUIRE
CB\textsuperscript{2}O hyperparamerters~$\lambda,\sigma,\alpha,\beta$; discretization time step size $\Delta t$;  stopping criteria $\estop$; maximal termination round $K$;  
\STATE Initialize particles $\{\theta_0^i\}_{i=1}^N \subset \R^d$;\\
\STATE Initialize the stopping criterion $\cstop > \estop$ and set iteration counter $k = 0$;\\
\WHILE{$\cstop > \estop \text{ and } k \in \{0,\dots, K\}]$}
\STATE Evaluate the $\beta$-quantile $\qbeta{\rho_k^N}$ and estimate the quantile set $\Qbeta{\rho_k^N}$ using \eqref{eq:Qbeta_qbeta_alg};
\STATE Compute the consensus point $\mAlphaBeta{\rho_k^N}$ according to \eqref{eq:csp_finite_particles_discrete} from the particle positions $\{\theta_k^i\}_{i=1}^N \cap \Qbeta{\rho_k^N}$;
\STATE Update the particle positions $\{\theta^i_{k+1}\}_{i=1}^N$ by evaluating  \eqref{eq:dyn_micro_discrete}; 
\STATE Set $k\gets k+1$ and evaluate the stopping criterion $\cstop \gets \frac{1}{dN}\sum_{i=1}^N \N{\theta_{k+1}^i - \mAlphaBeta{\rho_k^N}}_2^2$;
\ENDWHILE
\ENSURE Consensus point $\mAlphaBeta{\rho^N_{\text{stop}}}$.
\end{algorithmic}
\end{algorithm}

\begin{remark}
    Algorithm \ref{alg:cb2o_alg} describes the standard implementation of the CB\textsuperscript{2}O algorithm.
    Many useful numerical tricks, including random batch methods \cite{carrillo2019consensus}, particle re-initialization \cite{carrillo2019consensus, bailo2024cbx}, hyperparameter cooling strategies \cite{fornasier2021convergence}, amongst others, proposed in the past to improve the performance of CBO-type methods, can also be applied immediately to our CB\textsuperscript{2}O algorithm.
    We will discuss some of them in the later experimental sections, and refer to \cite{carrillo2019consensus,fornasier2021convergence,bailo2024cbx} for more detailed and elaborate descriptions.
\end{remark}

\subsection{Constrained Optimization}
\label{sec:numerics_COPT}
Standard constrained optimization problems can be reformulated as bi-level optimization problems of the form \eqref{eq:bilevel_opt}. 
In particular,
any optimization problem with equality constraints,
\begin{equation}
    \min_{\theta\in\bbR^d} G(\theta) \quad \text{s.t.\@}\quad g_\ell(\th) = 0, \quad \ell = 1, \dots, m,
\end{equation}
can be equivalently written as
\begin{equation}\label{eq:reformulate_copt}
    \min_{\theta^*\in \Theta} G(\theta^*) \quad \text{s.t.\@}\quad \Theta =  \argmin_{\theta \in \bbR^d} L(\th) := \sum_{l=1}^mg_l^2(\th).
\end{equation}
Thanks to this reformulation, we can employ our proposed CB\textsuperscript{2}O methods, as described in Algorithm \ref{alg:cb2o_alg}, to constrained optimization problems. 

\subsubsection{Baseline Comparisons}\label{subsec:COPT_baseline_comparison}

Let us first describe in detail the experimental setup for constrained optimization problem.\\

\noindent \textbf{Optimization Problem Setup.}
We choose for the upper-level objective function $G$ the two-dimensional Ackley function, i.e.,
\begin{equation}\label{eq:ackley}
	\begin{aligned}
		&G(\th) = -A\exp\l(-a\sqrt{\frac{b^2}d\Nbig{\th - \hat{\th}}_2^2}\r)- \exp\l(\frac1d\sum_{i=1}^d\cos\left(2\pi b\big(\th_i - \hat{\th}_i\big)\right)\r)+e^1+A,
	\end{aligned}
\end{equation}
where $A = 20$, $a = 0.2$, $b = 3$, $d=2$, and $\hat{\th} =(1/2,1/3)$. Furthermore, we consider two different constraints, a circular and a star constraint, i.e.,
\begin{subequations}
\begin{align}
\text{Circular constraint:} \qd &\left\{(\theta_1, \theta_2) \in \R^2 \, | \, \theta_1^2 + \theta_2^2 - 1 = 0 \right\}, \label{eq:circular_constraint}\\
\text{Star constraint:}\qd 
& \left\{(\theta_1, \theta_2) \in \R^2 \, | \, \theta_1^2 + \theta_2^2 - (1 + 0.5 \sin (5\atantwo(\theta_2, \theta_1)))^2 = 0 \right\}.\label{eq:star_constraint}
\end{align}
\end{subequations}
With respect to the constraints \eqref{eq:circular_constraint} and \eqref{eq:star_constraint}, respectively, the unique constrained global minimizers of $G$ are
\begin{subequations}
    \begin{align}
        \text{Circular:}\quad \thetaG &= (0.781475, 0.623937),\\ 
        \text{Star:}\quad \thetaG &= (0.482208, 0.468687).
    \end{align}
\end{subequations}
Please find visualizations of the two-dimensional Ackley function with circular and star constraints in Figures \ref{subfig:circle} and \ref{subfig:star}, respectively.
\begin{figure}[!htb]
	\centering
	\subcaptionbox{\label{subfig:circle}Circular constraint}{
		\includegraphics[width=0.4\textwidth]{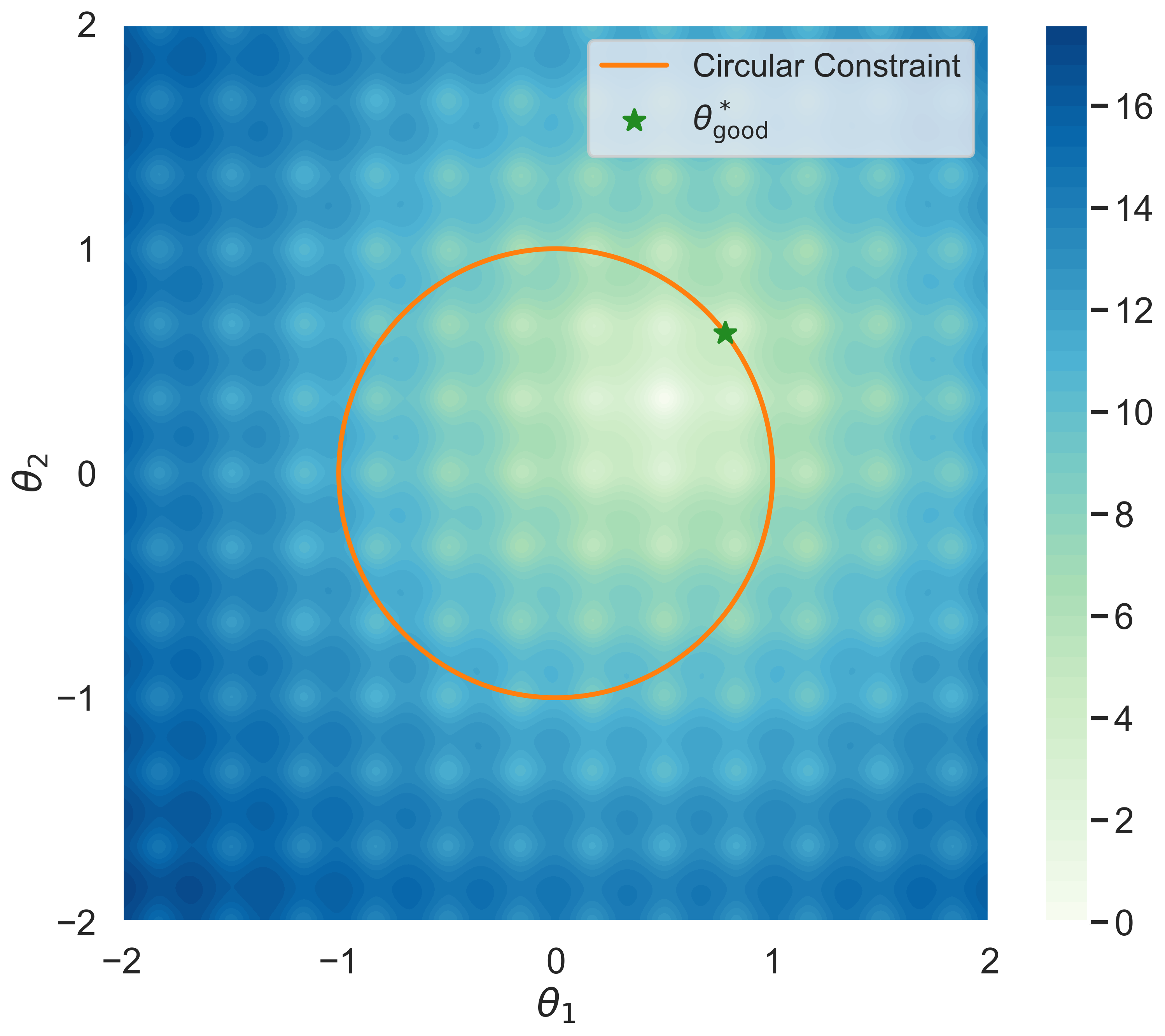}
	}
    \hspace{1em}
	\subcaptionbox{\label{subfig:star}Star constraint}{
		\includegraphics[width=0.4\textwidth]{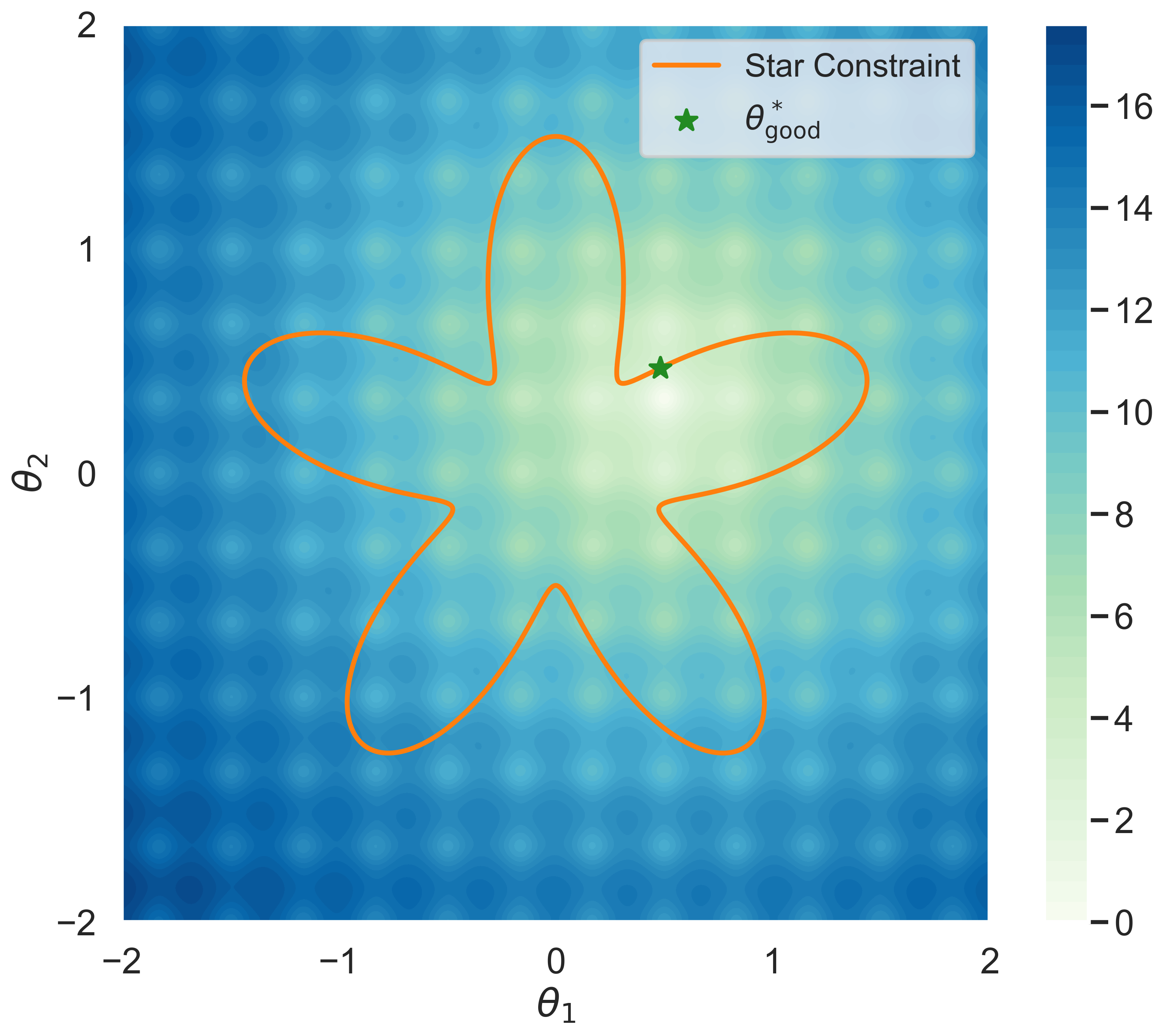}}
	\caption{2D Ackley function with two different constraint sets.}
\end{figure}
Thanks to the reformulation~\eqref{eq:reformulate_copt}, we obtain the equivalent lower-level objective functions $L$ for circular and star constraints given by
\begin{subequations}
\begin{align}
\text{Circular:} \quad L(\theta) &:= \left(\theta_1^2 + \theta_2^2 - 1 \right)^2, \\
\text{Star:} \quad L(\theta) &:= \left(\theta_1^2 + \theta_2^2 - (1 + 0.5 \sin (5\atantwo(\theta_2, \theta_1)))^2 \right)^2,
\end{align}
\end{subequations}
respectively.

\noindent \textbf{Baselines \& Implementations.}
We compare our CB\textsuperscript{2}O (Algorithm \ref{alg:cb2o_alg}) with four baseline methods: Penalized CBO \cite{carrillo2021consensus}, Adaptive Penalized CBO \cite{borghi2021constrained}, CBO with Gradient Force (GF) \cite{carrillo2024interacting} and Projected CBO \cite{fornasier2020consensus_hypersurface_wellposedness}.
The shared hyperparameters across all five algorithms are,
\[
\begin{aligned}
\lambda = 1, \qquad \sigma = 1, \qquad \alpha = 30,\qquad \Delta t = 0.01, \qquad  T = 300.
\end{aligned}
\]
The stopping criteria are set to $\estop = 0$ for the circular constraint and $\estop = 10^{-3}$ for the star constraint.
Let us now give the details about the baselines and their specific hyperparameters.
\begin{itemize}
    \item \textit{Penalized CBO.} The Penalized CBO method \cite{carrillo2021consensus} transforms the constrained optimization problem into an unconstrained problem by linearly combining the upper- and lower-level objective functions~$G$ and $L$ with a penalty parameter $\LM$, and then applies the standard CBO method \cite{pinnau2017consensus} on the resulting unconstrained problem.
    The reformulated objective is $\LM L + G$, where $\LM$ is a key hyperparameter.
    We set $\LM = 100$ for both the circular and the star constraint problems.
    \item \textit{Adaptive Penalized CBO.} The Adaptive Penalized CBO method \cite{borghi2021constrained} also applies the standard CBO to the unconstrained objective $\LM_k L + G$, yet dynamically adjusts $\LM_k$ at each iteration $k$ based on the constraint violations of the current particles.
    If the violation is below the tolerance threshold $1/\sqrt{\zeta_k}$, the tolerance decreases according to $\zeta_{k+1} = \eta_{\zeta} \zeta_k$ for some $\eta_{\zeta} > 1$. Otherwise, $\LM_k$ increases by a factor $\eta_{\LM} > 1$, i.e., $\LM_{k+1} = \eta_{\LM} \LM_k $.
    Please find the choices of hyperparameters in Appendix~\ref{sec:additional_copt}.
    
    \item \textit{CBO with GF.} CBO with gradient forcing solves the constrained optimization problem by adding a gradient force term $-\LM \nabla L$ to the standard CBO dynamics with $G$ as objective function.
    A large $\LM$ encourages particles to rapidly concentrate around the constraint set.
    We set $\LM = 100$ for both the circular and the star constraint problems.
    \item \textit{Projected CBO.} The Projected CBO methods modifies the standard CBO method \cite{pinnau2017consensus} by projecting the drift and diffusion terms onto the tangent space of the constraint set and adding an extra term to ensure the dynamics remains on the constraint set despite the Brownian motion.
    This method does not introduce additional hyperparameters.
    \item \textit{CB\textsuperscript{2}O.} In addition to shared hyperparameters, CB\textsuperscript{2}O introduces a key hyperparameter $\beta$.
    For this study, $\beta$ is set to be constant.
    Details of the chosen values are provided in Appendix~\ref{sec:additional_copt}.
\end{itemize}

\begin{remark}\label{rem:beta_too_small}
    In practical implementations of the CB\textsuperscript{2}O algorithm, the quantile parameter $\beta$ cannot be chosen too small,
    since the number of particles $N$ is finite.
    Specifically, if $\lceil \beta N \rceil \leq 1$, only one particle is selected at each time step to when computing the consensus point according to \eqref{eq:csp_finite_particles_discrete}. 
    In this case, the consensus point is simply at every time step the particle with the smallest value of the lower-level objective function $L$. 
    Consequently, CB\textsuperscript{2}O behaves in such case similarly to the standard CBO method \cite{pinnau2017consensus} with a large value of the hyperparameter $\alpha$, when $L$ is used as the objective function, neglecting completely the upper-level objective function $G$. 
    This hinders the algorithm's ability to converge to the target minimizer $\thetaG$.
    Therefore, in practical implementations of the CB\textsuperscript{2}O algorithm, we impose a lower bound on the hyperparameter $\beta$, denoted as $\betaMin$, which is the value such that $\lceil \beta N \rceil = 2$.
\end{remark}

\noindent \textbf{Performance Metrics.}
We evaluate the algorithm's performance using the averaged $\ell^2$-distance between the optimizer each algorithm finds and the target global minimizer $\thetaG$ over $100$ simulations, referred to as \textit{precision}.
Comparisons are conducted in two scenarios, where
(i) all algorithms use the same number of particles ($N = 100$), and where (ii) all algorithms run for approximately the same total time. 
For each algorithm, this requires to determine the maximum number of particles it can handle within a fixed runtime. 
Runtime is measured in seconds over $100$ simulations using MATLAB.

\subsubsection{Experimental Results}

We now report on the results of our numerical experiments.

\begin{itemize}
    \item \textbf{Circular constraint.}
    We first compare the algorithms using the same number of particles, i.e., setting (i).
    As shown in Table \ref{tab:circular_N100} and Figure \ref{subfig:circle_same_particles}, both CBO with Gradient Force and Projected CBO achieve the best precision but require relatively long running times.
    In contrast, the CB\textsuperscript{2}O algorithm demonstrates a good balance, maintaining strong performance while using significantly less computational time.
    The reduced computational cost of CB\textsuperscript{2}O is due to its use of only $\lceil \beta N \rceil$ particles to compute the consensus point at each iteration, which significantly reduces complexity, particularly for large particle counts.
    This efficiency enables CB\textsuperscript{2}O algorithm to utilize a greater number of particles under the same computational cost.
    
    To investigate further, we compare the algorithms under equivalent computational costs (running times), i.e., setting (ii).
    From Table \ref{tab:circular} and Figure \ref{subfig:circle_same_time},
    we observe that while CBO with Gradient Force and Projected CBO exhibit faster initial convergence, CB\textsuperscript{2}O ultimately achieves the best precision among all methods given the same computational budget.
    
    Notably, although Adaptive Penalized CBO shows a convergence pattern similar to CB\textsuperscript{2}O,
    the later consistently outperforms it in both precision and computational efficiency.
    Furthermore, Adaptive Penalized CBO is sensitive to its specific hyperparameters, such as $\eta_{\zeta}$ and $\zeta_{\chi}$, whereas CB\textsuperscript{2}O remains stable across a broader range of values of the hyperparameter $\beta$. 
    For a detailed discussion, please see Section \ref{subsec:COPT_ablation_study}.

\begin{table}[!htb]
\centering
\caption{Comparison of different algorithms for the two-dimensional Ackley function with a circular constraint, evaluated using the same number of particles.}
\label{tab:circular_N100}
\renewcommand\arraystretch{1.25}
\resizebox{0.95\textwidth}{!}{
    \begin{sc}
\begin{tabular}{cccc}
\toprule
    \bf Methods & \bf Number of particles & \bf Precision & \bf Running time (s)  \\
\midrule
Penalized CBO & $N = 100$ & $9.3\times10^{-3}$ & 3.49 \\
\midrule
    Adaptive Penalized CBO  & $N = 100$ & $5.1\times 10^{-3}$ & 2.23\\
\midrule
    CBO with GF & $N = 100$ & $\mathbf{1\times 10^{-3}}$ & 10.92\\
\midrule
    Projected CBO  & $N = 100$ & $1.4\times 10^{-3}$ & 6.12\\
\midrule
CB\textsuperscript{2}O & $N = 100$ & $\mathbf{4\times 10^{-3}}$ & $\mathbf{1.16}$ \\
\bottomrule
\end{tabular}
    \end{sc}}
\end{table}

\begin{table}[!htb]
\centering
\caption{Comparison of different algorithms for the two-dimensional Ackley function with a circular constraint, evaluated under approximately equal running times.}
\label{tab:circular}
\renewcommand\arraystretch{1.25}
\resizebox{0.95\textwidth}{!}{
    \begin{sc}
\begin{tabular}{cccc}
\toprule
    \bf Methods & \bf Number of particles & \bf Precision & \bf Running time (s) \\
    \midrule
    Penalized CBO & $N =500$  & $9.3\times 10^{-3}$ & $5.69$ \\
    \midrule
    Adaptive Penalized CBO & $N =500$0 & $2\times 10^{-3}$ & $6.43$\\
    \midrule
    CBO with GF & $N = 50$ & $1.4\times 10^{-3}$ & $6.18$\\
    \midrule
    Projected CBO  & $N = 80$ & $1.8\times 10^{-3}$ & $5.75$\\
    \midrule
    CB\textsuperscript{2}O  & $N = 1500$ & $\mathbf{1\times 10^{-3}}$ & $5.55$\\
\bottomrule
\end{tabular}
    \end{sc}
}
\end{table}

\begin{figure}[!htb]
	\centering
	\subcaptionbox{Setting (i) using the same number of particles $N=100$\label{subfig:circle_same_particles}}{
		\includegraphics[width=0.45\textwidth]{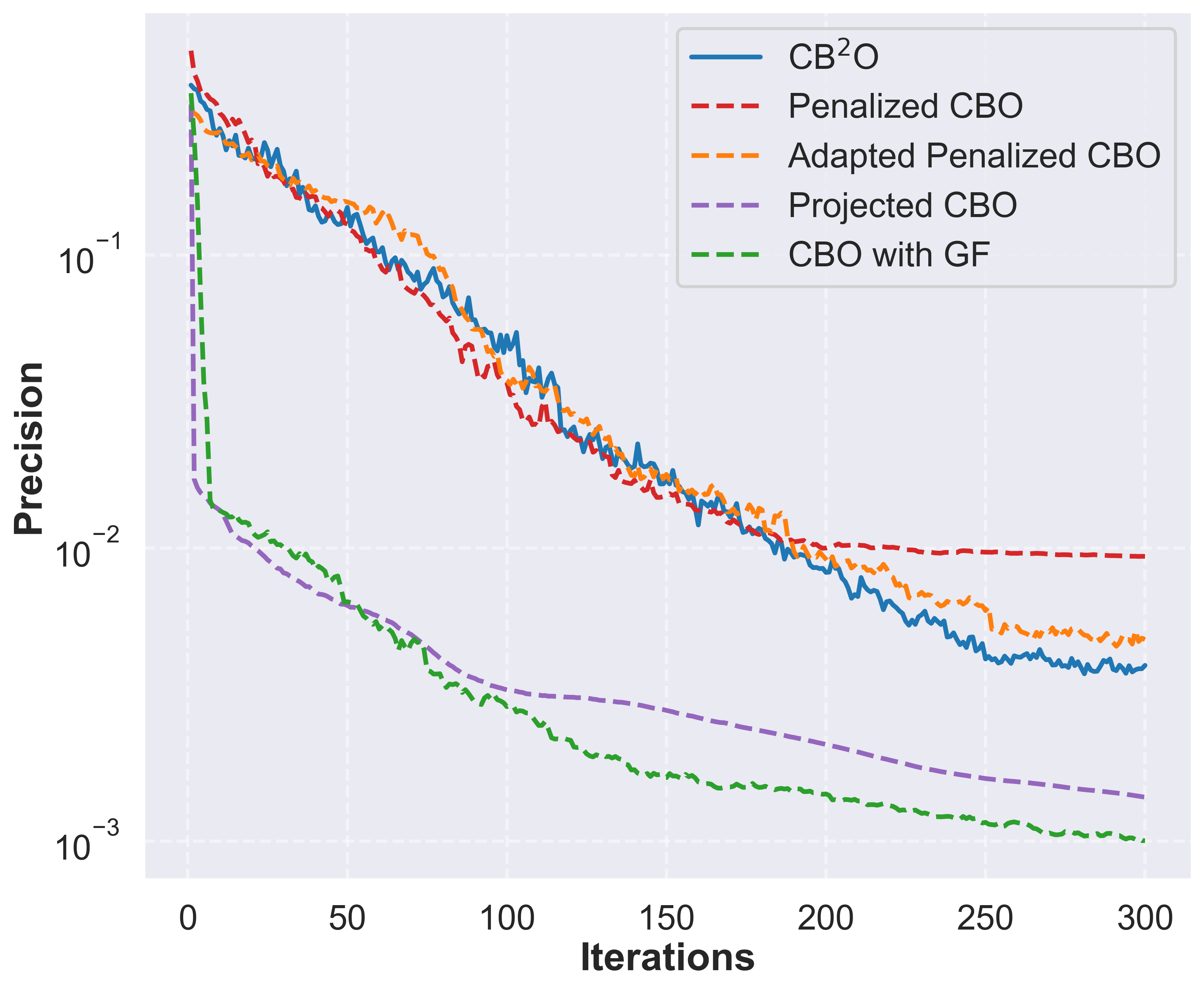}
	}
    \hspace{1em}
	\subcaptionbox{Setting (ii)  using approximately the same running time\label{subfig:circle_same_time}}{
		\includegraphics[width=0.45\textwidth]{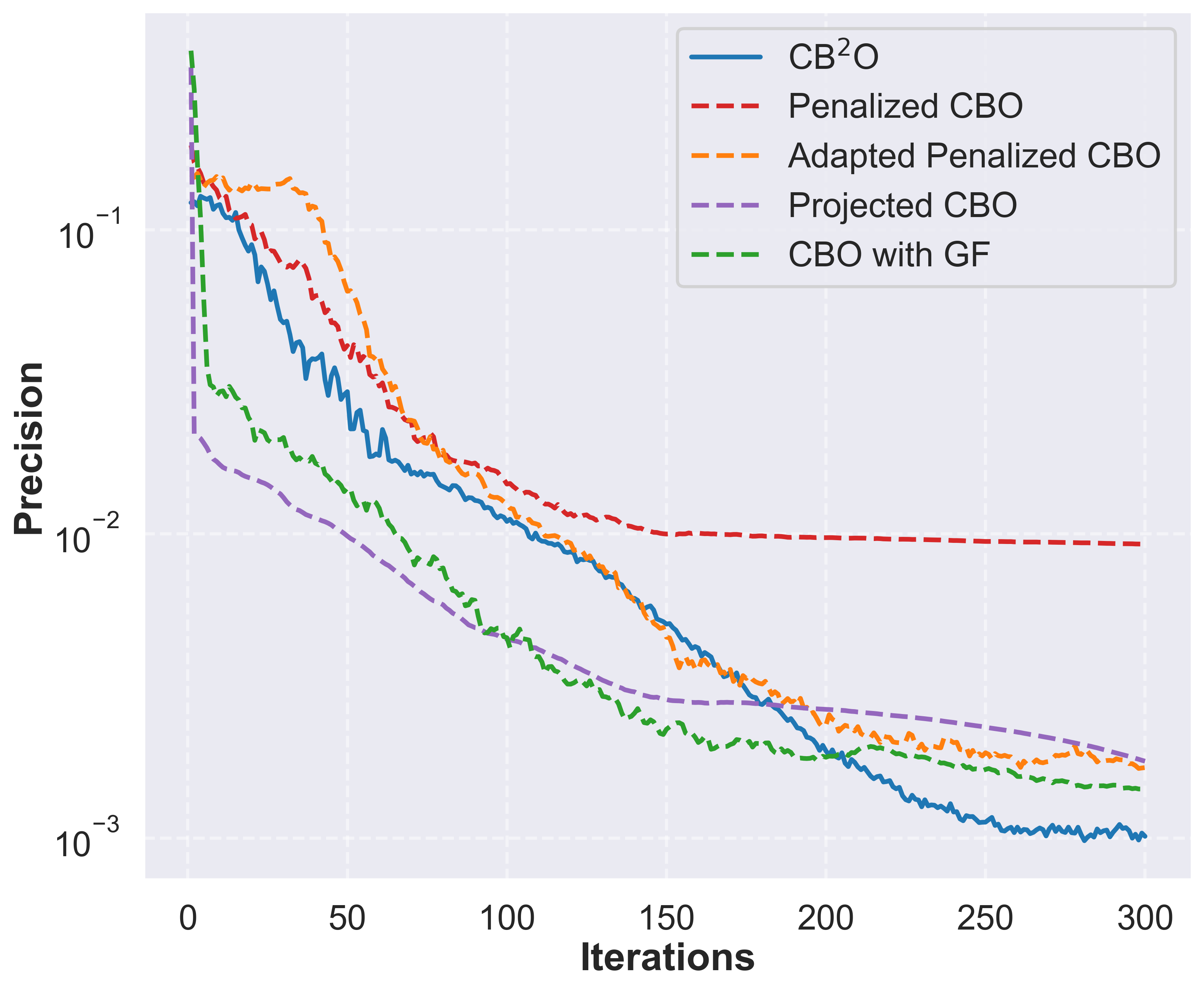}
	}	
	\caption{Comparison of different algorithms for the two-dimensional Ackley function with a circular constraint in settings (i) and (ii), respectively.}
	\label{fig:circular}
\end{figure}

\item \textbf{Star constraint.} Similar to the circular constraint case, we compare the performance of different algorithms using the same number of particles (setting (i)) and under approximately the same running time (setting (ii)).  
Due to the complexity of the star constraint, applying projection is not straightforward, so we exclude Projected CBO from this comparison.
From Table \ref{tab:star_N100} and Figure \ref{subfig:star_same_particles}, we observe that even using the same number of particles, CB\textsuperscript{2}O already outperforms most of the baselines, and has comparable performances as Penalized CBO meanwhile having less computational cost.
Furthermore, when  CB\textsuperscript{2}O is allowed to use more particles but maintaining the same computational cost as others, as shown in Table \ref{tab:star} and Figure \ref{subfig:star_same_time}, it consistently demonstrates superior performance in both precision and efficiency.
Notably, CBO with Gradient Force requires significantly longer running times to achieve reasonable precision and behaves highly unstable compared to CB\textsuperscript{2}O. 
This is primarily due to the computationally expensive matrix inversion step in the CBO with Gradient Force algorithm and the greatly nonconvex nature of the star constraint. 
These results underscore the advantages of CB\textsuperscript{2}O in handling nonconvex constraint sets effectively.

\begin{table}[!htb]
\centering
\caption{Comparison of different algorithms for the two-dimensional Ackley function with a star constraint, evaluated using the same number of particles.}
\label{tab:star_N100}
\renewcommand\arraystretch{1.25}
\resizebox{0.95\textwidth}{!}{
    \begin{sc}
\begin{tabular}{cccc}
\toprule
    \bf Methods & \bf Number of particles & \bf Precision & \bf Running time (s) \\
    \midrule
    Penalized CBO & $N =100$  & $8.7 \times 10^{-3}$ & $1.89$ \\
    \midrule
    Adaptive Penalized CBO & $N = 100$ & $11.2 \times 10^{-3}$ & $2.17$ \\
    \midrule
    CBO with GF & $N = 100$ & $10 \times 10^{-3}$ & $2.41 \times 10^4$ \\
    \midrule
    CB\textsuperscript{2}O  & $N = 100$ & $\mathbf{8 \times 10^{-3}}$ & $\mathbf{1.29}$ \\
\bottomrule
\end{tabular}
    \end{sc}
}
\end{table}

\begin{table}[!htb]
\centering
\caption{Comparison of different algorithms for the two-dimensional Ackley function with a star constraint, evaluated under approximately the same running time.}
\label{tab:star}
\renewcommand\arraystretch{1.25}
\resizebox{0.95\textwidth}{!}{
    \begin{sc}
\begin{tabular}{cccc}
\toprule
    \bf Methods & \bf Number of particles & \bf Precision & \bf Running time (s) \\
    \midrule
    Penalized CBO & $N =500$  & $6.1 \times 10^{-3}$ & $6.26$ \\
    \midrule
    Adaptive Penalized CBO & $N = 500$ & $4.4 \times 10^{-3}$ & $7.03$ \\
    \midrule
    CBO with GF & $N =500$ & $20.9 \times 10^{-3}$ & $1.17 \times 10^3$ \\
    \midrule
    CB\textsuperscript{2}O  & $N = 2000$ & $\mathbf{3.2 \times 10^{-3}}$ & $\mathbf{5.09}$ \\
\bottomrule
\end{tabular}
    \end{sc}
}
\end{table}

\begin{figure}[!htb]
	\centering
	\subcaptionbox{Setting (i) using the same number of particles $N=100$\label{subfig:star_same_particles}}{
		\includegraphics[width=0.45\textwidth]{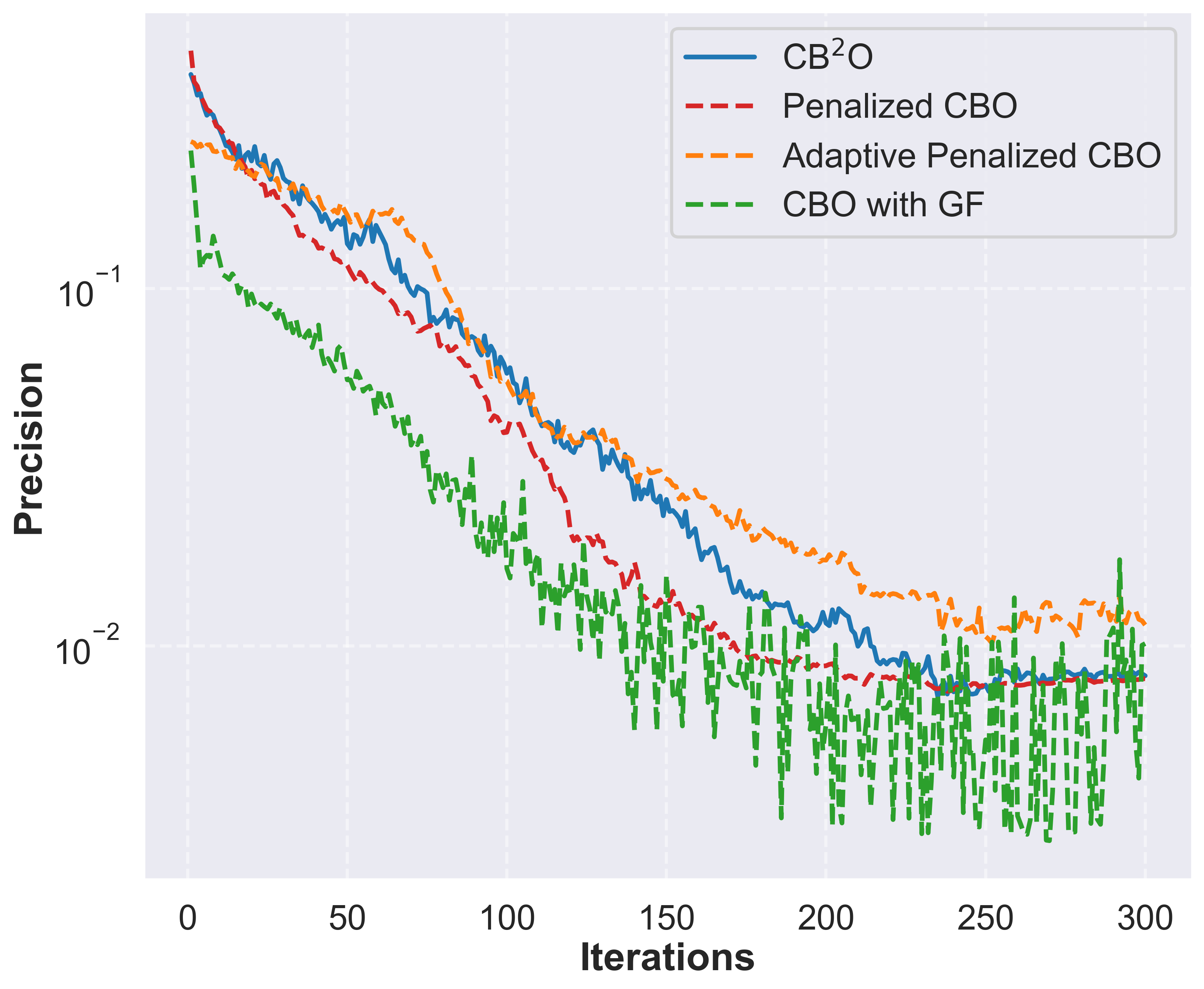}
	}
    \hspace{1em}
	\subcaptionbox{Setting (ii)  using approximately the same running time\label{subfig:star_same_time}}{
		\includegraphics[width=0.45\textwidth]{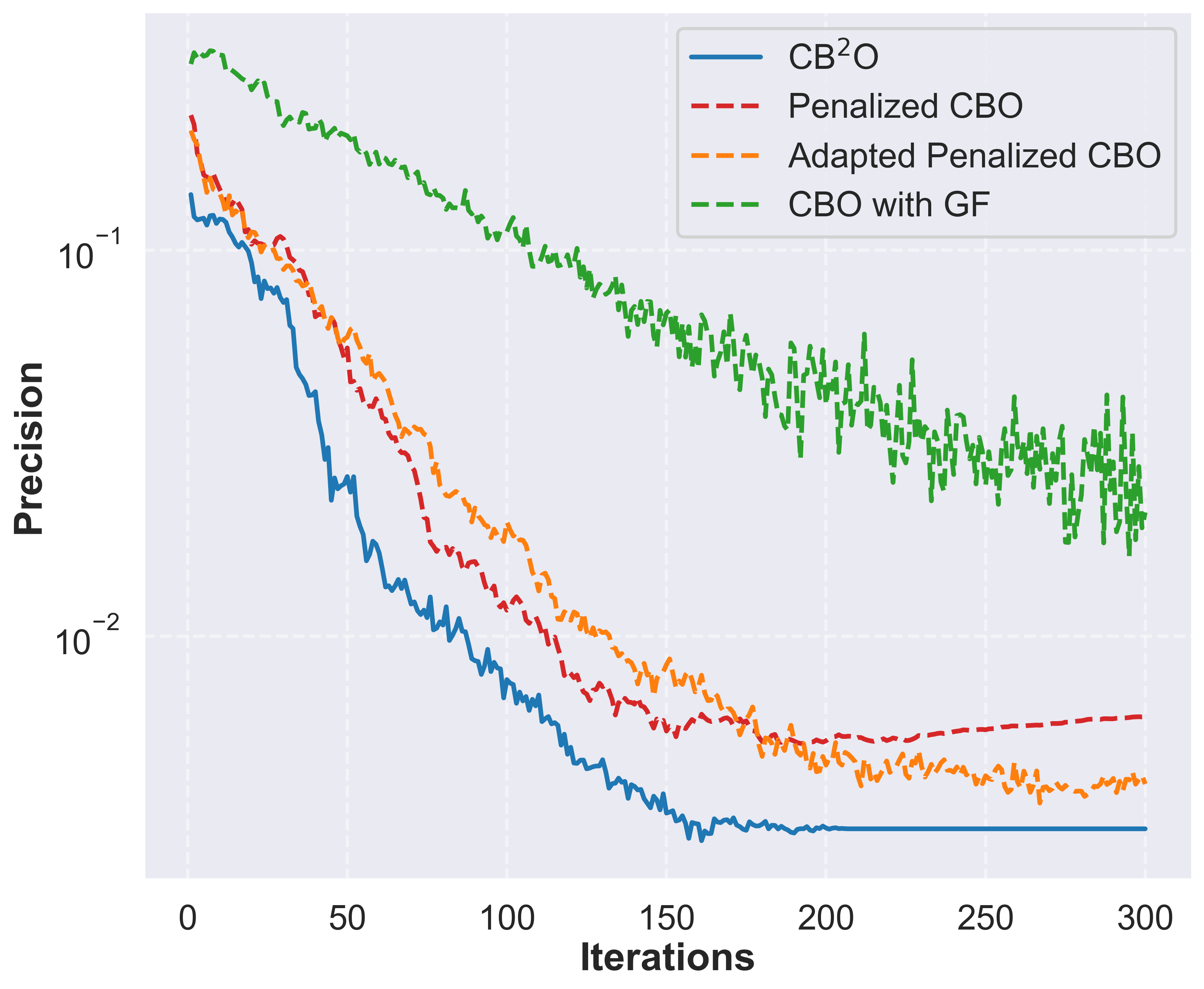}
	}	
	\caption{Comparison of different algorithms for the two-dimensional Ackley function with a star constraint in settings (i) and (ii), respectively.}
	\label{fig:star}
\end{figure}

\end{itemize}

\begin{remark}
From the two experiments above, we observe that CB\textsuperscript{2}O is computationally efficient, as it computes the consensus point at each iteration using only a subset of particles. 
However, this approach also means that the unselected particles is not fully utilized, which generally necessitates using a larger total number of particles to achieve good performance.
This observation motivates further exploration into strategies for accelerating  CB\textsuperscript{2}O by better leveraging the unselected particles, which we leave for future work.
\end{remark}

\subsubsection{The Influence of Hyperparameters}\label{subsec:COPT_ablation_study}
In this section, we investigate more deeply the influence of the hyperparameters $N$, $\beta$, $\alpha$, and $\estop$ on the performance of the CB\textsuperscript{2}O algorithm in terms of both precision and running time through an ablation study, see Figure~\ref{fig:eq2_distance}.
Specifically, in the experiments below, we vary one or two hyperparameters at a time while keeping all others fixed.
Detailed information on the hyperparameter settings are provided in Appendix~\ref{sec:additional_copt}.\\

\noindent \textbf{Ablation Study: Influence of the Hyperparameters on Precision.}
\begin{itemize}
\item \textit{Impact of the particle number $N$.}
In Figure \ref{subfig:different_N_distance}, we observe that increasing the number of particles $N$ from $10^2$ to $10^3$ improves the algorithm's performance. 
However, the precision plateaus as $N$ increases further from $10^3$ to $10^5$. 
This phenomenon can be understood through theoretical intuition by considering a hypothetical ``mean-field'' version of the CB\textsuperscript{2}O algorithm with the same hyperparameters as in Figure \ref{subfig:different_N_distance} but with infinitely many particles.
This mean-field algorithm achieves a fixed precision determined by the choice of hyperparameters.
For the finite particle CB\textsuperscript{2}O algorithm, its precision can be decomposed into two parts. First, the mean-field approximation error (how closely the finite particle algorithm approximates the mean-field algorithm) and second the intrinsic precision of the mean-field algorithm.
When $N$ is small, the finite particle algorithm's performance is limited by the mean-field approximation error. 
As $N$ increases, the algorithm's precision approaches the one of the mean-field algorithm. 
While our paper does not formally justify the mean-field approximation property, these observations align with theoretical results in the CBO literature \cite{huang2021MFLCBO,fornasier2021consensus,gerber2023mean}.

\item \textit{Impact of the quantile parameter $\beta$.}
From Figure \ref{subfig:different_beta_distance}, we make the following three observations.
First, for a fixed number of particles $N=1000$ and other hyperparameters held constant, the optimal precision occurs for $\beta \in [0.01, 0.05]$. 
Furthermore, selecting $\beta$ within the range $[0.002, 0.1]$ results in relatively good algorithm performance, indicating that the CB\textsuperscript{2}O algorithm is relatively insensitive to the choices of $\beta$. 
Second, choosing $\beta$ too large causes the CB\textsuperscript{2}O algorithm to fail in finding the target global minimizer~$\thetaG$.
This occurs because, with large $\beta$, the quantile set $\Qbeta{\dummy}$  may include the regions with high loss in terms of $L$ but low values of $G$, therefore causing the algorithm to move towards the global minimizer of $G$ but not the desired minimizer $\thetaG$.
This behavior is consistent with the theoretical results discussed in Remark~\ref{remark:beta0}.
Third, and conversely, choosing $\beta$ being too small degrades the algorithm performance, as discussed in Remark~\ref{rem:beta_too_small}.
However, for a fixed number of particles $N$, selecting $\beta$ such that $\lceil \beta N \rceil = 2$ already results in good performance.
These observations demonstrate that the choice of the quantile parameter 
$\beta$ in CB\textsuperscript{2}O is not restrictive, offering flexibility without compromising performance.

We repeat the experiments for varying values of $\beta$ for different numbers of particles, namely $N = 500$ and $N=2000$, respectively. 
The three patterns discussed earlier can be observed in analogous form in Figures \ref{subfig:beta_sameN_500_distance} and \ref{subfig:beta_sameN_2000_distance}.

\item \textit{Joint impact of $N$ and $\beta$.} Figure \ref{subfig:N_and_beta_distance} shows that as the number of particles $N$ increases and the quantile parameter $\beta$ decreases while keeping $\beta N = 50$ constant, the precision of CB\textsuperscript{2}O algorithm improves consistently.
Combined with the observation from Figure \ref{subfig:different_N_distance} that the precision plateaus when N exceeds $10^3$ for a fixed $\beta$, it suggests that smaller $\beta$ improves the algorithm performance in the ``mean-field'' regime.
These findings align with the theoretical perspective discussed in Remark~\ref{remark:beta0}.

\item \textit{Impact of the temperature parameter $\alpha$.} As shown in Figure \ref{subfig:different_alpha_distance}, the algorithm's  precision consistently improves with increasing $\alpha$.
This aligns with the theoretical perspective discussed in Remark~\ref{remark:alpha0}, as well as other CBO literature \cite{carrillo2018analytical,fornasier2021convergence}. 

\item \textit{Impact of the stopping criterion $\estop$.}
The choice of stopping criterion $\estop$ has a significant impact on the algorithm's precision, as illustrated in Figure \ref{subfig:different_estop_distance}. 
A smaller $\estop$ requires the algorithm to run for more iterations before stopping, resulting in improved performance.
While selecting a small $\estop$ is ideal for achieving high precision, this hyperparameter can be adjusted to balance precision and running time when the algorithm is operating under computational cost limitations. 
Further discussions on the influence of $\estop$ on the running time are provided below.

\begin{figure}[!htb]
	\centering
\subcaptionbox{\footnotesize Number of particles $N$\label{subfig:different_N_distance}}{\includegraphics[height=0.16\textheight, keepaspectratio=true]{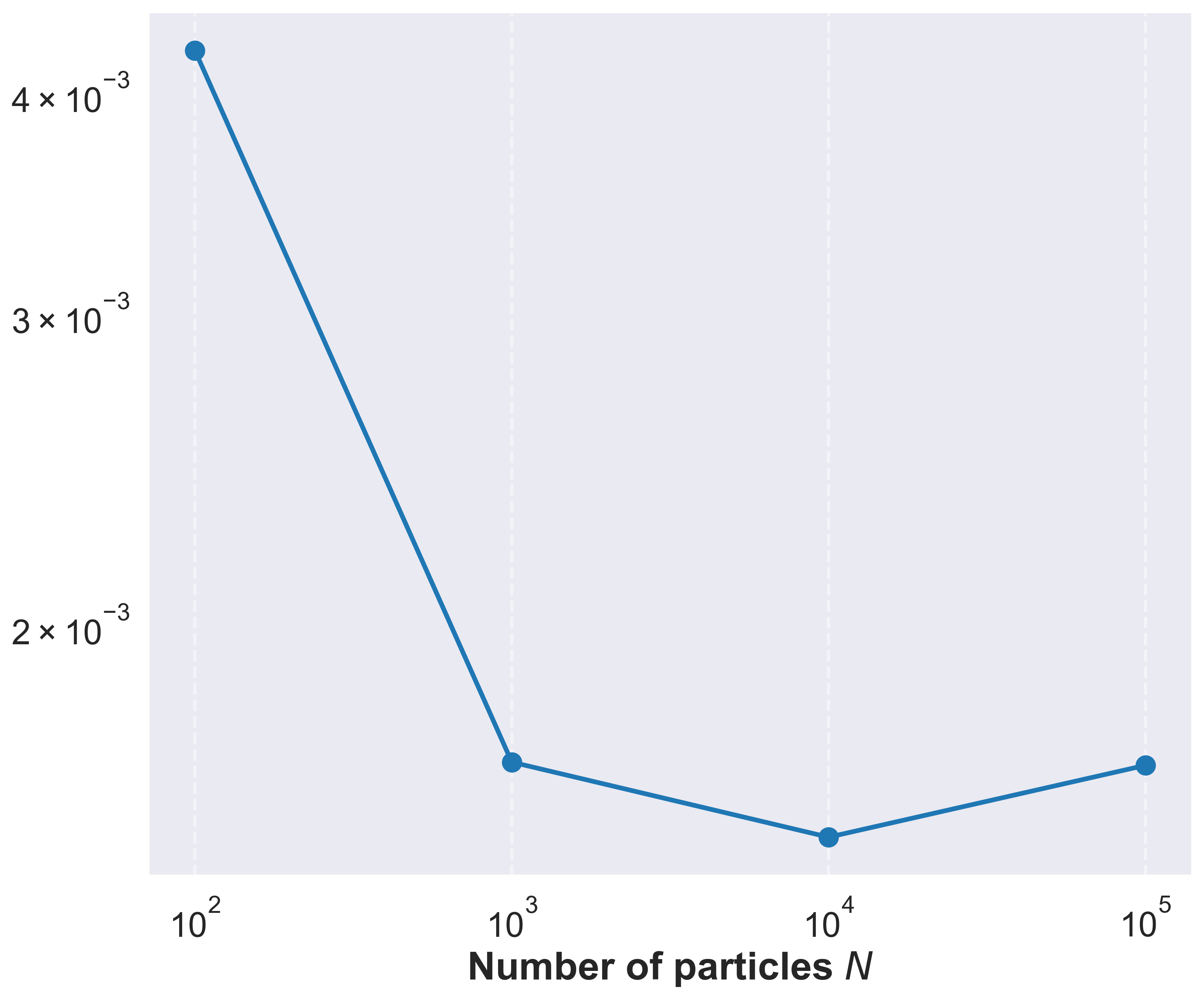}
}
\hspace{1em}
\subcaptionbox{\footnotesize Quantile parameter $\beta$ ($N=1000$)\label{subfig:different_beta_distance}}{
\includegraphics[height=0.16\textheight, keepaspectratio=true]{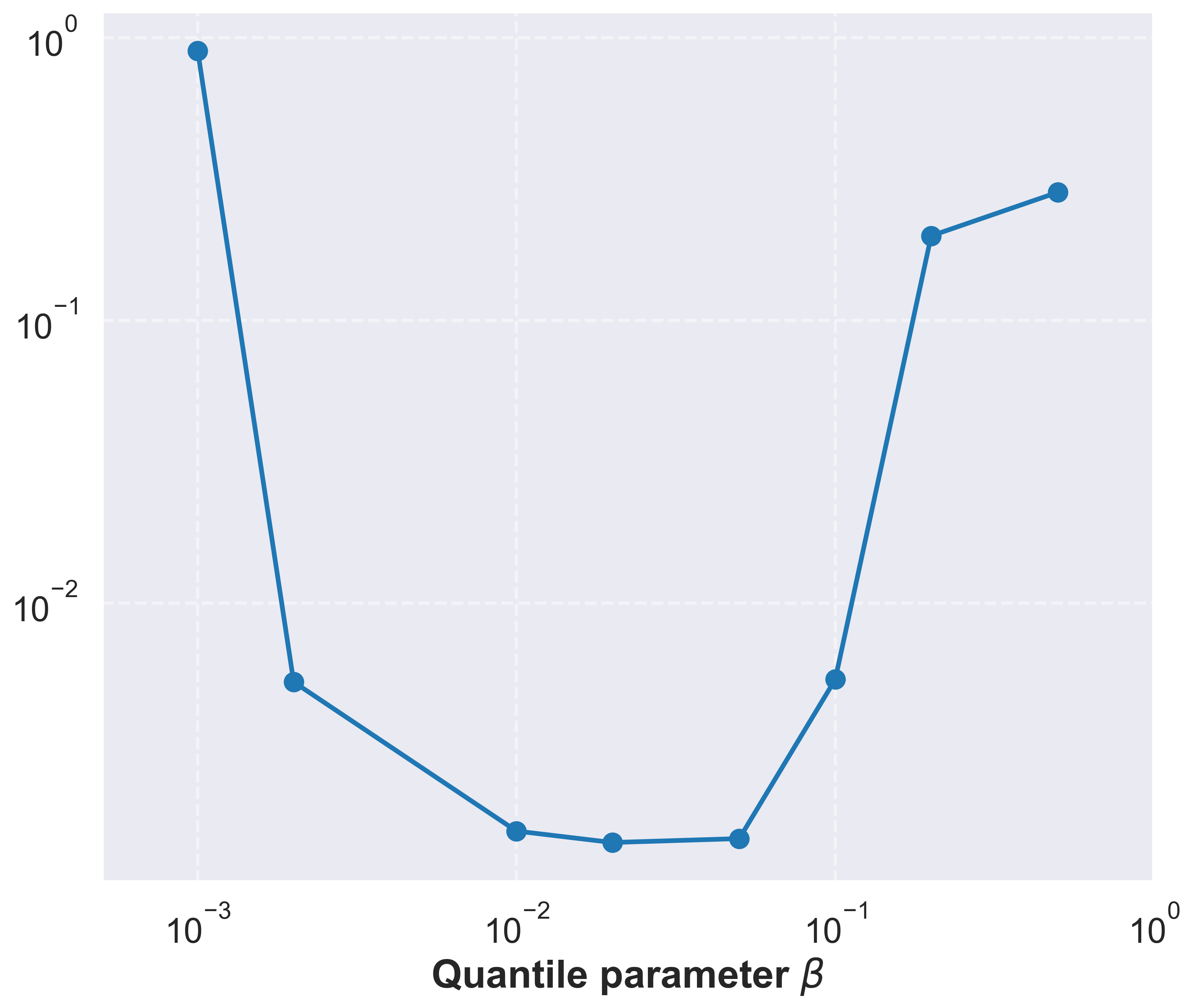}
}
\vspace{1em}

\subcaptionbox{\footnotesize $(N, \beta)$-pairs with fixed $\beta N$\label{subfig:N_and_beta_distance}}{
\includegraphics[height=0.16\textheight, keepaspectratio=true]{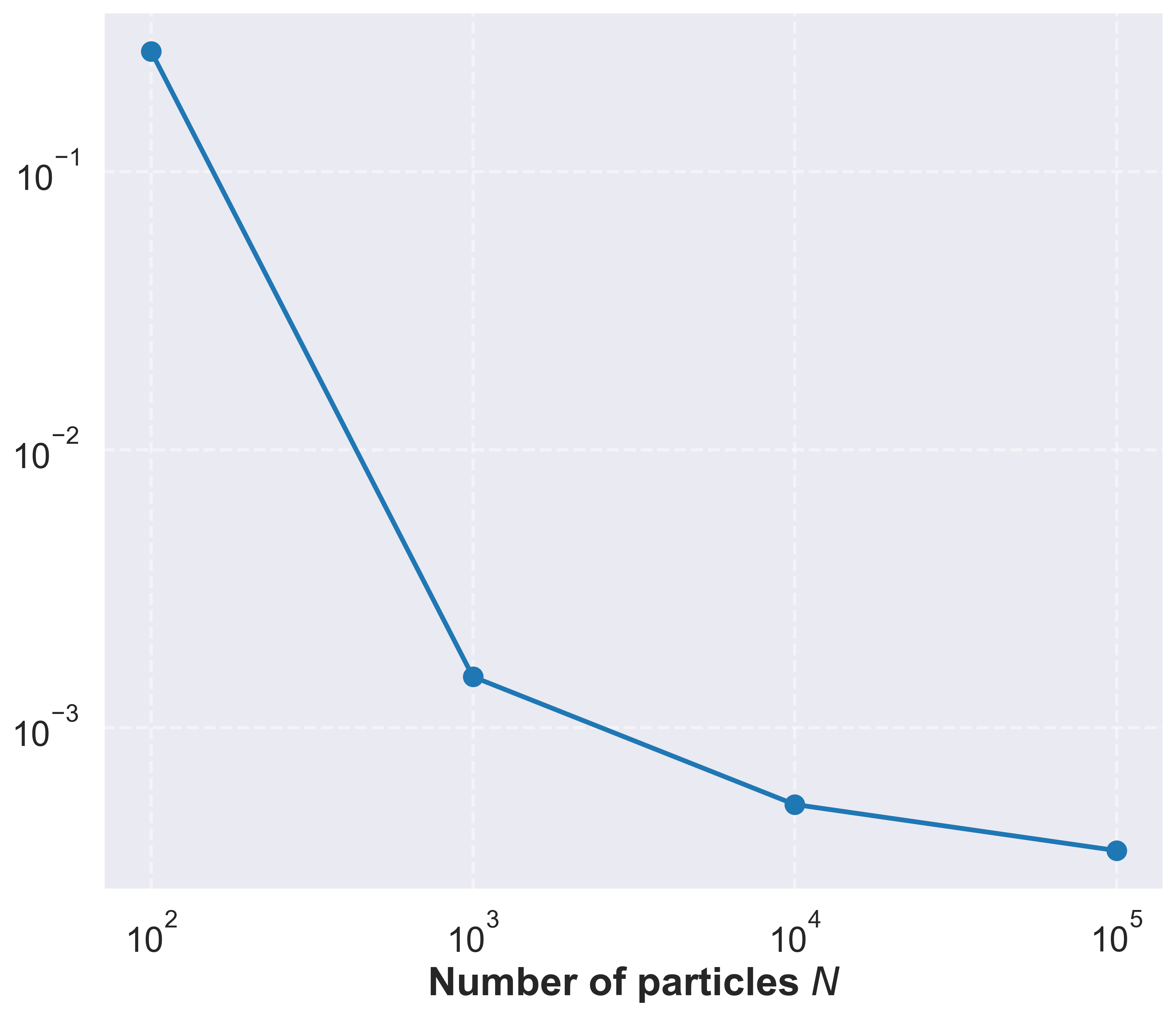}
} 
\hspace{1em}
\subcaptionbox{\footnotesize Temperature parameter $\alpha$\label{subfig:different_alpha_distance}}{
\includegraphics[height=0.16\textheight, keepaspectratio=true]{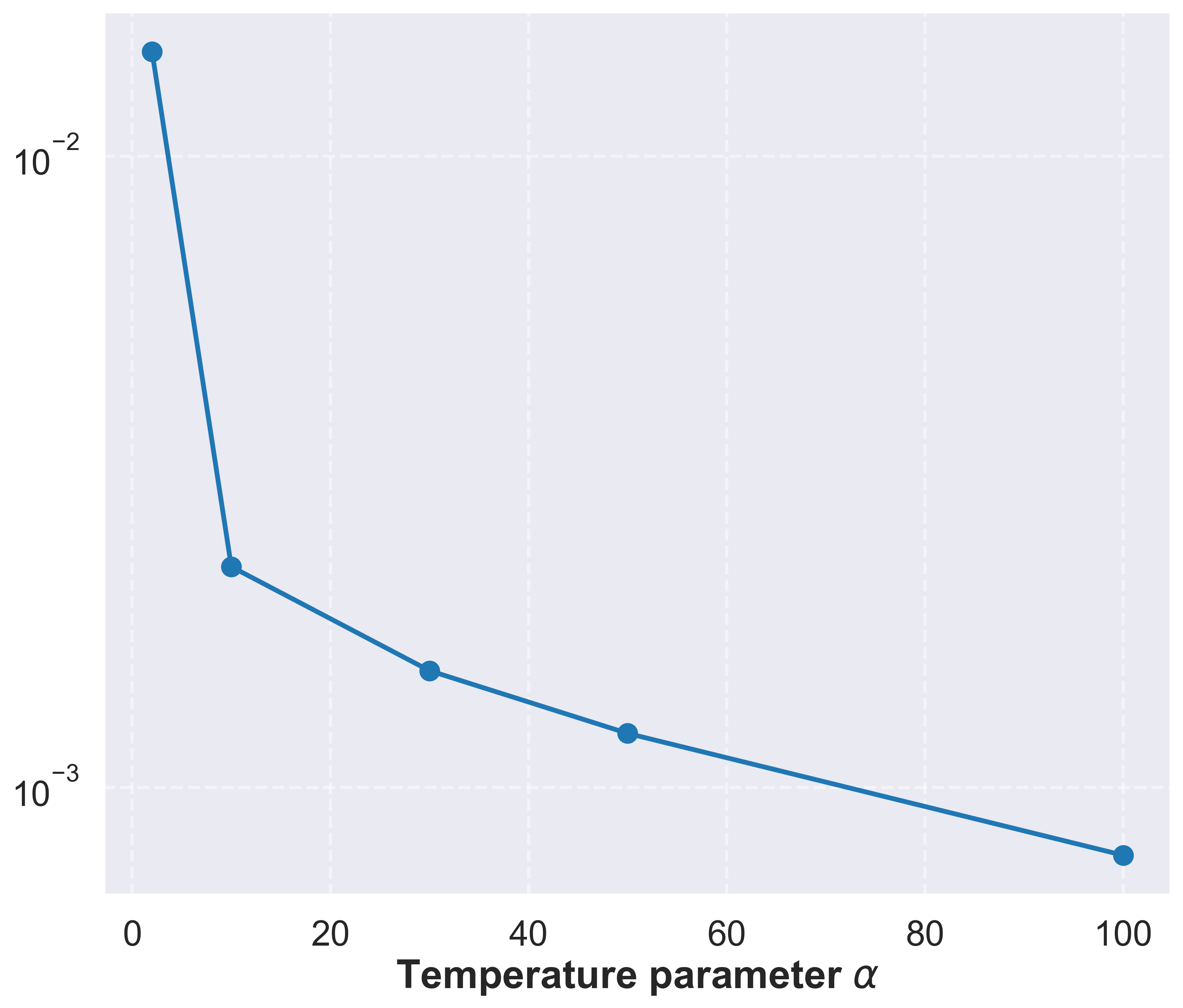}
}
\hspace{1em}
\subcaptionbox{\footnotesize Stopping criteria $\estop$\label{subfig:different_estop_distance}}{
\includegraphics[height=0.16\textheight, keepaspectratio=true]{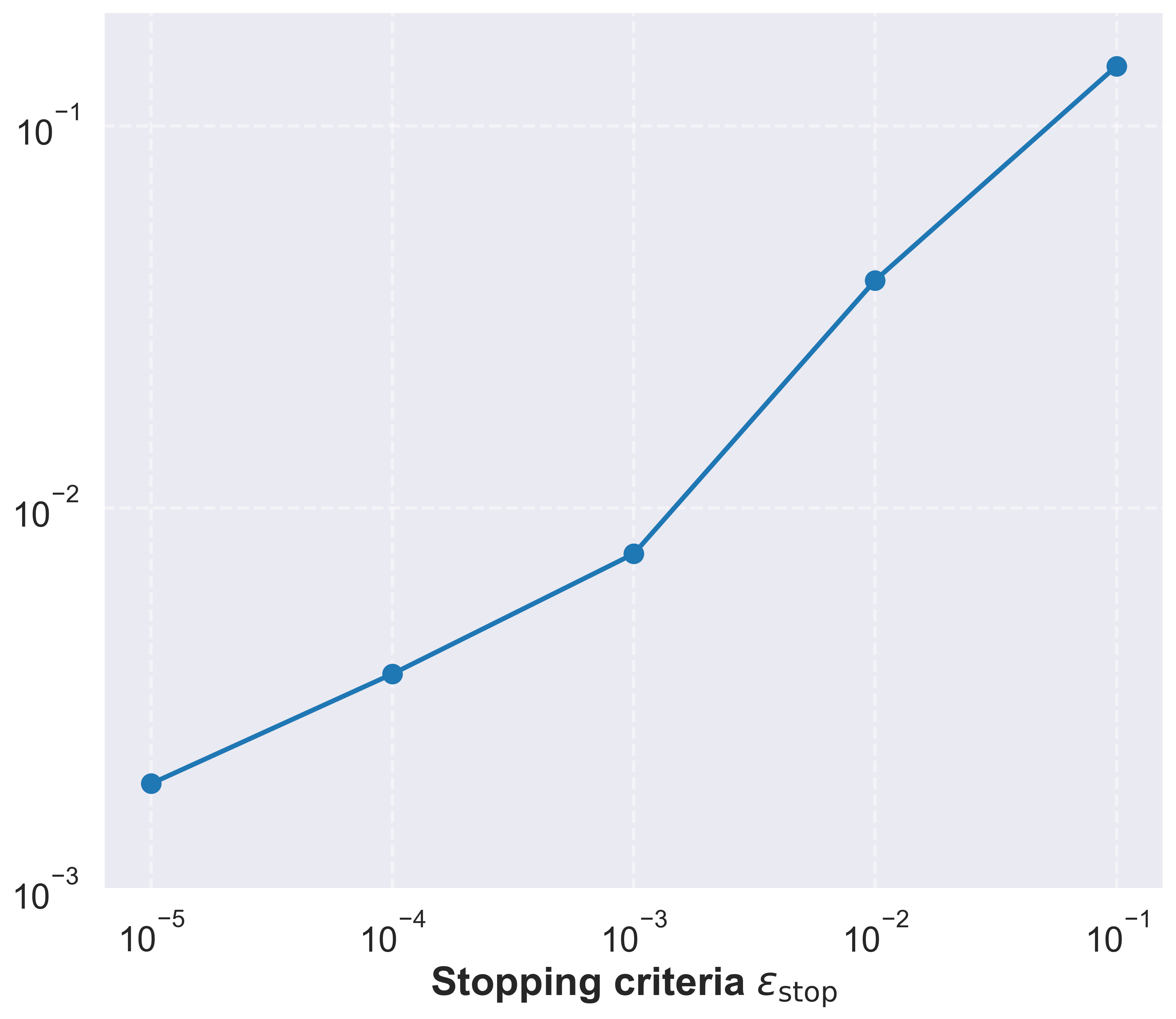}
}
	\caption{The influence of different hyperparameters on the performance of the CB\textsuperscript{2}O algorithm.
    The vertical axis in each plot corresponds to the {precision} of the algorithm, i.e., the averaged $\ell^2$-distance between the output of the algorithm and the target global minimizer $\thetaG$ averaged over $100$ simulations.}
	\label{fig:eq2_distance}
\end{figure}
\end{itemize}

\begin{figure}[!htb]
	\centering
	\subcaptionbox{\footnotesize Quantile parameter $\beta$ ($N=500$)\label{subfig:beta_sameN_500_distance}}{
		\includegraphics[height=0.16\textheight,keepaspectratio=true]{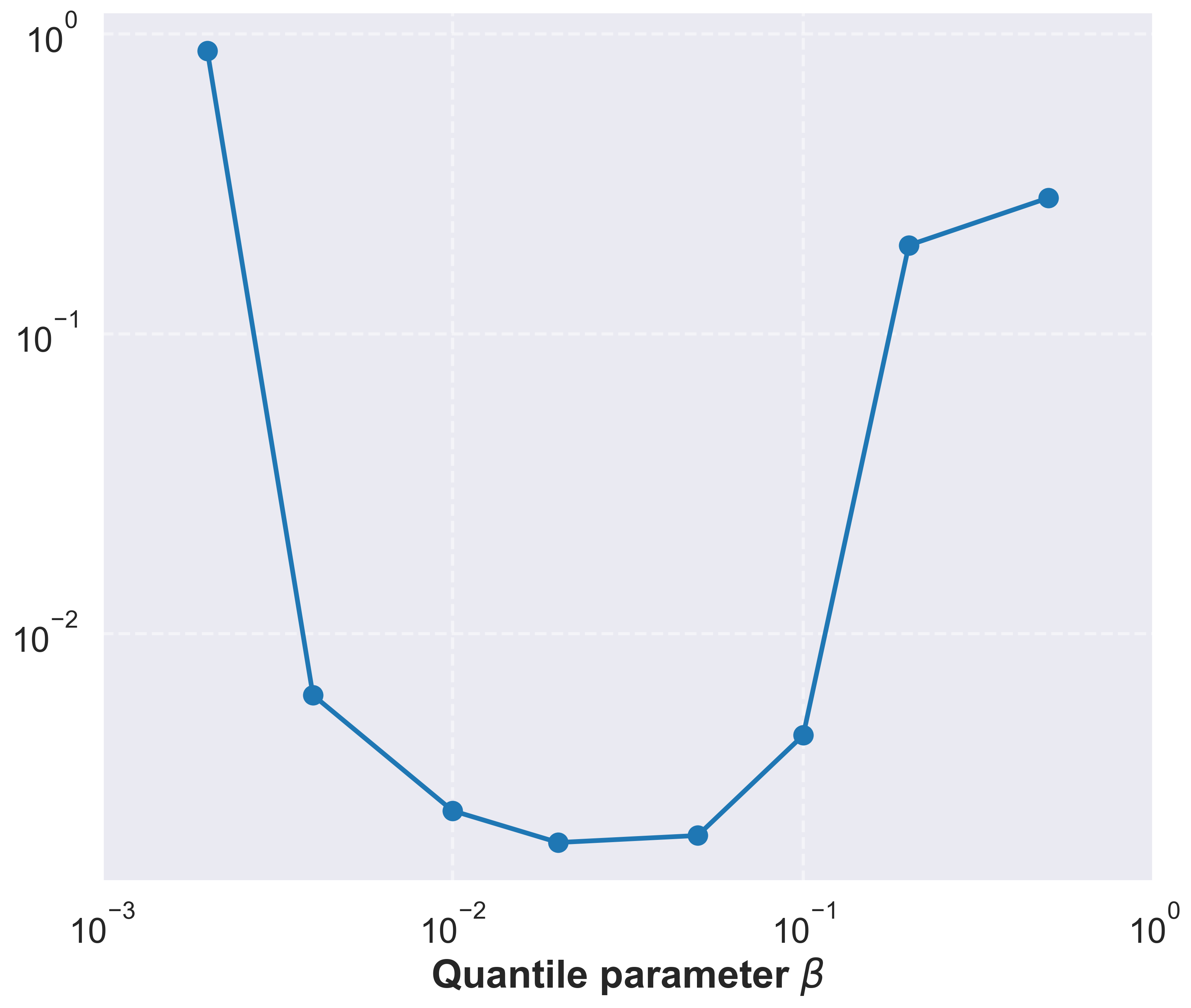}
	}
    \hspace{1em}
	\subcaptionbox{\footnotesize Quantile parameter $\beta$ ($N=2000$)\label{subfig:beta_sameN_2000_distance}}{
		\includegraphics[height=0.16\textheight,keepaspectratio=true]{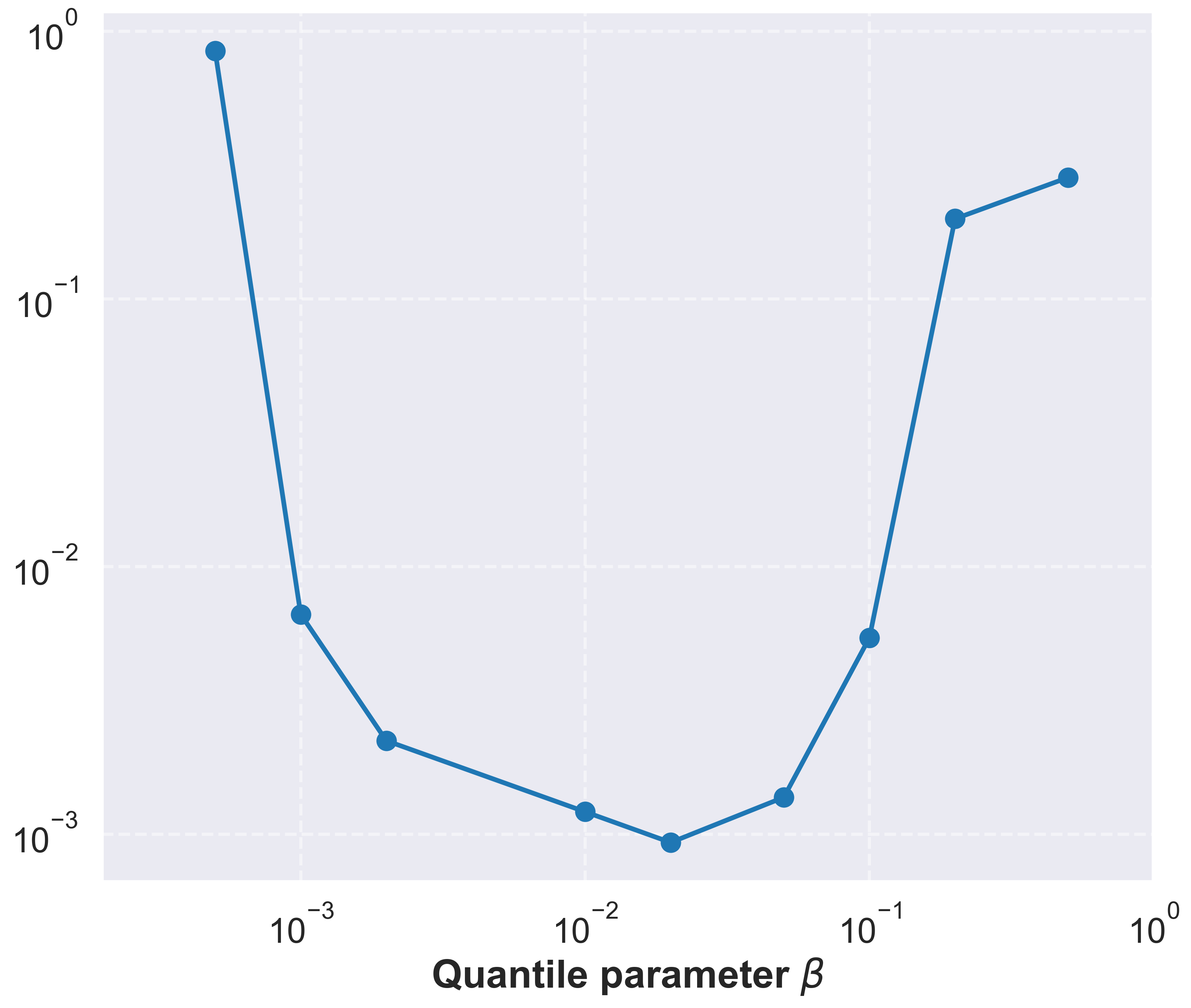}
	}	
	\caption{The influence of the hyperparameter $\beta$ on the precision of the CB\textsuperscript{2}O algorithm for different numbers of particles $N$ averaged over $100$ simulations.}
	\label{fig:beta_sameN_500_and_2000}
\end{figure}

\noindent \textbf{Influence of the Hyperparameters on the Running Time.} 
Figures \ref{subfig:different_N_runningtime}, \ref{subfig:different_beta_runningtime} and \ref{subfig:N_and_beta_runningtime} reveal that the running time of the CB\textsuperscript{2}O algorithm is predominately influenced by the total number of particles $N$, and the quantile $\beta$ affects the constant factor in the running time scaling. 
Specifically, $\beta$ determines the proportion of particles used in each iteration, impacting the computational overhead but not the overall scaling trend.
Figures \ref{subfig:different_alpha_runningtime} and \ref{subfig:different_estop_runningtime} demonstrate that the stopping criterion $\estop$ significantly affects the running time, as stricter criteria require more iterations for convergence. 
In contrast, the temperature parameter $\alpha$ has minimal impact on the overall running time. \\

\begin{figure}[!htb]
	\centering
	\subcaptionbox{\footnotesize Number of particles $N$\label{subfig:different_N_runningtime}}{
		\includegraphics[height=0.16\textheight, keepaspectratio=true]{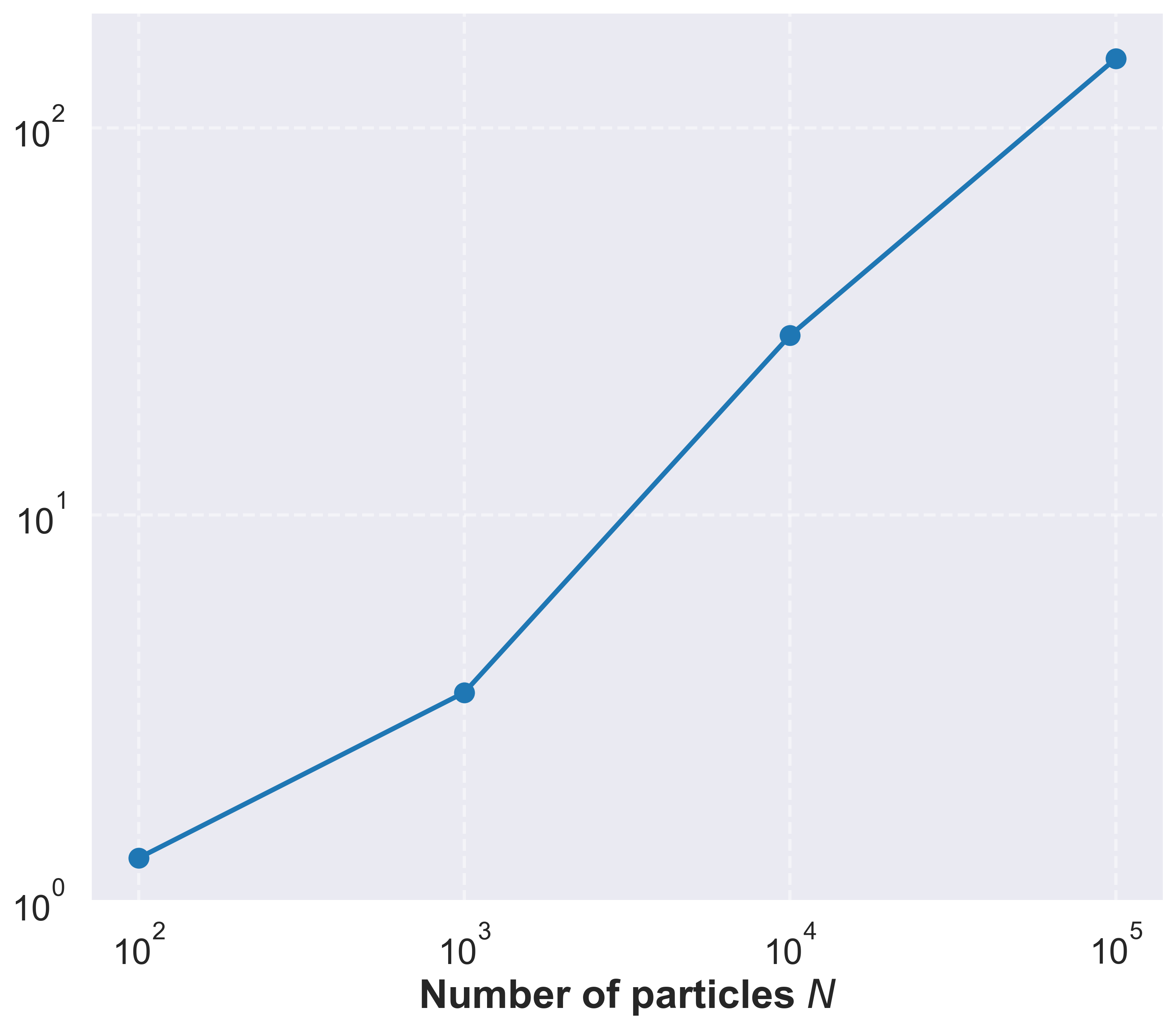}
	}
    \hspace{1em}
	\subcaptionbox{\footnotesize Quantile parameter $\beta$\label{subfig:different_beta_runningtime}}{
		\includegraphics[height=0.16\textheight, keepaspectratio=true]{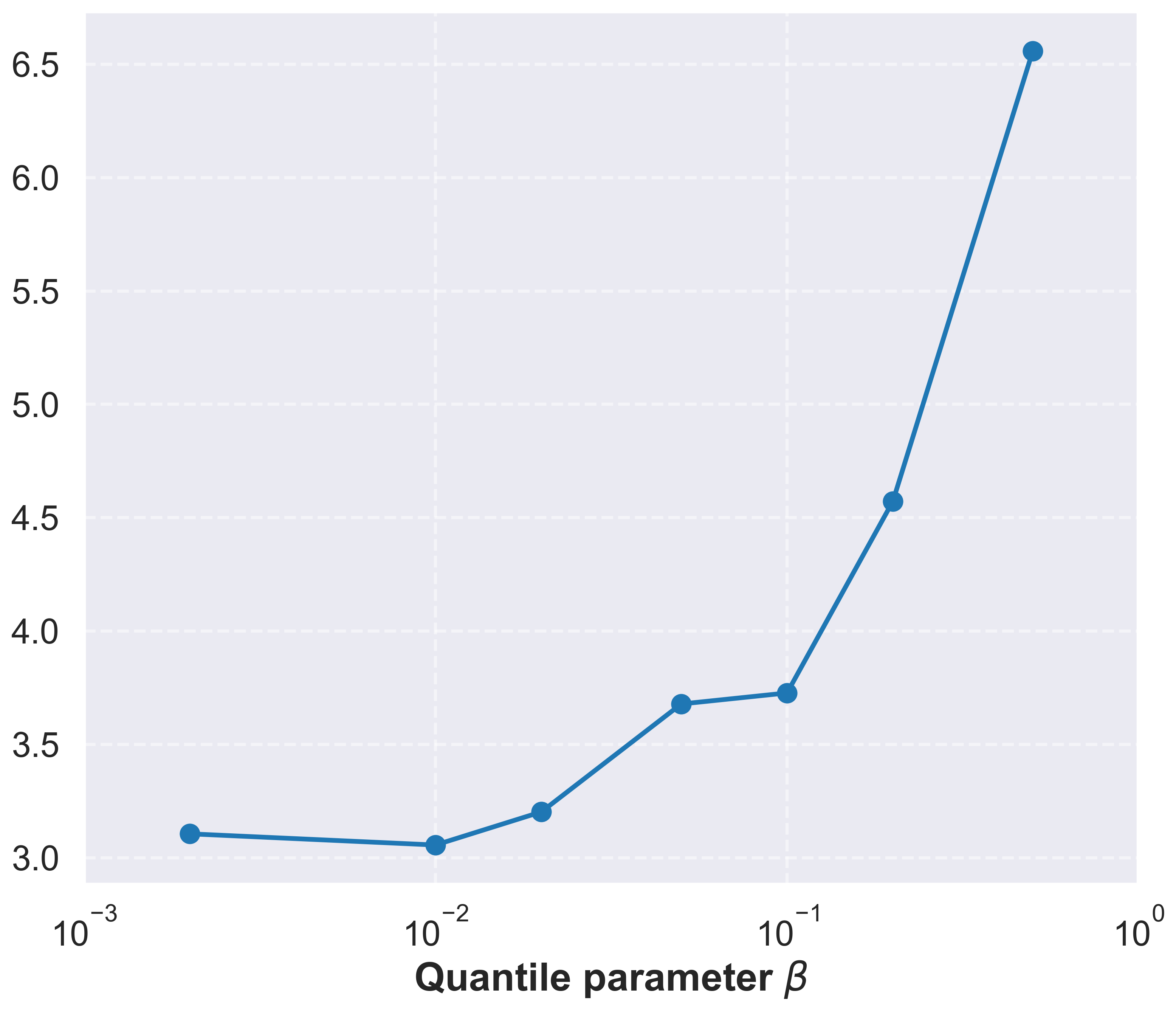}
	}
    \vspace{1em}
    
    \subcaptionbox{\footnotesize $(N, \beta)$-pairs with fixed $\beta N$\label{subfig:N_and_beta_runningtime}}{
		\includegraphics[height=0.16\textheight, keepaspectratio=true]{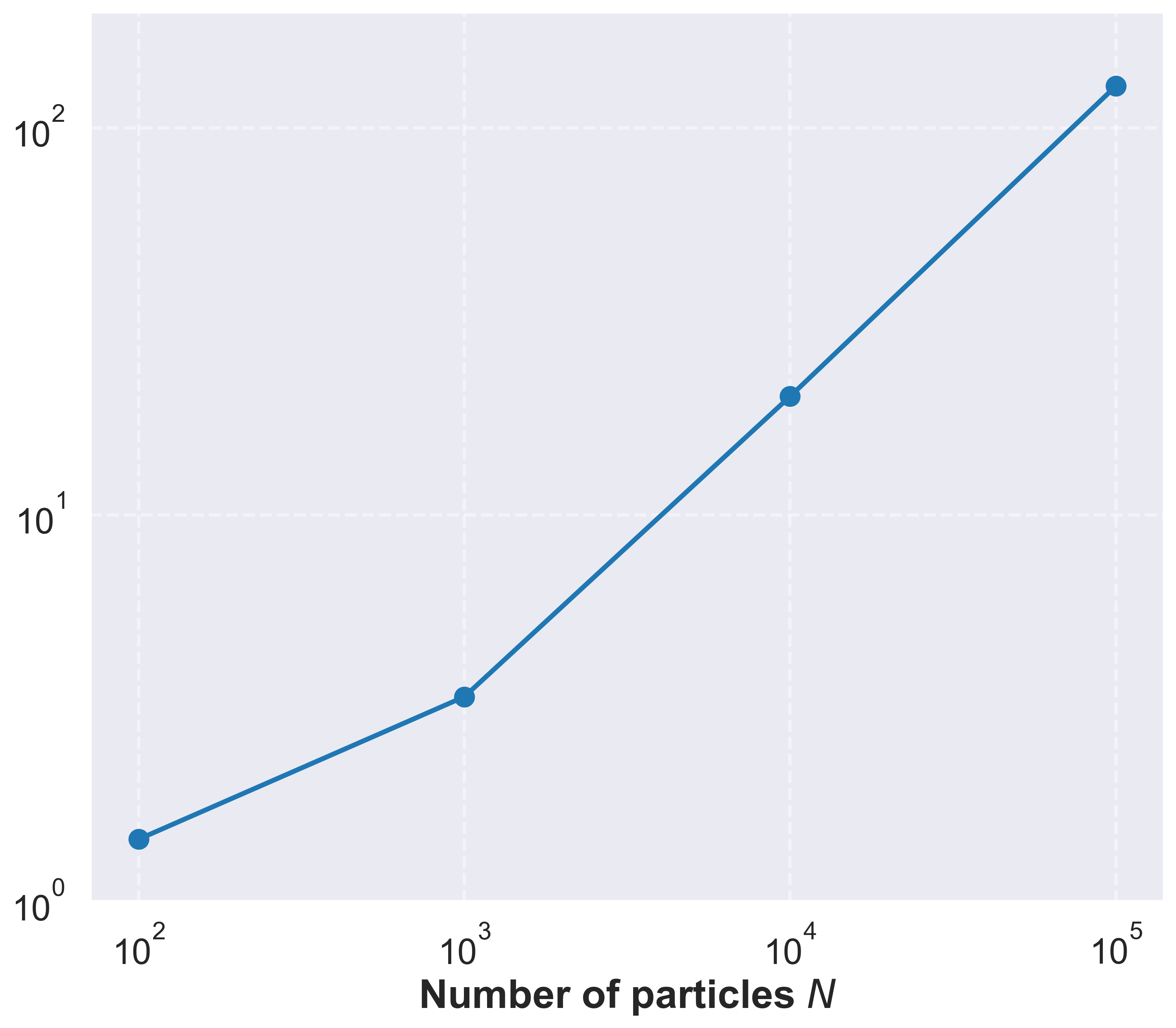}
	}
    \hspace{1em}
	\subcaptionbox{\footnotesize Temperature parameter $\alpha$\label{subfig:different_alpha_runningtime}}{
		\includegraphics[height=0.16\textheight, keepaspectratio=true]{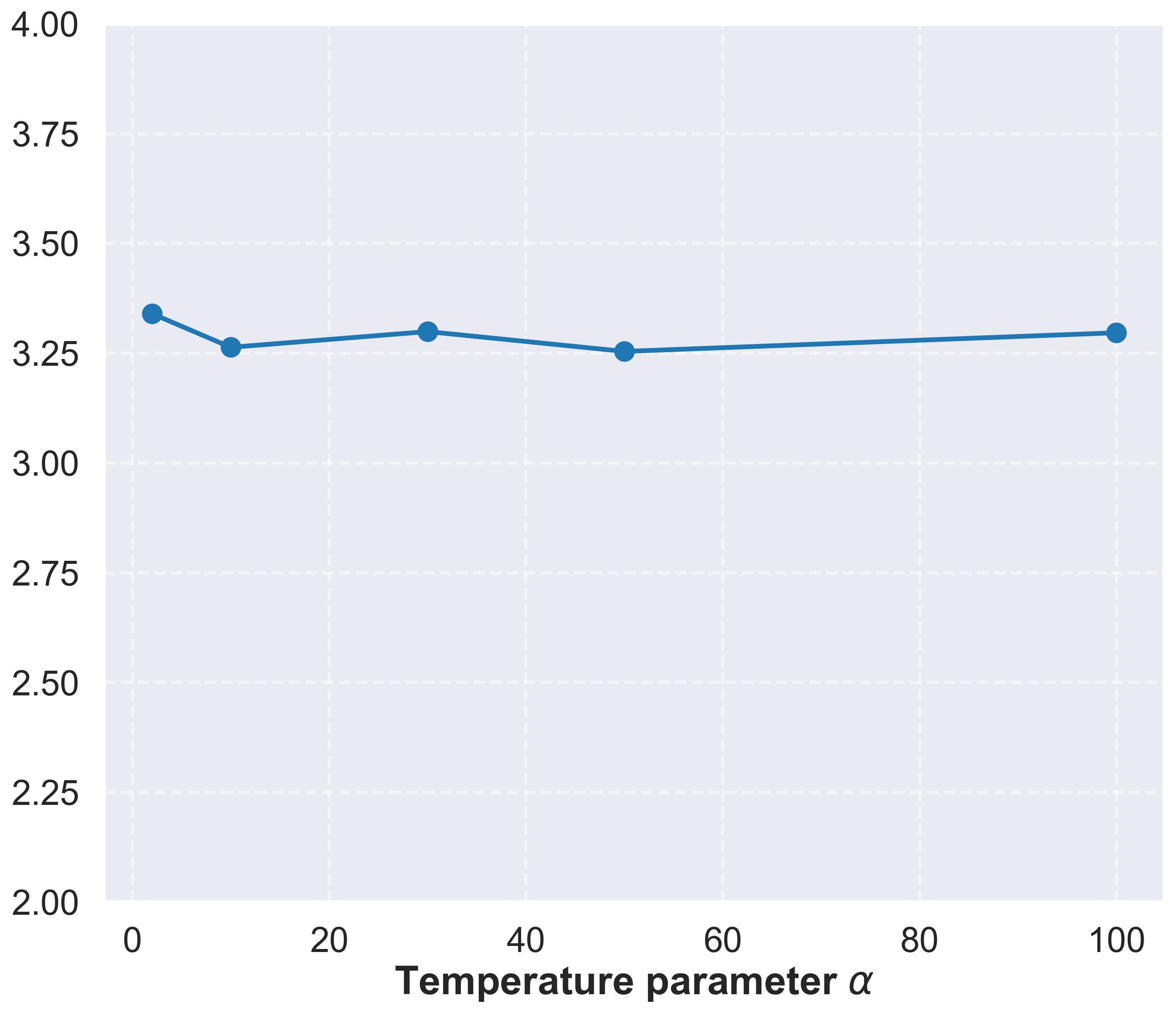}
	}
    \hspace{1em}
	\subcaptionbox{\footnotesize Stopping criteria $\estop$\label{subfig:different_estop_runningtime}}{
		\includegraphics[height=0.16\textheight, keepaspectratio=true]{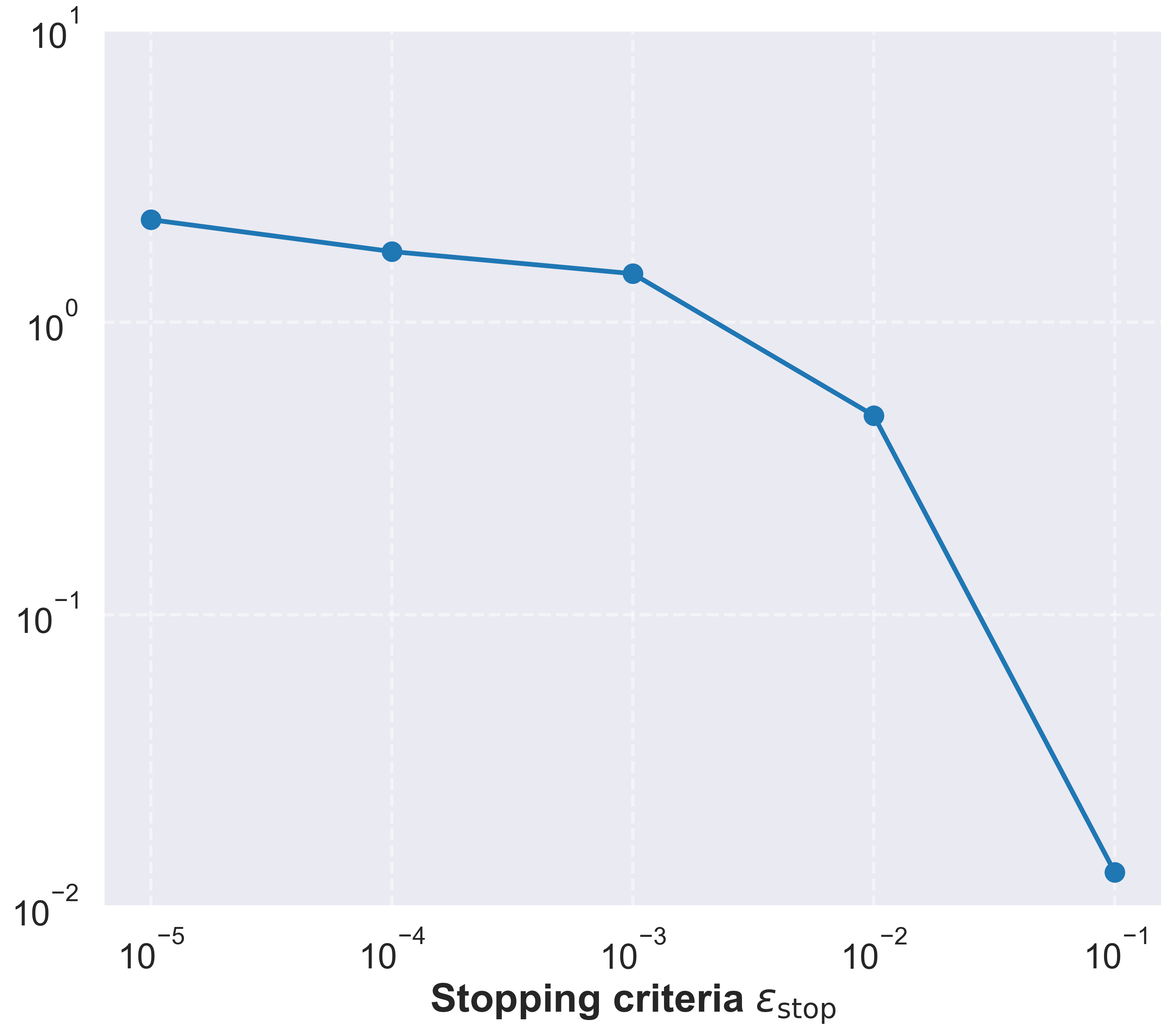}
	}
	\caption{The influence of different hyperparameters on the running time of the CB\textsuperscript{2}O algorithm.
    The vertical axis in each plot corresponds to the algorithm's total running time (seconds) averaged over $100$ simulations.}
	\label{fig:eq2_runningtime}
\end{figure}

\noindent In summary, we can draw the following three general conclusions.
First, the quantile parameter $\beta$ plays a pivotal role in determining the performance of the CB\textsuperscript{2}O method for a fixed number of particles $N$. 
The results demonstrate that the algorithm is relatively stable and robust across a reasonably large range of $\beta$ values for every fixed number of particles $N$.
Second, the number of particles $N$ and the stopping criterion $\estop$ can be adjusted to achieve a balance between the algorithm's performance and computational efficiency, offering flexibility to run the algorithm under different computational constraints.
And, third, in this two-dimensional example, the temperature parameter $\alpha$ can be set rather large provided that the number of particles $N$ is not too small. 
However, setting $\alpha = \infty$ is generally not ideal in practice, as elaborated on in Remark~\ref{rem:choicealphafinite}.

\subsection{Sparse Representation Learning}\label{sec:numerics_SRL}

In this section, we consider the task of sparse representation learning using neural networks \cite{gong2021Biojective}.
Specifically, we want to learn sparse feature representations on a supervised dataset $\CD$ that contains feature-label pairs $(x,y) \in \R^n \times \R$, while maintaining high prediction accuracy. 
Training a neural network that admits a sparse representation as well as high prediction accuracy for this task 
can be formulated as a bi-level optimization problem of form~\eqref{eq:bilevel_opt}.
Encoding the prediction accuracy in the lower-level objective function $L$
and promoting sparsity of the feature representations through the upper-level function $G$,
we can formalize the sparse representation learning task as 
\begin{equation}
    \label{eq:opt_srl}
    \min_{\theta^*\in \Theta} G(\theta^*) := \E_{(x,y) \in \CD} \left[ \N{h_{\theta^*} (x)}_p^p \right]
    \quad \text{s.t.\@}\quad
    \theta^* \in \Theta := \argmin_{\theta \in \R^d} L(\theta) := \E_{(x,y) \in \CD} \left[ \ell \left(\phi_{\theta} (h_{\theta} (x)), y\right) \right].
\end{equation}
Here, $h_{\theta}: \R^n \mapsto \R^m$ denotes the feature representation map, and $\phi_{\theta}: \R^m \mapsto \R$ the prediction head.
For instance, in a neural network, $h_{\theta}$ comprises all layers except the final one, while $\phi_{\theta}$ represents the last layer.
Within the lower-level objective function~$L$,
the function~$\ell: \R \times \R \mapsto \R$ represents the loss function, such as the cross-entropy loss used later on during the experiment, which penalizes deviations between the model's predictions~$\phi_{\theta}(h_{\theta}(x))$ and the true labels~$y$.
The parameter~$p\in[0,1]$ in the upper-level objective function~$G$ is chosen to promote a sparse representation.
In our experiment, we set $p = 0.5$ to enforce nonconvex regularization.

\begin{remark}
    Disclaimer: It is important to note that our primary goal is not to challenge state-of-the-art methods for this task, such as gradient-based approaches, which may be generically better suited for training neural networks.
    Rather, we consider the task of sparse representation learning as a well-understood high-dimensional benchmark bi-level optimization problem to firstly demonstrate the effectiveness of a mathematically rigorously analyzable method such as CB\textsuperscript{2}O (even without gradient term) and to secondly numerically analyze the influence of the hyperparameters on the behavior of CB\textsuperscript{2}O in practical machine learning applications.
    Following Remark~\ref{rem:gradient_drift}, it is straightforward to integrate and exploit gradient information in CB\textsuperscript{2}O, which will certainly improve the performance as observed for various other tasks~\cite{riedl2022leveraging}.
\end{remark}

\subsubsection{Experimental Setup}

Let us now describe in detail the experimental setup for the sparse representation learning task.

\textbf{Dataset \& Model Setup.}
We use a subset of the MNIST dataset \cite{lecun1998gradient}, consisting of $10,000$ training and $10,000$ test images.
Following the setup in \cite{fornasier2021convergence,riedl2022leveraging}, we use a neural network (NN) with two convolutional layers, two max-pooling layers, and one dense layer as our base model.
A batch normalization step is applied after each ReLU activation.
We denote by $h_{\theta}$ all layers up to the final dense layer,
and by $\phi_{\theta}$ the final dense layer.

\textbf{Baselines \& Implementations.} 
Since the simple bi-level optimization problem can be regarded as a constrained optimization problem, where the constraint set is defined as the global minimizers $\Theta$ of the lower-level objective function $L$,
some CBO-type algorithms designed for constrained problems \cite{carrillo2021consensus} are applicable.
However, several other constrained CBO algorithms \cite{albers2019ensemble, fornasier2020consensus_hypersurface_wellposedness, fornasier2020consensus_sphere_convergence, fornasier2021anisotropic, borghi2021constrained, carrillo2019consensus}, which depend heavily on the structure (e.g., convexity) of the constraint set, are not suitable for our problem.
This is because the constraint set $\Theta$ is not explicitly available (see discussions in Remark~\ref{remark:not_applicable_alg}). 

We compare our CB\textsuperscript{2}O algorithm as described in Algorithm~\ref{alg:cb2o_alg} with two baseline algorithms: Standard CBO \cite{carrillo2019consensus, fornasier2021convergence} and Penalized CBO \cite{carrillo2021consensus}.
The general setup for all three methods follows the setting used in \cite{fornasier2021convergence}. 
In particular, we use mini-batches when evaluating both the upper- and lower-level objective functions $G$ (if necessary) and $L$, with a mini-batch size of $\nmini = 60$.
At each time step one mini-batch of training data is processed.
One epoch comprises as many time step as are necessary to loop once through all training samples, i.e., we have $\lceil 10000/60 \rceil$ time steps per epoch.
Cooling strategies are applied to the parameters $\alpha$ and $\sigma$.
Specifically, $\alpha$ is multiplied by $2$ after each epoch, and the diffusion parameter $\sigma$ obeys $\sigmaEpoch = \sigma_0 / \log_2 (\text{epoch} + 2)$.
For the remaining parameters, we choose $\lambda = 1$, $\sigma_0 = \sqrt{0.4}$, $\alphaInit = 50$, and a step size of $\Delta t= 0.1$.
The default number of particles is set to $N = 100$ if there is no further specification.
We initialize all particles (recall that each particle contains the parameters~$\theta$ of one NN) from $\CN (0, I_{d\times d})$, 
all of which are involved in computing the consensus point and updated at every time step (referred to as the ``full update'' in \cite{carrillo2019consensus,fornasier2021convergence}).  
Additionally, we employ the re-initialization ad-hoc method introduced in \cite[Remark~2.4]{carrillo2019consensus}, where we perturb the positions of all particles by independent Gaussian noise with variance~$\sigma^2$ if the consensus point remains unchanged for $30$ consecutive time steps.

Below, we provide details on the baseline methods.
\begin{itemize}
    \item \textit{Standard CBO.} The standard CBO algorithm \cite{carrillo2019consensus, fornasier2021convergence} was implemented using exclusively the lower-level objective function $L$ as the objective.
    This baseline highlights that minimizing the classification loss $L$ alone does not automatically result in a neural network with sparse feature representations,
    motivating and necessitating the inclusion of the upper-level objective function $G$.
    It further provides a lower bound on the prediction accuracy for CBO-type methods.

    \item \textit{Penalized CBO.} The same as the one described in Section \ref{subsec:COPT_baseline_comparison}, i.e.
    applying the Standard CBO \cite{pinnau2017consensus,fornasier2021consensus} to the objective $\LM L + G$, where $\LM$ is a key hyperparameter to tune.
    
    \item \textit{CB\textsuperscript{2}O.} In addition to following the standard CBO parameter setup, we consider two types of schedulers for quantile parameter $\beta$. (i) constant $\beta$, and 
    (ii) decaying $\beta$.
    In particular, we apply a cooling strategy, where $\beta$ follows the schedule $\betaEpoch = \beta_0\DF^{\text{epoch}} $ with a decay factor $\kappa \in (0,1)$ until reaching prefixed minimum value~$\betaMin$.
\end{itemize}

\noindent \textbf{Performance Metrics.} We compare the performances of the different algorithms using three metrics:
\textit{training loss} (value of the lower-level objective function $L$ evaluated on the training set),
\textit{norm of feature representation} (value of upper-level objective function $G$ evaluated on the training set),
and \textit{test accuracy} (the model's prediction accuracy on the test set).
These three metrics are assessed based on the location of the consensus point (representing the parameters of an NN) generated by each algorithm.
Experiments are conducted with $5$ different random seeds for all algorithms, and we report the averages of the results.

\begin{remark}\label{remark:not_applicable_alg}
    Certain constrained CBO algorithms like Penalized CBO with adapted Lagrange multiplier \cite{borghi2021constrained},
    Projected CBO \cite{ fornasier2020consensus_hypersurface_wellposedness, fornasier2020consensus_sphere_convergence, fornasier2021anisotropic} and CBO with gradient force \cite{carrillo2019consensus} are not implemented here, as they may be unsuitable for the high-dimensional simple bi-level optimization problem at hand.
    For instance, in \cite{borghi2021constrained}, the adapted Lagrange multiplier is updated based on constraint violations of the particles, which
    is inapplicable since the constrained set $\Theta$ is unknown.
    Similarly, the projection step during the Projected CBO algorithms \cite{albers2019ensemble, fornasier2020consensus_hypersurface_wellposedness, fornasier2020consensus_sphere_convergence, fornasier2021anisotropic} is not feasible without explicit knowledge of the set $\Theta$. 
    Furthermore, CBO with gradient force \cite{carrillo2019consensus} requires computationally expensive matrix inversions, making it prohibitive in high-dimensional problems.
\end{remark}

\subsubsection{Experimental Results}  
Let us now report the results of our numerical experiments.

\begin{itemize}
\item \textbf{CB\textsuperscript{2}O with constant hyperparameter $\beta$.}
In Figure \ref{fig:CB2O_constant_beta}, we compare the performance of the standard CBO algorithm with CB\textsuperscript{2}O  using different constant values of the hyperparameter $\beta$.
Both algorithms employ $N=100$ particles. 
We can make the following observations:

(i) The standard CBO method minimizes exclusively the lower-level objective function $L$, leading to a strong performance in terms of training loss and test accuracy.
However, the norm of the feature representation generated by its learned neural network is large,
illustrating that minimizing $L$ alone does not result in a network with sparse feature representations, emphasizing the need for a sparsity-promoting regularization function $G$.

(ii) CB\textsuperscript{2}O with $\beta = 0.01$ behaves almost identical to the standard CBO algorithm in all three performance metrics, which aligns with our discussion in Remark~\ref{rem:beta_too_small}.

(iii) As the quantile parameter $\beta$ increases, the learned NNs exhibit smaller norms of the feature representations, but the model's performance w.r.t.\@ training loss and test accuracy gradually declines.
This indicates a trade-off between the model's prediction performance and the sparsity of feature representation.
CB\textsuperscript{2}O with $\beta=0.02$ achieves the best balance between these two factors, producing a neural network with a relatively small norm of the feature representation while maintaining nearly the same prediction performance as the model obtained by standard CBO.
This observation aligns with our theoretical justification of the choice of the quantile parameter $\beta$ in Remark~\ref{remark:beta0}, where we show that $\beta$ must be sufficiently small to ensure a good performance of the CB\textsuperscript{2}O algorithm.

Let us further compare in Figure \ref{fig:CB2O_constant_beta_N_diff} the performance of CB\textsuperscript{2}O across various combinations of the particle numbers $N$ and the quantile parameter $\beta$.
Notably, for a fixed number of particles $N$, choosing $\beta$ such that $\lceil \beta N \rceil = 2$ consistently yields the best trade-off between the model prediction accuracy and the sparsity of the feature representations.
This highlights the simplicity of tuning the quantile parameter $\beta$ for this task.
Moreover, when $\lceil \beta N \rceil = 2$, we observe, that using $N=100$ particles yields roughly the same performance as using $N=200$ particles, while $N=50$ particles result in a slightly lower prediction accuracy.

\begin{figure}[!htb]
\centering
\includegraphics[width=0.98\textwidth]{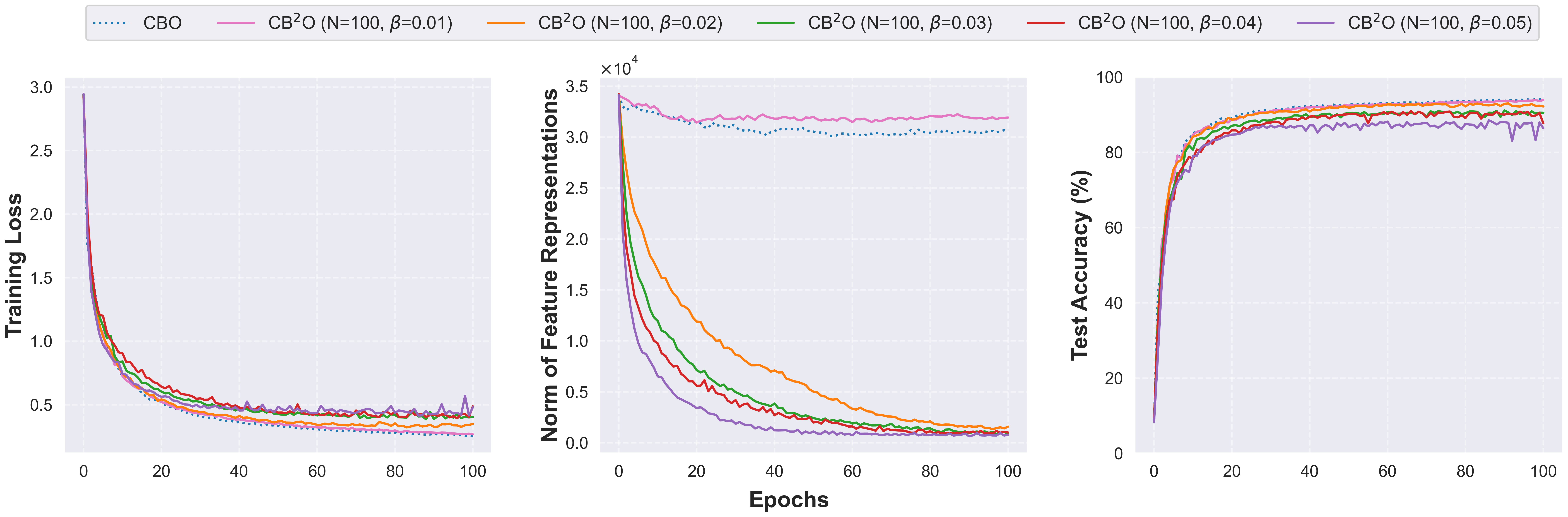}
\caption{Comparison between standard CBO and CB\textsuperscript{2}O for different constant values of the quantile hyperparameter $\beta$.
All algorithms employ $N=100$ particles.
The sub-figures from left to right are comparing the training loss $L$, the norm of the feature representation $G$, and the test accuracy against training epochs, respectively.} 
\label{fig:CB2O_constant_beta}
\end{figure}
\begin{figure}[!htb]
\centering
\begin{subfigure}[t]{0.98\textwidth}
\centering
\includegraphics[width=1.0\textwidth]{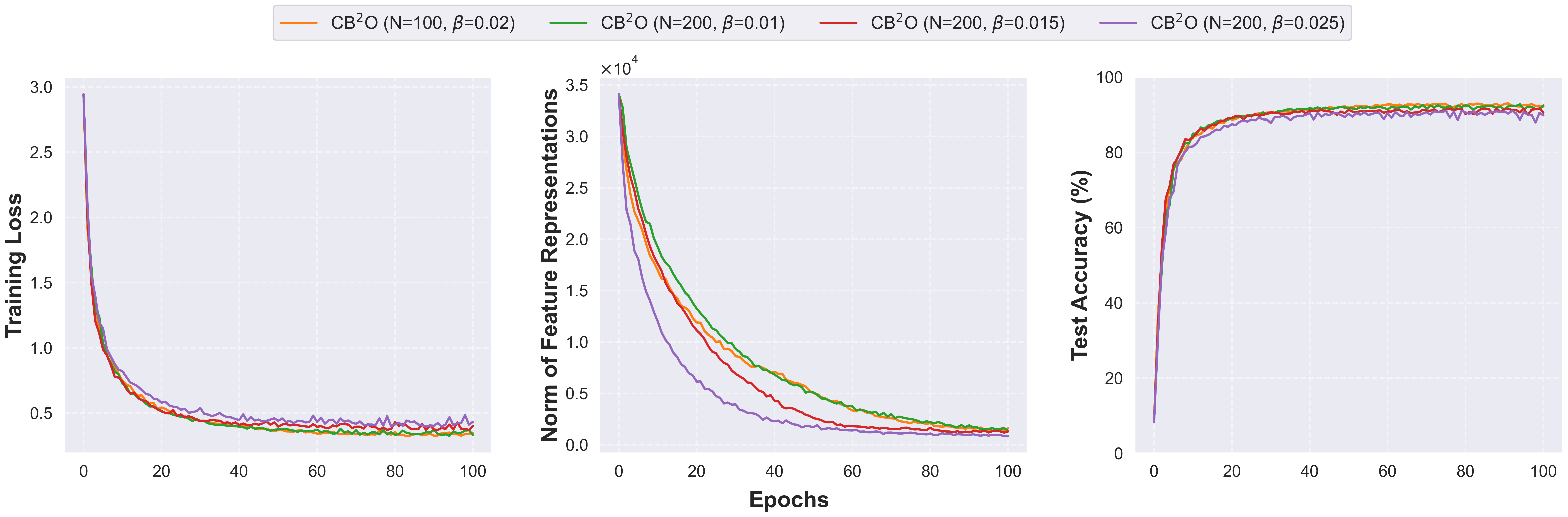}
\end{subfigure}
\begin{subfigure}[t]{0.98\textwidth}
\centering
\includegraphics[width=1.0\textwidth]{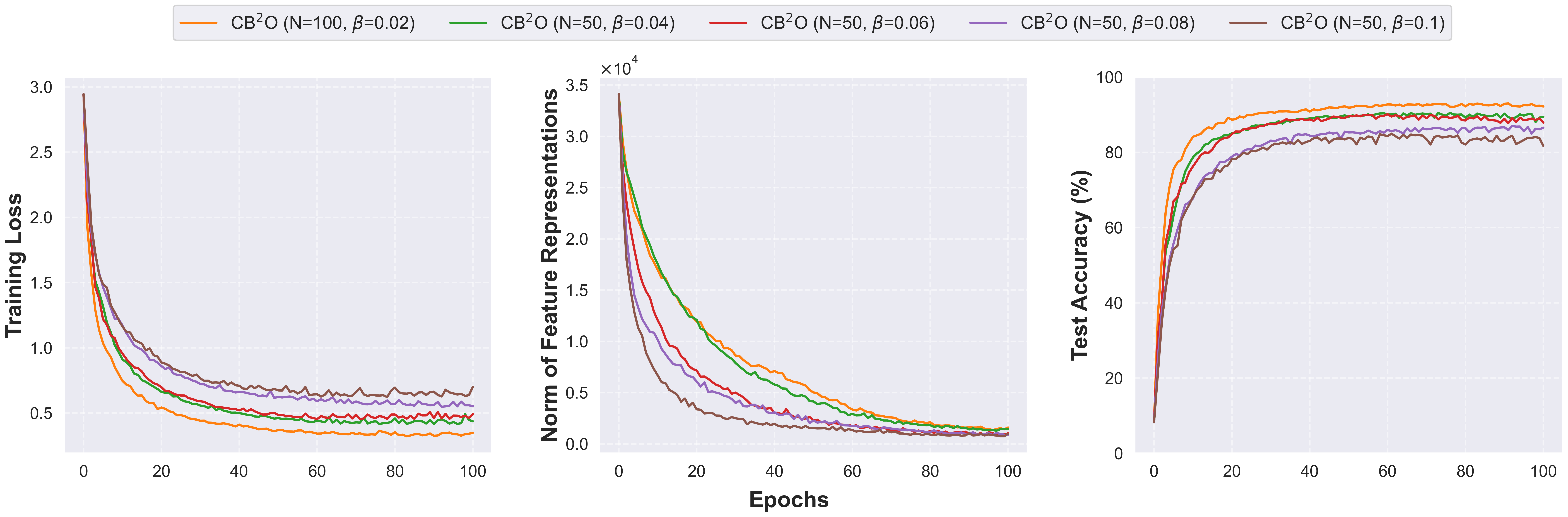}
\end{subfigure}
\caption{Comparison of CB\textsuperscript{2}O for different number~$N$ of particles and quantile hyperparameter~$\beta$.
In the top row, we compare CB\textsuperscript{2}O using $N=200$ particles and different constant values of the parameter $\beta$ with CB\textsuperscript{2}O using $N=100$ particles and $\beta=0.02$.
In the bottom row, we compare CB\textsuperscript{2}O using $N=50$ particles and different constant values of the parameter $\beta$ with CB\textsuperscript{2}O using $N=100$ particles and $\beta=0.02$.
The sub-figures from left to right are comparing the training loss $L$, the norm of the feature representation $G$, and the test accuracy against training epochs, respectively.} 
\label{fig:CB2O_constant_beta_N_diff}
\end{figure}

\item \textbf{Comparisons between CB\textsuperscript{2}O and Penalized CBO.}
We compare our CB\textsuperscript{2}O algorithm with Penalized CBO for different Lagrange multipliers $\LM$, and depict the results in Figure \ref{fig:penalized_CBO}.
We observe that Penalized CBO with $\LM = 5\times  10^4$ provides the best trade-off between model performance and the sparsity of the feature representations among all different choices of Lagrange multipliers.
Moreover, it has a similar convergence behavior as CB\textsuperscript{2}O with constant value $\beta = 0.02$ in terms of both upper- and lower-objectives $G$ and $L$.
However, it is generally harder to tune the hyperparameter $\LM$ since it requires taking into account the scaling between the objective functions $G$ and $L$.
In particular,
Penalized CBO with $\LM$ being too large will only minimize the lower-level objective $L$, and therefore behaves the same as standard CBO minimizing $L$. 
On the other hand, Penalized CBO with $\LM$ being too small will mainly minimize the objective function $G$, which will also fail the goal of finding $\thetaG$.
This phenomenon can be observed in Figure \ref{fig:penalized_CBO}.
Specifically, Penalized CBO with $\LM = 10^6, 10^7$ produces good performance on the prediction accuracy, while not enforcing the sparsity of the feature representations.
On the other extreme, smaller $\LM = 10^3, 10^4$ show good convergence in terms of the objective function $G$, but the model's prediction performance declines.
In comparison, tuning the hyperparameter $\beta$ in CB\textsuperscript{2}O is much simpler. 
For a fixed number $N$ of particles, choosing $\beta$ such that $\lceil \beta N\rceil = 2$ can already delivers good performance.
This observation aligns with similar findings in the constrained optimization task, as discussed in Section \ref{subsec:COPT_ablation_study}.

\begin{figure}[!htb]
\centering
\includegraphics[width=0.95\textwidth]{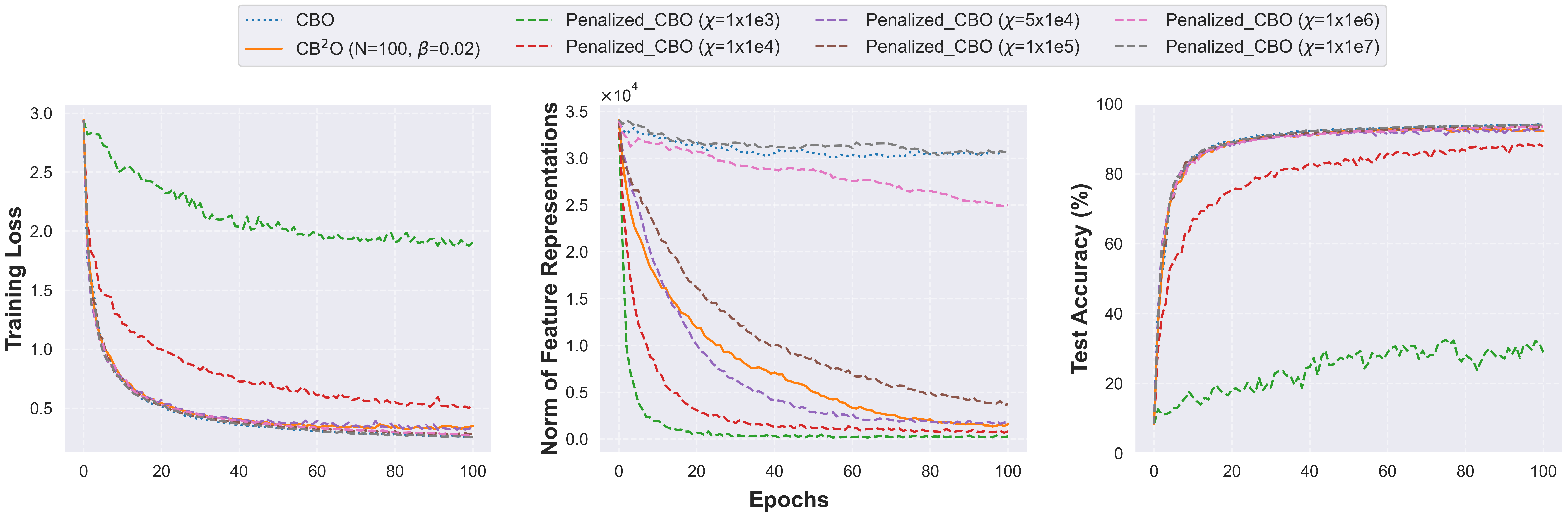}
\caption{Comparison between CB\textsuperscript{2}O with $\beta=0.02$ and Penalized CBO with different Lagrange multipliers $\LM$.
The sub-figures from left to right are comparing the training loss $L$, the norm of the feature representation $G$, and the test accuracy against training epochs, respectively.} 
\label{fig:penalized_CBO}
\end{figure}

\item \textbf{CB\textsuperscript{2}O with decaying hyperparameter $\beta$.}
Figure \ref{fig:CB2O_constant_beta} suggests that while choosing $\beta = 0.02$ gives the best overall performance by balancing the prediction accuracy and sparsity of the feature representations, a small value of the quantile hyperparameter $\beta$ also leads to a relatively slow decay of the norm of the feature representations compared to larger choices of $\beta$.
This motivates us to consider a decaying scheduler for $\beta$ such that it induces faster convergence on the function $G$, while maintaining good performance in terms of the objective function $L$.
In particular, we consider the scheduler $\betaEpoch = \min\, \{\beta_0 \DF^{\text{epoch}}, \betaMin \}$, i.e., $\beta$ with initialized value $\beta_0$ decaying every epoch with decay factor $\DF$ until it reaches a prefixed lower threshold $\betaMin$.
In Figure \ref{fig:CB2O_decay_beta}, we consider different pairs of initialized hyperparameter value $\beta_0$ and decay factor $\DF$ with minimum value $\betaMin = 0.02$.
We observe that larger $\beta_0$ enjoys faster convergence in terms of the norm of the feature representations (objective function $G$) and large decay factors $\DF$ ensure maintaining the overall model's performance (objective function $L$).

\begin{figure}[!htb]
\centering
\includegraphics[width=0.95\textwidth]{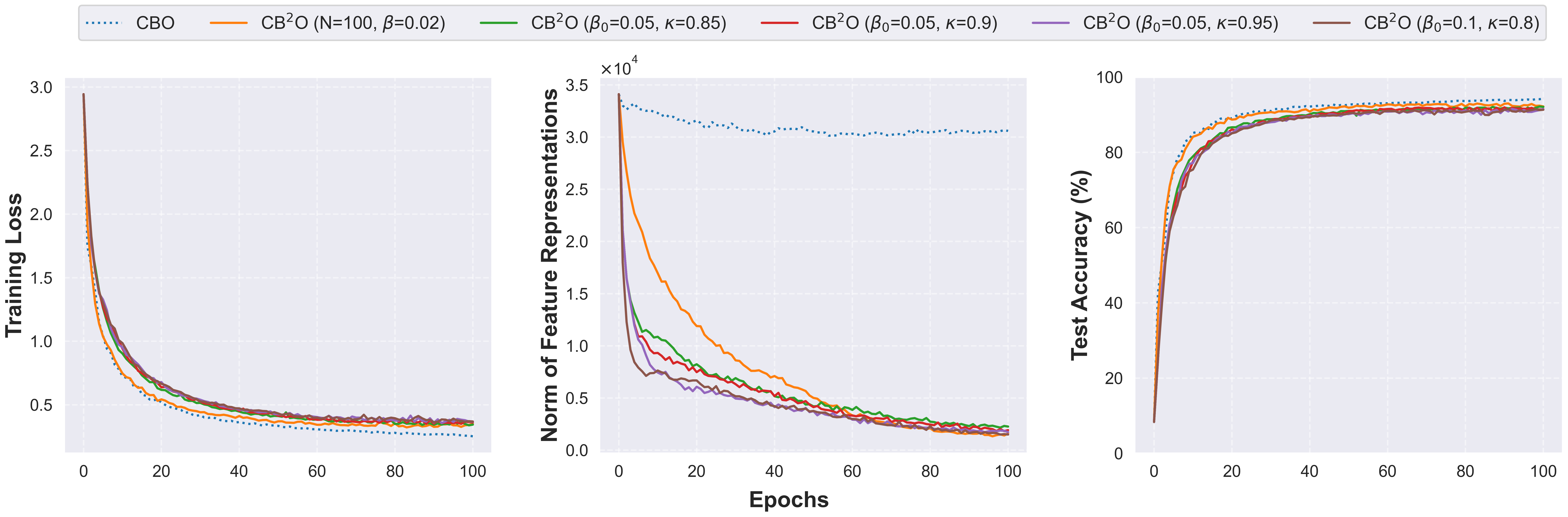}
\caption{Comparison between standard CBO and CB\textsuperscript{2}O for decaying $\beta$ with different decay factors.
The sub-figures from left to right are comparing the training loss $L$, the norm of the feature representation $G$, and the test accuracy against training epochs, respectively.} 
\label{fig:CB2O_decay_beta}
\end{figure}

\end{itemize}

\section{Conclusions}
\label{sec:conclusions}
In this paper,
we propose consensus-based bi-level optimization,
abbreviated as CB\textsuperscript{2}O,
a multi-particle metaheuristic derivative-free optimization method designed to solve nonconvex bi-level optimization problems of the form~\eqref{eq:bilevel_opt},
which are also known as simple bi-level optimization problems.
To ensure that the bi-level optimization problem is solved in an intrinsic manner, our method leverages within the computation of the consensus point a carefully designed particle selection principle implemented through a suitable choice of a quantile on the level of the lower-level objective, together with a Laplace principle-type approximation w.r.t.\@ the upper-level objective function.

On the theoretical side,
we give an existence proof of solutions to the mean-field dynamics, which is a nonlinear nonlocal Fokker-Planck equation, and establish its global convergence to the global target minimizer of the bi-level optimization problem in mean-field law.
To achieve the former, we first develop a new stability estimate for our consensus point w.r.t.\@ a combination of Wasserstein and $L^2$ perturbations,
and consecutively resort to PDE considerations extending the classical Picard iteration to construct a solution.
For the latter, we develop a novel quantitative large deviation result with an integrated selection mechanism in terms of the quantile function.
We believe that both those developments are of independent interest and may become fundamental analytical tools for evolutionary algorithms and metaheuristics in general.

On the experimental side,
we demonstrate the effectiveness of the CB\textsuperscript{2}O algorithm on both low-dimensional constrained global optimization problems and high-dimensional machine learning applications.

There are several theoretical considerations of the proposed CB\textsuperscript{2}O system that have not been explored in this paper and are left for future research. This includes the well-posedness of the finite particle CB\textsuperscript{2}O system \eqref{eq:dyn_micro}, the uniqueness of solutions to the mean-field system \eqref{eq:dyn_macro} and the mean-field approximation of the finite particle system to the mean-field system. 
Since the consensus point $\mAlphaBetanoarg$, as defined in \eqref{eq:consensus_point}, is not stable under the Wasserstein perturbations, the tools developed in \cite{carrillo2018analytical,fornasier2021consensus,huang2021MFLCBO, gerber2023mean,huang2024uniform} to prove the above-mentioned properties for the standard CBO system \cite{pinnau2017consensus} do not directly apply to the CB\textsuperscript{2}O method.
This necessitates the development of more general and less rigid proof strategies, which we leave for future work.
However, we believe that the tools developed in this paper are of foundational nature to those endeavors.

\section*{Acknowledgements}
All authors acknowledge the kind hospitality of the Institute for Computational and Experimental Research in Mathematics (ICERM) during the ICERM workshop ``Interacting Particle Systems: Analysis, Control, Learning and Computation.''

\section*{Funding}

NGT was supported by the NSF grant DMS-2236447, and, together with SL would like to thank the IFDS at UW-Madison and NSF through TRIPODS grant DMS-2023239 for their support.
KR acknowledges the financial support from the Technical University of Munich and the Munich Center for Machine 
Learning, where most of this work was done.
His work there has been funded by the German Federal Ministry of Education and Research, and the Bavarian State Ministry for Science and the Arts.
KR moreover acknowledges the financial support from the University of Oxford.
For the purpose of Open Access, the author has applied a CC BY public copyright license to any Author Accepted Manuscript (AAM) version arising from this submission.
YZ was supported by the NSF grant DMS-2411396.

\bibliographystyle{abbrv}
\bibliography{biblio}

\newpage
\appendix


\section{Technical Lemmas}
\label{sec:AppendixTechnical}

\subsection{Technical Lemmas for Section \ref{sec:conspoint_stability}}

\begin{restatable}{lemma}{lemWass}\label{lem:1dWass}
Let $L \in \CC(\bbR^d)$ satisfy Assumption \ref{asm:LipObjL}.
Moreover, let $\varrho,\widetilde\varrho \in \CP_2(\R^d)$. 
Then for any $\beta \in (0,1]$ it holds
\begin{equation}
\left| \int_{\beta/2}^{\beta} \qa{\varrho} da  - \int_{\beta/2}^{\beta} \qa{\widetilde{\varrho}} da \right| \leq C_{\beta, L} W_2 \left(\varrho, \widetilde{\varrho} \right),
\end{equation}
where $\qa{\dummy}$ is defined in \eqref{eq:q_beta} and $C_{\beta, L}$ is a constant depending on $\beta$ and Lipschitz coefficient $C_2$ defined in Assumption \ref{asm:LipObjL}.
\end{restatable}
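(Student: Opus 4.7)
The key observation is that $\qa{\varrho}$ is, by its definition in \eqref{eq:q_beta}, nothing but the generalized inverse (quantile function) of the distribution function of the pushforward measure $L_\#\varrho$ on $\R$. So the first step will be to rewrite
\[
\qa{\varrho} = F_{\mu}^{-1}(a), \qquad \mu := L_\#\varrho,
\]
and analogously for $\widetilde\varrho$, so that both integrands become quantile functions of probability measures on the real line.

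\textbf{Step 1: reduce to a one-dimensional Wasserstein estimate.} For measures on $\R$, the classical identity
\[
W_2^2(\mu,\tilde\mu) = \int_0^1 \big|F_\mu^{-1}(a) - F_{\tilde\mu}^{-1}(a)\big|^2\,da
\]
(see, e.g., \cite{savare2008gradientflows}) holds. Applying the triangle inequality and Cauchy--Schwarz on the interval $[\beta/2,\beta]$, I would bound
\[
\left|\int_{\beta/2}^{\beta}\!\!\qa{\varrho}\,da - \int_{\beta/2}^{\beta}\!\!\qa{\widetilde\varrho}\,da\right|
\leq \int_{\beta/2}^{\beta}\!\!\big|\qa{\varrho}-\qa{\widetilde\varrho}\big|\,da
\leq \sqrt{\beta/2}\; W_2\!\left(L_\#\varrho,\,L_\#\widetilde\varrho\right),
\]
extending the squared integral to $[0,1]$ in the last step to invoke the identity above.

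\textbf{Step 2: pushforward stability via Lipschitzness of $L$.} It remains to estimate $W_2(L_\#\varrho,L_\#\widetilde\varrho)$ in terms of $W_2(\varrho,\widetilde\varrho)$. For any coupling $\pi \in \Pi(\varrho,\widetilde\varrho)$, the pushforward $(L\otimes L)_\#\pi$ is a coupling of $L_\#\varrho$ and $L_\#\widetilde\varrho$, and Assumption~\ref{asm:LipObjL} yields
\[
\int |L(\theta)-L(\tilde\theta)|^2 \,d\pi(\theta,\tilde\theta) \leq C_1^2 \int \N{\theta-\tilde\theta}_2^2\,d\pi(\theta,\tilde\theta).
\]
Taking the infimum over $\pi$ gives $W_2(L_\#\varrho,L_\#\widetilde\varrho) \leq C_1\,W_2(\varrho,\widetilde\varrho)$. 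Combining this with Step~1 produces the claim with constant $C_{\beta,L} = C_1\sqrt{\beta/2}$.

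\textbf{Anticipated difficulty.} There is no substantial obstacle here — the proof is a composition of two well-known facts (quantile representation of $W_2$ in one dimension and Lipschitz stability of pushforwards). The only minor care needed is to verify that $\qa{\varrho}$, as defined through the sub-level sets of $L$ under $\varrho$, coincides with $F_{L_\#\varrho}^{-1}(a)$; this is immediate from unfolding the definition of the pushforward, but worth writing out one line to make the identification unambiguous.
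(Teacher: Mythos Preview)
Your proposal is correct and follows essentially the same route as the paper: Cauchy--Schwarz on $[\beta/2,\beta]$ to produce the factor $\sqrt{\beta/2}$, the one-dimensional quantile representation of $W_2$ applied to the pushforwards $L_\#\varrho$ and $L_\#\widetilde\varrho$, and finally the Lipschitz stability $W_2(L_\#\varrho,L_\#\widetilde\varrho)\le C_1\,W_2(\varrho,\widetilde\varrho)$. The only cosmetic difference is that you spell out the coupling argument for the pushforward bound, while the paper simply cites Lipschitzness of $L$.
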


\begin{proof}
One can compute that
\begin{equation}
\begin{aligned}
\left| \int_{\beta/2}^{\beta} \qa{\varrho} da - \int_{\beta/2}^{\beta} \qa{\widetilde{\varrho}} da \right| &\leq \sqrt{\frac{\beta}{2}} \left( \int_{0}^{1} \left| q_a^L[\varrho] - q_a^L[\widetilde{\varrho}] \right|^2 da \right)^{\frac{1}{2}}\\
&= \sqrt{\frac{\beta}{2}} W_2 \left( L_{\sharp} \varrho, L_{\sharp} \widetilde{\varrho} \right)\\
&\leq C_2 \sqrt{\frac{\beta}{2}}  W_2 \left( \varrho, \widetilde{\varrho} \right),
\end{aligned}
\end{equation}
where in the equality we have used the fact that in one dimension the Wasserstein distance between two measures can be computed in terms of the $L^2$-distance between their quantile functions, and in the last line we have used the fact that $L$ is Lipschitz.
\end{proof}

\subsection{Technical Lemmas for Section \ref{sec:well_posedness_proof}}

\begin{lemma}\label{lem:upperBound}
Let $L \in \CC(\bbR^d)$ and $G \in \CC(\bbR^d)$ satisfy \ref{asm:lowerBound_G} and \ref{asm:growthBound_G}.
Moreover, let $\varrho \in \CP_2(\R^d)$ satisfy $\int \N{\theta}_2^2 d\varrho \leq K$ and $\varrho (\Qbeta{\varrho} ) \geq c$ with $K < \infty$ and $c > 0$.
Then
\begin{equation}
    \frac{\exp(-\alpha \minobj)}{\N{\omegaa}_{L^1(\Ibeta{\varrho})}} 
    \leq \frac{1}{c} \exp\left(\frac{\alpha}{c} C_1(1 + K)\right)
    =: c_K.
\end{equation}
\end{lemma}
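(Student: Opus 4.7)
The plan is to bound $\|\omegaa\|_{L^1(\Ibeta{\varrho})}$ from below by a direct application of Jensen's inequality to the convex exponential $t \mapsto e^{-\alpha t}$, after normalizing $\Ibeta{\varrho}$ to a probability measure. The hypothesis $\varrho(\Qbeta{\varrho}) \geq c > 0$ is precisely what makes this normalization possible and gives a quantitative handle on the resulting constant.

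Concretely, first I would introduce the conditional probability measure $\widetilde{\varrho} := \Ibeta{\varrho} / \varrho(\Qbeta{\varrho})$, which is well defined thanks to the lower mass assumption, and write
\[
    \|\omegaa\|_{L^1(\Ibeta{\varrho})}
    = \varrho(\Qbeta{\varrho}) \int \exp(-\alpha G(\theta)) \, d\widetilde{\varrho}(\theta)
    \geq c \, \exp\!\left(-\alpha \int G \, d\widetilde{\varrho}\right),
\]
using Jensen's inequality in the last step. The remaining task is to bound the average $\int G \, d\widetilde{\varrho}$ from above in terms of $\minobj$, $K$, and $c$.

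For that, I would shift by $\minobj$ using \ref{asm:lowerBound_G} so that the integrand $G - \minobj \geq 0$, then extend the domain from $\Qbeta{\varrho}$ to all of $\bbR^d$, which is legitimate exactly because of the sign, paying a factor $1/c$ from normalization. The growth bound \ref{asm:growthBound_G} gives $G(\theta) - \minobj \leq C_4(1 + \|\theta\|_2^2)$, and the second-moment assumption $\int \|\theta\|_2^2 \, d\varrho \leq K$ then yields
\[
    \int G \, d\widetilde{\varrho} - \minobj
    \leq \frac{1}{c} \int (G - \minobj) \, d\varrho
    \leq \frac{C_4 (1+K)}{c}.
\]
Inserting this bound into the Jensen estimate above and dividing gives the claim; the constant in the statement should be read as $C_4$ from \ref{asm:growthBound_G} (the letter $C_1$ in the displayed inequality appears to be a typographical slip, since $C_1$ is the Lipschitz constant of $L$ and does not enter the argument for $\omegaa$).

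No real obstacle is expected here: the proof is a textbook Jensen application combined with the quadratic upper bound on $G$, and all ingredients are already enforced by the standing assumptions. The only thing to be careful about is the sign handling when replacing $G$ by $G - \minobj$ so that the extension of the integration domain is a valid upper bound, and the proper tracking of the factor $1/c$ arising from re-normalization.
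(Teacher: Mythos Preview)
Your proposal is correct and follows essentially the same route as the paper's proof: normalize $\Ibeta{\varrho}$ to a probability measure, apply Jensen's inequality to the convex exponential, shift by $\minobj$ so that the integrand is nonnegative, extend the integration from $\Qbeta{\varrho}$ to all of $\bbR^d$, and finish with the growth bound \ref{asm:growthBound_G} together with the second-moment assumption. Your observation about the constant is also correct---the paper writes $C_1$ in both the statement and the proof where the growth constant $C_4$ from \ref{asm:growthBound_G} is meant.
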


\begin{proof}
By the definitions of $\Ibeta{\dummy}$ and $\Qbeta{\dummy}$ in \eqref{eq:I_beta} and \eqref{eq:Q_beta}, one can compute that
\begin{equation}
\begin{aligned}
    \int_{\R^d} \exp \left(-\alpha \left(G(\theta) - \minobj\right) \right) d\Ibeta\varrho (\theta) & = \varrho \left(\Qbeta{\varrho} \right) \int_{\R^d} \frac{1}{\varrho \left(\Qbeta{\varrho} \right)} \exp \left(-\alpha \left(G(\theta) - \minobj\right) \right) d\Ibeta\varrho (\theta) \\
    &\geq \varrho \left(\Qbeta{\varrho} \right) \exp \left(-\frac{\alpha}{\varrho \left(\Qbeta{\varrho} \right)} \int_{\R^d} \left(G(\theta) - \minobj\right) d\Ibeta\varrho (\theta) \right)\\
    & \geq c \exp \left( -\frac{\alpha}{c} \int_{\R^d} (G(\theta) - \minobj) d\Ibeta{\varrho}(\theta) \right) \\
    &\geq c \exp \left(-\frac{\alpha}{c} \int_{\R^d} C_1 \left(1 + \|\theta\|_2^2\right) d\Ibeta\varrho (\theta) \right)\\
    &\geq c \exp \left(-\frac{\alpha}{c} \int_{\R^d} C_1 \left(1 + \|\theta\|_2^2\right) d\varrho (\theta) \right)\\
    & \geq c \exp\left(-\frac{\alpha}{c} C_1(1 + K)\right).
\end{aligned}
\end{equation}
Here the first inequality uses Jensen's inequality, and for the second and third inequalities we use assumption $\varrho (\Qbeta{\varrho} ) \geq c$ and the growth condition \ref{asm:growthBound_G} for $G$, respectively.
We conclude the proof by reordering the above computations.
\end{proof}

\begin{lemma}\label{lem:bound_Grad}
Let the dimension $d \geq 3$ and let $\widetilde\varrho\in\CP(\bbR^d)$.
Then, for any $r > 0$ and any $\varrho\in\CP(\bbR^d)$ it holds
\begin{equation}
    \int_{B_{r}(0)} \|\nabla \varrho\|_2 d\theta
    \leq C \left( \int_{B_r(0)} \|\nabla \varrho\|_2^2 \|\theta - \mAlphaBetaRED{\widetilde\varrho}\|_2^2 d\theta \right)^{\frac{1}{2}},
\end{equation}
for a constant $C=C(d,r)$.
\end{lemma}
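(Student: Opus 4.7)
The plan is to apply the Cauchy-Schwarz inequality with the weight $\|\theta - \mAlphaBetaRED{\widetilde\varrho}\|_2$ to split the integrand in a way that isolates the weighted $L^2$-norm of $\nabla \varrho$ that appears on the right-hand side. Writing $m := \mAlphaBetaRED{\widetilde\varrho}$ for brevity and inserting the factor $\|\theta - m\|_2 \cdot \|\theta - m\|_2^{-1} = 1$ into the integrand, Cauchy-Schwarz yields
\begin{equation*}
\int_{B_r(0)} \|\nabla\varrho\|_2 \, d\theta
\leq \left(\int_{B_r(0)} \|\nabla\varrho\|_2^2 \|\theta - m\|_2^2 \, d\theta\right)^{\!1/2} \left(\int_{B_r(0)} \frac{1}{\|\theta - m\|_2^2} \, d\theta\right)^{\!1/2}.
\end{equation*}
The first factor is exactly the quantity we want on the right-hand side of the claimed inequality, so it remains to bound the second factor by a constant depending only on $d$ and $r$ (where, implicitly, one also uses the bound $\|m\|_2 \leq R$ coming from the definition \eqref{eq:consensus_point} of $\mAlphaBetanoarg$).

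For the second factor, I would use the inclusion $B_r(0) \subset B_{r+R}(m)$, which follows from $\|m\|_2 \leq R$ by the triangle inequality, and then change variables $\phi = \theta - m$ to obtain
\begin{equation*}
\int_{B_r(0)} \frac{1}{\|\theta - m\|_2^2} \, d\theta \leq \int_{B_{r+R}(m)} \frac{1}{\|\theta - m\|_2^2} \, d\theta = \int_{B_{r+R}(0)} \frac{1}{\|\phi\|_2^2} \, d\phi.
\end{equation*}
Switching to spherical coordinates, this last integral equals $\omega_{d-1} \int_0^{r+R} \rho^{d-3} \, d\rho$, where $\omega_{d-1}$ denotes the surface area of the unit sphere in $\bbR^d$. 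This quantity is finite precisely because $d \geq 3$, and evaluates to $\omega_{d-1}(r+R)^{d-2}/(d-2)$. Taking the square root and combining with the Cauchy-Schwarz step above yields the claimed bound with $C = (\omega_{d-1}(r+R)^{d-2}/(d-2))^{1/2}$.

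\textbf{Main obstacle.} The only real obstacle is the dimension restriction $d \geq 3$, which enters through the integrability of $\|\phi\|_2^{-2}$ at the origin: this fails in dimensions $d = 1$ and $d = 2$. Everything else is a straightforward application of Cauchy-Schwarz and a change of variables. It is also worth noting that, strictly speaking, the constant in the statement depends on $R$ as well (through the uniform bound $\|m\|_2 \leq R$ on the consensus point), even though $R$ is not explicitly listed as a parameter of $C$; this is compatible with the way the lemma is used in the proof of Theorem~\ref{thm:well_posedness}, where $R$ is a fixed global constant of the problem.
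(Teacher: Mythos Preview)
Your proof is correct and uses the same Cauchy--Schwarz step as the paper, inserting the weight $\|\theta - m\|_2$ and then bounding the singular integral $\int_{B_r(0)}\|\theta - m\|_2^{-2}\,d\theta$. The only difference lies in how that singular integral is controlled. You invoke the global bound $\|m\|_2\leq R$ to enclose $B_r(0)\subset B_{r+R}(m)$, which yields a constant $C=C(d,r,R)$; you correctly flag this discrepancy with the statement. The paper instead avoids any reference to $R$ by splitting into two cases according to the position of $m$: if $\|m\|_2\leq r+1$ it uses $B_r(0)\subset B_{2r+1}(m)$ and computes exactly as you do, obtaining a constant depending only on $d,r$; if $\|m\|_2> r+1$ then $\|\theta-m\|_2\geq 1$ on $B_r(0)$, so a plain Cauchy--Schwarz against $\mathrm{Vol}(B_r(0))^{1/2}$ suffices. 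This case split buys the $R$-independence of $C$ as stated in the lemma, whereas your argument is shorter but gives the slightly weaker (though, as you note, perfectly adequate for the application) constant.
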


\begin{proof}
\textbf{First case: $\|\mAlphaBetaRED{\widetilde\varrho}\|_2 \leq r+1$.}
By Cauchy-Schwarz inequality,
\begin{equation}
\begin{aligned}
    \int_{B_{r}(0)} \|\nabla \varrho\|_2 d\theta
    &= \int_{B_r(0)} \|\nabla \varrho\|_2 \frac{\|\theta - \mAlphaBetaRED{\widetilde\varrho}\|_2}{\|\theta - \mAlphaBetaRED{\widetilde\varrho}\|_2} d\theta\\
    &\leq \left( \int_{B_{r}(0)} \frac{1}{\|\theta - \mAlphaBetaRED{\widetilde\varrho}\|_2^2} d\theta \right)^{\frac{1}{2}} \left( \int_{B_r(0)} \|\nabla \varrho\|_2^2 \|\theta - \mAlphaBetaRED{\widetilde\varrho}\|_2^2 d\theta \right)^{\frac{1}{2}}\\
    &\leq \left( \int_{B_{2r+1}(0)} \frac{1}{\left\|\theta\right\|_2^2} d\theta \right)^{\frac{1}{2}} \left( \int_{B_r(0)} \|\nabla \varrho\|_2^2 \|\theta - \mAlphaBetaRED{\widetilde\varrho}\|_2^2 d\theta \right)^{\frac{1}{2}}\\
    &\leq \left(\int_0^{2r+1} \int_{\partial B_r(0)} \frac{1}{r^2} dS \right)^{\frac{1}{2}} \left( \int_{B_r(0)} \|\nabla \varrho\|_2^2 \|\theta - \mAlphaBetaRED{\widetilde\varrho}\|_2^2 d\theta \right)^{\frac{1}{2}} \\
    &= \text{Vol}\left(S^{d-1} \right)^{\frac{1}{2}} \left( \frac{(2r+1)^{d-2}}{d-2} \right)^{\frac{1}{2}} \left( \int_{B_r(0)} \|\nabla \varrho\|_2^2 \|\theta - \mAlphaBetaRED{\widetilde\varrho}\|_2^2 d\theta \right)^{\frac{1}{2}},
\end{aligned}
\end{equation}
where $\text{Vol}(S^{d-1})$ denotes the volume of unit sphere in $\R^d$.

\textbf{Second case: $\|\mAlphaBetaRED{\widetilde\varrho}\|_2 > r+1$.}
By Cauchy-Schwarz inequality,
\begin{equation}
\begin{aligned}
\int_{B_{r}(0)} \|\nabla \varrho\|_2 d\theta &\leq \text{Vol}(B_r(0))^{\frac{1}{2}} \left( \int_{B_r(0)} \|\nabla \varrho\|_2^2 d\theta \right)^{\frac{1}{2}}\\
&\leq \text{Vol}(B_r(0))^{\frac{1}{2}} \left( \int_{B_r(0)} \|\nabla \varrho\|_2 \|\theta - \mAlphaBetaRED{\widetilde\varrho}\|_2^2 d\theta \right)^{\frac{1}{2}}.
\end{aligned}
\end{equation}
\end{proof}

\begin{lemma}\label{lem:L_infinity_norm_bound}
Given fixed $T > 0$, $\lambda, \sigma > 0$, and $u \in \CC \left([0,T], \R^d \right)$, let $\rho $ be the solution to the following linear Fokker-Planck equation
\begin{equation}
    \partial_t \rho_t = \lambda \nabla \cdot \left( \left( \theta - u_t \right)\rho_t \right) + \frac{\sigma^2}{2} \Delta \left( \left\|\theta - u_t \right\|_2^2 \rho_t \right)
\end{equation}
in the weak sense with $\rho_0 \in L^{\infty} (\R^d)$.
Then there exists a constant $C=C(T, d, \lambda, \sigma, \lVert u \rVert_{C([0,T], \R^d)}) > 0$ such that $\|\rho_t\|_{L^{\infty} (\R^d)} \leq C$ for all $t \in [0,T]$.
\end{lemma}

\begin{proof}
For any $2 \leq p < \infty$, we can compute that
\begin{equation}
\begin{aligned}
\frac{1}{p} \frac{d}{dt} \|\rho\|_{L^p(\R^d)}^p &= \left\langle \partial_t \rho_t, \rho_t^{p-1} \right\rangle_{L^2(\R^d)}\\
&= \lambda \int \nabla \cdot \left( \left(\theta - u_t \right) \rho_t \right) \rho_t^{p-1} d\theta + \frac{\sigma^2}{2} \int \Delta \left( \left\|\theta - u_t \right\|_2^2 \rho_t\right) \rho_t^{p-1}d\theta\\
&= -\lambda (p-1) \int \left(\theta - u_t \right)^T \nabla \rho_t \rho_t^{p-1} d\theta - \frac{\sigma^2}{2} (p-1) \int \nabla \left( \left\|\theta - u_t \right\|_2^2 \rho_t \right)^T \nabla \rho_t \rho_t^{p-2} d\theta\\
&= - (p-1)\left(\lambda + \sigma^2 \right)  \int \left(\theta - u_t \right)^T \nabla \rho_t \rho_t^{p-1} d\theta - \frac{\sigma^2}{2} (p-1) \int \left\| \theta - u_t \right\|_2^2 \left\| \nabla \rho_t \right\|_2^2 \rho_t^{p-2} d\theta\\
&= - (p-1)\left(\lambda + \sigma^2 \right)  \int \left(\theta - u_t \right)^T \nabla \left( \frac{1}{p} \rho_t^p \right) d\theta - \frac{\sigma^2}{2} (p-1) \int \left\| \theta - u_t \right\|_2^2 \left\| \nabla \rho_t \right\|_2^2 \rho_t^{p-2} d\theta\\
&= \frac{p-1}{p} d \left(\lambda + \sigma^2 \right) \int |\rho_t|^p d\theta - \frac{\sigma^2}{2} (p-1) \int \left\| \theta - u_t \right\|_2^2 \left\| \nabla \rho_t \right\|_2^2 \rho_t^{p-2} d\theta\\
&\leq \frac{p-1}{p} d \left(\lambda + \sigma^2 \right) \left\| \rho_t \right\|^p_{L^p(\R^d)} .
\end{aligned}
\end{equation}
By Grönwall's inequality, we know that
\begin{equation}
    \left\| \rho_t \right\|_{L^p(\R^d)} \leq \exp \left( \frac{p-1}{p} d (\lambda + \sigma^2) t \right) \left\| \rho_0 \right\|_{L^p(\R^d)} .
\end{equation}
Therefore, for any $t \in [0,T]$ we have
\begin{equation}
\begin{aligned}
\left\| \rho_t \right\|_{L^{\infty}(\R^d)} = \lim_{p \rightarrow \infty} \left\| \rho_t \right\|_{L^p(\R^d)} &\leq \lim_{p \rightarrow \infty} \exp \left( \frac{p-1}{p} d (\lambda + \sigma^2) t \right) \left\| \rho_0 \right\|_{L^p(\R^d)}\\
&= \exp \left( d (\lambda + \sigma^2) t \right) \left\| \rho_0 \right\|_{L^{\infty}(\R^d)}\\
&\leq \exp \left( d (\lambda + \sigma^2) T \right) \left\| \rho_0 \right\|_{L^{\infty}(\R^d)} .
\end{aligned}
\end{equation}
\end{proof}

\begin{lemma}\label{lem:Cb_conv_to_L2_conv}
Let $\{\rho_{\tau}\}_{\tau \in [0,T]} \subset L^2 (\R^d)$ and $\|\rho_{\tau}\|_{L^2(\R^d)} \leq C$ with a uniform constant $C > 0$ for any $\tau \in [0,T]$. Suppose that for every $s$ and every $\phi \in \CC_b(\R^d)$ we have 
\[ \lim_{t \rightarrow s} \int \phi(\theta) \rho_t(\theta) d \theta = \int \phi(\theta) \rho_s(\theta) d \theta.\]
Then, for every $s\in [0,T]$ we have that $\rho_t$ converges weakly in $L^2(\R^d)$ towards $\rho_s$ as $t \rightarrow s$.
\end{lemma}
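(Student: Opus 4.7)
\textbf{Proof plan for Lemma~\ref{lem:Cb_conv_to_L2_conv}.} The plan is to reduce weak $L^2$-convergence to the hypothesized convergence against $C_b$ test functions via a density-plus-uniform-bound argument. Fix $s \in [0,T]$ and an arbitrary test function $\psi \in L^2(\R^d)$; the goal is to show $\int \psi \rho_t \, d\theta \to \int \psi \rho_s \, d\theta$ as $t \to s$. The obvious difficulty is that $C_b(\R^d)$ is not a subspace of $L^2(\R^d)$, so one cannot directly approximate $\psi$ in $L^2$-norm by elements of $C_b$. The key observation that bypasses this is the inclusion $C_c(\R^d) \subset C_b(\R^d)$ together with the classical fact that $C_c(\R^d)$ is dense in $L^2(\R^d)$.

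Concretely, given $\varepsilon > 0$, I would first invoke density to choose $\phi_\varepsilon \in C_c(\R^d)$ with $\|\psi - \phi_\varepsilon\|_{L^2(\R^d)} < \varepsilon$. Writing
\begin{equation*}
\int \psi (\rho_t - \rho_s) \, d\theta = \int (\psi - \phi_\varepsilon)(\rho_t - \rho_s) \, d\theta + \int \phi_\varepsilon (\rho_t - \rho_s) \, d\theta,
\end{equation*}
I would bound the first term by Cauchy--Schwarz together with the uniform bound $\|\rho_\tau\|_{L^2(\R^d)} \leq C$ for all $\tau \in [0,T]$, obtaining
\begin{equation*}
\left| \int (\psi - \phi_\varepsilon)(\rho_t - \rho_s) \, d\theta \right| \leq \|\psi - \phi_\varepsilon\|_{L^2(\R^d)} \, \|\rho_t - \rho_s\|_{L^2(\R^d)} \leq 2 C \varepsilon.
\end{equation*}
For the second term, since $\phi_\varepsilon \in C_c(\R^d) \subset C_b(\R^d)$, the hypothesis gives $\int \phi_\varepsilon (\rho_t - \rho_s) \, d\theta \to 0$ as $t \to s$. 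Combining these yields
\begin{equation*}
\limsup_{t \to s} \left| \int \psi(\rho_t - \rho_s) \, d\theta \right| \leq 2 C \varepsilon,
\end{equation*}
and letting $\varepsilon \to 0$ completes the proof.

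The argument is essentially a soft functional-analytic one (density plus uniform boundedness), and I do not foresee any genuine obstacle: the only thing to be careful about is the direction of the inclusion $C_c \subset C_b$ (so that the hypothesis applies to the chosen approximants) and the use of the uniform $L^2$ bound, which is exactly what allows truncation in the approximation step.
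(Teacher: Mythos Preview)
Your proof is correct and follows essentially the same approach as the paper: approximate the $L^2$ test function by a continuous (bounded) function, control the approximation error via Cauchy--Schwarz and the uniform $L^2$ bound, and handle the remaining term with the hypothesis. You are in fact slightly more careful than the paper, which simply asserts that ``$C_b(\R^d)$ is dense in $L^2(\R^d)$'' without noting, as you do, that the inclusion goes through $C_c(\R^d)$.
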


\begin{proof}
    Since $\CC_c(\R^d)$ is dense in $L^2(\R^d)$, for any $\phi \in L^2(\R^d)$ and $\varepsilon > 0$, there exists a function $\psi \in \CC_c(\R^d)$ such that $\|\phi - \psi\|_{L^2 (\R^d)} < \varepsilon/2$.
Applying the triangle inequality, Cauchy-Schwarz inequality, and the fact that $\|\rho_{\tau}\|_{L^2(\R^d)} \leq C$ uniformly for all $\tau \in [0,T]$, we deduce that
\begin{equation}
\begin{aligned}
&\left| \int \phi(\theta) \rho_t(\theta) d\theta - \int \phi(\theta) \rho_s(\theta) d\theta \right|\\
&\qquad\,\leq \left| \int \left( \phi(\theta) - \psi(\theta) \right) \rho_t(\theta) d\theta \right| + \left| \int \psi(\theta) \left(\rho_t(\theta) - \rho_s(\theta) \right) d\theta \right| + \left| \int \left( \phi(\theta) - \psi(\theta) \right) \rho_s(\theta) d\theta \right|\\
&\qquad\,\leq \left\| \phi - \psi \right\|_{L^2(\R^d)} \left( \|\rho_t\|_{L^2(\R^d)} + \|\rho_s\|_{L^2(\R^d)} \right) + \left| \int \psi(\theta) \left(\rho_t(\theta) - \rho_s(\theta) \right) d\theta \right|\\
&\qquad\, < C\varepsilon + \left| \int \psi(\theta) \left(\rho_t(\theta) - \rho_s(\theta) \right) d\theta \right| .
\end{aligned}
\end{equation}
The latter term in the last line above converges to $0$ as $t \rightarrow s$ by the assumption and the fact $\psi \in \CC_c(\R^d) \subset \CC_b(\R^d)$, implying that
\begin{equation}
    \limsup_{t \rightarrow s} \left| \int \phi(\theta) \rho_t(\theta) d\theta - \int \phi(\theta) \rho_s(\theta) d\theta \right| \leq C\varepsilon,
\end{equation}
which implies the desired result since $\varepsilon$ was arbitrary.
\end{proof}

The following is a standard fact from functional analysis.

\begin{lemma}\label{lem:limiting_func_Sob_reg}
For $l \geq 1$, let $\{u^n\}_n \subset W^{1,\infty}([0,T], H^l(\R^d))$ be a sequence of functions with a uniformly bounded norm in this space.
Suppose $u$ is such that $u^n \rightarrow u$ strongly in $\CC ([0,T], L_{\text{loc}}^2 (\R^d))$.
Then $u$ has the regularity $u \in W^{1, \infty} ([0,T], H^l(\R^d))$.
\end{lemma}

\begin{proof}

By the Banach-Alaoglu theorem \cite{rudin1991functional} (applied in the Banach space $W^{1,\infty} ([0,T], H^l(\R^d))$), there exists a subsequence of $u^n$ (not relabeled) and a function $\tilde u$ such that
\begin{equation*}
u^n \rightharpoonup^* \tilde u \qquad \text{in}\;\; W^{1,\infty} ([0,T], H^l (\R^d)) \, .
\end{equation*}
That is, for any test function $\phi \in L^1 ([0,T], H^l(\R^d))$, we have
\begin{subequations}
\begin{equation}\label{eq:weak_star_conv}
\lim_{n \rightarrow +\infty} \int_0^T \langle u^n, \phi(t, \dummy) \rangle_{H^l(\R^d)} dt = \int_0^T \langle \tilde u, \phi(t, \dummy) \rangle_{H^l(\R^d)} dt \, ,
\end{equation}
\begin{equation}\label{eq:weak_star_conv_deri}
\lim_{n \rightarrow +\infty} \int_0^T \langle \partial_t u^n, \phi(t, \dummy) \rangle_{H^l(\R^d)} dt = \int_0^T \langle \partial_t \tilde u, \phi(t, \dummy) \rangle_{H^l(\R^d)} dt \, .
\end{equation}
\end{subequations}
Our goal now is to show that $\tilde u =u$. In turn, it will suffice to show that $\tilde u \xi = u \xi$ for any arbitrary smooth cutoff function $\chi \in \CC_c^{\infty}(\R^d)$. 

Let $v^n(t, \theta) := \chi(\theta) u^n(t,\theta)$ and $v(t, \theta) := \chi(\theta) \tilde u(t,\theta)$. From the strong convergence of $u^n \rightarrow u$ in $\CC ([0,T], \Lloc (\R^d))$, we know that $v^n = \chi u^n \rightarrow \chi u$ strongly in $\CC ([0,T], L^2(\R^d))$.
Thus, together with \eqref{eq:weak_star_conv}, we obtain, for any $\phi \in L^1([0,T], H^l(\R^d))$, 
\begin{equation*}
\int_0^T \langle v, \phi(t, \dummy) \rangle_{L^2(\R^d)} dt = \lim_{n \rightarrow +\infty} \int_0^T \langle v^n, \phi(t, \dummy) \rangle_{L^2(\R^d)} dt = \int_0^T \langle \chi u, \phi(t, \dummy) \rangle_{L^2(\R^d)} dt.
\end{equation*}
Using a density argument, the above implies that $v= \chi u$, finishing with this the proof.

\end{proof}

\begin{lemma}\label{lem:limiting_func_L_infty_reg}
Let $\{u^n\}_n \subset L^{\infty} ([0,T] \times \R^d)$ be a sequence of functions with a uniformly bounded norm in this space.
Suppose $u$ is such that $u^n \rightarrow u$ strongly in $\CC ([0,T], \Lloc(\R^d))$.
Then $u$ has the regularity $u \in L^{\infty} ([0,T], L^{\infty} (\R^d))$.
\end{lemma}

\begin{proof}
Since $u^n$ strongly converges to $u$ in $\CC ([0,T], \Lloc(\R^d))$, it follows that for any fixed compact set $K \subset \R^d$, up to subsequence (not relabeled), we have 
\begin{equation*}
\lim_{n \rightarrow +\infty }u^n (t, x) = u(t,x) \qquad \text{for a.e. } (t,x) \in [0,T] \times K \, .
\end{equation*}
Thus, we conclude that
\begin{equation*}
    |u(t,x)| = \lim_{n \rightarrow +\infty}| u^n(t,x)| \leq \sup_n \|u^n\|_{L^{\infty}([0,T] \times \R^d)} < +\infty
\end{equation*}
for a.e. $(t,x) \in [0,T] \times K$.
By the arbitrariness of $K \subset \R^d$, we conclude that $u \in L^{\infty} ([0,T], L^{\infty} (\R^d))$.
\end{proof}

\begin{lemma}\label{lem:ContinuityInTime}
Let $\rho \in L^2 \left([0,T], L^2(\R^d) \right)$ satisfy 
\begin{enumerate}
    \item $\lim_{t \rightarrow s} \|\rho_t\|^2_{L^2(\R^d)} = \|\rho_s\|^2_{L^2 (\R^d)}$ for any $s \in [0,T]$.
    \item For any $s\in [0,T]$, $\rho_t$ 
 weakly converges to $\rho_s$ in $L^2 (\R^d)$ as $t \rightarrow s$, i.e.
 \begin{equation}
     \lim_{t \rightarrow s}\int_{\R^d} \varphi(\theta) \rho_t(\theta) d\theta = \int_{\R^d} \varphi(\theta) \rho_s(\theta) d\theta,
 \end{equation}
 for every $\varphi \in L^2(\R^d)$.
\end{enumerate}
Then $\rho_t$ strongly converges to $\rho_s$ as $t \rightarrow s$ in $L^2(\R^d)$.
 In other words, $\rho \in \CC \left([0,T], L^2(\R^d) \right)$.
\end{lemma}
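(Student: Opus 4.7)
The plan is to invoke the classical Radon--Riesz (Kadec--Klee) property of Hilbert spaces: in $L^2(\mathbb{R}^d)$, weak convergence combined with convergence of norms implies strong convergence. Since $L^2(\mathbb{R}^d)$ is a Hilbert space with inner product $\langle f,g\rangle_{L^2(\mathbb{R}^d)} = \int_{\mathbb{R}^d} f(\theta)g(\theta)\,d\theta$, this framework applies directly and the two listed hypotheses are exactly the ingredients one needs.

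Concretely, I would fix $s \in [0,T]$ and expand the squared distance via the polarization identity
\begin{equation*}
\|\rho_t - \rho_s\|_{L^2(\mathbb{R}^d)}^2 = \|\rho_t\|_{L^2(\mathbb{R}^d)}^2 - 2\langle \rho_t, \rho_s\rangle_{L^2(\mathbb{R}^d)} + \|\rho_s\|_{L^2(\mathbb{R}^d)}^2.
\end{equation*}
The first term converges to $\|\rho_s\|_{L^2(\mathbb{R}^d)}^2$ as $t \to s$ by hypothesis~(1). For the cross term, I would apply hypothesis~(2) with the legitimate test function $\varphi := \rho_s$, which lies in $L^2(\mathbb{R}^d)$ by assumption, to conclude that $\langle \rho_t, \rho_s\rangle_{L^2(\mathbb{R}^d)} \to \langle \rho_s, \rho_s\rangle_{L^2(\mathbb{R}^d)} = \|\rho_s\|_{L^2(\mathbb{R}^d)}^2$. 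Summing the three contributions, the right-hand side converges to $\|\rho_s\|^2 - 2\|\rho_s\|^2 + \|\rho_s\|^2 = 0$, giving $\|\rho_t - \rho_s\|_{L^2(\mathbb{R}^d)} \to 0$ as $t \to s$. Since $s$ was arbitrary, this is precisely the statement that $\rho \in \mathcal{C}([0,T], L^2(\mathbb{R}^d))$.

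There is no substantive obstacle; the argument is a two-line computation once one recognizes that the two hypotheses together are calibrated to cancel the cross term in the Hilbert-space expansion of $\|\rho_t - \rho_s\|_{L^2(\mathbb{R}^d)}^2$. The only minor point worth flagging is that hypothesis~(2) must be applicable to the specific test function $\rho_s$ itself, but this is immediate since $\rho_s \in L^2(\mathbb{R}^d)$ and hypothesis~(2) is stated for \emph{every} $\varphi \in L^2(\mathbb{R}^d)$. No compactness, no density argument, and no appeal to the structure of the CB\textsuperscript{2}O dynamics is needed — this is a purely abstract Hilbert-space lemma used as a technical step in the preceding existence proof.
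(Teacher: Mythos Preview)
Your proposal is correct and essentially identical to the paper's proof: both expand $\|\rho_t - \rho_s\|_{L^2}^2$ via the inner product, use hypothesis~(1) for the norm term, and apply hypothesis~(2) with the test function $\varphi = \rho_s$ to handle the cross term. This is indeed the standard Radon--Riesz argument in a Hilbert space, and there is nothing more to it.
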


\begin{proof}
For any $s \in [0,T]$, we have
\begin{equation}
\begin{aligned}
\lim_{t \rightarrow s} \left\| \rho_t - \rho_s \right\|^2_{L^2 (\R^d)} &= \lim_{t \rightarrow s}  \|\rho_t\|^2_{L^2(\R^d)} - 2 \lim_{t \rightarrow s} \langle \rho_t, \rho_s \rangle_{L^2(\R^d)} + \|\rho_s\|^2_{L^2(\R^d)}\\
&= \|\rho_s\|^2_{L^2(\R^d)} - 2\|\rho_s\|^2_{L^2(\R^d)} + \|\rho_s\|^2_{L^2(\R^d)} = 0,  
\end{aligned}
\end{equation}
where the second equality uses the fact that $\phi_s \in L^2(\R^d)$ and $\rho_t$ converges to $\rho_s$ weakly in $L^2(\R^d)$.
\end{proof}

\begin{lemma}\label{lem:WassConv}
Suppose that $\varrho$ is a density function on $\R^d$ and that $\{ \varrho^n \}_{n \in \bbN} \subset \CP_4(\R^d)$ is a sequence of densities satisfying
\begin{enumerate}
\item $\varrho^n$ strongly converges to $\varrho$ in $L^2_{\text{loc}}(\R^d)$,
\item  $\sup_{n \in \bbN} \int \|\theta\|_2^4 \varrho^n(\theta) d\theta \leq C $ for some constant $C > 0$.
\end{enumerate}
Then $\{\varrho^n\}_{n \in \bbN}$ converges to $\varrho$ in the Wasserstein-$2$ distance, i.e.,
\[ \lim_{n \rightarrow \infty} W_2(\varrho^n, \varrho)=0 . \]
\end{lemma}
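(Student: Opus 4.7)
My plan is to invoke the standard characterization that, for probability measures on $\bbR^d$ with finite second moments, convergence in the Wasserstein-$2$ distance is equivalent to narrow (weak) convergence together with convergence of second moments, i.e., $W_2(\varrho^n, \varrho) \to 0$ iff $\varrho^n \rightharpoonup \varrho$ in duality with $C_b(\bbR^d)$ and $\int \N{\theta}_2^2 d\varrho^n \to \int \N{\theta}_2^2 d\varrho$ (see, e.g., Villani's or Ambrosio-Gigli-Savaré's monographs). So the task reduces to verifying these two properties from the two assumptions of the lemma, namely strong $L^2_{\text{loc}}$ convergence and a uniform fourth moment bound.

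First I would establish \emph{tightness}. By Markov's inequality, for any $R>0$,
\begin{equation*}
    \varrho^n\!\left(\{\N{\theta}_2 > R\}\right) \leq R^{-4}\int \N{\theta}_2^4\, \varrho^n(\theta)\,d\theta \leq C R^{-4},
\end{equation*}
which is uniform in $n$; hence $\{\varrho^n\}$ is tight. Next I would prove \emph{weak convergence} to $\varrho$. For $\phi \in C_b(\bbR^d)$ and any $R>0$, split
\begin{equation*}
    \int \phi\,(\varrho^n - \varrho)\,d\theta = \int_{B_R(0)} \phi\,(\varrho^n-\varrho)\,d\theta + \int_{B_R(0)^c}\phi\,(\varrho^n-\varrho)\,d\theta.
\end{equation*}
The first term is bounded by $\N{\phi}_\infty\sqrt{\mathrm{Vol}(B_R(0))}\,\N{\varrho^n - \varrho}_{L^2(B_R(0))}$, which tends to $0$ as $n\to\infty$ by hypothesis. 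The second term is bounded uniformly by $\N{\phi}_\infty (\varrho^n(B_R(0)^c) + \varrho(B_R(0)^c))$, which can be made arbitrarily small by choosing $R$ large thanks to tightness. Sending $n\to\infty$ and then $R\to\infty$ yields $\int \phi\,d\varrho^n\to\int \phi\,d\varrho$.

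Then I would verify \emph{convergence of second moments}. Analogously splitting,
\begin{equation*}
    \left|\int \N{\theta}_2^2 \,(\varrho^n - \varrho)\,d\theta\right| \leq \left|\int_{B_R(0)} \N{\theta}_2^2 \,(\varrho^n - \varrho)\,d\theta\right| + \int_{B_R(0)^c} \N{\theta}_2^2 \,(\varrho^n + \varrho)\,d\theta.
\end{equation*}
For the compact part, by Cauchy-Schwarz,
\begin{equation*}
    \left|\int_{B_R(0)} \N{\theta}_2^2\,(\varrho^n - \varrho)\,d\theta\right| \leq \left(\int_{B_R(0)} \N{\theta}_2^4\,d\theta\right)^{1/2}\N{\varrho^n-\varrho}_{L^2(B_R(0))}\to 0
\end{equation*}
as $n\to\infty$. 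For the tail, Markov's inequality with the uniform fourth moment bound yields $\int_{B_R(0)^c}\N{\theta}_2^2\,\varrho^n\,d\theta \leq R^{-2}\int \N{\theta}_2^4\,\varrho^n\,d\theta \leq C R^{-2}$, and similarly for $\varrho$ (noting that $\varrho\in\CP_4(\bbR^d)$). Letting first $n\to\infty$ and then $R\to\infty$ concludes the second moment convergence.

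Finally, applying the aforementioned characterization, weak convergence together with convergence of the second moments implies $W_2(\varrho^n,\varrho)\to 0$. The only mildly delicate point is the verification of the compact-part convergence using $L^2_{\text{loc}}$ strong convergence; everything else is essentially a tightness/tail bookkeeping exercise built on the uniform fourth moment bound, so I do not anticipate any serious obstacle.
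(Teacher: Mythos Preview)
Your proposal is correct and follows essentially the same approach as the paper: both invoke the characterization that $W_2$-convergence is equivalent to weak convergence plus convergence of second moments, and both verify these by splitting integrals into a ball and its complement, using Cauchy--Schwarz with the $L^2_{\text{loc}}$ convergence on the ball and the uniform fourth moment bound (via Markov) for the tail. The only minor difference is that the paper appeals to a separate lower-semicontinuity lemma to bound $\int\N{\theta}_2^4\,d\varrho$, whereas you simply use the standing assumption $\varrho\in\CP_4(\bbR^d)$ directly---your route is slightly more economical here.
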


\begin{proof}
Since weak convergence of probability measures and convergence in second moments implies convergence in the Wasserstein-$2$ distance (see, e.g., \cite[Chapter~6]{villani20090oldandnew}), we will show that $\varrho^n$ converges to $\varrho$ weakly and that $\int_{\R^d} \|\theta\|_2^2 \varrho^n d\theta \rightarrow \int_{\R^d} \|\theta\|_2^2 \varrho d\theta$ as $n \rightarrow \infty$.

For given $r > 0$, we have
\begin{equation}\label{eq:auxIneq}
\int_{\R^d \backslash B_r(0)} \|\theta\|_2^2 \varrho^n(\theta) d\theta \leq \frac{1}{r^2} \int_{\R^d \backslash B_r(0)} \|\theta\|_2^4 \varrho^n(\theta) d\theta \leq \frac{C}{r^2} .
\end{equation}
Then for any $\varepsilon > 0$, we pick $r_{\varepsilon} = \sqrt{\frac{C}{\varepsilon}}$, which ensures that $\int_{\R^d \backslash B_{r_{\varepsilon}}(0)} \|\theta\|_2^2 \varrho^n(\theta) d\theta \leq \varepsilon$ for $n \in \bbN$. 
From Lemma \ref{lem:semiContinuity}, we know that $\int \|\theta\|_2^4 \varrho d\theta \leq C$.
Then the same computation in \eqref{eq:auxIneq} holds for $\varrho$, and we also have $\int_{\R^d \backslash B_{r_{\varepsilon}}(0)} \|\theta\|_2^2 \varrho(\theta) d\theta \leq \varepsilon$.
Then one can compute
\begin{equation}
\begin{aligned}
&\left| \int_{\R^d} \|\theta\|_2^2 \varrho^n(\theta) d\theta - \int_{\R^d} \|\theta\|_2^2 \varrho(\theta) d\theta \right|^2 \\
&= \left| \int_{B_{r_{\varepsilon}}(0)} \|\theta\|_2^2 \left(\varrho^n(\theta) - \varrho(\theta) \right)d\theta + \left(\int_{\R^d \backslash B_{r_{\varepsilon}}(0)} \|\theta\|_2^2 \varrho^n(\theta) d\theta - \int_{\R^d \backslash B_{r_{\varepsilon}}(0)} \|\theta\|_2^2 \varrho(\theta) d\theta \right) \right|^2\\
&\leq 2 \left| \int_{B_{r_{\varepsilon}}(0)} \|\theta\|_2^2 \left(\varrho^n(\theta) - \varrho(\theta) \right)d\theta \right|^2 + 2\left|\int_{\R^d \backslash B_{r_{\varepsilon}}(0)} \|\theta\|_2^2 \varrho^n(\theta) d\theta - \int_{\R^d \backslash B_{r_{\varepsilon}}(0)} \|\theta\|_2^2 \varrho(\theta) d\theta  \right|^2\\
&\leq 2 \left( \int_{B_{r_{\varepsilon}}(0)} \|\theta\|_2^4 d\theta \right) \left\| \varrho^n - \varrho \right\|^2_{L^2(B_{r_{\varepsilon}(0)})} + 4 \left(\left| \int_{\R^d \backslash B_{r_{\varepsilon}}(0)} \|\theta\|_2^2 \varrho^n(\theta) d\theta \right|^2 + \left| \int_{\R^d \backslash B_{r_{\varepsilon}}(0)} \|\theta\|_2^2 \varrho(\theta) d\theta \right|^2 \right)\\
&\leq 2 C_{r_{\varepsilon}} \left\| \varrho^n - \varrho \right\|^2_{L^2(B_{r_{\varepsilon}(0)})} + 8 \varepsilon^2 .
\end{aligned}
\end{equation}
By the fact that $\lim_{n \rightarrow \infty} \|\varrho^n - \varrho\|^2_{L^2(B_{r_{\varepsilon}})} = 0$, we have
\begin{equation}
\limsup_{n \rightarrow \infty} \left| \int_{\R^d} \|\theta\|_2^2 \varrho^n(\theta) d\theta - \int_{\R^d} \|\theta\|_2^2 \varrho(\theta) d\theta \right|^2 \leq 8 \varepsilon^2 .
\end{equation}
By the arbitrariness of $\varepsilon$, we know that $\varrho^n$ converges to $\varrho$ in second moment.
Next we show that $\varrho^n$ weakly converges to $\varrho$ with similar idea.
For any given $\varphi \in C_b(\R^d)$, denote $M := \sup_{\theta \in \R^d} |\varphi(\theta)|$.
For any $\varepsilon > 0$, with the similar computations as in \eqref{eq:auxIneq}, we could pick $\widetilde{r}_{\varepsilon} = \sqrt{\frac{M}{\varepsilon}}$ such that
\begin{equation}
\int_{\R^d \backslash B_{\widetilde{r}_{\varepsilon}}(0)} |\varphi(\theta)| \varrho^n(\theta) d\theta \leq \varepsilon \quad \text{and} \quad \int_{\R^d \backslash B_{\widetilde{r}_{\varepsilon}}(0)} |\varphi(\theta)| \varrho(\theta) d\theta \leq \varepsilon .
\end{equation}
Then 
\begin{equation}
\begin{aligned}
&\left| \int_{\R^d} \varphi(\theta) \varrho^n(\theta) d\theta - \int_{\R^d} \varphi(\theta) \varrho(\theta) d\theta \right|^2 \\
& \leq 2\left| \int_{B_{\widetilde{r}_{\varepsilon}}(0)} \varphi(\theta) \left(\varrho^n(\theta) - \varrho(\theta) \right) d\theta \right|^2 + 2\left|\int_{\R^d \backslash B_{\widetilde{r}_{\varepsilon}}(0)} \varphi(\theta) \varrho^n(\theta) d\theta - \int_{\R^d \backslash B_{\widetilde{r}_{\varepsilon}}(0)} \varphi(\theta) \varrho(\theta) d\theta  \right|^2\\
&\leq 2 \left(\int_{B_{\widetilde{r}_{\varepsilon}}(0)} \varphi(\theta)^2 d\theta \right) \left\| \varrho^n - \varrho \right\|^2_{L^2(B_{\widetilde{r}_{\varepsilon}}(0))} + 4 \left(\left| \int_{\R^d \backslash B_{\widetilde{r}_{\varepsilon}}(0)} \varphi(\theta) \varrho^n(\theta) d\theta \right|^2 + \left| \int_{\R^d \backslash B_{\widetilde{r}_{\varepsilon}}(0)} \varphi(\theta) \varrho(\theta) d\theta \right|^2 \right)\\
&\leq C(r_{\varepsilon}, M) \left\| \varrho^n - \varrho \right\|^2_{L^2(B_{\widetilde{r}_{\varepsilon}}(0))} + 8\varepsilon^2,
\end{aligned}
\end{equation}
which implies that
\begin{equation}
\limsup_{n \rightarrow \infty} \left| \int_{\R^d} \varphi(\theta) \varrho^n(\theta) d\theta - \int_{\R^d} \varphi(\theta) \varrho(\theta) d\theta \right|^2 \leq 8 \varepsilon^2 .
\end{equation}
Again by the arbitrariness of $\varepsilon > 0$ and $\varphi \in C_b(\R^d)$, we obtain that $\varrho^n$ weakly converges to $\varrho$.
\end{proof}

\begin{lemma}\label{lem:semiContinuity}
Let $\varrho$ and $\{\varrho^n\}_{n \in \mathbb{N}}$ satisfy the same assumptions as in Lemma \ref{lem:WassConv}.
Then
\begin{equation}
    \int_{\R^d} \|\theta\|_2^4 \varrho d\theta \leq \liminf_{n \rightarrow \infty} \int_{\R^d} \|\theta\|_2^4 \varrho^n d\theta .
\end{equation}
\end{lemma}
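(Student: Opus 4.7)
\textbf{Proof plan for Lemma~\ref{lem:semiContinuity}.} The result is a standard lower-semicontinuity statement, so the plan is to combine the $L^2_{\mathrm{loc}}$ convergence (which yields a.e.\ convergence along a subsequence) with Fatou's lemma. Concretely, I first reduce to a subsequence realizing the $\liminf$: let $\alpha := \liminf_{n \to \infty} \int_{\R^d} \|\theta\|_2^4 \,\varrho^n(\theta)\,d\theta$ and choose a subsequence $\{\varrho^{n_j}\}_{j \in \mathbb{N}}$ such that $\lim_{j \to \infty} \int_{\R^d} \|\theta\|_2^4 \,\varrho^{n_j}(\theta)\,d\theta = \alpha$. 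It suffices to bound $\int \|\theta\|_2^4 \,\varrho\,d\theta$ by $\alpha$.

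Next, I would upgrade the $L^2_{\mathrm{loc}}$ convergence to pointwise a.e.\ convergence along a further subsequence via a standard diagonal argument. Since $\varrho^{n_j} \to \varrho$ in $L^2(B_k(0))$ for every $k \in \mathbb{N}$, classical measure-theoretic arguments allow extracting, for each $k$, a subsequence converging a.e.\ on $B_k(0)$; combining these through a diagonal extraction produces a sub-subsequence $\{\varrho^{n_{j_\ell}}\}_{\ell \in \mathbb{N}}$ with $\varrho^{n_{j_\ell}}(\theta) \to \varrho(\theta)$ for a.e.\ $\theta \in \R^d$.

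With pointwise a.e.\ convergence in hand and the integrand $\|\theta\|_2^4 \geq 0$, I would first apply Fatou's lemma on balls $B_R(0)$ for fixed $R > 0$:
\begin{equation*}
  \int_{B_R(0)} \|\theta\|_2^4 \,\varrho(\theta)\,d\theta
  \leq \liminf_{\ell \to \infty} \int_{B_R(0)} \|\theta\|_2^4 \,\varrho^{n_{j_\ell}}(\theta)\,d\theta
  \leq \liminf_{\ell \to \infty} \int_{\R^d} \|\theta\|_2^4 \,\varrho^{n_{j_\ell}}(\theta)\,d\theta = \alpha.
\end{equation*}
Then, sending $R \to \infty$ and invoking the monotone convergence theorem on the left-hand side yields $\int_{\R^d} \|\theta\|_2^4 \,\varrho(\theta)\,d\theta \leq \alpha = \liminf_n \int_{\R^d} \|\theta\|_2^4 \,\varrho^n(\theta)\,d\theta$, which is the desired inequality.

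There is no real obstacle here: the uniform fourth-moment bound in the hypothesis ensures $\alpha < \infty$ (though this is not strictly needed for the proof), and the only subtlety is the diagonal extraction to pass from $L^2_{\mathrm{loc}}$ to pointwise a.e.\ convergence. Everything else is just Fatou plus monotone convergence.
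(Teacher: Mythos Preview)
Your proof is correct, but it takes a different route from the paper's. The paper avoids extracting pointwise-convergent subsequences altogether: it introduces a smooth cutoff $\psi_r$ supported on $B_{2r}(0)$ with $0\le\psi_r\le 1$ and $\psi_r\equiv 1$ on $B_r(0)$, then uses the $L^2_{\mathrm{loc}}$ convergence directly via the pairing $\int \|\theta\|_2^4 \psi_r(\theta)\,\varrho^n\,d\theta \to \int \|\theta\|_2^4 \psi_r(\theta)\,\varrho\,d\theta$ (since $\|\theta\|_2^4 \psi_r$ is bounded and compactly supported, hence in $L^2$ on the support). Bounding this by $\int \|\theta\|_2^4 \varrho^n$ and sending $r\to\infty$ finishes the argument. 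Your approach instead upgrades $L^2_{\mathrm{loc}}$ convergence to a.e.\ convergence along a diagonal sub-subsequence and invokes Fatou; this is equally valid and arguably more elementary at the level of the final inequality, but it costs the extra subsequence bookkeeping. Note also that once you have a.e.\ convergence on all of $\R^d$, you can apply Fatou directly on $\R^d$ without first restricting to $B_R(0)$ and then passing to the limit $R\to\infty$.
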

\begin{proof}
For any fixed $r > 0$, we choose a $C^{\infty}_c(\R^d)$ bump function $\psi_r$ that has support in $B_{2r}(0)$ with $|\psi_r| \leq 1$ and is identically $1$ on $B_r(0)$.
In particular, we can construct
\begin{equation}\label{eq:bumpFuncton}
\psi_r(\theta) := 1 - g \left( \frac{\|\theta\|_2^2 - r^2}{3r^2} \right)
\end{equation}
where
\begin{equation}
g(x) := \frac{f(x)}{f(x) + f(1-x)} \qquad\text{with}\qquad f(x) := e^{-\frac{1}{x}} \mathds{1}_{\{x > 0\}}.
\end{equation}
Then one can compute
\begin{equation}
\int_{\R^d} \N{\theta}_2^4 \varrho d\theta = \lim_{r \rightarrow \infty}\int_{\R^d} \N{\theta}_2^4 \psi_r(\theta) \varrho d\theta = \lim_{r \rightarrow \infty} \lim_{n \rightarrow \infty} \int_{\R^d} \N{\theta}_2^4 \psi_r(\theta) \varrho^n d\theta \leq \liminf_{n \rightarrow \infty} \int_{\R^d} \N{\theta}_2^4 \varrho^n d\theta.
\end{equation}
Here, the first equality is due to the monotone convergence theorem (Theorem of Beppo-Levi) making use of the fact that the sequence $\N{\dummy}_2^4\psi_r(\dummy)\varrho$ is monotonously non-decreasing since $\psi_r (\dummy) \leq \psi_{\tilde{r}} (\dummy)$ if $ 0< r \leq \tilde{r}$.
The second equality uses that $\varrho^n$ strongly converges to $\varrho$ in $L^2_{\text{loc}}(\R^d)$ together with $\N{\dummy}_2^4\psi_r(\dummy)\in L^2_{\text{loc}}(\R^d)$ for fixed $r>0$.
Eventually, the last inequality just uses that $\psi_r\leq1$.
\end{proof}

\newpage

\newpage

\section{Supplementary Experimental Details for Constrained Optimization}\label{sec:additional_copt}


\subsection*{Baseline Comparisons}
We provide details on the selection of algorithm-specific hyperparameters for both CB\textsuperscript{2}O and Adaptive Penalized CBO algorithms studied in Section \ref{subsec:COPT_baseline_comparison}.
\begin{itemize}
    \item \textbf{CB\textsuperscript{2}O.}\\
    For Table \ref{tab:circular_N100} and Figure \ref{subfig:circle_same_particles}, $\beta = 1/20$;\\
    For Table \ref{tab:circular} and Figure \ref{subfig:circle_same_time}, $\beta = 1/30$;\\
    For Table \ref{tab:star_N100} and Figure \ref{subfig:star_same_particles}, $\beta = 1/20$;\\
    For Table \ref{tab:star} and Figure \ref{subfig:star_same_time}, $\beta = 1/40$.

    \item \textbf{Adaptive Penalized CBO.} The Adaptive Penalized CBO adapts the penalization parameter $\LM_k$ by introducing two other hyperparameters $\eta_{\LM}$ and $\eta_{\zeta}$. 
    We test various hyperparameter configurations to identify the best performance in each scenario, and the final hyperparameter settings are as follows:\\
    For Table \ref{tab:circular_N100} and Figure \ref{subfig:circle_same_particles}, $\LM_0 = 1$, $\eta_{\LM}=1.1$, $\zeta_0 = 0.1$ and $\eta_{\zeta} = 1.4$;\\
    For Table \ref{tab:circular} and Figure \ref{subfig:circle_same_time}, $\LM_0 = 10$, $\eta_{\LM}=1.05$, $\zeta_0 = 0.1$ and $\eta_{\zeta} = 1.4$;\\
    For Table \ref{tab:star_N100} and Figure \ref{subfig:star_same_particles}, $\LM_0 = 1$, $\eta_{\LM}=1.1$, $\zeta_0 = 0.1$ and $\eta_{\zeta} = 1.4$;\\
    For Table \ref{tab:star} and Figure \ref{subfig:star_same_time}, $\LM_0 = 50$, $\eta_{\LM}=1.05$, $\zeta_0 = 0.1$ and $\eta_{\zeta} = 1.4$. 
\end{itemize}

\subsection*{Ablation Study}
We provide details on the hyperparameter settings used in the ablation study conducted in Section \ref{subsec:COPT_ablation_study}.
Specifically, we set the reference hyperparameter values as follows:
\begin{equation}\label{eq:reference_hyper}
    N = 10^3, \qd \beta = 1/20,\qd \alpha = 30, \qd \estop = 0 \, .
\end{equation} 
In each ablation experiment, one or two hyperparameters are varied at a time, while the others are kept fixed as their reference values.

For Figures \ref{subfig:different_N_distance} and \ref{subfig:different_N_runningtime}, we vary the number of particles as $N = 10^2$, $10^3$, $10^4$, and $10^5$.

For Figures \ref{subfig:different_beta_distance} and \ref{subfig:different_beta_runningtime}, we test $\beta = 1/500$, $1/100$, $1/50$, $1/20$, $1/10$, $1/5$, and $1/2$. 

For Figures \ref{subfig:N_and_beta_distance} and \ref{subfig:N_and_beta_runningtime}, we vary $(N,\beta)$ as follows: $(10^2, 0.5)$, $(10^3, 0.05)$, $(10^4, 0.005)$, and $(10^5, 0.0005)$.

For Figures \ref{subfig:different_alpha_distance} and \ref{subfig:different_alpha_runningtime}, we vary $\alpha = 2$, $10$, $30$, $50$, and $100$. 

For Figures \ref{subfig:different_estop_distance} and \ref{subfig:different_estop_runningtime}, we test $\estop = 0$, $10^{-5}$, $10^{-4}$, $10^{-3}$, $10^{-2}$, and $10^{-1}$.


\end{document}